\newtheorem{cor}{Corollary}[section]
\newtheorem{theorem}[cor]{Theorem}
\newtheorem{prop}[cor]{Proposition}
\newtheorem{lemma}[cor]{Lemma}
\theoremstyle{definition}
\newtheorem{defi}[cor]{Definition}
\theoremstyle{remark}
\newtheorem{remark}[cor]{Remark}
\newtheorem{conj}[cor]{Conjecture}
\newtheorem*{notation}{Notation}
\newcommand{\cC}{{\mathcal C}}
\newcommand{\cF}{{\mathcal F}}
\newcommand{\cT}{{\mathcal T}}
\newcommand{\cQ}{{\mathcal Q}}
\newcommand{\cW}{{\mathcal W}}
\newcommand{\cY}{{\mathcal Y}}
\newcommand{\cDY}{{\mathcal{DY}}}
\newcommand{\cCP}{{\mathcal C\mathcal P}}
\newcommand{\cML}{{\mathcal M\mathcal L}}
\newcommand{\cFML}{{\mathcal F\mathcal M\mathcal L}}
\newcommand{\C}{{\mathbb C}}
\newcommand{\HH}{{\mathbb H}}
\newcommand{\PP}{\mathbb{P}}
\renewcommand{\Pr}{\mathbb{P}}
\newcommand{\N}{{\mathbb N}}
\newcommand{\R}{{\mathbb R}}
\newcommand{\hd}{\dot{h}}
\newcommand{\gb}{\overline{g}}
\newcommand{\Hyp}{\mathbb{H}}
\newcommand{\AdS}{\mathbb{A}\mathbbm{d}\mathbb{S}}
\newcommand{\dS}{\mathbbm{d}\mathbb{S}}
\newcommand{\SL}{\mathrm{SL}}
\newcommand{\PSL}{\mathrm{PSL}}
\newcommand{\dual}{\star}
\newcommand{\ph}{\varphi}
\newcommand{\Dom}{\mathrm{Dom}}
\newcommand{\Diffeo}{\mbox{Diffeo}}
\newcommand{\St}{\tilde S}
\newcommand{\tr}{\mbox{\rm tr}}
\newcommand{\grad}{\operatorname{grad}}
\newcommand{\isom}{\mathrm{Isom}}
\newcommand{\II}{I\hspace{-0.1cm}I}
\newcommand{\III}{I\hspace{-0.1cm}I\hspace{-0.1cm}I}
\newcommand{\rar}{\rightarrow}
\newcommand{\lra}{\longrightarrow}
\newcommand{\da}{da}
\def\l{\lambda}
\def\Hess{\mathrm{Hess}}
\def\Div{\mathrm{div}}
\def\Teich{\mathcal{T}}
\def\wti#1{\widetilde{#1}}
\def\what#1{\widehat{#1}}
\def\ol#1{\overline{#1}}
\def\Earth{\mathcal E}
\def\En{\mathbbm{1}}
\def\Ene{E}
\def\Land{\mathcal{L}}
\def\hend{M}
\def\Ends{\mathfrak{E}}
\def\des{M^\dual}
\def\mgh{N}
\def\Sy{\mathcal{S}}
\def\Sch{Sch} 
\def\qd{\varphi}
\def\wolf{\mathcal{W}}
\def\nor{n}
\def\th{\theta}
\def\skew{{r}}
\def\cG{\mathcal{G}}
\def\d{\delta}
\def\Id{id} 
\def\SW{SW} 
\def\Extr{\mathrm{Ext}}
\def\const{R} 
\def\Dom{C} 
\def\tDom{\tilde{C}} 
\def\cDC{\mathcal{DC}}
\begin{document}

\title{A cyclic extension of the earthquake flow II}
\author{Francesco Bonsante}
\address{Universit\`a degli Studi di Pavia\\
Via Ferrata, 1\\
27100 Pavia, Italy}
\email{francesco.bonsante@unipv.it}
\thanks{F.B. is partially supported by the A.N.R. through project Geodycos.}
\author{Gabriele Mondello}
\address{Universit\`a di Roma ``La Sapienza'' - Dipartimento di Matematica
``Guido Castelnuovo'' \\
piazzale Aldo Moro 5 \\
00185 Roma, Italy}
\email{mondello@mat.uniroma1.it}
\author{Jean-Marc Schlenker}
\thanks{J.-M. S. was partially supported by the A.N.R. through projects
ETTT, ANR-09-BLAN-0116-01, and GeomEinstein, ANR-09-BLAN-0116-01.}
\address{Institut de Math\'ematiques de Toulouse, UMR CNRS 5219 \\
Universit\'e Toulouse III \\
31062 Toulouse cedex 9, France}
\email{schlenker@math.univ-toulouse.fr}

\date{August 2012 (v1)}

\begin{abstract}
The landslide flow, introduced in [5], is a smoother analog of the earthquake
flow on Teichm\"uller space which shares some of its key properties. We show here
that further properties of earthquakes apply to landslides. The landslide flow is
the Hamiltonian flow of a convex function. The smooth grafting map $sgr$ taking values in
Teichm\"uller space, which is to landslides as grafting is to earthquakes, 
is proper and surjective with respect to either of its variables. 
The smooth grafting map $SGr$ taking values in the space of complex projective structures
is symplectic (up to a multiplicative constant). 
The composition of two landslides has a fixed point on Teichm\"uller
space. As a consequence we obtain new results on constant Gauss curvature surfaces
in 3-dimensional hyperbolic or AdS manifolds. We also show that the landslide flow has
a satisfactory extension to the boundary of Teichm\"uller space.
\end{abstract}

\maketitle

\tableofcontents

\section{Introduction and results}

In this paper we consider a closed surface $S$ of genus at least $2$. We 
denote by $\cT$ the Teichm\"uller space of $S$, considered either as the space
of hyperbolic structures on $S$ (considered up to isotopy) or as the space of
conformal structures on $S$ (also up to isotopy). We denote by $\cML$ the
space of measured laminations on $S$. 

\subsection{Earthquakes and landslides}

Let $\gamma$ be a simple closed curve on $S$, with a weight $w>0$, and let $h$ be a
hyperbolic metric on $S$. The image of $h$ by the (left) earthquake along the 
weighted curve $w\gamma$ is obtained by realizing $\gamma$ as a closed geodesic in
$(S,h)$, cutting $S$ open along this geodesic, rotating the right-hand side
by a length $w$ in the positive direction, and gluing back. This defines 
a map $\Earth(\bullet,w\gamma):\cT\rightarrow \cT$. Thurston \cite{thurston-earthquakes} proved that this definition
extends from weighted curves to measured laminations, so that we obtain a map:
$$ \Earth:\cT\times\cML\rightarrow \cT~. $$

This earthquake map has a number of remarkable properties, of which we can
single out, at this stage, the following.
\begin{enumerate}
\item For fixed $\lambda\in \cML$, it defines a flow on $\cT$: for all $t_1,t_2\in\R$, 
$\Earth(h,(t_1+t_2)\lambda)=\Earth(\Earth(h,t_1\lambda),t_2\lambda)$.
\item Thurston's Earthquake Theorem (see \cite{kerckhoff,mess}): for any
$h,h'\in \cT$, there exists a unique $\lambda\in \cML$ such that $\Earth(h,\lambda)=h'$.
\item McMullen's complex earthquakes \cite{mcmullen:complex}: for fixed $\lambda\in \cML$
and $h\in \cT$, the map $t\mapsto \Earth(h,-t\lambda)$ extends to a holomorphic map from 
the upper half-plane to $\cT$. 
\item $\Earth(h,(t+is)\lambda)=gr(\bullet,s\lambda)\circ \Earth(\bullet,-t\lambda)$, where $gr(\bullet,s\lambda):\cT\rightarrow \cT$ is 
the grafting map.
\item The grafting map $gr:\cT\times\cML\rightarrow \cT$ can be written as the composition $gr=\Pi\circ
Gr$, where $Gr:\cT\times\cML \rightarrow \cCP$ is also called the grafting map but with 
values in the space $\cCP$ of complex projective structures on $S$, and $\Pi:\cCP\rightarrow
\cT$ is the forgetful map sending a complex projective structure to the underlying complex
structure.
\item Thurston proved that the map $Gr:\cT\times\cML\rightarrow \cCP$ is a homeomorphism 
(see \cite{kamishima-tan} for a proof).
\end{enumerate}

In \cite{cyclic} we introduced the notion of {\it landslides}, which can be considered
as smooth version of earthquakes. The landslide map $\Land:S^1\times\cT\times \cT\rightarrow 
\cT\times \cT$, can be defined in different ways, see below. In \cite{cyclic} we showed
that properties (1)-(6) above extend from earthquakes to landslides, with the grafting
maps $gr$ and $Gr$ replaced by the corresponding smooth grafting maps $sgr'$ and $SGr'$. 

Here we further consider the properties of the earthquake and grafting maps.
\begin{enumerate}
\setcounter{enumi}{6}
\item For a fixed measured lamination $\lambda$, the earthquake flow $(t,h)\mapsto \Earth(h,t\lambda)$
is the Hamiltonian flow of the length function of $\frac{1}{2}\lambda$, considered as a function on $\cT$, with respect to the Weil-Petersson symplectic structure.
\item The length of a measured lamination is a convex function on $\cT$ with respect to the Weil-Petersson
metric.
\item Given two measured laminations $\lambda,\mu\in \cML$ which fill $S$, the composition $\Earth(\bullet,\lambda)\circ
\Earth(\bullet,\mu):\cT\rightarrow \cT$ has a fixed point (conjectured to be unique).
\item For fixed $\lambda$, the map $gr(\bullet,\lambda):\cT\rightarrow \cT$ is a homeomorphism (see  \cite{scannell-wolf}), while,
for $h\in \cT$ fixed, the map $gr(h,\bullet):\cML\rightarrow\cT$ is a homeomorphism (see \cite{dumas-wolf}).
\item The cotangent space $T^*\cT$ can be identified with the product $\cT\times \cML$ through the map
$d\ell:\cT\times \cML \to T^*\cT$ which sends $(h,\lambda)$ to the differential at $h$ of $d\ell_\lambda$ --- in
particular this map is one-to-one.
\item The grafting map $Gr:\cT\times \cML\rightarrow \cCP$ can be composed with 
$(d\ell)^{-1}:T^*\cT\to \cT\times \cML$ to obtain a map from $T^*\cT$ to $\cCP$. This map is actually $C^1$
(although $\cML$ does not have a natural $C^1$-structure) and symplectic, when one considers on $\cCP$ the real 
symplectic structure equal to the real part of the Goldman symplectic structure on the space of
representations of $\pi_1(S)$ to $\PSL(2, \C)$.
\end{enumerate}
We will prove that those properties extend to landslides, except for point (10) for which we only prove here
that the corresponding maps in the landslide setting are onto. (We also believe that those maps are
one-to-one, but could not prove it.)

We will 
see that points (9) and (10) can be translated in terms of 3-dimensional 
hyperbolic or anti-de Sitter geometry. 

In addition we will show (see Section \ref{ssc:extension} for a more precise statement) that
\begin{enumerate}
\setcounter{enumi}{12}
\item the landslide map has a satisfactory extension to the space $\cFML$ of filling pairs of measured laminations
on $S$, considered as a boundary of $\cT\times \cT$, and this extension is Hamiltonian for the symplectic structure
equal to the sum of the Thurston symplectic forms on the two factors.
\end{enumerate}

\subsection{The landslide flow is Hamiltonian}

We will first define a function $F$ on $\cT\times \cT$ that plays for landslides the role that the length
of a measured lamination plays for earthquakes. Recall that given two hyperbolic metrics $h$ and $h^\dual$ on
$S$, there is a unique minimal Lagrangian map $m$ isotopic to the identity from $(S,h)$ to $(S, h^\dual)$
(see \cite{L5,schoen:role}). This map can be characterized by the existence of a bundle morphism $b:TS\rightarrow TS$
which has determinant $1$, is self-adjoint for $h$ and satisfies the Codazzi equation $d^\nabla b=0$, 
and such that $m^*h^\dual=h(b\bullet,b\bullet)$. We call $b$ the {\it Labourie operator} of the pair $(h,h^\dual)$
and $c$ the {\it center} of $(h,h^\dual)$, namely the conformal structure (up to isotopy) underlying the metric $h+m^* h^\dual$.

\begin{defi}
Let $F:\cT\times \cT\rightarrow \R$ the function defined as 
$$ F(h,h^{\dual}) = \int_S \tr(b) \da_h~, $$
where $b$ is the Labourie operator of the pair $(h,h^{\dual})$
and $\da_h$ is the area element associated to $h$.
\end{defi}

Note that $F(h,h^{\dual})=F(h^{\dual},h)$: if $b$ is the Labourie operator of the pair $(h,h^\dual)$, then 
the Labourie operator of the pair $(h^\dual,h)$ is $b^\dual=b^{-1}$, and $\tr(b^\dual)=\tr(b)$ since $b$ has
determinant $1$.

\begin{prop}\label{prop:energy}
Let $c$ be the center of $(h,h^\dual)$.
The functions $\Ene(\bullet,h),F(\bullet,h):\cT\rar\R$ are proper
and real-analytic, where $\Ene(c,h)$ is the energy of the unique
harmonic map from $(S,c)$ to $(S,h)$.
Moreover, $2F(h,h^{\dual})=\Ene(c,h)=\Ene(c,h^\dual)$.
\end{prop}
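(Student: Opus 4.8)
The plan is to prove Proposition \ref{prop:energy} by reducing all three claims --- the identity $2F(h,h^\dual)=\Ene(c,h)=\Ene(c,h^\dual)$, properness, and real-analyticity --- to well-understood facts about harmonic maps and minimal Lagrangian maps. First I would set up the local picture relating the Labourie operator $b$ to the harmonic map from the center. Recall that the minimal Lagrangian map $m:(S,h)\to(S,h^\dual)$ isotopic to the identity factors (up to isotopy) as $m=f^\dual\circ f^{-1}$, where $f:(S,c)\to(S,h)$ and $f^\dual:(S,c)\to(S,h^\dual)$ are the harmonic maps from the common conformal center $c$; this is essentially the definition of the center, since the pullback metrics $f^*h$ and $(f^\dual)^*h^\dual$ have the same conformal class $c$ and the minimal Lagrangian condition is exactly the statement that the center is conformal to $h+m^*h^\dual$. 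The Labourie operator $b$ is then the self-adjoint (for $h$), determinant-one, Codazzi bundle morphism with $m^*h^\dual=h(b\,\bullet,b\,\bullet)$.

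The key computation is to express $\tr(b)$ and the area form $\da_h$ in terms of the harmonic map $f:(S,c)\to(S,h)$. In a conformal coordinate $z$ for $c$, writing $f^*h=\rho\,(e^{2u}|dz|^2 \text{-type data})$, the harmonic map energy density splits into a holomorphic-derivative part and an antiholomorphic-derivative part, and the Hopf differential $\Phi=(f^*h)^{2,0}$ is holomorphic. The point is that $h+m^*h^\dual=h(\mathrm{Id}+b^2,\bullet)$ up to the identification by $m$, so its conformal class being $c$ pins down the relation between $b$ and the two Hopf differentials, and in fact $\tr(b)$ measured against $\da_h$ integrates to (half) the harmonic map energy. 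Concretely, I would show pointwise that $\tr(b)\,\da_h=\frac12\,e(f)\,\da_c$, where $e(f)$ is the energy density of $f$ with respect to $c$ and $h$; integrating gives $2F(h,h^\dual)=\Ene(c,h)$. Symmetry in $(h,h^\dual)$, already noted in the remark after the definition via $b^\dual=b^{-1}$ and $\tr(b^{-1})=\tr(b)$, then yields $\Ene(c,h)=\Ene(c,h^\dual)$ by running the same argument for $f^\dual$. I expect this pointwise trace identity to be the main obstacle, since it requires carefully matching the minimal Lagrangian/Codazzi normalization of $b$ with the standard harmonic-map decomposition and verifying that the cross terms assemble into the full energy density rather than just one of its two pieces.

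Granting the energy identity, properness and real-analyticity of $\Ene(\bullet,h)$ and $F(\bullet,h)$ follow from standard harmonic map theory. For fixed target $(S,h)$, the map $c\mapsto f_c$ sending a conformal structure to the unique harmonic map $f_c:(S,c)\to(S,h)$ depends real-analytically on $c$ (Eells--Sampson existence and uniqueness in the negatively curved target, together with real-analytic dependence on parameters for the solution of the harmonic map equation via the implicit function theorem on the relevant Banach or Sobolev spaces), so $\Ene(\bullet,h)$ is real-analytic; pulling back through the real-analytic correspondence $(h,h^\dual)\mapsto c$ established in \cite{cyclic} transfers analyticity to $F(\bullet,h)$. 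For properness I would use that the energy of the harmonic map to a fixed hyperbolic target is a proper exhaustion of Teichm\"uller space: if a sequence $h^\dual_n$ leaves every compact set of $\cT$, then the centers $c_n$ (equivalently the conformal classes) degenerate, and the energy of the harmonic map from a degenerating conformal structure to a fixed hyperbolic surface blows up, forcing $\Ene(c_n,h)\to\infty$ and hence $F(h,h^\dual_n)\to\infty$.

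Finally I would assemble the pieces: the pointwise identity and its integrated form give the stated equalities, the two properness statements follow from the energy-growth estimate applied in either variable, and real-analyticity follows from the real-analytic dependence of the harmonic map (and of the center) on its data. The only delicate point meriting care beyond the trace computation is ensuring that the correspondence $(h,h^\dual)\mapsto c$ and the harmonic map solution are jointly real-analytic, which is where I would invoke the real-analytic implicit function theorem rather than merely smooth dependence.
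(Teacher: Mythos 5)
Your route for the energy identity and for $\Ene(\bullet,h)$ is essentially the paper's (harmonic-map decomposition at the center; Eells--Lemaire-type arguments for real-analyticity; Tromba \cite{fischer-tromba:cell} for properness), but two steps do not hold up. First, the trace identity that you defer as ``the main obstacle'' is short, and carrying it out shows that the constant you propose is impossible. For a normalized pair the two harmonic maps from the center have opposite Hopf differentials \emph{and the same energy density} $e$ (the Bochner equation determining $e$ depends only on $|\qd|^2$), so $h=\qd+e\hat{c}+\ol{\qd}$ and $h^\dual=-\qd+e\hat{c}-\ol{\qd}$, whence
\[
h\bigl((\En+b^2)\bullet,\bullet\bigr)=h+h^\dual=2e\hat{c}~;
\]
comparing area forms and using $\det(\En+b^2)=\tr(b)^2$ (valid because $\det b=1$) gives the pointwise identity $\tr(b)\,\da_h=2e\,\da_{\hat{c}}$, hence $F(h,h^\dual)=2\Ene(c,h)=2\Ene(c,h^\dual)$. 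Your proposed identity $\tr(b)\,\da_h=\tfrac12\,e\,\da_{\hat{c}}$ is off by a factor of $4$ and cannot be proven: the Fuchsian case $h=h^\dual$, where $b=\En$, $F(h,h)=4\pi|\chi(S)|$ and $\Ene(c,h)=2\pi|\chi(S)|$, settles the constant. (The constant printed in the Proposition, $2F=\Ene$, is itself inconsistent with this computation and with the paper's own use $F(\lambda,\mu)=2\Ene(c,\lambda)$ in Section 8; the correct relation is $F=2\Ene$, so do not reverse-engineer your pointwise identity to match the printed statement.)

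Second, and more seriously, your properness argument for $F(\bullet,h)$ rests on the bare assertion that if $h^\dual_n$ leaves every compact set of $\cT$ then the centers $c_n$ of $(h,h^\dual_n)$ do too. That implication is exactly the nontrivial point, and it cannot be extracted from the identity plus properness of $\Ene(\bullet,h)$: assuming $c_n$ bounded only bounds $F(h,h^\dual_n)$, and to conclude anything about $h^\dual_n$ from a bound on $F$ you would need the very properness of $F$ you are trying to establish --- the argument is circular. What your claim actually requires is (joint) continuity of the correspondence $c\mapsto h^\dual$ for fixed $h$, i.e.\ continuity of the Hopf differential in $c$ together with continuity of Wolf's parameterization in both variables (essentially the homeomorphism $\what{\SW}$ of Section 8); you neither state nor prove this. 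The paper avoids the issue altogether: it proves only the easy converse ($c_n$ divergent $\Rightarrow h^\dual_n$ divergent, by the bounded-$F$ argument above, which is valid in that direction), and then obtains $F(h,h^\dual_n)\rar+\infty$ for a divergent sequence $h^\dual_n$ by a different mechanism, namely Thurston's compactification together with Proposition 6.15 of \cite{cyclic}: up to a subsequence, $\theta_n h^\dual_n\rar\lambda\neq 0$ in $\cML$ with $\theta_n\rar 0$, and then $\theta_n F(h,h^\dual_n)\rar\ell_\lambda(h)>0$, so $F(h,h^\dual_n)\rar+\infty$. To complete your proof you must either import that boundary-asymptotics result or supply the missing continuity statement; as written, the properness of $F(\bullet,h)$ is not established.
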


\begin{theorem} \label{tm:hamiltonian}
The landslide flow on $\cT\times \cT$ is the Hamiltonian flow
associated to $\frac{1}{4}F$
for the symplectic form $\omega_{WP,1}+\omega_{WP,2}$.
\end{theorem}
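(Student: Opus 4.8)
The statement is the landslide analogue of property (7), so the plan is to verify directly that the infinitesimal generator $X$ of the landslide flow on $\cT\times\cT$ satisfies the Hamiltonian relation $\iota_X(\omega_{WP,1}+\omega_{WP,2}) = \tfrac14\,dF$ (with the sign convention for Hamiltonian fields fixed along the way); since both sides are tensorial in $(h,h^\dual)$ it suffices to check this as an identity of $1$-forms at each pair. First I would record the generator $X = (\dot h,\dot h^\dual)$, obtained by differentiating at $\theta=0$ the landslide $\Land^{e^{i\theta}}$ of \cite{cyclic}; it is convenient to keep $\dot h$ and $\dot h^\dual$ expressed through the Labourie operator $b$ and the complex structure $J$ of $h$, using that $\Land$ fixes the center $c$ and acts on the associated holomorphic data by a rotation. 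As a consistency check, and because any Hamiltonian must be a first integral of its flow, I would first verify that $F$ is constant along the orbits: by Proposition \ref{prop:energy} we have $F=\tfrac12\Ene(c,h)$, and the energy density of the harmonic map from the fixed center $(S,c)$ solves a Bochner equation that depends on the Hopf differential only through its pointwise norm, which the landslide rotation preserves.

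The core of the argument is then two parallel computations. On one hand I would compute $dF$ by differentiating $F=\int_S\tr(b)\,\da_h$ in an arbitrary direction $(\dot h_0,\dot h^\dual_0)$: the area element contributes $\tfrac12\tr_h(\dot h_0)$, while the variation $\delta b$ of the Labourie operator is controlled by differentiating the three defining conditions $\det b=1$ (so that $\tr(b^{-1}\delta b)=0$), self-adjointness, and the Codazzi equation $d^\nabla b=0$, together with $m^*h^\dual=h(b\cdot,b\cdot)$. The Codazzi equation is what lets one integrate $\int_S\tr(\delta b)\,\da_h$ by parts and eliminate the a priori unknown first-order part of $\delta b$, leaving an expression bilinear in $b$ and $(\dot h_0,\dot h^\dual_0)$. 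On the other hand I would expand $\iota_X(\omega_{WP,1}+\omega_{WP,2})$ using the representation of the Weil-Petersson form on symmetric $2$-tensors, $\omega_{WP}(\dot h_1,\dot h_2)=c_0\int_S\langle\dot h_1,J\dot h_2\rangle\,\da_h$ on transverse-traceless representatives, applied on each factor with $\dot h_1,\dot h^\dual_1$ the components of $X$. Matching the two resulting $1$-forms, term by term against $\dot h_0$ and $\dot h^\dual_0$, yields the theorem.

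The main obstacle is this matching together with the precise constant $\tfrac14$: the variation $\delta b$ solves a linear elliptic (Codazzi-type) system rather than being given by a closed formula, so the comparison only becomes transparent after projecting the variations onto their transverse-traceless parts and repeatedly using $\det b=1$, the self-adjointness of $b$, and $\tr(b)=\tr(b^{-1})$. I expect the normalization $\tfrac14$ to emerge exactly from the interplay between the factor $c_0$ in the Weil-Petersson form and the factor relating $F$ to the energy in Proposition \ref{prop:energy}; getting it right is the delicate point. An alternative, possibly cleaner, route is to pass to the parametrization of $\cT\times\cT$ by the center $c$ and the Hopf differential $q$ of the harmonic map $(S,c)\to(S,h)$ --- under which $\Land^{e^{i\theta}}$ becomes fiberwise multiplication $q\mapsto e^{i\theta}q$ and $X$ becomes $(0,iq)$ --- and to compute $\omega_{WP,1}+\omega_{WP,2}$ in these coordinates using the first variation of harmonic energy and Wolf's identification of the cotangent space with holomorphic quadratic differentials; the same constant-tracking issue is the crux there as well.
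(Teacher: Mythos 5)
Your proposal is correct and essentially reproduces the paper's own argument: the paper likewise reduces (by symmetry) to checking $\omega_{WP}(X,v)=\frac{1}{4}\,d(F(\bullet,[h^\dual]))(v)$ on one factor, computes the right-hand side by differentiating $\int_S\tr(b)\,\da_h$ through the defining identities of the Labourie operator --- with the isotopy relation of Lemma \ref{lm:triv}, self-adjointness, and the divergence theorem eliminating the a priori unknown part of $\dot b$ (Proposition \ref{prop:1der}) --- and computes the left-hand side by pairing the landslide generator $\dot J_X$ with the harmonic representative $\nu_\qd$ via the Fischer--Tromba formula (\ref{eq:ft2}), from which the constant $\frac{1}{4}$ falls out. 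The differences are only presentational: the paper contracts $dF$ along Wolf's family $h_t=t\qd+e(t)h+t\bar\qd$ rather than against an arbitrary variation, and writes the Weil--Petersson form on almost-complex structures rather than on transverse-traceless symmetric $2$-tensors.
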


As a consequence we see that the landslide flow is certainly {\it not}
the same as the Hamiltonian flow of the length of the Liouville
cycle.

\subsection{Convexity of the Hamiltonian}

The following result is an extension to landslides of the convexity of
the length function of measured laminations. 

\begin{theorem} \label{tm:cvx}
Let  $h\in \cT$ be fixed. The function $F(h,\bullet):\cT\rightarrow \R$ is
strictly convex for the Weil-Petersson metric on $\cT$. More precisely,
at each point $h^{\dual}$, $\Hess(F(h,\bullet))\geq 2g_{WP}$, with equality
exactly when $h=h^{\dual}$.  
\end{theorem}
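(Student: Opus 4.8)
The plan is to combine the energy identity of Proposition~\ref{prop:energy} with a variational characterization of the center and then to reduce the statement to a single second-variation inequality for the harmonic map energy. Recall that if $c$ is the center of $(h,h^\dual)$, the harmonic maps $(S,c)\to(S,h)$ and $(S,c)\to(S,h^\dual)$ have opposite Hopf differentials $\pm\Phi$, which is the standard description of minimal Lagrangian maps \cite{schoen:role}. Since the derivative of the energy in the domain conformal structure is (a multiple of) the Hopf differential, the center is precisely the critical point of $c\mapsto\Ene(c,h)+\Ene(c,h^\dual)$; this function is a proper exhaustion with a unique nondegenerate minimum (Tromba, Wolf), so the center is that minimum. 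Together with $2F=\Ene(c,h)=\Ene(c,h^\dual)$ this gives the variational formula
$$ F(h,h^\dual)=\frac14\,\min_{c\in\cT}\big(\Ene(c,h)+\Ene(c,h^\dual)\big)~, $$
which I would take as the starting point.

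Next I would differentiate this formula twice in $h^\dual$ along a Weil--Petersson geodesic with $h$ fixed. The Hessian of a partial minimum is the Schur complement of the full Hessian of $G(c,h^\dual):=\tfrac14\big(\Ene(c,h)+\Ene(c,h^\dual)\big)$ in the variables $(c,h^\dual)$; since the center is a nondegenerate minimum, the block in the $c$--directions is positive definite and the Schur complement is well defined. An elementary computation with block forms shows that if the full Hessian of $G$, as a quadratic form in $(\dot c,\dot h^\dual)$, dominates $2\,g_{WP}(\dot h^\dual,\dot h^\dual)$, then so does its Schur complement, i.e.\ $\Hess\big(F(h,\bullet)\big)\geq 2g_{WP}$. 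This reduces the theorem to the single inequality
$$ \Hess_c\Ene(c,h)[\dot c]+\Hess_{(c,h^\dual)}\Ene(c,h^\dual)[(\dot c,\dot h^\dual)]\;\geq\;8\,g_{WP}(\dot h^\dual,\dot h^\dual)~, $$
a statement about the energy functional alone in which both harmonic maps emanating from the center appear.

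The technical heart is this coupled second-variation estimate, and I expect it to be the main obstacle. I would treat it through Wolf's description of the energy density of a harmonic map between hyperbolic surfaces: writing $\mathcal H,\mathcal L$ for the holomorphic and antiholomorphic energies (so that $\mathcal H\mathcal L=|\Phi|^2$ and $\mathcal H-\mathcal L$ is the Jacobian), one has a Bochner equation for $\log\mathcal H$ whose linearization under simultaneous variations of domain and target is governed by a Schr\"odinger operator $\Delta-V$ with $V\geq0$. Integrating the resulting second variation by parts against the harmonic Beltrami differentials that represent the Weil--Petersson directions should produce the norm $g_{WP}(\dot h^\dual,\dot h^\dual)$ with the correct multiple, plus a manifestly nonnegative remainder. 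The delicate points are to organize the coupling so that exactly the constant $8$ (hence $2$ after taking the Schur complement) is produced, and to identify the remainder as a nonnegative integral that vanishes precisely when $\Phi\equiv0$.

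Finally I would settle the equality case. The vanishing $\Phi\equiv0$ is equivalent to $b=\mathrm{id}$, that is to $h=h^\dual$; at this point the center is $[h]$, the relevant harmonic map is the identity, and the whole estimate collapses to the classical identification of the Hessian of the energy at its minimum with a fixed multiple of the Weil--Petersson metric (Tromba, Fischer--Tromba, Wolf). This both fixes the constant, giving $\Hess(F(h,\bullet))=2g_{WP}$ there, and shows that equality holds exactly when $h=h^\dual$.
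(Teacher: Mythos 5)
Your framework is set up correctly but it does not reduce the problem: the variational formula $F(h,h^\dual)=\frac14\min_{c}\big(\Ene(c,h)+\Ene(c,h^\dual)\big)$ is indeed available (it is Proposition~\ref{prop:energy} combined with Lemma~\ref{lemma:Ext}), and the Schur-complement description of the Hessian of a partial minimum is sound. However, the Schur complement of the full Hessian of $G$ is \emph{exactly} $\min_{\dot c}\Hess G[(\dot c,\dot h^\dual)]$, so your ``reduced'' inequality --- that the full Hessian of $G$ dominates $2g_{WP}(\dot h^\dual,\dot h^\dual)$ --- is logically equivalent to the statement $\Hess(F(h,\bullet))\geq 2g_{WP}$ you are trying to prove. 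After your second paragraph you are therefore facing the original theorem, merely rewritten in the variables $(\dot c,\dot h^\dual)$; nothing has been gained. (A secondary issue: you invoke nondegeneracy of the center as a minimum of $\Ene(\bullet,h)+\Ene(\bullet,h^\dual)$, citing Tromba and Wolf, but what is actually available, in \cite{cyclic} and in Lemma~\ref{lemma:Ext}, is uniqueness of the minimum; positive-definiteness of the $c$-block is not obviously in the literature. This can be sidestepped, since the center depends smoothly on $(h,h^\dual)$ via the Labourie operator, so the minimizing path is differentiable and semi-definiteness of the $c$-block suffices, but as written the claim is unsupported.)

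The genuine gap is that what you yourself call the ``technical heart'' --- the coupled second-variation inequality --- is never proved: you sketch a hoped-for Bochner/Schr\"odinger-operator argument and explicitly defer the two points on which everything hinges, namely producing the exact constant and exhibiting the remainder as a nonnegative integral that vanishes iff $\Phi\equiv 0$. That deferred computation is where the entire theorem lives, and it requires second-variation formulas for harmonic maps in which source and target vary simultaneously. The paper's proof shows what this work actually looks like, organized differently: it differentiates $F$ twice along Wolf's families $h_t=t\qd+e(t)h+t\bar\qd$ of (\ref{eq:wolf}), which are WP-geodesic at $t=0$; it uses Wolf's second-order expansion of the energy density to get $\ddot\alpha=2(2-\Delta)^{-1}(|\nu_\qd|^2)\En$; and --- the crucial step, Proposition~\ref{prop:psi} --- it controls the term $\int_S\tr(\dot\alpha\dot b)\,\da_h$ through the first-order variation equations of the Labourie operator (the vector field $Y$ and function $r$ with $JY=b^{-1}\grad r$, coming from the fact that $h^\dual_t$ moves by isotopy). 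This produces $\ddot F=\int_S\tr(b\ddot\alpha)\,\da_h+\int_S h(\grad r,b^{-1}\grad r)\,\da_h+\int_S r^2\tr(b)\,\da_h$, whence $\ddot F\geq\int_S 2u\,\tr(b)\,\da_h\geq 2g_{WP}(\qd,\qd)$ using $\tr(b)\geq 2$ and the maximum principle ($u>0$), with equality iff $b\equiv\En$, i.e.\ $h=h^\dual$. Nothing playing the role of Proposition~\ref{prop:psi} or of Wolf's expansion appears in your proposal; your equality-case paragraph also rests on the unproven estimate. As it stands, this is a plausible research plan, not a proof.
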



\subsection{Landslide symmetries}

There is a simple notion of ``symmetry'' associated to the notion of landslides --- it actually
also makes some sense for earthquakes, see below.

\begin{defi}
Let $\theta\in (0,\pi)$, and let $h_0\in \cT$. For all $h\in \cT$, there is a unique
$h_0^\dual\in \cT$ such that $\Land^1_{e^{i\theta}}(h_0,h_0^\dual)=h$ (see \cite[Theorem 1.14]{cyclic}). 
We set $\Sy_{e^{i\theta},h_0}(h)=\Land^1_{e^{-i\theta}}(h_0,h_0^\dual)$. We call $\Sy$ the symmetry of center 
$h_0$ and angle $\theta$. 
\end{defi}

Note that $\Sy_{e^{i\theta},h_0}$ is not an involution, however, by definition, 
$\Sy_{e^{i\theta},h_0}\circ \Sy_{e^{-i\theta},h_0}=Id$.

The following statement is an analog for landslides of the main statement in 
\cite{earthquakes}.

\begin{theorem} \label{tm:fixed}
Let $\theta_+,\theta_-\in (0,\pi)$ and $h_+,h_-\in \cT$ be fixed. 
The map $\Sy_{e^{i\theta_+},h_+}\circ \Sy_{e^{i\theta_-},h_-}:\cT\rightarrow \cT$
has a fixed point. If $\theta_++\theta_-=\pi$ then this fixed point is unique.
\end{theorem}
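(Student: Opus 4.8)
The plan is to read the fixed-point equation through the landslide flow, solve it explicitly in the balanced case $\theta_++\theta_-=\pi$, and obtain the general case as a continuation of that one. Throughout I write $\Phi_\alpha:=\Land_{e^{i\alpha}}:\cT\times\cT\to\cT\times\cT$ and regard $\alpha\mapsto\Phi_\alpha$ as the ($2\pi$-periodic) flow of Theorem \ref{tm:hamiltonian}, so that $\Phi_\alpha\circ\Phi_\beta=\Phi_{\alpha+\beta}$, $\Phi_{2\pi}=\Id$, and $\Phi_\pi=\tau$ is the factor-exchange $\tau(h,h^\dual)=(h^\dual,h)$ (these structural facts are read off from the construction in \cite{cyclic}). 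With $\pi_1,\pi_2$ the projections, $\Land^j_{e^{i\alpha}}=\pi_j\circ\Phi_\alpha$. Since $\Sy_{e^{-i\theta_+},h_+}=\Sy_{e^{i\theta_+},h_+}^{-1}$, a point $h$ is fixed by $\Sy_{e^{i\theta_+},h_+}\circ\Sy_{e^{i\theta_-},h_-}$ iff $\Sy_{e^{i\theta_-},h_-}(h)=\Sy_{e^{-i\theta_+},h_+}(h)$. Writing the two base pairs as $\Phi_{\theta_-}(h_-,p)=(h,a)$ and $\Phi_{-\theta_+}(h_+,q)=(h,b)$ — where $p,q$ exist and are unique by \cite[Theorem 1.14]{cyclic} and $a,b$ are the induced second coordinates — the definition of $\Sy$ turns this into
\[
 \pi_1\Phi_{-\theta_-}(h_-,p)=\pi_1\Phi_{\theta_+}(h_+,q),\qquad\text{i.e.}\qquad \pi_1\Phi_{-2\theta_-}(h,a)=\pi_1\Phi_{2\theta_+}(h,b),
\]
the second form using $(h_-,p)=\Phi_{-\theta_-}(h,a)$ and $(h_+,q)=\Phi_{\theta_+}(h,b)$.

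Next I would treat the balanced case $\theta_++\theta_-=\pi$. Then $2\theta_+\equiv-2\theta_-\pmod{2\pi}$, so $\Phi_{2\theta_+}=\Phi_{-2\theta_-}$ and the displayed equation becomes $\Land^1_{e^{-2i\theta_-}}(h,a)=\Land^1_{e^{-2i\theta_-}}(h,b)$. As $\Land^1_{e^{-2i\theta_-}}(h,\bullet)$ is injective (again \cite[Theorem 1.14]{cyclic}, using $2\pi$-periodicity to bring the angle into $(-\pi,\pi)$), this forces $a=b$, whence $\Phi_{\theta_-}(h_-,p)=(h,a)=(h,b)=\Phi_{-\theta_+}(h_+,q)$. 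Thus $(h_-,p)$ and $(h_+,q)$ lie on one $\Phi$-orbit at angular distance $\theta_++\theta_-=\pi$, i.e. $(h_+,q)=\Phi_\pi(h_-,p)=\tau(h_-,p)=(p,h_-)$; hence $p=h_+$ and the only candidate is $h=\Land^1_{e^{i\theta_-}}(h_-,h_+)$. Running the computation backwards (using $\Phi_\pi=\tau$ and $2\pi$-periodicity) shows that this $h$ is indeed fixed. So in the balanced case the fixed point both exists and is unique, and is given by an explicit formula.

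For arbitrary $\theta_\pm\in(0,\pi)$ I would keep $h_\pm$ fixed and deform the angle pair inside the convex square $(0,\pi)^2$ to a point of the segment $\theta_++\theta_-=\pi$, along a path $s\mapsto(\theta_+(s),\theta_-(s))$. Each $f_s:=\Sy_{e^{i\theta_+(s)},h_+}\circ\Sy_{e^{i\theta_-(s)},h_-}$ is a self-homeomorphism of $\cT\cong\R^{6g-6}$, and the balanced endpoint carries exactly one fixed point; homotopy invariance of the fixed-point index then yields a fixed point for every $s$, in particular for the original angles, provided the fixed points do not escape to $\partial\cT$ along the deformation. Establishing that uniform a priori confinement is the main difficulty: a fixed point produces a closed configuration $h,\,\Sy_{e^{i\theta_-},h_-}(h),\,h_-,\,h_+$ of metrics linked by landslides, and one must bound $h$ in a compact part of $\cT$ independently of $s$. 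I expect to extract this from the properness of $\Ene(\bullet,h)$ and of $F(\bullet,h)$ in Proposition \ref{prop:energy} — the value of $F$ is constant along each landslide orbit and controls the harmonic-map energies between the metrics of the configuration — reinforced by the quantitative convexity $\Hess(F(h,\bullet))\ge 2g_{WP}$ of Theorem \ref{tm:cvx}, which prevents the configuration from spreading out. (A direct minimization could replace the continuation argument, but the same compactness estimate would be needed.) The subsequent topological step is then routine; it is this a priori bound, rather than the algebra of the flow, that I expect to be the crux.
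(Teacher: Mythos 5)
Your treatment of the balanced case $\theta_++\theta_-=\pi$ is correct, and it is genuinely different from the paper's: you obtain existence, uniqueness and even an explicit formula for the fixed point, $h=\Land^1_{e^{i\theta_-}}(h_-,h_+)$, purely from the flow algebra ($\Phi_\alpha\circ\Phi_\beta=\Phi_{\alpha+\beta}$, $\Phi_\pi=$ factor swap) together with the landslide theorem of \cite[Theorem 1.14]{cyclic}. The paper instead gets this case from 3-dimensional AdS geometry: the balanced pair of curvatures corresponds to a pair of dual constant curvature surfaces, and existence/uniqueness of the MGH AdS manifold (Lemma \ref{lm:KK*}, resting on minimal Lagrangian maps) is translated into the fixed point statement via Lemma \ref{lm:diagram}.

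The general case, however, has a genuine gap, and it is exactly the point you flag yourself: the uniform a priori compactness of fixed points along the deformation of $(\theta_+,\theta_-)$ is never proved; you only state that you \emph{expect} it to follow from properness of $\Ene$ and $F$ and from the convexity of $F$. This is not a finishing detail — it is the heart of the theorem, and the 2-dimensional quantities you invoke do not obviously yield it. At a fixed point $h$ of $f_s$, conservation of $F$ along landslide orbits gives you identities such as $F(h_-,p)=F(h,a)$ and $F(h_+,q)=F(h,b)$, but none of these numbers carries an a priori upper bound: properness of $F(\bullet,h_\pm)$ says they blow up if $h$ escapes, yet nothing in your configuration forbids them from blowing up simultaneously, because the two symmetry constraints are never played against each other by an inequality. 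The paper closes precisely this gap by going to 3 dimensions: a fixed point is reinterpreted (Lemma \ref{lm:diagram}) as an MGH AdS manifold containing a past-convex $K_+$-surface homothetic to $h_+$ and a future-convex $K_-$-surface homothetic to $h_-$; monotonicity of the induced metric along the constant curvature foliation of the complement of the convex core then gives the length-spectrum comparisons of Corollary \ref{cr:comparison}, and compactness for Thurston's asymmetric metric (Lemma \ref{lm:compact}, Corollary \ref{cr:compact}) converts these into properness of the maps $\Phi_{K_-,K_+}$. No analog of this comparison is extracted, or obviously extractable, from $F$ alone.

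A second, smaller gap is your claim that "the subsequent topological step is then routine." Homotopy invariance of the fixed-point index requires the index at the balanced endpoint to be nonzero, and uniqueness of a fixed point does not imply this (an isolated fixed point can have index $0$); you would need a nondegeneracy argument there. The paper sidesteps the issue by running degree theory not on fixed-point sets of maps $\cT\to\cT$ but on the proper family $\Phi_{K_-,K_+}:\cT\times\cT\to\cT\times\cT$ of Theorem \ref{tm:adsI}, whose balanced member $\Phi_{K_-,K_-^\dual}$ is a homeomorphism; constancy of the degree gives surjectivity, and surjectivity — not an index count — produces the fixed point. If you want to salvage your scheme while staying in 2 dimensions, the natural fix is the same: recast the continuation as a statement about a proper map between copies of $\cT\times\cT$ whose balanced member your explicit computation shows to be invertible, and then prove properness — which returns you to the missing estimate above.
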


It would be quite satisfactory to know whether uniqueness holds for other
values of $\theta_++\theta_-$. 

This statement can also be translated in terms of 3-dimensional AdS geometry, see below. 
The uniqueness question which remains open can then be translated as a natural
statement on the uniqueness of $\AdS^3$ manifolds with smooth, space-like boundary
having a given pair of constant curvature metrics as the induced metric on
the boundary, and the analogy with the corresponding hyperbolic situation 
suggests that it might be true.

In the limit case of earthquakes, one can define a similar notion of symmetry. 
Given a fixed $h_0\in \cT$, we can define the (left) earthquake symmetry $\Sy_{h_0}$
as follows. For any $h\in \cT$, there is by Thurston's Earthquake Theorem (see \cite{kerckhoff,mess})
a unique $\lambda\in \cML$ such that $\Earth(h_0,\lambda)=h$, and we define $\Sy_{h_0}(h)=\Earth(h_0,-\lambda)$. 
One can then ask whether, for $h_+, h_-\in \cT$, the composition $\Sy_{h_+}\circ \Sy_{h_-}$ has a unique
fixed point. A positive answer would be equivalent to a proof of a conjecture of Mess \cite{mess} on 
the existence and uniqueness of a MGH AdS manifold for which the induced metric on the boundary
of the convex core is a prescribed pair of hyperbolic metrics. We leave details on this to the reader.

\subsection{Smooth grafting}

In Section \ref{sc:sgr} we turn to the smooth grafting map, defined in \cite{cyclic}
and recalled in Section \ref{ssc:smooth-grafting}, 
which is a smoother analog of the grafting map.
We have the following partial extension/analog of a result of Scannell and Wolf 
\cite{scannell-wolf} and of a result of Dumas and Wolf \cite{dumas-wolf}.

\begin{theorem} \label{tm:sgr}
Let $s>0$ and $h,h^\dual\in \cT$. The maps $sgr'_s(h,\bullet):\cT\rightarrow \cT$
and $sgr'_s(\bullet,h^\dual):\cT\rightarrow \cT$ are proper surjective maps.
\end{theorem}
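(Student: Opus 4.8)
The plan is to prove properness first, from the energy function $F$, and then to deduce surjectivity by a degree computation, using the landslide analogue of property (5), namely $sgr'_s=\Pi\circ SGr'_s$, together with the structure of $\Pi:\cCP\to\cT$ as an affine bundle with fibres the spaces of holomorphic quadratic differentials (real dimension $6g-6$). Since $\cT\cong\R^{6g-6}$ is an orientable manifold of the correct dimension, once $sgr'_s(h,\bullet)$ and $sgr'_s(\bullet,h^\dual)$ are known to be proper they are closed maps to which a mapping degree is attached, and surjectivity will follow from nonvanishing of that degree.

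For properness the key input is Proposition \ref{prop:energy}: $F(\bullet,h^\dual)$, and by the symmetry $F(h,h^\dual)=F(h^\dual,h)$ also $F(h,\bullet)$, are proper on $\cT$. The mechanism I expect is a comparison estimate bounding the energy of the pair by the geometry of its smooth grafting, via the relation $2F(h,h^\dual)=\Ene(c,h)$ with $c$ the center: concretely, one shows that if $sgr'_s(h,h^\dual)$ stays in a fixed compact $K\subset\cT$ then $F(h,h^\dual)$ stays bounded, the bound depending only on $K$, $s$ and the fixed metric. Granting this, if $h^\dual_n\to\infty$ then $F(h,h^\dual_n)=F(h^\dual_n,h)\to\infty$ by properness of $F(\bullet,h)$, so $sgr'_s(h,h^\dual_n)$ must leave every compact set; this is properness of $sgr'_s(h,\bullet)$, and the same argument with the factors exchanged handles $sgr'_s(\bullet,h^\dual)$. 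Controlling the center $c$, equivalently the Hopf differential of the associated harmonic map, by the underlying conformal structure of $SGr'_s(h,h^\dual)$ is the genuine analytic content here.

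With properness in hand I would compute the degrees. For $sgr'_s(\bullet,h^\dual)$ I let the grafting parameter $s\to 0^+$: at $s=0$ no grafting is performed and the underlying conformal structure is that of the first factor, so $sgr'_0(\bullet,h^\dual)=\mathrm{id}_\cT$. If the comparison estimate above is uniform for $s\in[0,s_0]$, the family $\{sgr'_s(\bullet,h^\dual)\}_{s\in[0,s_0]}$ is a proper homotopy, whence $\deg\, sgr'_s(\bullet,h^\dual)=\deg\,\mathrm{id}=1$ for all $s>0$ and the map is onto; this is the analogue of the Scannell--Wolf direction \cite{scannell-wolf}. For $sgr'_s(h,\bullet)$ this same deformation collapses, since $sgr'_0(h,\bullet)$ is the constant map $h$ (degree $0$). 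Instead I view $sgr'_s(h,\bullet)$ as $\Pi$ restricted to the slice $SGr'_s(\{h\}\times\cT)\subset\cCP$, so that surjectivity means this $(6g-6)$-dimensional slice meets every fibre $\Pi^{-1}(c_0)$, a transverse intersection of complementary dimension in $\cCP$ whose count is the degree. I would evaluate this degree either by a local analysis at the diagonal, where $sgr'_s(h,h)=h$ lies on the Fuchsian zero-section of $\Pi$ and the linearization can be read off from the variation of the harmonic map (the positivity in Theorem \ref{tm:cvx} being the relevant nondegeneracy), or by degenerating landslides to earthquakes, letting $s\to 0$ while $h^\dual$ escapes to a measured lamination $\lambda$ with $s$ times the strength of $h^\dual$ held finite, so that $sgr'_s(h,\bullet)$ limits to the earthquake grafting $gr(h,\bullet)$, which is a homeomorphism by Dumas--Wolf \cite{dumas-wolf}.

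The hard part is twofold. First is the properness comparison estimate, uniformly as $s\to0$: turning boundedness of the grafted conformal structure into boundedness of $F(h,h^\dual)$ requires real control of the center and the associated harmonic map by the geometry of $SGr'_s(h,h^\dual)$. Second, and more serious, is the degree computation for $sgr'_s(h,\bullet)$ --- the Dumas--Wolf direction --- where the naive $s\to 0$ limit is constant and one must instead extract a nonzero intersection number, either from a delicate local computation at the diagonal or from a non-compact degeneration to earthquake grafting through the Thurston boundary. This is presumably why only surjectivity, and not the full homeomorphism of Scannell--Wolf and Dumas--Wolf, is obtained: injectivity is exactly the information the degree argument cannot see.
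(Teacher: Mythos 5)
Your properness argument is essentially the paper's: the comparison estimate you anticipate is exactly Lemma \ref{lm:energyest}, which gives $E(f;h_c,h)<\tau^{-1}E(f;h^\#,h)$ with $\tau=\tanh(s/2)$ for every smooth map $f$, where $h_c$ is the center metric and $h^\#$ the grafted metric; combined with the properness of the energy functional (Proposition \ref{prop:energy}) and with the fact that divergence of the energy of the harmonic maps $(S,sgr'_s(h,h^\dual_n))\to(S,h)$ forces divergence in $\cT$ (see \cite{wolf:teichmuller}), this yields properness of $sgr'_s(h,\bullet)$ just as you sketch.

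The genuine gap is in your degree computation for $sgr'_s(h,\bullet)$. The degree of a proper map is the \emph{signed count of all preimages} of a regular value, so a linearization at the single point $h^\dual=h$ of the fibre over $[h]$ proves nothing unless you also know that this fibre is exactly $\{[h]\}$: other preimages could contribute local degrees of either sign, and the total could a priori vanish. Supplying this global uniqueness is the heart of the paper's proof (Lemma \ref{lm:unique}), and it is not formal: it follows from Proposition \ref{lm:energyest2}, which shows that if $\hat h$ is the hyperbolic metric uniformizing $sgr'_s(h,h^\dual)$, then $F([\hat h],[h^\dual])\le F([h],[h^\dual])$ with equality if and only if $h=h^\dual$; its proof uses the variational identity $F([h_1],[h_2])=\inf_{[h']\in\cT}\bigl(E([h'],[h_1])+E([h'],[h_2])\bigr)$ from \cite{cyclic}, the inequality $\det(\En+\tau b)\ge(1+\tau)^2$, the Liouville equation for the conformal factor of $\hat h$, and the maximum principle. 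Your proposal contains no substitute for this step, and your fallback --- degenerating $h^\dual$ to a measured lamination so as to invoke Dumas--Wolf \cite{dumas-wolf} --- is not a degree argument: it is a non-compact limit in which the deformation data themselves degenerate, and the properness of such a family (which is what homotopy invariance of the degree requires) is precisely what cannot be assumed there. Two further points. First, the nondegeneracy of the differential at the diagonal is obtained in the paper not from Theorem \ref{tm:cvx} but from the Fischer--Tromba fact that traceless self-adjoint Codazzi operators form a complement of the vertical space in $T_J\mathcal A$, so that $\dot{\wti{J}}=2J\dot{b}$ projects to a nonzero tangent vector of $T_{[J]}\cT$. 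Second, your $s\to 0^+$ homotopy for the second map $sgr'_s(\bullet,h^\dual)$ rests on a uniform-in-$s$ properness that you do not prove and which is delicate, since the available comparison constant $\tau^{-1}=\coth(s/2)$ blows up as $s\to 0$; it is also avoidable, because $sgr'_s(h,h^\dual)$ coincides with the grafting of $h^\dual$ along the Labourie operator $b^\dual=b^{-1}$ of $(h^\dual,h)$ with parameter $\coth(s/2)$, and the fixed-$s$ scheme (singleton fibre over the distinguished point plus invertible differential there) then applies to the second map as well, the estimates behind Lemma \ref{lm:unique} being valid for every positive grafting parameter and not only for $\tau\in(0,1)$.
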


This result can be stated in terms of 3d hyperbolic or de Sitter geometry.
Recall that any hyperbolic end has a unique foliation by constant curvature surfaces
\cite{L5}. The curvature of those surfaces varies between $-1$ and $0$.

\begin{theorem} \label{tm:hyperbolic}
Let $h,h'\in \cT$ and let $K\in (-1,0)$. 
There is a hyperbolic end $\hend$ with conformal metric at infinity $h'$ and such that
the surface $S^\dual$ in $\hend$ with constant curvature $K$ has an induced metric proportional to $h$. 
\end{theorem}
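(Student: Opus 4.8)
The plan is to translate the statement into one about the smooth grafting map $sgr'_s$ and then invoke the surjectivity part of Theorem \ref{tm:sgr}. First I would recall the correspondence, developed in \cite{cyclic}, between smooth grafting and hyperbolic ends carrying a distinguished constant curvature surface. Given a pair $(h,h^\dual)\in\cT\times\cT$ and a parameter $s>0$, the complex projective structure $SGr'_s(h,h^\dual)$ is the holonomy datum of a hyperbolic end $\hend$ whose conformal metric at infinity is precisely the underlying conformal structure $\Pi\circ SGr'_s(h,h^\dual)=sgr'_s(h,h^\dual)$. In this end the first argument $h$ is, up to scale, the induced metric on the distinguished constant curvature surface, while $h^\dual$ encodes the dual data (the suitably rescaled third fundamental form).

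Second, I would pin down the relation between the grafting parameter $s$ and the curvature $K\in(-1,0)$ of the distinguished surface. The Gauss equation forces the shape operator $B$ of a constant curvature $K$ surface in $\hend$ to satisfy $\det B=K+1\in(0,1)$, and rescaling its induced metric by $|K|$ yields a hyperbolic metric (curvature $-1$); tracking this normalization through the definition of $sgr'_s$ gives an explicit monotone bijection $s=s(K)$ between $(0,\infty)$ and $(-1,0)$. I fix $s=s(K)$ for the given value of $K$.

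Third, with $h$ and $s$ now fixed, Theorem \ref{tm:sgr} asserts that $sgr'_s(h,\bullet):\cT\rightarrow\cT$ is surjective. Hence there exists $h^\dual\in\cT$ with $sgr'_s(h,h^\dual)=h'$. Taking $\hend$ to be the hyperbolic end associated to $SGr'_s(h,h^\dual)$ then produces an end whose conformal metric at infinity is $h'$ and whose constant curvature $K$ surface $S^\dual$ has induced metric proportional (by the factor $1/|K|$) to $h$, as required; uniqueness of the constant curvature foliation \cite{L5} ensures that $S^\dual$ is well defined.

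The main obstacle is the first step: establishing the precise dictionary between the abstract smooth grafting construction and the hyperbolic geometry of constant curvature surfaces. Concretely, one must verify that the $\cCP$-structure $SGr'_s(h,h^\dual)$ develops to a genuine hyperbolic end, identify its conformal boundary with $sgr'_s(h,h^\dual)$, and match the induced metric and normalized third fundamental form of the $K$-surface with the input metrics $h$ and $h^\dual$ through the Labourie operator and the Gauss--Codazzi equations. Once this dictionary and the monotone correspondence $s\leftrightarrow K$ are in place, the existence statement follows at once from surjectivity.
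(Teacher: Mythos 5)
Your proposal is correct and follows essentially the same route as the paper: fix $s$ so that $K=-1/\cosh^2(s/2)$, invoke the surjectivity of $sgr'_s(h,\bullet)$ from Theorem \ref{tm:sgr} to find $h^\dual$ with $sgr'_s(h,h^\dual)=h'$, and read off the conclusion from the dictionary between smooth grafting and hyperbolic ends. The ``dictionary'' you flag as the main obstacle is exactly what the paper establishes just before its proof (and in \cite{cyclic}): the equivariant embedding with induced metric $\cosh^2(s/2)\tilde h$ and third fundamental form $\sinh^2(s/2)\tilde h^\dual$ is locally convex by the Gauss formula, so it sits in a hyperbolic end whose conformal structure at infinity is $sgr'_s(h,h^\dual)$.
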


It would be satisfactory to know whether $\hend$ is unique. 

Similarly, any 3-dimensional de Sitter domain of dependence (as defined in \cite{mess}) has
a unique foliation by constant curvature surfaces, which are actually dual to the constant
curvature surfaces in the foliation of the dual hyperbolic end (see \cite{BBZ2}). 

\begin{theorem} \label{tm:deSitter}
Let $h^\dual,h'\in \cT$ and let $K^\dual\in (-\infty,0)$. 
There is a de Sitter domain of dependence $\des$ with conformal metric at infinity $h'$ and such that
the surface $S$ in $\des$ with constant curvature $K^\dual$ has an induced metric homothetic to $h^\dual$. 
\end{theorem}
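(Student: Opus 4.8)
The plan is to deduce the de Sitter statement from Theorem \ref{tm:sgr} together with the projective duality between hyperbolic ends and de Sitter domains of dependence recalled above (and studied in \cite{BBZ2}). Recall that this duality sends a hyperbolic end $\hend$ to a de Sitter domain of dependence $\des$ sharing the same conformal metric at infinity $h'$, and carries the foliation of $\hend$ by constant curvature surfaces to the foliation of $\des$ by constant curvature surfaces: the $K^\dual$-surface $S$ of $\des$ is the dual of the $K$-surface $S^\dual$ of $\hend$, its first fundamental form is the third fundamental form $\III$ of $S^\dual$, and its shape operator is the inverse of that of $S^\dual$. First I would pin down the numerical dictionary between the two curvatures. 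If $B$ denotes the shape operator of the $K$-surface $S^\dual$ in $\hend$, the Gauss equation in $\HH^3$ gives $\det(B)=K+1$, so $\det(B^{-1})=1/(K+1)$; the Gauss equation for a spacelike surface in $\dS^3$ then gives that the dual surface has intrinsic curvature
$$ K^\dual = 1-\frac{1}{K+1}=\frac{K}{K+1}~. $$
As $K$ runs over $(-1,0)$ this expression runs monotonically over $(-\infty,0)$, matching the ranges in Theorems \ref{tm:hyperbolic} and \ref{tm:deSitter}; inverting it yields $K=K^\dual/(1-K^\dual)\in(-1,0)$ for any prescribed $K^\dual\in(-\infty,0)$.

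Second, I would translate the metric condition. Since the first fundamental form of the de Sitter $K^\dual$-surface $S$ is the third fundamental form $\III$ of the hyperbolic $K$-surface $S^\dual$, asking that the induced metric on $S$ be homothetic to the fixed hyperbolic metric $h^\dual$ is the same as asking that the rescaled third fundamental form of $S^\dual$ represent $h^\dual$ in $\cT$. In the correspondence underlying Theorem \ref{tm:hyperbolic} between hyperbolic ends and the smooth grafting map, a hyperbolic end with conformal metric at infinity $h'$ whose $K$-surface has first fundamental form proportional to $h$ and third fundamental form proportional to $h^\dual$ corresponds to the relation $sgr'_s(h,h^\dual)=h'$, for the value of $s$ matching $K$. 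Thus, given $h^\dual,h'$ and $K^\dual$, I would fix the corresponding $K$ and $s$ and invoke the surjectivity of $sgr'_s(\bullet,h^\dual):\cT\to\cT$ from Theorem \ref{tm:sgr} to produce an $h\in\cT$ with $sgr'_s(h,h^\dual)=h'$. This produces a hyperbolic end $\hend$ with conformal metric at infinity $h'$ whose $K$-surface has third fundamental form representing $h^\dual$.

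Finally, I would dualize: the de Sitter domain of dependence $\des$ dual to $\hend$ has conformal metric at infinity $h'$, and its $K^\dual$-surface $S$, being dual to the $K$-surface of $\hend$, has first fundamental form equal to the third fundamental form of that surface, hence homothetic to $h^\dual$. This is exactly the assertion. The main obstacle is not the final assembly but the careful bookkeeping in the two translations: verifying that the duality of \cite{BBZ2} genuinely preserves the conformal metric at infinity and matches the two foliations with the stated curvature and fundamental-form dictionary (so that the homothety constants, which are all we need, come out right), and confirming that the smooth grafting correspondence behind Theorem \ref{tm:hyperbolic} records the third fundamental form in its second variable $h^\dual$ exactly as the first fundamental form is recorded in its first variable $h$. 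Once these identifications are in place, surjectivity of $sgr'_s(\bullet,h^\dual)$ does all the remaining work.
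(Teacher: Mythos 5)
Your proposal is correct and follows essentially the same route as the paper: set $K=K^\dual/(1-K^\dual)\in(-1,0)$, use the surjectivity of $sgr'_s(\bullet,h^\dual)$ from Theorem \ref{tm:sgr} (with $-1/\cosh^2(s/2)=K$) to produce a hyperbolic end with conformal metric at infinity $h'$ whose $K$-surface has third fundamental form homothetic to $h^\dual$, and then pass to the dual de Sitter domain of dependence. Your explicit verification of the curvature dictionary $K^\dual=K/(K+1)$ via the Gauss equations is a detail the paper leaves implicit, but the argument is the same.
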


The proof of those 3-dimensional theorems, from Theorem \ref{tm:sgr}, are in Section \ref{ssc:ends}

\subsection{The smooth grafting map is symplectic}

Consider a measured lamination $\lambda\in \cML$. Its length $\ell_\lambda$ is a smooth function
on $\cT$ and, for each $h\in \cT$, we can consider its differential $d_h\ell_\lambda\in T^*_h\cT$. 
It is well-known that this defines a one-to-one map between $\cT\times \cML$ and $T^*\cT$. 
This has the following counterpart in the context considered here, with $\cML$ replaced by another
copy of $\cT$ and the length function replaced by the function $F$ defined above. 

\begin{prop} \label{pr:parameterization}
For all $h^\dual\in \cT$, let  
$$ \begin{array}{llcl}
  F_{h^\dual}: & \cT & \to & \R \\
 & h & \mapsto & F(h ,h^\dual)~. 
\end{array} $$
The map $d_1F:\cT\times \cT\to T^*\cT$ sending $(h,h^\dual)$ to the differential at $h$ of 
the function $F_{h^\dual}$ is a global diffeomorphism between $\cT\times \cT$ and $T^*\cT$.
\end{prop}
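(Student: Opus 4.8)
The plan is to exploit that $d_1F$ is fibred over the identity of $\cT$: by construction $d_1F(h,h^\dual)=(h,\,d_hF_{h^\dual})$ with $d_hF_{h^\dual}\in T_h^*\cT$, so $d_1F$ is a morphism from the bundle $\cT\times\cT\to\cT$ (first projection) into $T^*\cT\to\cT$ covering $\mathrm{id}_\cT$. Such a morphism is a global diffeomorphism if and only if, for each fixed $h$, the fibre map $\Phi_h:=d_1F(h,\bullet):\cT\to T_h^*\cT$ is a diffeomorphism (in adapted coordinates the differential of $d_1F$ is block triangular with diagonal blocks $\mathrm{Id}$ and $d\Phi_h$). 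Since $T_h^*\cT\cong\R^{6g-6}$ is contractible, hence connected and simply connected, it suffices to prove that each $\Phi_h$ is a proper local diffeomorphism: a proper local diffeomorphism onto a connected, simply connected manifold is a degree-one covering, i.e. a diffeomorphism.

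Properness of $\Phi_h$ will follow directly from the results already stated. By the symmetry $F(h,h^\dual)=F(h^\dual,h)$ one has, in $T_h^*\cT$, the identity $\Phi_h(h^\dual)=d_1F(h,h^\dual)=d_2F(h^\dual,h)$, i.e. $\Phi_h(h^\dual)$ is the Weil--Petersson differential at $h$ of the function $F(h^\dual,\bullet)$. By Theorem \ref{tm:cvx} this function is $2$-strongly convex along Weil--Petersson geodesics (which exist and are unique, by Wolpert's geodesic convexity), and its minimum is attained at $h^\dual$ with value $F(h^\dual,h^\dual)=2\,\mathrm{Area}(h^\dual)=8\pi(g-1)$. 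The Polyak--\L ojasiewicz inequality for a $2$-strongly convex function $f$, namely $\|\Grad f\|^2\geq 4\big(f-\min f\big)$, then yields
\[ \|\Phi_h(h^\dual)\|_{WP}^2=\|\Grad_h F(h^\dual,h)\|^2\geq 4\big(F(h^\dual,h)-8\pi(g-1)\big). \]
Since $F(\bullet,h)$ is proper by Proposition \ref{prop:energy}, the right-hand side tends to $+\infty$ as $h^\dual$ leaves every compact subset of $\cT$; hence $\Phi_h$ is proper.

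It remains to show that $\Phi_h$ is a local diffeomorphism, equivalently that the mixed Weil--Petersson Hessian of $F$ is everywhere nondegenerate; here I would use the harmonic-map description of $F$. Recall from Proposition \ref{prop:energy} that $2F(h,h^\dual)=\Ene(c,h)=\Ene(c,h^\dual)$, where $c$ is the center of $(h,h^\dual)$; moreover $c$ is characterized (Schoen) as the conformal structure for which the harmonic maps onto $(S,h)$ and onto $(S,h^\dual)$ have opposite Hopf differentials, that is, as the critical point in $c'$ of $c'\mapsto \Ene(c',h)+\Ene(c',h^\dual)$. Consequently $4F(h,h^\dual)=\Ene(c,h)+\Ene(c,h^\dual)$ with $c=c(h,h^\dual)$ critical, and the envelope theorem gives
\[ 4\,d_1F(h,h^\dual)=d_h\Ene(c,h), \]
the differential with respect to the target metric of the energy of the harmonic map $(S,c)\to(S,h)$. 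Differentiating in $h^\dual$ factors the mixed Hessian as $4\,\nabla_{h^\dual}d_1F=\big(\partial_c\,d_h\Ene(c,h)\big)\circ D_{h^\dual}c$. The first factor is the mixed second variation of the energy in the domain and target variables; by the standard first-variation formula its nondegeneracy is the same statement as the fact that Wolf's Hopf-differential map $h\mapsto \partial_{c}\Ene(c,h)$ is a local diffeomorphism of $\cT$ onto $T_c^*\cT$, which is Wolf's theorem. The second factor $D_{h^\dual}c$ is invertible by the implicit function theorem applied to $\partial_{c'}\big(\Ene(c',h)+\Ene(c',h^\dual)\big)=0$, using the nondegeneracy of the same mixed second variation (for the $h^\dual$-term) together with the nondegeneracy of the critical point $c$. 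Thus the mixed Hessian is nondegenerate and $\Phi_h$ is a local diffeomorphism, which completes the argument.

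The main obstacle is precisely this last step: whereas properness is an essentially formal consequence of the convexity Theorem \ref{tm:cvx} and the properness in Proposition \ref{prop:energy}, the local-diffeomorphism property (nondegeneracy of the mixed Hessian) genuinely requires the harmonic-map input. The two delicate points are the clean identification $4\,d_1F=d_h\Ene(c,\bullet)$ through the envelope theorem and, above all, deducing the nondegeneracy of the mixed second variation of the energy from Wolf's theorem, together with the verification that the center $c(h,h^\dual)$ depends on $h^\dual$ as a diffeomorphism.
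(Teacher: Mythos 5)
Your fibrewise reduction and your properness argument are sound, and the properness route is genuinely different from the paper's: in Lemma \ref{lm:dFproper} the paper degenerates $h^\dual_n$ to a measured lamination and uses the $C^\infty$-convergence $\th_n F_{h^\dual_n}\to\ell_\lambda$ on compact sets to see that the differentials blow up, whereas you get the same conclusion from the uniform bound $\Hess(F(h^\dual,\bullet))\geq 2g_{WP}$ of Theorem \ref{tm:cvx}, the properness of $F(\bullet,h)$ from Proposition \ref{prop:energy}, and the gradient inequality $\|\Grad f\|^2\geq 4(f-\min f)$ for $2$-strongly convex functions. Two ingredients should be made explicit there: Wolpert's theorem that any two points of $\cT$ are joined by a WP geodesic (the WP metric is incomplete, so this is not free), and the fact that $h^\dual$ is indeed the critical point, hence the minimum, of $F(h^\dual,\bullet)$ with value $8\pi(g-1)$; the latter follows from Proposition \ref{prop:1der}, since at $h=h^\dual$ one has $b=\En$ and $\tr\dot\alpha=0$, so $\dot F=0$. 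Both are easily supplied.

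The genuine gap is in the local-diffeomorphism step. The envelope identity $4\,d_1F(h,h^\dual)=\partial_h\Ene(c,h)$ and the factorization $4\,\partial_{h^\dual}(d_1F)=\bigl(\partial_c\partial_h\Ene(c,h)\bigr)\circ D_{h^\dual}c$ are fine (the center depends smoothly on the pair because the minimal Lagrangian map does), and the invertibility of $\partial_c\partial_h\Ene(c,h)$ via the first-variation formula $d_c\Ene(\bullet,h)=\mathrm{const}\cdot\Re\,\qd(c,h)$ and the Sampson--Wolf theorem is also fine. But everything then hinges on the invertibility of $D_{h^\dual}c$, which you derive from the implicit function theorem applied to $\partial_{c'}\bigl(\Ene(c',h)+\Ene(c',h^\dual)\bigr)=0$; that application requires the Hessian of $\Ene(\bullet,h)+\Ene(\bullet,h^\dual)$ at its minimum $c$ to be nondegenerate. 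You assert this (``the nondegeneracy of the critical point $c$'') but never prove it, and it is not available: neither this paper nor the results it cites establish it, uniqueness of the minimum (Lemma \ref{lemma:Ext}) does not imply it (think of $x^4$), and strict plurisubharmonicity of the energy in the source variable does not either, since at a critical point a positive Levi form coexists happily with a degenerate real Hessian. Worse, the assertion is \emph{equivalent} to what you are proving: differentiating the criticality equation in $h^\dual$ gives $\Hess_c\bigl(\Ene(\bullet,h)+\Ene(\bullet,h^\dual)\bigr)\circ D_{h^\dual}c=-\partial_{h^\dual}\partial_c\Ene(c,h^\dual)$, whose right-hand side is invertible by Wolf, so nondegeneracy of that Hessian, invertibility of $D_{h^\dual}c$, and invertibility of the mixed Hessian of $F$ are all the same statement. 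Your argument is therefore circular at its crucial point. The paper avoids this entirely by a symplectic mechanism: from $d\eta_\infty+2d\eta=0$ and $d\eta_\infty=-(\Sch\circ\partial_\infty)^*\omega_{can}$, with $\Sch$ and $\partial_\infty$ diffeomorphisms, it deduces that $(d_1F\circ\kappa_s)^*\omega_{can}$ is nondegenerate, hence $d_1F$ has maximal rank everywhere (Lemma \ref{lm:dFinjective}). To repair your proof you would need an independent second-variation argument for the energy in the source variable; as written, this step fails.
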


Coming back to the familiar setting of measured lamination, we can consider the 
grafting map $Gr:\cT\times \cML\rightarrow \cCP$, which is known from Thurston's work
to be a homeomorphism (see \cite{kamishima-tan}). 
Composing with the inverse of the map $d\ell:\cT\times \cML\rightarrow
T^*\cT$, we obtain a map $Gr\circ (d\ell)^{-1}:T^*\cT\rightarrow \cCP$. 

Given a complex projective structure $\Xi$ on a surface, one can consider the underlying 
complex structure, say $c$. Riemann uniformization produces another $\C \PP^1$-structure $\Xi_F$
on $S$ which is Fuchsian with underlying complex structure $c$. We can then consider the
Schwarzian derivative of the identity map from $(S,\Xi_F)$ to $(S,\Xi)$, it is a holomorphic
quadratic differential $\qd$ for $c$. Its real part can therefore be considered as a vector in 
$T^*_c\cT$. This classical construction defines a map $\Sch:\cCP\to T^*\cT$. It was proved by Kawai
\cite{kawai} that this map is symplectic (up to a factor), that is, the pull-back by $\Sch$ of the
(real) cotangent symplectic structure on $T^*\cT$ is a multiple of the real part of the Goldman
symplectic form $\omega_G$ on $\cCP$.

We can now consider the map $\Sch\circ Gr\circ (d\ell)^{-1}:T^*\cT\rightarrow T^*\cT$.
It is proved in \cite{cp}
that this map is $C^1$, and that it is symplectic (up to a fixed factor) with respect to the cotangent symplectic
form on $T^*\cT$. Here we denote by $\omega_{can}$ the (real) cotangent symplectic form on $T^*\cT$.

Now recall from \cite{cyclic} the definition of the smooth grafting map 
$SGr':\R\times \cT\times \cT\to \cCP$. For $s\geq 0$ and $h,h^\dual\in \cT$, there is 
a unique equivariant convex immersion of the universal cover $\St$ of $S$ into $\Hyp^3$ with induced metric $\cosh^2(s/2)h$
and third fundamental form $\sinh^2(s/2)h^\dual$. This equivariant immersion defines on $S$
a complex projective structure, obtained by pulling back on the image surface the complex
projective structure at infinity by the ``Gauss map'' sending a point $x$ of the image to the
endpoint at infinity of the geodesic ray from $x$ orthogonal to the surface.
This complex projective structure is the image $SGr'_s(h,h^\dual)$. 

\begin{theorem} \label{tm:sgr-symplectic}
For all $s\in \R$, the composition map $\Sch\circ SGr'_s\circ (dF_1)^{-1}:T^*\cT\to T^*\cT$ is symplectic up to a constant
factor depending on $s$: 
$$ (\Sch \circ SGr'_s\circ (dF_1)^{-1})^*\omega_{can} = \sinh(s) \omega_{can}~. $$
\end{theorem}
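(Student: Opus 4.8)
The plan is to reduce Theorem~\ref{tm:sgr-symplectic} to the known symplectic statement for the classical grafting map, namely that $\Sch\circ Gr\circ (d\ell)^{-1}:T^*\cT\to T^*\cT$ is symplectic up to a factor (as proved in \cite{cp}), together with a precise understanding of how $SGr'_s$ degenerates to $Gr$ and of the scaling behavior in the parameter $s$. The key structural input is the definition of $SGr'_s$ via the equivariant convex immersion into $\Hyp^3$ with induced metric $\cosh^2(s/2)h$ and third fundamental form $\sinh^2(s/2)h^\dual$: the factors $\cosh^2(s/2)$ and $\sinh^2(s/2)$ are the source of the $\sinh(s)$ on the right-hand side, since $2\sinh(s/2)\cosh(s/2)=\sinh(s)$ is exactly the geometric mean coefficient that appears when one differentiates the Gauss map. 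First I would fix $s$ and identify, using Proposition~\ref{pr:parameterization}, the map $(dF_1)^{-1}$ so that the domain $T^*\cT$ is coordinatized by $(h,h^\dual)\in\cT\times\cT$; this turns the statement into the assertion that the two-form $(\Sch\circ SGr'_s)^*\omega_{can}$ on $\cT\times\cT$ equals $\sinh(s)$ times the two-form $(dF_1)^*\omega_{can}$.

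Next I would compute $(dF_1)^*\omega_{can}$ on $\cT\times\cT$ in terms of the Weil--Petersson data. By Theorem~\ref{tm:hamiltonian} the landslide flow is Hamiltonian for $\frac14 F$ with respect to $\omega_{WP,1}+\omega_{WP,2}$, and this should let me express the pullback of the canonical symplectic form under $d_1F$ intrinsically; the expected outcome is that $(dF_1)^*\omega_{can}$ is a fixed (s-independent) symplectic form on $\cT\times\cT$ built from the Weil--Petersson forms on the two factors. Then the core computation is to evaluate $(\Sch\circ SGr'_s)^*\omega_{can}$ directly. Here I would use the three-dimensional description: the immersed surface in $\Hyp^3$ carries an induced metric $I=\cosh^2(s/2)h$, second fundamental form $\II$, and third fundamental form $\III=\sinh^2(s/2)h^\dual$, and the projective structure $SGr'_s(h,h^\dual)$ is read off at infinity via the normal Gauss map. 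The Schwarzian $\Sch$ of this projective structure relative to the Fuchsian one is a holomorphic quadratic differential whose real part, as a cotangent vector, can be computed from the shape operator of the surface. The plan is to express the variation of this quadratic differential under a first-order deformation $(\dot h,\dot h^\dual)$ using the Codazzi equation $d^\nabla b=0$ and the self-adjointness of the Labourie operator $b$, and to read $\omega_{can}$ as the canonical pairing between the base variation (a conformal structure variation) and the fiber variation (a quadratic differential).

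The main obstacle, which I expect to occupy most of the work, is controlling how the factor $\sinh(s)$ emerges uniformly and how the $s$-dependence of the immersion data feeds through the Schwarzian at infinity. Concretely, one must show that the $\cCP$-structure $SGr'_s(h,h^\dual)$ and its first variation depend on $s$ only through the rescalings $\cosh^2(s/2)$ and $\sinh^2(s/2)$ in such a way that the pullback two-form scales by $2\sinh(s/2)\cosh(s/2)$. A clean way to organize this is to relate $SGr'_s$ to the classical $Gr$ in the limit, or better, to use the explicit relationship between the Gauss map at infinity for the equivariant immersion and the developing map of the projective structure, so that the shape operator enters with coefficient $\tanh(s/2)$ or $\coth(s/2)$ depending on the normalization. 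The delicate point is that $\Sch$ is only $C^1$ and $\cML$ (or here the second $\cT$ factor through the $\III$ data) does not obviously carry a smooth structure adapted to this computation; I would therefore work on the smooth side afforded by the second $\cT$ factor and invoke the $C^1$ regularity established in \cite{cp}, verifying that the symplectic identity, being a closed condition, propagates from an open dense smooth locus to all of $T^*\cT$ by continuity. Once the scaling coefficient is pinned down on a single tangent bivector where the computation is explicit (for instance at a point where $h=h^\dual$, so that $b=\mathrm{Id}$ and the surface is umbilic), naturality and the symplectic (hence nondegenerate, rank-fixing) character of both forms should force the identity $(\Sch\circ SGr'_s\circ(dF_1)^{-1})^*\omega_{can}=\sinh(s)\,\omega_{can}$ everywhere.
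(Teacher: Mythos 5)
Your proposal contains two genuine gaps, either of which is fatal. The first is the closing step: you propose to pin down the constant on a single tangent bivector at an umbilic point ($h=h^\dual$, $b=\En$) and then claim that ``naturality and the symplectic (hence nondegenerate, rank-fixing) character of both forms'' forces the identity everywhere. No such principle exists: two closed nondegenerate $2$-forms that are proportional at one point need not be proportional globally with the same constant, and there is no transitive symmetry acting on $\cT\times\cT$ (the mapping class group is far from transitive) that could propagate a pointwise computation. Since the entire content of the theorem is the global equality of two $2$-forms, this step assumes what is to be proved. The second gap is the claim that Theorem \ref{tm:hamiltonian} lets you compute $(d_1F)^*\omega_{can}$ as a fixed, $s$-independent form ``built from the Weil--Petersson forms.'' The Hamiltonian property concerns a single vector field (the symplectic gradient of $\frac14 F$), whereas the pull-back of $\omega_{can}$ under $d_1F$ involves the full Hessian of $F$ in all directions; it is neither computed this way in the paper nor needed. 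Relatedly, your fallback of reducing to the result of \cite{cp} for $Gr$ runs in the wrong direction: the measured-lamination picture is a degenerate boundary limit of the smooth one, so no continuity or density argument transfers the identity from $Gr$ back to $SGr'_s$; and the $C^1$-regularity worries you import from \cite{cp} are beside the point here, where all the maps involved are smooth.

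What is missing is the mechanism the paper actually uses: a generating-function argument via renormalized volume. One defines the $W$-volume of the region of the hyperbolic end between the constant-curvature surface $S$ (with $I=\cosh^2(s/2)h$, $\II=\cosh(s/2)\sinh(s/2)\,h(b\bullet,\bullet)$) and an equidistant foliation near infinity normalized so that $I_\infty$ is hyperbolic, obtaining the quantity $\cW$. A Schl\"afli-type variational formula (Propositions \ref{pr:schlafli}, \ref{pr:schlafli*} and \ref{pr:cW}) gives $d\cW=-\frac14\eta_\infty-\frac12\eta$ on the space of hyperbolic ends, where by \cite[Lemma 8.3]{volume} the form $\eta_\infty$ is minus the pull-back of the Liouville form under $\Sch\circ\partial_\infty$, and where, by Lemma \ref{lm:beta} (which identifies $d_1F(h,h^\dual)$ with the covector $\hd\mapsto-\frac12\int_S\langle\hd,\,h(b\bullet,\bullet)-\tr(b)h\rangle_h\,\da_h$) combined with the homothety factors above --- this is exactly where $\sinh(s)=2\sinh(s/2)\cosh(s/2)$ enters --- the form $\eta$ equals $\sinh(s)$ times the pull-back of the Liouville form under $d_1F\circ\kappa_s$. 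Applying $d$ and using $d^2\cW=0$ then yields the global identity between the two pull-backs of $\omega_{can}$. Your intuition that the factor $\sinh(s)$ comes from the homotheties of $I$ and $\III$ is correct, but without the exactness relation supplied by $\cW$ it cannot be converted into an identity of $2$-forms.
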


The proof uses a variant of the notion of renormalized volume, generalizing the definition introduced for a 
similar purpose in \cite{cp}.

\subsection{Extension at the boundary} \label{ssc:extension}

In order to understand how the landslide flow could be
extended to the boundary, let's recall the following result by Wolf.

\begin{prop}[\cite{wolf:teichmuller}]
Let the center $c$ stay fixed and suppose that $\th_n h_n\rightarrow \lambda$.
Then $\th_n h_n^{\dual}\rightarrow \mu$, with the property that the
unique holomorphic quadratic differential $\qd$ with horizontal and vertical laminations
$\lambda$ and $\mu$ has $c$ as underlying complex structure.
\end{prop}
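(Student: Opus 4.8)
The plan is to transport the statement into Wolf's parameterization of $\cT$ by holomorphic quadratic differentials over the fixed base $(S,c)$, where the whole content reduces to a statement about degenerating Hopf differentials. Recall that for a fixed conformal structure $c$ the map sending a hyperbolic metric $g$ to the Hopf differential of the unique harmonic map $(S,c)\to (S,g)$ is a homeomorphism $\cT\to QD(c)$ onto the space of holomorphic quadratic differentials. So the first step is to record how the hypothesis ``$c$ is the center of $(h_n,h_n^\dual)$'' constrains the two associated Hopf differentials. The harmonic-map characterization of minimal Lagrangian maps says precisely that $m_n:(S,h_n)\to(S,h_n^\dual)$ is minimal Lagrangian with center $c$ exactly when $m_n=u_n^\dual\circ u_n^{-1}$, where $u_n:(S,c)\to(S,h_n)$ and $u_n^\dual:(S,c)\to(S,h_n^\dual)$ are harmonic with \emph{opposite} Hopf differentials (this is also the maximal-surface picture in $\AdS^3$, whose two Gauss maps carry shape operators $B$ and $-B$). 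Thus, writing $\qd_n$ for the Hopf differential attached to $h_n$, the one attached to $h_n^\dual$ is $-\qd_n$. One must of course check that the center as defined in the paper (the conformal class of $h+m^*h^\dual$) coincides with this harmonic-maps center.

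The second step is to feed the hypothesis $\th_n h_n\to\lambda$ into Wolf's asymptotic analysis of the harmonic-maps compactification \cite{wolf:teichmuller}. Along a sequence $h_n\to\infty$ in $\cT$, Wolf shows that the suitably rescaled metrics converge, in the sense of rescaled length spectra (equivalently, Gromov--Hausdorff convergence to the dual $\R$-tree), to the horizontal measured foliation of the limit of the normalized Hopf differentials $\qd_n/\|\qd_n\|$; conversely, the limiting lamination pins down the projective class of the limiting quadratic differential. Applying this to $h_n$, the hypothesis $\th_n h_n\to\lambda$ forces $\qd_n/\|\qd_n\|\to \qd/\|\qd\|$ for a holomorphic quadratic differential $\qd$ on $(S,c)$ whose horizontal foliation is $\lambda$, where $\th_n$ is the scale matching $\|\qd_n\|^{-1/2}$.

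The third step reads off the conclusion for $h_n^\dual$. Since its Hopf differential is $-\qd_n$ and $-\qd_n/\|\qd_n\|\to -\qd/\|\qd\|$, Wolf's result applied to the sequence $h_n^\dual$ gives that $\th_n h_n^\dual$ converges to the horizontal foliation of $-\qd$. But multiplication by $-1=e^{i\pi}$ rotates the trajectory field by $\pi/2$, so the horizontal foliation of $-\qd$ is exactly the vertical foliation of $\qd$; call it $\mu$. Hence $\th_n h_n^\dual\to\mu$, and by construction $\qd$ is a holomorphic quadratic differential on $(S,c)$ with horizontal foliation $\lambda$ and vertical foliation $\mu$. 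Uniqueness of $\qd$ among quadratic differentials on $(S,c)$ with prescribed horizontal foliation $\lambda$ is the Hubbard--Masur theorem, and then $\mu$ is determined; this is the asserted ``unique holomorphic quadratic differential $\qd$ with horizontal and vertical laminations $\lambda$ and $\mu$'' having $c$ as underlying complex structure.

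The main obstacle I expect is not new analysis but a careful matching of conventions. One must verify that the mode of convergence ``$\th_n h_n\to\lambda$'' used in the statement is the one for which Wolf's estimates are available (convergence of $\th_n$-rescaled length functions to $i(\,\cdot\,,\lambda)$, equivalently convergence to the dual tree), and fix the normalizing scale $\th_n$ accordingly; and one must pin down, with correct signs, both the opposite-Hopf-differential characterization of the center and the canonical identification of measured foliations with measured laminations on $S$, so that the horizontal/vertical roles in the final $\qd$ are exactly those of the statement. Once these bookkeeping points are settled, the argument is a direct transcription of Wolf's theory.
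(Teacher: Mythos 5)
Your proposal is correct and follows essentially the same route as the paper, which treats this statement exactly via Wolf's theory: the opposite-Hopf-differential characterization of the center, the Sampson--Wolf parameterization $\SW_c$ and its boundary extension $\ol{\SW}_c$, and Hubbard--Masur for uniqueness (this is precisely the machinery of Section 8, culminating in the map $\what{\SW}$ with $\what{\SW}(c,\qd,-\infty)=(\cF_-(\qd),\cF_+(\qd),-\infty)$). The only discrepancy is the sign convention --- in the paper's normalization the rescaled metrics limit to the \emph{vertical} foliation of the limiting Hopf differential, not the horizontal one --- but as you anticipate this is harmless, since it merely replaces $\qd$ by $-\qd$ and leaves the stated conclusion unchanged.
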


If we consider the space $\cDY=\cT\times \cT\times \R_{<0}$ 
of couples of metrics $(h,h^{\dual})$ with the same negative
constant curvature $K$ (up to isotopy), then this space can be bordified as $\ol{\cDY}$ by adding
the space $\cFML\subset\cML\times \cML$ of filling couples of measured laminations correspondingly to $K=-\infty$.

\begin{prop}\label{prop:action}
Identifying $\cFML$ with the space $\cQ$
of holomorphic quadratic differentials on $S$,
the action on $\cDY$ limits on $\cFML=\cQ$ to the action
$(\theta,\qd)\mapsto e^{i\theta}\qd$.
\end{prop}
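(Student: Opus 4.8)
The plan is to understand the degeneration of the landslide action on $\cDY$ as $K\to-\infty$ in terms of the Hopf-differential/quadratic-differential picture already available from Wolf's result. Recall that a point of $\cDY$ is a triple $(h,h^\dual,K)$ where $h$ and $h^\dual$ are hyperbolic metrics rescaled to have the common constant curvature $K<0$; equivalently, writing $h=|K|^{-1}\hat h$ with $\hat h$ hyperbolic (curvature $-1$), the data is a pair of curvature $-1$ metrics $(\hat h,\hat h^\dual)$ together with the scale $|K|$. The $S^1$-action $\Land_{e^{i\theta}}$ rotates the Labourie operator $b$ of the pair by the angle $\theta$, i.e. it replaces the self-adjoint, determinant-one, Codazzi tensor $b$ by $\cos\theta\cdot\En+\sin\theta\cdot Jb$ in the appropriate normalization (this is exactly the description of landslides from \cite{cyclic}, which I may assume). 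The center $c$ is preserved along the orbit, and the whole family $(h,h^\dual)$ with fixed center is parametrized by a single holomorphic quadratic differential on $(S,c)$, namely (a multiple of) the Hopf differential of the harmonic map.

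First I would fix the identification $\cFML\cong\cQ$ via the horizontal/vertical foliation correspondence: a filling pair $(\lambda,\mu)\in\cFML$ corresponds to the unique holomorphic quadratic differential $\qd$ whose vertical measured foliation is $\lambda$ and horizontal measured foliation is $\mu$, with underlying complex structure the common center $c$. This is precisely the correspondence appearing in Wolf's Proposition above, where $\th_n h_n\to\lambda$ and $\th_n h_n^\dual\to\mu$. Next I would express the landslide action in the harmonic-map coordinate. The key computation is that, for the harmonic map from $(S,c)$ to a hyperbolic target, the Hopf differential $\Phi$ of the map is the holomorphic quadratic differential controlling the pair, and that applying $\Land_{e^{i\theta}}$ (with $c$ fixed) multiplies this Hopf differential by the unimodular factor $e^{i\theta}$. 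Concretely, the rotation of the Codazzi tensor $b\mapsto \cos\theta\cdot b+\sin\theta\cdot Jb$ acts on the traceless part of $b$—which is exactly the real part of $\Phi$—by rotation by $\theta$ in the complex line, i.e. as multiplication by $e^{i\theta}$ on $\Phi\in\Kbun{}^{\otimes 2}$.

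With these two identifications in hand, the statement becomes the assertion that this $e^{i\theta}$-multiplication on Hopf differentials is compatible, under the rescaling limit $\th_n h_n\to\lambda$, with the $e^{i\theta}$-multiplication on $\qd=\qdd\in\cQ$. I would carry this out by taking a sequence $(h_n,h_n^\dual,K_n)$ with $K_n\to-\infty$, fixed center $c$, and $\th_n h_n\to\lambda$; by Wolf's proposition the associated quadratic differentials $\Phi_n$ (suitably normalized by $\th_n$) converge to the $\qd$ attached to $(\lambda,\mu)$. Since $\Land_{e^{i\theta}}$ multiplies each $\Phi_n$ by $e^{i\theta}$ and the convergence $\Phi_n\to\qd$ is in the space $\cQ$ of holomorphic quadratic differentials (a finite-dimensional complex vector space, on which scalar multiplication is continuous), the limit of the rotated family is $e^{i\theta}\qd$, which is what the boundary action is required to be.

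The main obstacle I anticipate is the interchange of the two limits: showing that the boundary action is well-defined as the genuine limit of the interior actions, rather than merely formally multiplying differentials by $e^{i\theta}$. One must check that rotating $b$ and then degenerating $K\to-\infty$ yields the same boundary point as degenerating first—i.e. that the horizontal and vertical foliations of $e^{i\theta}\Phi_n$ converge to those of $e^{i\theta}\qd$—which requires that Wolf's convergence $\th_n h_n\to\lambda,\ \th_n h_n^\dual\to\mu$ be uniform enough in the $S^1$-parameter to pass to the limit. This should follow from the continuity of the foliation map $\cQ\to\cML\times\cML$ and the normalization consistency of the rescalings $\th_n$, but the careful verification that the bordification $\ol{\cDY}$ carries a continuous $S^1$-action making the inclusion $\cFML\hookrightarrow\ol{\cDY}$ equivariant is the crux of the argument.
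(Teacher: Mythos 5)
Your first two steps coincide with the paper's: the identification $\cFML\cong\cQ$ via Hubbard--Masur \cite{hubbard-masur:acta79}, and the observation that, with the center fixed, the landslide flow corresponds to multiplication of the Hopf differential by $e^{i\theta}$ (this is essentially immediate from the definition of $\Land_{e^{i\theta}}$ in \cite{cyclic}, as the paper itself notes). The genuine gap is the step you yourself call the crux, and the route you sketch for it does not suffice. Your verification only treats sequences $(h_n,h_n^\dual,K_n)$ with \emph{fixed} center $c$ --- the only setting in which the Wolf proposition you invoke applies. But the bordification $\ol{\cDY}$ is topologized by convergence of the rescaled metrics to a filling pair $(\lambda,\mu)$, with no constraint whatsoever on the centers; proving that the interior action ``limits'' to $e^{i\theta}$-multiplication on $\cFML$ means proving continuity along \emph{all} such sequences, and for these the centers $c_n$ vary. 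Neither the continuity of the foliation map $\cQ\to\cML\times\cML$ nor ``normalization consistency'' controls them: what is needed is that $c_n$ converges to the Hubbard--Masur complex structure underlying $(\lambda,\mu)$, together with convergence of the correspondingly normalized Hopf differentials.

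This missing ingredient is exactly what the paper isolates beforehand as Proposition~\ref{prop:What}: the \emph{center-varying} Sampson--Wolf map $\what{\SW}:\cQ\times[-\infty,0)\to\ol{\cDY}$ is a homeomorphism up to and including the boundary. Granting that, Proposition~\ref{prop:action} becomes a one-line corollary --- $\what{\SW}$ conjugates the landslide flow on $\cDY$ with $(c,\qd,K)\mapsto(c,e^{i\theta}\qd,K)$, which manifestly extends to $K=-\infty$ --- so no interchange of limits ever has to be discussed. The analytic work sits in the proof of Proposition~\ref{prop:What}: if $(h_n,h_n^\dual,K_n)$ converges in $\ol{\cDY}$ to $(\lambda,\mu)\in\cFML$, then the centers $c_n$, being the unique minima of $E(\bullet,|K_n|^{-1}h_n)+E(\bullet,|K_n|^{-1}h_n^\dual)$, converge to the unique minimum of $\Extr_\lambda+\Extr_\mu$; this uses Wolf's $C^\infty$-convergence on compacta of energies to extremal lengths \cite{wolf:R-trees}, the properness of $\Extr_\lambda+\Extr_\mu$ (via Kerckhoff's properness of $\ell_\lambda+\ell_\mu$ \cite{kerckhoff:minima}), and Gardiner's formula \cite{gardiner:acta84} to identify that minimum with the Hubbard--Masur structure of $(\lambda,\mu)$ (Lemma~\ref{lemma:Ext}). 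To complete your proposal you would need to prove or quote this center-convergence statement; as written, what you have established is the equivariance of the fixed-center slices, which is strictly weaker than the proposition.
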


Notice that the function $F$ extends to $\cDY$ as
$F(h,h^\dual,K):=K^{-2}F(h,h^\dual)$.
As the Weil-Petersson symplectic form $\omega_{WP}$
limits to Thurston's symplectic form $\omega_{Th}$ on $\cML$,
Theorem~\ref{tm:hamiltonian} admits the following extension.

\begin{prop}\label{prop:ham-ext}
The function $F:\cDY\rar\R_+$ extends to $\partial{\cDY}=\cFML$ as
$F(\lambda,\mu)=i(\lambda,\mu)$.
Moreover, the extension of the landslide flow on $\partial\cDY=\cFML$ is
Hamiltonian for the symplectic form $\omega_{Th,1}+\omega_{Th,2}$ with respect to $\frac{1}{4}F$.
\end{prop}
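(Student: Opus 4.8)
The plan is to deduce Proposition~\ref{prop:ham-ext} from the interior Hamiltonian statement (Theorem~\ref{tm:hamiltonian}) by passing to the limit $K\to-\infty$ along the bordification $\ol{\cDY}$, using the three facts already assembled in this subsection: Wolf's degeneration result, the identification $\cFML\cong\cQ$ of Proposition~\ref{prop:action}, and the limiting behaviour of the Weil--Petersson form. First I would verify the boundary value of the Hamiltonian. On $\cDY$ the renormalized function is $F(h,h^\dual,K)=K^{-2}F(h,h^\dual)$, and by Proposition~\ref{prop:energy} this equals $\frac12 K^{-2}\Ene(c,h)$. Rescaling the metrics so that $\th_n h_n\to\lambda$ and $\th_n h_n^\dual\to\mu$ (with $\th_n\to 0$ corresponding to $K\to-\infty$), Wolf's proposition says the pair $(\lambda,\mu)$ is exactly the horizontal/vertical pair of a holomorphic quadratic differential $\qd$ with underlying conformal structure $c$. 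The standard fact that the energy of the harmonic map degenerates, after renormalization, to the $L^1$-norm of $\qd$, together with the classical identity relating that norm to the geometric intersection number, should give $\lim K^{-2}F = \tfrac12\, i(\lambda,\mu)$ up to the normalizing constant; matching conventions yields $F(\lambda,\mu)=i(\lambda,\mu)$ on $\cFML$. I would state this continuity of $F$ up to the boundary as the first half of the proof.

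For the Hamiltonian assertion, the strategy is that ``Hamiltonian flow'' is preserved under a smooth limit provided both the symplectic form and the Hamiltonian function converge, and the flow converges to the limiting flow. I would argue as follows. By Theorem~\ref{tm:hamiltonian}, on each slice $\cDY$ the landslide flow is generated by $\frac14 F$ with respect to $\omega_{WP,1}+\omega_{WP,2}$; this holds uniformly as $K$ varies because the rescaling $K^{-2}$ is absorbed into $F$ and the time reparametrization of the flow. Proposition~\ref{prop:action} identifies the boundary action with the linear rotation $(\theta,\qd)\mapsto e^{i\theta}\qd$ on $\cQ\cong\cFML$, so the boundary flow is genuinely smooth and explicit. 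It then suffices to check that the infinitesimal generator of this rotation is the symplectic gradient of $\frac14 i(\lambda,\mu)$ with respect to $\omega_{Th,1}+\omega_{Th,2}$. The key input here is the limiting statement, recalled just before the proposition, that $\omega_{WP}$ limits to Thurston's symplectic form $\omega_{Th}$ on each $\cML$ factor; combining this with the convergence of $\frac14 F$ to $\frac14 i(\lambda,\mu)$ established above and the continuity of the flow up to $\partial\cDY$, the defining relation $\iota_{X}\omega=d H$ passes to the limit term by term.

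Concretely, I would reduce the verification to a computation on $\cQ$: writing $\qd=\frac12(\lambda+i\mu)$ in the identification of $\cFML$ with $\cQ$, the rotation $\qd\mapsto e^{i\theta}\qd$ has generator $\qd\mapsto i\qd$, which in the $(\lambda,\mu)$ coordinates is the rotation exchanging the two lamination factors with a sign; and one checks directly that this vector field is dual, under $\omega_{Th,1}+\omega_{Th,2}$, to $\frac14 d\, i(\lambda,\mu)$ using the first-variation formula for the intersection number together with Thurston's symplectic pairing. This is a finite, structural check once the identifications are in place. The main obstacle I anticipate is the analytic control of the first half: making the convergence $K^{-2}F\to i(\lambda,\mu)$ and the convergence of the flow uniform enough (and smooth enough in the boundary directions) that $\iota_X\omega=dH$ may legitimately be differentiated and taken to the limit, since $\cFML$ and $\cML$ carry only a piecewise-linear structure and $F$ is a priori only continuous there. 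I expect this to be handled by working in the analytic coordinates on $\cQ$ provided by Proposition~\ref{prop:action}, where all three objects---the form, the function $i(\lambda,\mu)$, and the flow---are smooth, thereby transferring the limit into a setting where the Hamiltonian identity is a genuine smooth equality rather than a limit of distributional ones.
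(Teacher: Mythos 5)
Your first half is essentially the paper's argument: the boundary value $F(\lambda,\mu)=i(\lambda,\mu)$ is obtained exactly as you sketch, from Wolf's degeneration of the renormalized harmonic-map energy together with the identities $F(\lambda,\mu)=2E(c,\lambda)=2E(c,\mu)=\|\qd\|=i(\lambda,\mu)$, where $\qd$ is the Hubbard--Masur differential of the filling pair and $c$ its underlying conformal structure. The broad strategy of your second half --- pass the relation $\iota_X\omega=dH$ to the limit using the convergence of $K^{-2}\omega_{WP}$ to $\omega_{Th}$ and of the renormalized $F$ to $i(\lambda,\mu)$ --- is also the paper's.

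The gap is in how you resolve the regularity problem that you yourself flag at the end. Your proposed fix is to transfer the verification to the analytic coordinates on $\cQ$, ``where all three objects are smooth.'' Two of the three are indeed fine there (the flow is the linear rotation $\qd\mapsto e^{i\theta}\qd$, the function is $\|\qd\|$), but the symplectic form is not: $\omega_{Th,1}+\omega_{Th,2}$ is defined only through train-track charts, and the Hubbard--Masur identification $\cFML\cong\cQ\setminus\{0\}$ is merely a \emph{homeomorphism} between the PL structure and the analytic one, so nothing makes $\omega_{Th}$ a smooth $2$-form in the $\cQ$ coordinates; there is no smooth setting in which your Hamiltonian identity can be checked. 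Relatedly, your concrete computation rests on the false formula $\qd=\frac{1}{2}(\lambda+i\mu)$: the transverse measures are $|\mathrm{Im}\sqrt{\qd}|$ and $|\mathrm{Re}\sqrt{\qd}|$, the correspondence $\qd\leftrightarrow(\lambda,\mu)$ is nonlinear (homogeneous of degree $1/2$), and $\qd\mapsto e^{i\theta}\qd$ does \emph{not} act on $(\lambda,\mu)$ as a linear rotation, so the asserted ``finite, structural check'' cannot be carried out as described. What the paper does instead, and what your argument is missing, is the following chain: (i) work in charts given by pairs of transverse maximal train tracks, where $\omega_{Th,1}+\omega_{Th,2}$ and $dF$ have constant coefficients, so the limiting Hamiltonian flow is well defined despite the absence of a smooth structure; (ii) invoke Papadopoulos--Penner and S\"ozen--Bonahon, which give the continuous extension of $K^{-2}\omega_{WP}$ to $\omega_{Th}$ \emph{only at maximal laminations} --- this restriction is the real content behind the sentence ``$\omega_{WP}$ limits to $\omega_{Th}$'' that you use as a black box; (iii) use real-analyticity of $F$ on $\cT\times\cT$ to extend it as a $C^1$ function at the dense, full-measure subset $\cFML_{max}$ of maximal pairs, the only points of $\cFML$ possessing a tangent space; (iv) deduce almost-everywhere convergence of the generating vector fields $\omega_{WP}^{-1}(\frac{1}{4}dF,-)$, hence locally uniform convergence of the landslide flow to the chart-defined Hamiltonian flow. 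Without (i)--(iv), ``passing $\iota_X\omega=dH$ to the limit term by term'' is not meaningful, since at non-maximal points of $\cFML$ neither $\omega$ nor $dF$ is even defined.
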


\subsection{Constant curvature surfaces in globally hyperbolic $\AdS^3$ manifolds}

Recall that given a globally hyperbolic $\AdS^3$ manifold $\mgh$, the complement of its
convex core has a unique foliation by constant curvature surfaces \cite{BBZ2}. The 
curvature of those surfaces varies between $-\infty$ and $-1$.

\begin{theorem} \label{tm:adsI}
Let $h_+,h_-\in \cT$, and let $K_+,K_-\in (-\infty, -1)$. 
There exists a globally hyperbolic $\AdS^3$ manifold $\mgh$ such that the constant curvature
$K_-$ surface in the past of the convex core has induced metric homothetic to $h_-$,
while the constant curvature $K_+$ metric in the future of the convex core of constant
curvature $K_+$ has induced metric homothetic to $h_+$. If $K_+=-K_-/(K_-+1)$, then $\mgh$
is unique.
\end{theorem}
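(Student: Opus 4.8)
The plan is to reduce Theorem \ref{tm:adsI} to the landslide fixed-point statement of Theorem \ref{tm:fixed}, following the by-now standard dictionary between globally hyperbolic $\AdS^3$ manifolds and pairs of points in $\cT\times\cT$. First I would recall, from the work of Mess and of \cite{BBZ2}, that a maximal globally hyperbolic $\AdS^3$ manifold $\mgh$ is determined by the conjugacy class of its holonomy, which is a pair of points $(\sigma_+,\sigma_-)\in\cT\times\cT$ (the left and right representations into $\PSL(2,\R)$). Under this correspondence, the foliation of the two connected components of the complement of the convex core by constant curvature surfaces is governed exactly by the landslide flow: the induced metric $I_K$ and third fundamental form $\III_K$ on the constant-curvature-$K$ surface, suitably normalized, are of the form $\Land^1_{e^{i\theta(K)}}(\sigma_+,\sigma_-)$ with the angle $\theta$ a fixed monotone function of $K$. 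This is precisely the geometric content that makes the cyclic flow of \cite{cyclic} the "smoother analog" of the earthquake flow in the $\AdS$ setting, and it is the statement I would cite rather than reprove.

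Concretely, I would fix the dictionary $K\mapsto\theta(K)$ on $(-\infty,-1)$ and express the prescription of the induced metrics on the two constant-curvature surfaces as a pair of equations. The surface in the future of the convex core with curvature $K_+$ sees one of the two metrics output by $\Land^1_{e^{i\theta_+}}$, and the past surface with curvature $K_-$ sees one output by $\Land^1_{e^{i\theta_-}}$ (with opposite sign of the angle, reflecting past versus future). The requirement that the induced metrics be homothetic to $h_+$ and $h_-$ respectively, with the holonomy pair being a single $(\sigma_+,\sigma_-)$ common to both foliations, translates—after using that a landslide determines its partner uniquely, as recorded in the definition of $\Sy$ and in \cite[Theorem 1.14]{cyclic}—into the condition that $(\sigma_+,\sigma_-)$ be a fixed point of a composition of two landslide symmetries $\Sy_{e^{i\theta_+},h_+}\circ\Sy_{e^{i\theta_-},h_-}$. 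Theorem \ref{tm:fixed} then provides existence of such a fixed point, hence existence of $\mgh$.

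For the uniqueness clause, I would trace through which angles the two symmetries carry. The constant $K_+=-K_-/(K_-+1)$ is exactly the relation that forces $\theta_++\theta_-=\pi$ under the dictionary $\theta(K)$ (this is a direct computation with the explicit form of $\theta(K)$, which I would carry out once and for all). In that regime Theorem \ref{tm:fixed} gives \emph{uniqueness} of the fixed point, and since the fixed point determines the holonomy pair $(\sigma_+,\sigma_-)$, it determines the maximal globally hyperbolic $\AdS^3$ manifold $\mgh$ uniquely. The homothety factors (as opposed to isometry) are accounted for by the scaling freedom relating the normalized metrics of the landslide flow to the actual induced metrics at a given curvature level, and I would make that rescaling explicit so that "homothetic to $h_\pm$" matches the landslide output rather than requiring equality on the nose.

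The main obstacle I expect is not the fixed-point input—Theorem \ref{tm:fixed} does all the analytic work—but rather pinning down the geometric dictionary with the correct signs and scalings: verifying that the two constant-curvature foliations on the two sides of the convex core are governed by landslides with angles of opposite sign, that the normalization converting $(I_K,\III_K)$ into a landslide pair $\Land^1_{e^{i\theta}}$ is consistent across the two sides, and that the algebraic identity $K_+=-K_-/(K_-+1)$ is genuinely equivalent to $\theta_++\theta_-=\pi$. Getting these conventions exactly right, so that the composition of symmetries is the one appearing in Theorem \ref{tm:fixed} and not its inverse or a conjugate, is where the care is needed; once the correspondence is fixed, existence and the conditional uniqueness follow immediately.
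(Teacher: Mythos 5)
Your proposal has a genuine circularity problem. The dictionary you describe between constant curvature surfaces in MGH $\AdS^3$ manifolds and landslides is correct --- it is exactly Lemma \ref{lm:diagram} of the paper, and your computation that $K_+=-K_-/(K_-+1)$ corresponds to $\theta_++\theta_-=\pi$ under $K_\pm=-1/\cos^2(\theta_\pm/2)$ is also right. But in this paper the logical order is the reverse of the one you propose: Theorem \ref{tm:fixed} is \emph{deduced from} Theorem \ref{tm:adsI} (its proof, at the very end of Section 9, begins by invoking Theorem \ref{tm:adsI} to produce the manifold $\mgh$, and then reads off the fixed point via Lemma \ref{lm:diagram}). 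There is no independent proof of Theorem \ref{tm:fixed} anywhere in the paper, nor can you import one from the earthquake literature without facing the same issue, since the fixed-point theorems there are themselves proved through $\AdS$ geometry. So when you write that ``Theorem \ref{tm:fixed} does all the analytic work,'' you are citing, as an input, a statement that is logically equivalent (through the very dictionary you set up) to the existence claim you are trying to prove. Your argument translates Theorem \ref{tm:adsI} into an equivalent statement and then declares the equivalent statement known; within this paper that is circular, and no actual existence mechanism is supplied.

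The content you are missing is precisely what the paper's proof consists of: (i) the special case $K_+=K_-^\dual$ (Lemma \ref{lm:KK*}), proved \emph{directly} by taking the Labourie operator $b$ of the minimal Lagrangian map between $(S,h_+)$ and $(S,h_-)$, rescaling it to $B=\sqrt{-1-K_+}\,b$ so that the Gauss and Codazzi equations for a space-like surface in $\AdS^3$ hold, and invoking the duality between convex surfaces together with the uniqueness of constant curvature surfaces from \cite{BBZ2}; and (ii) a topological degree argument for the map $\Phi_{K_-,K_+}:\cT\times\cT\to\cT\times\cT$ sending the holonomy pair $(h_l,h_r)$ to the pair of constant curvature metrics $(h_-,h_+)$. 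Properness of $\Phi_{K_-,K_+}$ is established by length-spectrum comparison between the leaves of the constant curvature foliation (Corollary \ref{cr:comparison} and Corollary \ref{cr:compact}), and then invariance of the degree in the parameters $(K_-,K_+)$ reduces the general case to the special case (i), where $\Phi_{K_-,K_-^\dual}$ is a homeomorphism, giving degree $1$ and hence surjectivity. None of this --- neither the explicit construction in the dual case nor the properness/degree argument --- can be replaced by an appeal to Theorem \ref{tm:fixed}, because that theorem sits downstream of the result you are proving.
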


It is tempting to conjecture that $\mgh$ is unique for any value of $K_+$ and $K_-$. 
Actually the analogy with quasifuchsian hyperbolic 3-manifolds indicates that 
Theorem \ref{tm:adsI} could well extend to metrics of non-constant curvature, as
follows (see \cite[\S 3.4]{adsquestions} for more on this). 

\begin{conj} \label{cj:adsI}
Let $h_+, h_-$ be two smooth metrics on $S$ with curvature $K<-1$. There exists a
unique globally hyperbolic $\AdS^3$ manifold 
homeomorphic to $S\times [-1,1]$,
with smooth, space-like and strictly convex
boundary, such that the induced metric on 
$S\times \{ -1\}$ is $h_-$ and the induced metric on $S\times \{ 1\}$ is $h_+$.
\end{conj}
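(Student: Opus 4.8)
The plan is to recast both halves of the conjecture as the single statement that a natural \emph{boundary metric map} is a bijection, and then to combine a degree argument for surjectivity with a variational argument for injectivity. Let $\cN$ denote the space, up to isotopy, of globally hyperbolic $\AdS^3$ manifolds homeomorphic to $S\times[-1,1]$ with smooth, spacelike, strictly convex boundary, and let $\mathrm{Met}_{<-1}(S)$ be the space of smooth metrics on $S$ of curvature $K<-1$. Consider
$$ \Phi:\cN\longrightarrow \mathrm{Met}_{<-1}(S)\times \mathrm{Met}_{<-1}(S),\qquad \mgh\longmapsto (h_-,h_+), $$
sending $\mgh$ to the induced metrics on $S\times\{-1\}$ and $S\times\{1\}$. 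Existence is surjectivity of $\Phi$ and uniqueness is injectivity. By the fundamental theorem of the theory of spacelike surfaces in $\AdS^3$, each boundary component is encoded by its first fundamental form $h_\pm$ together with a shape operator $B_\pm$ that is $h_\pm$-self-adjoint, satisfies the Codazzi equation $d^\nabla B_\pm=0$, and obeys the Gauss equation $\det B_\pm=-1-K_\pm>0$ (the positivity being exactly strict convexity, since $K_\pm<-1$). Via Mess's parametrization \cite{mess} of globally hyperbolic $\AdS^3$ structures by pairs of holonomies in $\cT\times\cT$, solving the problem amounts to finding, for the prescribed $h_\pm$, Codazzi operators $B_\pm$ whose embedding data realize two disjoint convex surfaces bounding a region that contains the convex core, \emph{with the two induced holonomies in $\cT\times\cT$ equal}; this last matching condition is the global constraint coupling the two otherwise independent Codazzi problems.

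For surjectivity I would set up $\Phi$ as a proper Fredholm map of index zero and compute its degree. That the linearization is Fredholm of index zero should follow from the ellipticity of the operator governing isometric deformations of a strictly convex surface, exactly as in the quasifuchsian hyperbolic case. The nonemptiness of the image and a distinguished regular fibre are supplied by Theorem~\ref{tm:adsI}: in the borderline case $K_+=-K_-/(K_-+1)$ the constant-curvature solution is unique, so $\Phi$ is nondegenerate there and the local degree is $\pm1$; since $\mathrm{Met}_{<-1}(S)\times\mathrm{Met}_{<-1}(S)$ is connected, the degree is globally $\pm1$ and $\Phi$ is onto. The genuinely analytic input here is \emph{properness}: one must show that if $(h_-^n,h_+^n)$ converges with curvatures bounded above by some $-1-\varepsilon$, then the manifolds $\mgh_n$ subconverge in $\cN$. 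This requires a priori bounds on the shape operators $B_\pm^n$ and on the width of the region, forcing the boundary surfaces to remain uniformly spacelike, strictly convex, disjoint and at bounded distance from the convex core, so that the left/right holonomies stay in a compact part of $\cT\times\cT$ and the embeddings converge.

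For injectivity I would look for a functional $W$ on $\cN$ — a variant of the total mean curvature, or a dual/renormalized-volume functional generalizing the Hamiltonian $F$ of this paper — whose first variation at fixed $(h_-,h_+)$ recovers the boundary shape operators, so that its critical points are precisely the solutions, and whose second variation is definite. Definiteness would give injectivity of $d\Phi$ (hence the local uniqueness and openness used above) and, propagated along the connectedness of the target, global injectivity. Equivalently, one must prove an infinitesimal rigidity statement: a first-order deformation of $\mgh$ fixing both induced boundary metrics is trivial, which is the $\AdS^3$ analog of the infinitesimal Pogorelov--Weyl rigidity of convex surfaces. This is the exact analog of the mechanism behind Theorem~\ref{tm:adsI}: its uniqueness clause descends from the landslide-symmetry fixed point uniqueness of Theorem~\ref{tm:fixed}, which in turn rests on the strict convexity of $F$ established in Theorem~\ref{tm:cvx}.

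The step I expect to be the \emph{main obstacle} is precisely this global definiteness of the second variation of $W$ (equivalently, showing $\Phi$ has no critical points for all $K<-1$). The variational convexity behind Theorem~\ref{tm:cvx} controls the relevant Hessian only in the borderline regime $\theta_++\theta_-=\pi$, which corresponds to the single relation $K_+=-K_-/(K_-+1)$; for general (and variable) curvature the sign of the second variation is not known, and this is exactly why the statement remains a conjecture here. A secondary, analytic obstacle is the properness estimate of the second paragraph: controlling the principal curvatures and the separation of the two boundary surfaces as the curvature is allowed to approach $-1$ from below is where uniform geometric control is most delicate, and where the analogy with the quasifuchsian setting \cite{L5,adsquestions} would have to be made quantitative.
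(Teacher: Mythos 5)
The statement you were asked to prove is Conjecture \ref{cj:adsI}: the paper does not prove it and explicitly leaves it open. What the paper establishes is only the constant-curvature existence case, via Theorem \ref{tm:adsI} (whose proof is a properness-plus-degree argument for the finite-dimensional maps $\Phi_{K_-,K_+}:\cT\times\cT\to\cT\times\cT$, anchored at the homeomorphism $\Phi_{K_-,K_-^\dual}$ of Lemma \ref{lm:KK*}), and uniqueness only under the relation $K_+=-K_-/(K_-+1)$. Your proposal is, correspondingly, a strategy rather than a proof, and you say so yourself: properness of the boundary-metric map for variable-curvature data and the definiteness of a second variation (equivalently, infinitesimal rigidity of strictly convex space-like boundaries in $\AdS^3$) are precisely the missing ingredients, and neither is supplied by anything in the paper. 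So as a solution to the stated problem the proposal has an unavoidable gap --- which is the honest situation, since the statement is open.

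Beyond that, two specific points where your sketch, as written, would fail even as a reduction. First, the inference ``in the borderline case the constant-curvature solution is unique, so $\Phi$ is nondegenerate there and the local degree is $\pm 1$'' is a non sequitur: uniqueness of the point in a fibre does not imply that the linearization of $\Phi$ is an isomorphism there, and in your setting the source and target are infinite-dimensional, so ordinary topological degree does not apply at all; one would need Smale degree for proper Fredholm maps of index $0$ together with an actual proof of regularity of the distinguished fibre. The paper avoids all of this by working with the finite-dimensional maps $\Phi_{K_-,K_+}$ between copies of $\cT\times\cT$ and deducing degree $1$ from homotopy invariance of the degree of the proper family $\Phi$ over $[K_{min},K_{max}]^2$. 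Second, your attribution of the paper's uniqueness mechanism is backwards: in the paper, the uniqueness in Theorem \ref{tm:fixed} is \emph{deduced from} the uniqueness in Theorem \ref{tm:adsI} (via Lemma \ref{lm:diagram}), and the latter comes from Lemma \ref{lm:KK*}, i.e.\ from the uniqueness of minimal Lagrangian maps and of constant-curvature surfaces in the Barbot--B\'eguin--Zeghib foliation --- not from the Weil--Petersson convexity of $F$ in Theorem \ref{tm:cvx}. So even the borderline uniqueness does not arise from a second-variation argument that one could hope to perturb to variable curvature; producing such a variational mechanism is exactly the open problem.
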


Theorem \ref{tm:adsI} shows that the existence part of this statement holds when
both $h_-$ and $h_+$ have constant curvature.
The analog 
of Conjecture \ref{cj:adsI} 
for quasifuchsian manifolds (and more generally convex co-compact
manifolds) holds, see \cite{hmcb}.

As the limit case of either Conjecture \ref{cj:adsI} or Theorem \ref{tm:adsI} when
the curvature of the metrics $h_-$ and $h_+$ goes to $-1$, we obtain the following conjecture
of Mess.

\begin{conj} \label{cj:mess}
Let $h_-, h_+$ be two hyperbolic metrics on $S$. There is a unique maximal globally hyperbolic
$\AdS^3$ manifold $\mgh$ such that induced metric on the boundary of the convex core of $\mgh$ is
given by $h_-$ and $h_+$.
\end{conj}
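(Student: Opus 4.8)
The plan is to deduce Conjecture~\ref{cj:mess} as a limiting case of Theorem~\ref{tm:adsI}. Recall that Theorem~\ref{tm:adsI} produces, for any $h_+,h_-\in\cT$ and any $K_+,K_-\in(-\infty,-1)$, a globally hyperbolic $\AdS^3$ manifold $\mgh$ whose future and past constant curvature surfaces have induced metrics homothetic to $h_+$ and $h_-$ respectively, and this $\mgh$ is unique under the relation $K_+=-K_-/(K_-+1)$. My approach would be to let $K_+,K_-\to -1$ along this uniqueness locus (note that $K_-\to -1$ forces $K_+\to -1$), rescale the induced metrics so that they approach genuine hyperbolic metrics, and argue that the constant curvature surfaces degenerate onto the two boundary components of the convex core, whose induced metrics are exactly the hyperbolic metrics $h_-,h_+$.

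First I would set up the rescaling precisely. If a surface has constant curvature $K$ and induced metric $g$, then $|K|g$ has curvature $-1$; so for each $K_\pm$ I would feed the normalized metrics $|K_\pm|h_\pm$ (or whatever normalization makes Theorem~\ref{tm:adsI} directly applicable) and track how the manifold $\mgh=\mgh(K_+,K_-)$ varies. The key geometric input is that as $K\to -1$, the constant curvature $K$ surface in the complement of the convex core converges to the corresponding boundary component of the convex core, and its induced metric (after the curvature-$(-1)$ normalization) converges to the induced hyperbolic metric on that pleated boundary component. This is the surface-theoretic counterpart of the statement in Section~\ref{ssc:extension} that the foliation limits onto the convex core boundary, and it should follow from the description of the foliation in \cite{BBZ2} together with control of the second fundamental form as $K\to -1$.

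Next I would establish the existence statement by a compactness-and-limit argument: take any sequence $K_n\to -1$ (along the uniqueness locus) and the associated manifolds $\mgh_n$ with boundary data converging to the prescribed hyperbolic pair $(h_-,h_+)$; then show the $\mgh_n$ converge, in the appropriate sense on the space of MGH $\AdS^3$ manifolds, to a limit $\mgh$ whose convex core boundary realizes $(h_-,h_+)$. The uniqueness statement in Mess's conjecture would follow by transporting the uniqueness already available in Theorem~\ref{tm:adsI}: since the uniqueness locus $K_+=-K_-/(K_-+1)$ passes through the limit point, the unique solutions for $K_n$ should pass to a unique limit, provided one has continuity of the correspondence between boundary data and the manifold. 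Alternatively, uniqueness can be phrased through Mess's parametrization of MGH $\AdS^3$ manifolds by pairs of measured laminations, matching the convex core bending data.

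The hard part will be the limit itself: controlling the degeneration of the constant curvature foliation as $K\to -1$ and proving that the limit of the smooth induced metrics is precisely the (generally non-smooth, pleated) hyperbolic metric on the convex core boundary, with no loss of area or collapse of the manifold. One must rule out the possibility that the manifolds $\mgh_n$ escape to infinity in moduli or that the convex core degenerates, which requires a priori compactness for the family of MGH structures with controlled boundary data. I expect this to be the main obstacle, and I would address it by combining properness estimates analogous to those underlying Theorem~\ref{tm:adsI} (the properness of $F$ and of $sgr'$) with Mess's structure theory for MGH $\AdS^3$ manifolds, so that the boundary data $(h_-,h_+)$ pin down a compact set of candidate limits.
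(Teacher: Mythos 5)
There is a genuine gap, and it begins before any analysis: the statement you are trying to prove is labelled Conjecture~\ref{cj:mess} in the paper, and the paper offers no proof of it. The authors present it only as the \emph{formal} limiting case of Theorem~\ref{tm:adsI} as $K_\pm\to -1$, and elsewhere they remark that (the uniqueness part of) this statement is an open conjecture of Mess, equivalent to the uniqueness of fixed points of compositions of earthquake symmetries, ``details left to the reader.'' So a correct complete proof would go well beyond the paper; your proposal, however, does not close the gap, for the following concrete reason.

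Your central step is to let $K_-\to -1$ ``along the uniqueness locus'' $K_+=-K_-/(K_-+1)$, claiming that $K_-\to -1$ forces $K_+\to -1$. This is exactly backwards. The duality $K\mapsto K^\dual=-K/(K+1)$ of Lemma~\ref{lm:dual} is an involution of $(-\infty,-1)$ whose unique fixed point is $K=-2$ and which \emph{exchanges} the two ends: writing $K_-=-1-\epsilon$ one gets $K_+=K_-^\dual=-(1+\epsilon)/\epsilon\to-\infty$ as $\epsilon\to 0^+$. Geometrically, the dual of a surface just outside the convex core is a surface near the initial/final singularity, not another surface near the convex core. Consequently the locus on which Theorem~\ref{tm:adsI} gives uniqueness never approaches the corner $(K_-,K_+)=(-1,-1)$ that Conjecture~\ref{cj:mess} requires, and the theorem provides \emph{no} uniqueness information in the regime you need; this is precisely why the authors can only state Mess's conjecture, not prove it. Two further problems would remain even if this were repaired: (i) uniqueness does not pass to limits --- a limit of unique solutions to the problems with data $K_n$ is a solution of the limiting problem, but nothing prevents other solutions of the limiting problem from existing; and (ii) the existence part (convergence of the constant curvature surfaces to the pleated convex core boundary, convergence of the rescaled induced metrics to the hyperbolic boundary metric, and compactness in moduli) is substantial new analysis not contained in the paper, although that half of your strategy is at least plausible since Theorem~\ref{tm:adsI} gives existence for all pairs $(K_-,K_+)$ without the duality constraint.
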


There is a useful notion of duality in $\AdS^3$, recalled in Section 2. It can be used to translate
Theorem \ref{tm:adsI} in terms of the third fundamental form, rather than the induced metric,
on surfaces in $\AdS^3$ manifolds. 

\begin{theorem} \label{tm:adsIII}
Let $h_+,h_-\in \cT$, and let $K_+,K_-\in (-\infty, -1)$. 
There exists a maximal globally hyperbolic $\AdS^3$ manifold $\mgh$ containing a surface $S_-$ with
third fundamental form of constant curvature $K_-$ homothetic to $h_-$ in the past
of the convex core, and a surface $S_+$ with constant curvature $K_+$ and third 
fundamental form homothetic to $h_+$ in the future of the convex core. 
If $K_+=-K_-/(K_-+1)$, then $\mgh$ is unique.
\end{theorem}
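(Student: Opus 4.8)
The plan is to deduce Theorem~\ref{tm:adsIII} from Theorem~\ref{tm:adsI} using the polar duality of $\AdS^3$ recalled in Section~2, which interchanges the induced metric and the third fundamental form of a convex spacelike surface. Recall that if a strictly convex spacelike leaf $\Sigma$ has induced metric $I$, shape operator $B$ and third fundamental form $\III=I(B\cbull,B\cbull)$, then its dual $\Sigma^\dual$ (the image of $\Sigma$ under the ``Gauss map'' following the timelike normal a distance $\pi/2$) has induced metric $I^\dual=\III$, third fundamental form $\III^\dual=I$, and shape operator $B^\dual=B^{-1}$. This correspondence is an involution, is equivariant under the identity component of the isometry group of $\AdS^3$, and therefore descends to the quotients; since $\AdS^3$ is self-dual, it defines an involution $\mgh\mapsto\mgh^\dual$ on maximal globally hyperbolic $\AdS^3$ manifolds.

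The first quantitative step is the transformation of the curvature. For a spacelike surface in $\AdS^3$ the Gauss equation reads $K_I=-1-\det(B)$, so a leaf with constant induced curvature $K\in(-\infty,-1)$ satisfies $\det(B)=-(K+1)>0$; in particular $B$ is invertible and $\Sigma$ is strictly convex, so the duality applies. The dual leaf then has $\det(B^\dual)=-1/(K+1)$, hence constant induced curvature
\[
K^\dual=-1-\det(B^\dual)=-1+\frac{1}{K+1}=\sigma(K),\qquad \sigma(K):=\frac{-K}{K+1},
\]
and $\sigma$ is an involution of $(-\infty,-1)$. Because $I^\dual=\III$ and the Gauss map is an isometry from $(S,\III)$ to $(\Sigma^\dual,I^\dual)$ isotopic to the identity, the conformal class of the third fundamental form of the leaf of curvature $K$ equals, as a point of $\cT$, the induced conformal class of the dual leaf, whose curvature is $\sigma(K)$; in particular $\III$ has constant curvature $\sigma(K)$.

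With this dictionary, prescribing (up to homothety) the third fundamental forms of the constant curvature leaves of $\mgh$ amounts to prescribing (up to homothety) the induced metrics of the corresponding leaves of $\mgh^\dual$, the curvatures being related by the involution $\sigma$. I would therefore feed $h_-,h_+$ and the appropriately $\sigma$-transformed curvatures into Theorem~\ref{tm:adsI}, obtaining a maximal globally hyperbolic $\AdS^3$ manifold with the prescribed induced metrics on the relevant past and future leaves; dualizing it and using $\III=I^\dual$ produces the manifold required by Theorem~\ref{tm:adsIII}. For uniqueness, note that $\sigma$ being an involution, the hypothesis $K_+=-K_-/(K_-+1)=\sigma(K_-)$ is exactly the relation between the two curvatures that Theorem~\ref{tm:adsI} requires for the $\sigma$-transformed data; since duality is a bijection, the uniqueness provided by Theorem~\ref{tm:adsI} transfers to $\mgh$.

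The infinitesimal content---the swap $I\leftrightarrow\III$, $B\leftrightarrow B^{-1}$ and the resulting law $\sigma$---is immediate; I expect the main obstacle to be the global bookkeeping. One must verify that the polar duality descends to a genuine involution of maximal globally hyperbolic $\AdS^3$ manifolds, that it carries the constant curvature foliation of the complement of the convex core of $\mgh$ to that of $\mgh^\dual$ (sending the leaf of curvature $K$ to the leaf of curvature $\sigma(K)$), and that it matches the past and future sides correctly, so that the data $(h_-,K_-)$ and $(h_+,K_+)$ are assigned to the intended boundary components. One must also check that the Gauss map realizing $\III\cong I^\dual$ is isotopic to the identity, which is what guarantees that the two conformal classes coincide as points of $\cT$ rather than merely up to the action of the mapping class group.
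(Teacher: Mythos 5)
Your proposal is correct and follows essentially the same route as the paper: Theorem \ref{tm:adsIII} is deduced from Theorem \ref{tm:adsI} via the $I\leftrightarrow\III$ duality of Lemmas \ref{lm:dual} and \ref{lm:GHdual}, with the curvatures transformed by the involution $K\mapsto K^\dual=-K/(K+1)$ and the past/future labels swapped exactly as you describe. The only cosmetic difference is that you dualize the whole manifold, whereas the paper dualizes the two constant-curvature surfaces inside the same manifold: Lemma \ref{lm:GHdual} already states that the dual of a convex surface in $\mgh$ lies in $\mgh$ itself (so the ``dual manifold'' is canonically $\mgh$, the holonomy being unchanged), which means the global bookkeeping you flag as the main remaining obstacle is already settled by the lemmas recalled in Section 2.
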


As for Theorem \ref{tm:adsI}, the analogy with quasifuchsian manifolds indicates 
that the statement might hold also for metrics of variable curvature --- this would
actually follow from Conjecture \ref{cj:adsI} using the same notion of duality.

\subsection{Content of the paper}

Section 2 contains notations and background material, including previous definitions
and results on the landslide flow considered here, on smooth grafting, and on their
relationships with 3d hyperbolic and $\AdS$ geometry. 

In Section 3 and 4 we will describe the Hamiltonian interpretation of the landslide flow,
Theorem \ref{tm:hamiltonian},
and relate its Hamiltonian to the energy of underlying harmonic maps. In Section 5
--- probably the most technically involved part of the paper --- we show that this
Hamiltonian function is convex for the Weil-Petersson metric on $\cT$, Theorem
\ref{tm:cvx}. 

In Section 6 we turn to the smooth grafting, and prove Theorem \ref{tm:sgr}
and then, as an application, Theorem \ref{tm:hyperbolic}. The symplectic properties
of the smooth grafting map are investigated in Section 7. In 
Section 8 we consider the extension of the landslide flow to the boundary. 
Finally in Section 9 we prove Theorem \ref{tm:adsI} on 3-dimensional 
$\AdS$ geometry and Theorem \ref{tm:fixed} on fixed points of compositions of landslides.

\section{Notations and background material}

This section collects a number of definitions and results which are used below. It is included
here so as to make the paper as self-contained as possible.

\subsection{The space of complex structures and Weil-Petersson product}

We fix an oriented closed surface $S$ of genus $g(S)\geq 2$.
The Teichm\"uller space of $S$ is the quotient of the space of complex structures on $S$ 
by the action of $\Diffeo_0(S)$.
By the uniformization theorem, it can be also regarded as the space of hyperbolic
metrics on $S$ up to isotopy.

We will denote by $\mathcal A$ the space of almost-complex structures on $S$: in other words,
an element of $\mathcal A$ is an operator $J$ on $TS$ such that
\begin{itemize}
\item  $J^2=-\En$. 
\item For every $0\neq X\in T_pS$, the basis $(X, JX)$ is 
positively oriented.
\end{itemize}
Since in dimension $2$ every
almost-complex structure is integrable, we have a natural map
\[
    \mathcal A\rightarrow \mathcal T,
\]
which is a $\Diffeo_0(S)$-principal bundle, where $\Diffeo_0(S)$ acts on $\mathcal A$ 
by pull-back (\cite{fischer-tromba:cell}).

Let $\mathcal M_{-1}$ the space of hyperbolic metrics on $S$.
By the 
Uniformization Theorem, 
the map $\mathcal M_{-1}\rightarrow\mathcal A$ sending
$h$ to the complex structure compatible with $h$ is a $\Diffeo(S)$-equivariant identification.
In particular the 
elements 
of $\mathcal T$ can be also regarded as hyperbolic metrics
up to isotopies.

Let $h_0$ be a hyperbolic metric on $S$ and denote by $J_0$ the corresponding
almost-complex structure. 

The tangent space $T_{J_0}\mathcal A$ is the set of operators $\dot J$ on $TS$ such that
\[
    \dot JJ_0+J_0\dot J=0~.
\]
Notice that there are two simple characterizations of $\dot J$:
\begin{itemize}
\item it is a $\mathbb C$-anti-linear operator,
\item it is traceless and $h_0$-self-adjoint.
\end{itemize}

The tangent space of $\mathcal T$ at $[J_0]$  
turns out to be identified to the quotient of $T_{J_0}\mathcal A$ by the vertical subspace.
We want to relate this description of $T_{[J_0]}\mathcal T$ with the classical
description in terms of Beltrami differentials.
We will give a description of quadratic differentials and Beltrami differentials
as tensors on the surface.

Let us fix a complex atlas $\{(U_j, z_j)\}$ on $S$ compatible with $J_0$. Namely,
 putting $z_j=x_j+iy_j$ it results that 
\[
   J_0\frac{\partial\,}{\partial x_j}=\frac{\partial\,}{\partial y_j}\,,\qquad
   J_0\frac{\partial\,}{\partial y_j}=-\frac{\partial\,}{\partial x_j}~.
\]

\subsubsection{Holomorphic quadratic differentials}
A holomorphic quadratic differential $\qd$ on $S$ is a holomorphic
section of the square of the canonical bundle $(\Omega^{1,0}S)^{\otimes 2}$: in local coordinates
$\qd|_{U_k}=\qd_j(z_j)dz_j^2$, where on the intersection
$U_j\cap U_k$ we have
\[
    \qd_j(z_j)=\qd_k(z_k)\left(\frac{dz_k}{dz_j}\right)^2(z_k)~.
\]
Quadratic differentials can be regarded as complex bilinear
tensors: indeed given $p\in U_j$ and $X,X'\in T_pS$ we can decompose
\[
   X=u\frac{\partial\,}{\partial x_j}+v\frac{\partial\,}{\partial y_j}\,,\qquad
   X'=u'\frac{\partial\,}{\partial x_j}+v'\frac{\partial\,}{\partial y_j}\,.
\]
Then, setting
\[
  \qd(X,X')=\qd_j(z_j(p))(u+iv)(u'+iv')
\]
one can directly check that this definition does not depend on the
complex chart $z_j$ and 
defines
a complex bilinear form at $T_pS$.

Holomorphic 
quadratic
differentials form a complex vector 
space --- denoted here by $\mathcal Q(J_0)$  --- of complex dimension
$3g(S)-3$.

There is a holomorphic vector bundle $\pi:\cQ\rightarrow\cT$
whose fiber on the point $[J]$ is canonically identified with $\cQ(J)$.
This is called the fiber bundle of holomorphic quadratic differentials.

\subsubsection{Beltrami differentials}
A Beltrami differential $\nu$ is a section of the differentiable linear
bundle $\Omega^{-1,1}S$: 
locally it can be written  as 
\[
   \nu|_{U_j}=\nu_j(z_j)\frac{d\bar z_j}{d z_j}~,
\] 
and on $U_j\cap U_k$ we have 
 \[
     \nu_j(z_j)=\nu_k(z_k)\frac{\left(\overline{\frac{dz_k}{dz_j}}\right)}{\left(\frac{dz_k}{dz_j}\right)}(z_j)~.
 \]
 
Beltrami differentials can be regarded as 
anti-linear operators of $TS$.
Indeed, given $p\in U_j$ and $X\in T_pS$ with
$X=u\frac{\partial\,}{\partial x_j}+v\frac{\partial\,}{\partial y_j}$
we can put $\nu(X)= t\frac{\partial\,}{\partial
  x_j}+s\frac{\partial\,}{\partial y_j}$ where
$t+is=\nu_j(z_j(p))\overline{(u+iv)}$. 

 The matrix representative of 
$\nu$ with respect to the real basis
 $\{\frac{\partial\,}{\partial x_j},\frac{\partial\,}{\partial y_j}\}$ is 
 \[
    [\nu]_j=\begin{pmatrix} \Re \nu_j & \Im \nu_j\\ \Im \nu_j & -\Re\nu_j\end{pmatrix}~,
 \]
whereas $|\nu|^2=\frac{1}{2}\tr(\nu^2)$.
Finally notice that the multiplication by $i$ of Beltrami differentials
corresponds to the composition with the complex structure on $TS$:
\[
   (i\nu)(X)=J_0\nu(X)~.
\]

There is a classical open embedding of $\mathcal A$ into
$Belt(J_0)$, that can be described in the following way.
Given a complex structure $J$, the identity map
\[
  \En: (T_pS, J_0)\rightarrow (T_pS, J)
\]
decomposes into a $\mathbb C$-linear part $\partial \En$
and an anti-linear part $\bar\partial \En$: namely
\[
   \partial \En=\frac{\En-JJ_0}{2}\qquad \bar\partial \En=\frac{\En+JJ_0}{2}~.
\]
In particular, the operator
\[
  \nu_J=(\partial \En)^{-1}\circ(\bar \partial \En)
\]
is an anti-linear operator of $(S, J_0)$ and is thus a Beltrami differential.
Locally around $p$, if $z$ is a local complex coordinate for $(S, J_0)$ and
$w$ is a local complex coordinate for $(S, J)$, we have
\[
   \nu_J=\frac{\frac{\partial w}{\partial \bar z}}{\frac{\partial w}{\partial z}}\frac{d\bar z}{dz}~.
\]

It is a classical fact that the map
\[
 \mathcal A\ni J\mapsto \nu_J\in Belt(J_0)
\]
is an open embedding whose image is the space of Beltrami differential with
$L^\infty$-norm less than $1$ (\cite{}). 

In this way any Beltrami differential corresponds to an infinitesimal deformation
of the complex structure of $J_0$. We say that a Beltrami differential is trivial
if it corresponds to a trivial deformation of the 
complex structure. 
Trivial Beltrami differentials form a vector space which we denote by
$Belt_{tr}(J_0)$.

Classically the tangent space of Teichm\"uller space is identified to the quotient
\[
    T_{[J_0]}\mathcal T= Belt(J_0)/Belt_{tr}(J_0)~.
\]
Notice that $[\dot\nu]$ is a tangent vector to a curve $[J_t]$ of complex structures
(starting from $J_0$) if $\nu_{J_t}=t\dot\nu+o(t)$.

In particular, differentiating the relation $\nu_{J_t}=(\En-J_tJ_0)^{-1}(\En+J_tJ_0)$,
we get
\begin{equation}\label{eq:belt}
  \dot\nu=\frac{1}{2}\dot JJ_0=-\frac{1}{2}J_0\dot J
\end{equation}
It turns out that the differential of the natural projection
$\pi:\mathcal A\rightarrow\mathcal T$ at $J_0$
is simply the map
\[
d\pi:T_{J_0}\mathcal A\ni \dot J\mapsto [-\frac{1}{2}J_0\dot J]\in Belt(J_0)/Belt_{tr}(J_0)~.
\]

\subsubsection{Pairing between quadratic differentials and Beltrami differentials}
Given a holomorphic quadratic differential $\qd$ and a Beltrami
differential $\nu$ we can consider the complex-bilinear form
$\qd\bullet\nu$ which is defined on $U_j$ as
\[
   \qd\bullet\nu|_{U_j}=\qd_j(z_j)\nu_j(z_j) dx_j\wedge dy_j
\]
The form $\qd\bullet\nu$ can be described explicitly has an
alternating $2$-form on $T_pS$ in the following way. Given $X,X'\in
T_pS$ we have
\[
   (\qd\bullet\nu)(X,X')=\frac{\qd(\nu(X), X')-\qd(\nu(X'), X)}{2i}
\]

It is a classical fact that a Beltrami differential $\nu$
is trivial iff 
\[
   \int_S \qd\bullet\nu=0
\]
for all holomorphic quadratic differentials $\qd$.

As a consequence, the pairing $Belt(J_0)\times\mathcal
Q(J_0)\rightarrow\mathbb C$ induces 
on the quotient a non-degenerate pairing
\[
   T_{[J_0]}\mathcal T\times\mathcal Q(J_0)\rightarrow\mathbb C
\]
which allows to identify $\mathcal Q(J_0)$ with the cotangent space of
$\mathcal T$ at $[J_0]$.

\subsubsection{Harmonic Beltrami differentials and Weil-Petersson metric}
Given a holomorphic Beltrami differential $\qd$, its real part is a
symmetric $2$-form on $S$, so there exists an $h_0$-self-adjoint operator
$\nu_\qd$ such that
\[
\Re(\qd)(X,X')=h_0(\nu_\qd(X), X')~.
\]
Since $\qd(J_0X, J_0 X')=-\qd(X,X')$ we deduce that $J_0\nu_\qd
J_0=\nu_\qd$, that is $\nu_\qd$ is $\mathbb C$-anti-linear. This means that
$\nu_\qd$ is a Beltrami operator. 
It is called the harmonic
Beltrami operator associated to $\qd$. 

If $\qd|_{U_j}=\qd_j(z_j) dz_j^2$ and $h|_{U_j}=h_j(z_j)|dz_j|^2$ we easily see that
\begin{equation}\label{eq:harm}
   (\nu_\qd)|_{U_j}=\frac{\overline \qd_j(z_j)}{h_j(z_j)}
\frac{d\overline z_j}{dz_j}~.
\end{equation}

Since $\Im(\qd(X,X'))=-\Re(\qd(J_0X, X'))$ we get
\[
    \qd(X,X')=h_0(\nu_\qd(X),X')+i h_0(J_0\nu_\qd(X), X')~.
\] 

In particular the map
\[
\tilde{wp}: \mathcal Q(J_0)\ni\qd\mapsto\nu_\qd\in Belt(J_0)
\]
is $\mathbb C$-anti-linear and injective. Its image can be 
characterized as 
the set of self-adjoint traceless operators which satisfy the 
Codazzi equation
$d^\nabla\nu=0$, where $\nabla$ is the Levi-Civita connection of the hyperbolic
metric $h_0$ (\cite{minsurf}).
Let us recall that $d^\nabla\nu$ is in general a $2$-form with values in the tangent bundle
$TS$ defined by
\[
     d^\nabla\nu(u,v)=\nabla_u(\nu v)-\nabla_v(\nu u)-\nu([u,v])~.
\]
Regarding $\nu$ as a $1$-form on $S$ with values in $TS$, $d^\nabla$ coincides with the exterior
differential with respect to $\nabla$ on the tangent bundle.

The map $\tilde{wp}$ induces a sequilinear-form
\[
  \langle \qd, \qd'\rangle_{WP}=\int_S\qd\bullet\nu_{\qd'}~.
\]

By a  local check using (\ref{eq:harm}), one sees that
$\langle \bullet, \bullet\rangle_{WP}$ is a positive hermitian form. 
It is called the Weil-Petersson metric on $\mathcal Q(J_0)$.

We will denote by $g_{WP}$ the real part of $\langle \bullet, \bullet\rangle_{WP}$
--- which is the Weil Petersson product --- whereas $\omega_{WP}$ denotes the imaginary part,
which is the Weil-Petersson symplectic form. 

We deduce:
\begin{itemize}
\item The harmonic Beltrami differential $\nu_\qd$ is trivial iff $\qd=0$.
 Indeed  $\int_S \qd\bullet\nu_\qd=\|\qd\|^2_{WP}$.
\item The induced map $wp:\mathcal Q(J_0)\rightarrow T_{[J_0]}\mathcal T$
  is an anti-linear isomorphism. This means that every element in
  $T_{[J_0]}\mathcal T$ admits a unique harmonic representative.
\item Identifying $T_{[J_0]}\mathcal T$ with $\mathcal Q(J_0)^*$, the
  map $\qd\mapsto \langle \bullet,
  \qd\rangle_{WP}$ induced by the Weil-Petersson metric coincides with $wp$.
\item In particular we get that the Weil Petersson metric on the tangent space
of $\mathcal T$ is simply:
\[
   \langle [\nu_\qd], [\nu_{\qd'}]\rangle_{WP}=\langle \qd, \qd'\rangle_{WP}~.
\]
\end{itemize}

A local computation shows that in general
\[
   \Re( \qd\bullet\nu)=\frac{1}{2}\int_S\tr(\nu_\qd\nu)\da_{h_0}~,
\]
where $\da_{h_0}$ is the area form of $h_0$.
In particular we deduce that
\[
\langle [\nu_\qd], [\nu_{\qd'}]\rangle_{WP}=
\langle \qd, \qd'\rangle_{WP}=
\frac{1}{2}\left(\int_S (\tr(\nu_\qd, \nu_{\qd'})+i\tr(J_0\nu_\qd\nu_{\qd'}))\da_{h_0}\right)~.
\]

\subsubsection{Fischer-Tromba product}

Given a holomorphic quadratic differential $\qd$, 
we denote by $\dot J_\qd=2J_0\nu_\qd$, the infinitesimal deformation
of $J_0$ corresponding to the Beltrami differential $\nu_\qd$.

We consider on $T_{J_0}\mathcal A$ the hermitian product
\[
   (\dot J,\dot J')_{J_0}=\int_S\tr(\dot J\dot J')\da_{h_0}
\]
where $\da_{h_0}$ is the area form of $h_0$.

\begin{lemma}\cite{fischer-tromba:cell}
The image of the map $\mathcal Q(J_0)\ni\qd\mapsto \dot J_\qd\in T_{J_0}\mathcal A$
is the orthogonal complement of the vertical space.
\end{lemma}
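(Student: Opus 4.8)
The plan is to identify the vertical space concretely and then recognise the lemma as the standard Hodge-type splitting of Beltrami differentials into an ``exact'' and a ``harmonic'' part. The fibre of $\mathcal A\to\mathcal T$ through $J_0$ is the $\Diffeo_0(S)$-orbit, so the vertical subspace is $V=\{L_XJ_0 \ :\ X\in\Gamma(TS)\}$. Since in dimension two $h_0$ is Kähler, i.e. $\nabla J_0=0$ for the Levi-Civita connection $\nabla$ of $h_0$, a direct computation from $(L_XJ_0)(Y)=[X,J_0Y]-J_0[X,Y]$ gives $L_XJ_0=[J_0,\nabla X]=2J_0(\nabla X)_-$, where $(\nabla X)_-=\tfrac12\big(\nabla X+J_0(\nabla X)J_0\big)$ is the anti-linear (equivalently, traceless $h_0$-self-adjoint) part of the endomorphism $\nabla X$. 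Thus every vertical vector has the same shape $2J_0\nu$ as the vectors $\dot J_\qd=2J_0\nu_\qd$ in the image, the difference being that here $\nu=(\nabla X)_-$ is the ``$\db$'' of a vector field while $\nu_\qd$ is harmonic. Proving the lemma therefore reduces to showing that a Beltrami operator $\nu$ is $L^2$-orthogonal to every $(\nabla X)_-$ exactly when it is harmonic.

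The core is one integration by parts. Writing an arbitrary tangent vector as $\dot J=2J_0\nu$ with $\nu$ traceless $h_0$-self-adjoint, and using $J_0\nu J_0=\nu$ (because $\nu$ anti-commutes with $J_0$), I get $(\dot J,L_XJ_0)_{J_0}=4\int_S\tr\big(\nu(\nabla X)_-\big)\,\da_{h_0}=4\int_S\tr(\nu\,\nabla X)\,\da_{h_0}$, the last step because the symmetric traceless $\nu$ annihilates (under $\tr$) the $\mathbb C$-linear part of $\nabla X$. The pointwise identity $\tr(\nu\,\nabla X)=\Div(\nu X)-(\delta\nu)(X)$, where $\delta\nu$ denotes the divergence of the operator $\nu$, combined with the divergence theorem on the closed surface $S$, turns this into $-4\int_S(\delta\nu)(X)\,\da_{h_0}$. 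As this must vanish for every vector field $X$, I conclude that $\dot J\in V^\perp$ if and only if $\delta\nu=0$.

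To finish I use that, for a traceless $h_0$-self-adjoint operator on a surface, the divergence-free condition $\delta\nu=0$ is equivalent to the Codazzi equation $d^\nabla\nu=0$: in a local conformal chart both amount to the Cauchy--Riemann equations for the quadratic differential whose real part is $h_0(\nu\,\bullet,\bullet)$, as one checks from (\ref{eq:harm}). By the characterization of the image of $\tilde{wp}$ recalled above, $d^\nabla\nu=0$ holds precisely when $\nu=\nu_\qd$ for a (unique) holomorphic quadratic differential $\qd$, i.e. precisely when $\dot J=\dot J_\qd$ lies in the image. Hence the image of $\qd\mapsto\dot J_\qd$ coincides with $V^\perp$.

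The computation is formal once the two inputs are in place, so the content is really in those inputs. The point I expect to require the most care is the identity $L_XJ_0=2J_0(\nabla X)_-$ (where $\nabla J_0=0$ is used), together with the sign/normalisation bookkeeping in the integration by parts and the reduction that $\tr(\nu\,\bullet)$ sees only the anti-linear part of $\nabla X$; conceptually, the one substantive assertion is that orthogonality to $V$ is not merely necessary but sufficient, which is exactly the statement that the formal $L^2$-adjoint of $X\mapsto L_XJ_0$ is a multiple of the divergence, whose kernel on traceless self-adjoint operators is the space of harmonic Beltrami differentials.
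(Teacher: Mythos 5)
Your proof is correct, but it follows a genuinely different route from the paper's. The paper's proof is a three-line reduction to the classical triviality criterion recalled in its background section: a Beltrami differential $\nu$ is trivial iff $\int_S\qd\bullet\nu=0$ for every holomorphic quadratic differential $\qd$; combined with the identity $\Re\big(\int_S\qd\bullet\nu\big)=\frac{1}{2}\int_S\tr(\nu_\qd\nu)\,\da_{h_0}=\frac{1}{8}(\dot J_\qd,\dot J)_{J_0}$, this shows that $\dot J$ is vertical exactly when it is orthogonal to every $\dot J_\qd$, i.e.\ $V=W^\perp$ where $W$ is the image; the stated conclusion $W=V^\perp$ then follows since $W$ is finite-dimensional. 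You instead re-prove that duality from scratch: you identify the vertical space concretely as $\{L_XJ_0\}$, compute $L_XJ_0=2J_0(\nabla X)_-$ using $\nabla J_0=0$, and integrate by parts to show that orthogonality to all Lie-derivative directions is equivalent to $\delta\nu=0$, which for traceless self-adjoint operators on a surface is equivalent to the Codazzi equation $d^\nabla\nu=0$ and hence (by the characterization of the image of $\tilde{wp}$ the paper cites) to $\nu$ being harmonic. What the paper's argument buys is brevity, at the cost of invoking the triviality criterion as a black box; what yours buys is self-containedness --- it effectively computes the formal $L^2$-adjoint of the infinitesimal $\Diffeo_0$-action and identifies its kernel, which is the analytic content hiding inside that black box --- and it establishes $V^\perp=W$ directly, with no need for the bi-orthogonality step. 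All the computational identities you flag as delicate ($L_XJ_0=2J_0(\nabla X)_-$, the reduction of $\tr(\nu\,\nabla X)$ to the anti-linear part, the equivalence $\delta\nu=0\Leftrightarrow d^\nabla\nu=0$) check out.
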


\begin{proof}
Notice that $\dot J$ is vertical iff $\nu_{\dot J}=-\frac{1}{2}J_0\dot J$ is trivial, this being the case exactly iff
for all $\qd\in\mathcal Q(J_0)$ we have
\[
  0= \Re\left(\int_S\qd\bullet\nu_{\dot J}\right)=\frac{1}{2}\int_S\tr(\nu_\qd\nu_{\dot J})=
  \frac{1}{8}\int_S\tr(\dot J_\qd, \dot J)\da_{h_0}.
\]
\end{proof}

In particular given two vectors $\dot J$ and $\dot J'$ in $T_{J_0}\mathcal A$
and denoting by $\dot J_H$ and by $\dot J'_H$ their projection on  the orthogonal complement
of the vertical space, 
there exist two quadratic differentials $\qd$ and $\qd'$
such that $\dot J_H=\dot J_{\qd}$ and $\dot{J}'_H=\dot{J}_{\qd'}$.
Since 
$d\pi(\dot J)=d\pi(\dot J_H)=[\nu_{\qd}]$ and
$d\pi(\dot J')=d\pi(\dot J'_H)=[\nu_{\qd'}]$,
 we easily deduce that
\begin{equation}\label{eq:ft1}
g_{WP}( d\pi(\dot J), d\pi(\dot J'))=\frac{1}{2}\int_S\tr(\nu_{\qd}\nu_{\qd'})\da_{h_0}
=\frac{1}{8}\int_S\tr(\dot J_H\dot J'_H)\da_{h_0}=\frac{1}{8}(\dot J_H, \dot J'_H)_{J_0}~.
\end{equation}
Analogously,
\begin{equation}\label{eq:ft2}
\omega_{WP}(d\pi(\dot J), d\pi(\dot J'))=
\frac{1}{8}(J_0\dot J_H, \dot J'_H)_{J_0}~.
\end{equation}

\subsection{Harmonic maps vs Minimal Lagrangian map}

We collect here a number of basic facts on harmonic maps and minimal Lagrangian maps between
hyperbolic surfaces, and the relation between those two notions.

\subsubsection{Harmonic maps}
Let $h_0$ and $h$ be two  metrics on $S$. For every $C^1$ map $f:(S, h_0)\rightarrow(S, h)$,
we have the following decomposition of $f^*(h)$
\[
   f^*(h)=\qd+e h_0+\bar\qd
\]
where $\qd$ is a $J_0$-complex bilinear form on $S$,
called the Hopf differential of $f$, 
 and $e$ is a positive function on $S$
called the energy density of the map $f$. The total energy of the map $f$ is defined as
\[
  \Ene(f)=\int_S e\,\da_{h_0}~,
\]
where $\da_{h_0}$ is the area form associated with the metric $h_0$.

We say that the map $f$ is harmonic 
if $f$ is a stationary point of the functional $\Ene$. 
If $f$ is a diffeomorphism this is equivalent to requiring that 
 $\qd$ is a \emph{holomorphic} quadratic differential on $(S,J_0)$ \cite{}.
Notice that $\qd$ and $\Ene(f)$  do not change by changing $h_0$ in its conformal
class (but $e$ does), so the harmonicity of the map $f$ only depends on the complex structure on the source
surface. 

Let us fix a complex structure $J_0$ (or equivalently a hyperbolic metric $h_0$).
Given a holomorphic quadratic differential $\qd$, Wolf \cite{wolf:teichmuller} 
proved there exists a unique
hyperbolic metric $h_\qd$ on $S$ such that the identity $\Id:(S, J_0)\rightarrow (S, h_\qd)$
is an harmonic map with Hopf differential equal to $\qd$.
In other words, there is a unique hyperbolic metric on $S$ of the form
\begin{equation}\label{eq:harmo}
   h_\qd=\qd+eh_0+\bar\qd~.
\end{equation}

This allows to construct a map
\[
\begin{array}{rccc}
  \wolf_{J_0}: & \cQ(J_0) & \to & \cT \\

 & \qd & \mapsto & [h_\qd]
\end{array}
\]
which has been proved to be a homeomorphism \cite{wolf:teichmuller}. 
This is called the Wolf parameterization centered at $[J_0]$.

The differential of $\wolf_{J_0}$ at $0$ can be easily computed:
\[
  d_0(\wolf_{J_0})(\qd)=[\nu_\qd]~.
\]

\subsubsection{Minimal Lagrangian maps}

Given two hyperbolic metrics $h, h^\dual$ on $S$, a map
$m:(S, h)\rightarrow (S, h^\dual)$ is minimal Lagrangian if
it is area-preserving and its graph is a minimal surface in
$(S\times S, h\oplus h^\dual)$.

A map $m:(S, h)\rightarrow (S, h^\dual)$ is a minimal Lagrangian map iff 
there exists an operator $b:TS\rightarrow TS$ such that
\begin{itemize}
\item $b$ is positive and self-adjoint;
\item $\det b=1$;
\item $b$ is solution of Codazzi equation $d^\nabla b=0$ for the Levi-Civita connection $\nabla$ of $h$;
\item $m^*(h^\dual)=h(b\bullet, b\bullet)$
\end{itemize}

Labourie \cite{L5} and Schoen \cite{schoen:role} proved that
there exists a unique such minimal Lagrangian map $m:(S, h)\rightarrow (S, h^\dual)$
isotopic to the identity.
In particular the Labourie operator of the pair $(h, h^\dual)$ is the
operator $b$ as above, such that the metric $h(b\bullet, b\bullet)$ is isotopic to
$h^\dual$.
We will say that the pair is normalized if $h^\dual=h(b\bullet, b\bullet)$, or equivalently
if the identity $\Id:(S, h)\rightarrow (S, h^\dual)$ is a minimal Lagrangian map.

If $(h, h^\dual)$ is a pair of normalized hyperbolic metrics, then the
metrics
\[
   h_c=h((\En+b)\bullet, (\En+b)\bullet)\,,\qquad h'_c=h(b\bullet, \bullet)
\]
are conformal, because $(\En+b)^2 = (2+\tr(b))b$ since $\det(b)=1$. 

So they 
determine a conformal structure, denoted here by $c$.
The identity maps
\[
    (S, c)\rightarrow(S, h)\,,\qquad (S, c)\rightarrow(S, h^\dual)
\]
are harmonic maps with opposite Hopf differential.

The conformal structure is called the {\it center} 
of the pair $(h, h^\dual)$.

\begin{proof}[Proof of Proposition~\ref{prop:energy}]
Real-analyticity of the energy functional $\Ene(\bullet,h):\cT\rar\R$
follows mimicking the arguments of \cite{eells-lemaire:deformation}.
Properness was proven by Tromba \cite{fischer-tromba:cell}.

If $\hat{c}$ is a metric in the conformal class of $c$ and
$h=\qd+e\hat{c}+\ol{\qd}$, then $h^\dual=-\qd+e\hat{c}-\ol{\qd}$. Thus,
$h((\En+b^2)\bullet,\bullet)=h+h^\dual=2e\hat{c}$, which implies
\[
2\Ene(c,h)=\int_S 2e\da_{\hat{c}}=2\int_S \sqrt{\det(\En+b^2)}\da_h
=2\int_S\tr(b)\da_h=2F(h,h^\dual)
\]
Finally, fix $h$ and let $(c_n)$ be a divergent sequence in $\cT$.
This determines a sequence $(h^\dual_n)$ such that
$c_n$ is the center of $(h,h^\dual_n)$. This $(h_n^\dual)$ is divergent too, for otherwise a subsequence of
$F(h,h^\dual_n)$ and so of $\Ene(c_n,h)$ would remain bounded,
contradicting the properness of $\Ene(\bullet,h)$.
Again up to subsequences, we can then assume that
$\theta_n h^\dual_n$ converge to a nonzero measured lamination
$\lambda$, where $\theta_n$ is a positive sequence that converges to zero.
It follows from Proposition~6.15 of \cite{cyclic}
that $\theta_n F(h,h_n^\dual)\rar \ell_\lambda(h)>0$
and so $F(h,h_n^\dual)\rar +\infty$.
\end{proof}

\subsection{The $\AdS^3$ space}

The 3-dimensional anti-de Sitter space $\AdS^3$ can be defined much like the hyperbolic space. 
Let $\R^{2,2}$ denote $\R^4$ with the symmetric bilinear form 
$\langle\bullet,\bullet\rangle_{(2,2)}$of signature $(2,2)$. Then
$$ \AdS^3 := \{ x\in \R^{2,2}~|~ \langle x,x\rangle_{(2,2)} =-1\}~, $$
with the induced metric. 

We refer the reader to \cite{mess,mess-notes} for the main properties of $\AdS^3$, and only recall
here a few key properties, without proof. In many respects $\AdS^3$ is reminiscent of $\Hyp^3$, while
some of its properties, in particular concerning its isometry group, also relates to the 3-dimensional
sphere $S^3$. 

The space $\AdS^3$ is Lorentzian, with constant curvature $-1$. It is not simply connected, however,
and its fundamental group is infinite cyclic. As a Lorentz space, it has three types of geodesics: space-like
and light-like 
geodesics are open lines, while time-like geodesics are closed, of length $2\pi$. When considering 
$\AdS^3$ as a quadric in $\R^{2,2}$ as above, the geodesics in $\AdS^3$ are the intersections of
$\AdS^3$ with the 2-dimensional planes containing $0$ in $\R^{2,2}$. The space-like
totally geodesic planes in $\AdS^3$ are isometric to the hyperbolic plane. 

The multiplication by $\pm 1$ acts on every sphere in $\R^{2,2}$ and so on $\AdS^3$ and on the quadric $Q:=\{x\in\R^{2,2}\,|\,\langle x,x\rangle_{(2,2)}=0\}$.
The projective model of $\AdS^3$ identifies $\AdS^3/\{\pm 1\}$ to 
one connected component of the complement in $\R\Pr^3\setminus\PP Q$. 
The space-like geodesics in $\AdS^3/\{\pm 1\}$ then correspond to the projective lines
intersecting the boundary quadric $\PP Q$ in two points, while light-like geodesics are tangent to $\PP Q$, and time-like geodesics do not intersect $\PP Q$. Taking the double cover of
$\R\Pr^3$ (namely taking the quotient of $\R^{2,2}$ by $\R_+$) yields a projective model of $\AdS^3$ inside $S^3$. 
This projective model is one way to define the boundary at infinity of $\AdS^3$. It is topologically
a torus, with a Lorentz conformal structure.

The isometry group of $\AdS^3$ is $\mathrm{O}(2,2)$. However, up to finite quotient,
this isometry group splits
as the product of two copies of $\PSL(2,\R)$, more precisely its identity component is isomorphic to 
$(\SL(2,\R)\times \SL(2,\R))/\{\pm 1\}$. 

\subsection{Space-like surfaces in $\AdS^3$} \label{ssc:ads}

The local theory of space-like surfaces 
in $\AdS^3$ is very similar to that of surfaces in 
the Euclidean or in the hyperbolic 3-dimensional space. Here again we only briefly recall without
proof some basic facts which will be useful below (see \cite{} for a treatment of this subject).

Let $\St\subset \AdS^3$ be a space-like surface,
and let $\nor$ be a unit normal vector field on $\St$. Given such a surface, we will call $I$ its induced
metric (or first fundamental form). The shape operator of $\St$ is an $I$-self-adjoint bundle morphism $B:T\St\to T\St$
defined by 
$$ \forall x\in \St\ \forall v\in T_x\St\quad Bv=-D_v\nor~, $$
where $D$ is the restriction to $\St$ of the Levi-Civita connection of $\AdS^3$. 

The shape operator satisfies two basic equations. 
\begin{itemize}
\item The Codazzi equation: if $\nabla$ is the Levi-Civita connection of $I$ and $u,v$ are two
vector fields on $\St$, then $d^\nabla B=0$.
\item The Gauss equation: the curvature of $I$ on $\St$ is equal to $K=-1-\det(B)$. 
\end{itemize}

The second and third fundamental forms of $\St$ are then defined by 
$$ \forall x\in \St\ \forall u,v\in T_x \St\quad \II(u,v)=I(Bu,v),\quad \III(u,v)=I(Bu,Bv)~. $$

\subsection{Globally hyperbolic $\AdS^3$ manifolds}

Let $\mgh$ be a Lorentz 3-dimensional manifold locally modeled on $\AdS^3$. We say that $\mgh$ is 
{\it maximal globally hyperbolic}, or MGH, if:
\begin{itemize}
\item it contains a closed space-like surface ({\it Cauchy surface}),
\item any inextendible time-like curve intersects the Cauchy surface exactly once,
\item it is maximal (under inclusion) among $\AdS^3$ manifolds having those properties, that is,
if $\mgh'$ is another 3-dimensional $\AdS$ manifold having the previous two properties and $i:\mgh\rightarrow \mgh'$ 
is an isometric embedding, then $i$ is onto.
\end{itemize}

Mess realized that some properties of MGH $\AdS^3$ manifolds are remarkably close of those of 
quasifuchsian hyperbolic manifolds. Among the analogies are the following points, which will be
useful below.

\begin{itemize}
\item The space of MGH $\AdS^3$ metrics on a fixed manifold $S\times \R$ is parameterized by the product
of two copies of $\cT$, the Teichm\"uller space of $S$, as with the Bers double uniformization theorem
for quasifuchsian manifolds. In the $\AdS^3$ case this parameterization comes from the holonomy representation
$\rho$ of an MGH AdS structure, which takes values in 
$\isom_0(\AdS^3/\{\pm 1\})=\PSL(2,\R)\times \PSL(2,\R)$, and
therefore splits as two representations $\rho_l,\rho_r$ in $\PSL(2,\R)$. Mess \cite{mess} proves that
those two representations have maximal Euler class, so that they are holonomy representations of
two hyperbolic metrics $h_l, h_r$ on $S$. Any pair $(h_l, h_r)\in  \cT\times \cT$ can be obtained
from a unique MGH $\AdS^3$ structure.
\item A MGH $\AdS^3$ manifold $\mgh$ contains a smallest closed non-empty convex subset, called its convex
core $C(\mgh)$. (A subset $C$ of $\mgh$ is convex if any geodesic segment with endpoints in $C$ is
contained in $C$.) The boundary of $C(\mgh)$ is the disjoint union of two spacelike pleated surfaces, 
except in the ``Fuchsian'' case where $C(\mgh)$ is a totally geodesic surface. Each of those pleated
surfaces has an induced metric which is hyperbolic, and its pleating is encoded by a measured
lamination, as in the quasifuchsian setting. 
\item The complement of $C(\mgh)$ has a unique foliation by convex, space-like surfaces, with constant 
curvature varying monotonically from $-1$ (near the convex core) to $-\infty$ (near the 
initial/final singularity) on each side of the convex core, see \cite{BBZ2}. This is similar
to what happens for quasifuchsian manifolds or more generally hyperbolic ends, see \cite{L5}.
\end{itemize}

\subsection{The duality between convex surfaces in $\AdS^3$ manifolds}

There is a well-known ``projective'' duality (or polarity) between points and hyperplanes in the projective space, or
in the sphere. This duality has a hyperbolic version, which associates to a point in $\Hyp^n$ a space-like hyperplane
in the de Sitter space $\dS^n$, and to an oriented hyperplane in $\Hyp^n$ a point in $\dS^n$, see \cite{HR}. 

A similar duality exists between points and hyperplanes in $\AdS^3$ (or more generally in $\AdS^n$). 
We recall here its definition and its main properties. 
Consider $\AdS^3$ as a quadric in $\R^{2,2}$ as in Section \ref{ssc:ads}.
Every point $x\in \AdS^3$ is the intersection in $\R^{2,2}$
of $\AdS^3$ with a half-line $d$ starting from $0$
on which the bilinear form is negative definite. 
We call $d^\perp$ the oriented hyperplane orthogonal to $d$ in $\R^{2,2}$, so that the induced metric on $d^\perp$
has signature $(1,2)$. The intersection between $d^\perp$ and $\AdS^3$ is the disjoint union of two totally geodesic
space-like planes, one at distance $\pi/2$ in the future of $x$, the other at distance $\pi/2$ in the past of $x$.
We define the dual $x^\dual$ of $x$ as the oriented space-like plane which is the intersection of
$\AdS^3$ with $d^\perp$ at distance $\pi/2$ in the future of $x$.

Conversely, every totally geodesic space-like plane $P$ in $\AdS^3$ is the intersection of
$\AdS^3$ with a hyperplane $H$ of signature $(1,2)$ in $\R^{2,2}$. The orthogonal $H^\perp$ of $H$ then
intersects $\AdS^3$ in two antipodal points, and we define
the dual $P^\dual$ of $P$ as the intersection which is at distance $\pi/2$ in the past of $P$.

This duality relation has a number of useful properties. It is an involution, and the union of the planes dual 
to the points of a plane $P$ is the antipodal of the point dual to $P$. 

Consider now a smooth, space-like, strictly convex surface $\St$ in $\AdS^3$. Denote by $\St^\dual$ the set of points 
which are duals of the support planes of $\St$. The relation between $\St$ and $\St^\dual$ is based on the following
lemma (see e.g. \cite{these} for the analogous statement concerning the duality between $\HH^3$ and $\dS^3$,
the proof is the same in the AdS case).

\begin{lemma} \label{lm:dual}
Let $\St\subset \AdS^3$ be a smooth, space-like, locally strictly convex surface of constant curvature $K\in (-\infty,0)$. 
Then:
\begin{enumerate}
\item The dual of $\St$ is a smooth, locally strictly convex surface $\St^\dual$. 
\item The pull-back of the induced
metric on $\St^\dual$ through the duality map is the third fundamental form of $\St$, and vice versa.
\item If $\St$ is a space-like surface of constant curvature $K\in (-\infty, -1)$ in $\AdS^3$ then its dual $\St^\dual$
is a space-like surface of constant curvature $K^\dual=-K/(K+1)$.
\end{enumerate}
\end{lemma}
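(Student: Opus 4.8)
The plan is to work directly in the quadric model $\AdS^3 = \{x\in\R^{2,2} \mid \langle x,x\rangle_{(2,2)} = -1\}$ and to describe the duality map analytically. For a point $x\in\St$ with future-pointing unit normal $\nor$, the dual plane $x^\dual$ is the totally geodesic plane at distance $\pi/2$ in the future along the normal geodesic. Since time-like geodesics in $\AdS^3$ are the intersections with timelike $2$-planes through $0$ and have length $2\pi$, travelling a distance $\pi/2$ along the normal sends $x$ to the point $\hat{x} := \cos(\pi/2)\,x + \sin(\pi/2)\,\nor = \nor$. Thus the duality map on the surface level is simply $G:\St\to\St^\dual$, $G(x) = \nor(x)$, the (suitably normalized) Gauss map. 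This identification is the conceptual key: all three assertions follow from differentiating $G$.

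First I would establish (2) and the smoothness half of (1) simultaneously. Differentiating $G(x)=\nor(x)$ in a tangent direction $v\in T_x\St$ gives $dG(v) = D_v\nor = -Bv$, where $B$ is the shape operator of $\St$ and $D$ the ambient connection; here $D_v\nor$ is tangent to $\St$ because $\nor$ has constant ($=+1$) norm. Hence $dG = -B$, which is invertible precisely because $\St$ is locally strictly convex ($B$ nondegenerate, indeed positive by the convexity convention). This shows $G$ is a local immersion, so $\St^\dual$ is a smooth surface, and it lets me compute the induced metric on $\St^\dual$ pulled back by $G$:
\[
  G^*I^\dual(u,v) = \langle dG(u), dG(v)\rangle = \langle Bu, Bv\rangle = I(Bu,Bv) = \III(u,v),
\]
using that $T_x\St$ and $T_{G(x)}\St^\dual$ carry the same ambient metric restricted from $\R^{2,2}$. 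This is exactly statement (2); the ``vice versa'' follows from the involutivity of the duality (the dual of $\St^\dual$ is $\St$, with the roles of $I$ and $\III$ swapped, since the shape operator of the dual surface will turn out to be $B^{-1}$).

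For (3) I would combine the Gauss equation with the curvature computation for $\III$. The induced metric on $\St^\dual$ is $\III = I(B\bullet,B\bullet)$, so its curvature can be related to $K = \mathrm{curv}(I)$ by the standard formula for the curvature of the third fundamental form of a surface of constant curvature: using $K = -1-\det(B)$ (the Gauss equation as recalled in Section \ref{ssc:ads}) together with the fact that $\III$ is the pull-back metric whose shape operator is $B^{-1}$, the ambient curvature $-1$ enters again through the Gauss equation for $\St^\dual$, giving $K^\dual = -1 - \det(B^{-1}) = -1 - 1/\det(B)$. Substituting $\det(B) = -K-1$ yields $K^\dual = -1 - 1/(-K-1) = -K/(K+1)$, as claimed. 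The hypothesis $K\in(-\infty,-1)$ guarantees $\det(B) = -K-1 > 0$, so $B$ is genuinely invertible and $\St^\dual$ is again space-like and strictly convex, closing the loop with (1).

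The main obstacle I anticipate is \emph{not} the algebra but the careful bookkeeping of signs, orientations, and the time-orientation conventions: one must verify that moving $\pi/2$ toward the future (rather than the past) produces the claimed dual, that $\nor$ is future-directed with the correct normalization so that $\langle\nor,\nor\rangle = +1$ in the $(2,2)$ metric, and that the convexity convention makes $B$ positive definite so that $\St^\dual$ is space-like with the induced $(2,2)$-metric of the right signature. These conventions must be pinned down exactly so that the curvature formula comes out with the stated sign; once they are fixed, the differential identity $dG = -B$ does all the work. I would therefore devote the bulk of the write-up to setting up normals and orientations precisely, after which (1)--(3) are short computations, citing \cite{these} for the strictly parallel $\Hyp^3$/$\dS^3$ argument where convenient.
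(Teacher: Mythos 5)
Your plan is sound, and it is essentially \emph{the} proof: note that the paper does not actually prove Lemma \ref{lm:dual} --- it refers to \cite{these} for the analogous duality between $\Hyp^3$ and $\dS^3$ and remarks that the AdS proof is the same --- and the argument in that reference is exactly the Gauss-map computation you outline: $dG=-B$, hence $G^*I^\dual=I(B\bullet,B\bullet)=\III$, the shape operator of the dual is $\pm B^{-1}$, and the Gauss equation $K=-1-\det B$ applied on both sides gives $K^\dual=-1-1/\det B=-K/(K+1)$.

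One concrete correction, since you explicitly announce you will ``verify that $\langle \nor,\nor\rangle=+1$'': this is false, and that verification step would stall as written. The normal to a \emph{space-like} surface in the Lorentzian manifold $\AdS^3$ is \emph{time-like}, so the correct normalization is $\langle \nor,\nor\rangle_{(2,2)}=-1$. This sign is exactly what your key formula requires: $\gamma(t)=\cos(t)\,x+\sin(t)\,\nor$ satisfies $\langle\gamma(t),\gamma(t)\rangle=-\cos^2 t+\sin^2 t\,\langle\nor,\nor\rangle$, which stays equal to $-1$ (so that $\gamma$ lies in $\AdS^3$ and is the closed time-like normal geodesic of length $2\pi$) precisely when $\langle\nor,\nor\rangle=-1$; with a space-like normal the normal flow would be governed by $\cosh$ and $\sinh$ and there would be no point at distance $\pi/2$ at all. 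Fortunately every computation you then perform ($dG=-B$, the pull-back identity, the curvature formula) implicitly uses the time-like normalization and is correct as written, so the fix is purely local. Two smaller points to incorporate in the write-up: (i) since $\St^\dual$ is defined as the set of duals of the \emph{support planes} of $\St$ (not of its points), add the one-line check that the dual point of the tangent plane at $x$ is $\pm\nor(x)$: the tangent plane is $\AdS^3\cap\mathrm{span}(x,T_x\St)$, and the orthogonal of that span in $\R^{2,2}$ is the line $\R\,\nor(x)$ --- this is what legitimizes replacing the duality map by the Gauss map; (ii) tangency of $D_v\nor$ to $\St$ uses both $\langle\nor,\nor\rangle=\mathrm{const}$ and $\langle\nor,x\rangle=0$, the latter giving $\langle D_v\nor,x\rangle=-\langle\nor,v\rangle=0$.
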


Consider now a smooth, space-like strictly convex surface $S$ in a MGH $\AdS^3$ manifold $\mgh$. The lift
of $S$ to the universal cover of $\mgh$ can be identified with a surface $\St$ in $\AdS^3$, invariant
under an action $\rho:\pi_1 S\to \isom(\AdS^3)$. The dual surface $\St^\dual$ is then also invariant
under $\rho$ so that it corresponds to a surface $S^\dual$ in $\mgh$.

\begin{lemma} \label{lm:GHdual}
If $\mgh$ is a MGH $\AdS^3$ manifold and $S$ is a space-like, past-convex surface in the future of the
convex core of $\mgh$, then its dual $S^\dual$ is space-like, future-convex surface in the past of 
the convex core. 
\end{lemma}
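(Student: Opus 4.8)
The plan is to pass to the universal cover, dispatch the space-like and local-convexity assertions via the \emph{local} content of Lemma~\ref{lm:dual}, and then spend the effort on the two genuinely causal assertions: that duality turns past-convexity into future-convexity, and that it sends a surface in the future of the convex core to one in its past.

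First I would lift $S$ to a $\rho(\pi_1 S)$-invariant space-like surface $\St\subset\AdS^3$, where $\rho$ is the holonomy of $\mgh$. Because the duality recalled above is defined purely from the ambient geometry, it is equivariant under the isometry group, $(g\cdot\St)^\dual=g\cdot\St^\dual$ for $g\in\isom(\AdS^3)$; hence $\St^\dual$ is again $\rho(\pi_1 S)$-invariant and descends to the surface $S^\dual\subset\mgh$ of the statement. The argument proving the first two points of Lemma~\ref{lm:dual} is local and never uses the constant-curvature hypothesis, so it applies verbatim to our $\St$ and shows that $\St^\dual$ is a smooth, space-like, locally strictly convex surface. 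It then remains only to pin down the time-orientation of its convexity and its location.

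For the convexity type I would exploit the involutivity of the duality and the surface correspondence it induces. The points of $\St^\dual$ are the duals $(T_x\St)^\dual$ of the tangent planes of $\St$; applying the same description to $\St^\dual$ and using $\St^{\dual\dual}=\St$ shows that the tangent plane of $\St^\dual$ at the point $(T_x\St)^\dual$ is precisely the dual plane $x^\dual$ of $x$. Now $x^\dual$ lies at time-like distance $\pi/2$ in the future of $x$, whereas $(T_x\St)^\dual$ lies at distance $\pi/2$ in the past of $T_x\St$, hence in the past of $x$; so each tangent plane of $\St^\dual$ lies to the future of the surface, and by the local strict convexity just established $\St^\dual$ stays on the past side of its tangent planes. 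This is exactly the statement that $\St^\dual$ is future-convex, and it makes transparent how the $\pi/2$ time-shift built into the duality converts past-convexity into future-convexity.

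The main obstacle is the last assertion, that $\St^\dual$ (equivalently $S^\dual$) lies in the past of the convex core. Here I would use the foliation of the future of the convex core by constant-curvature surfaces. Since $S$ is compact and contained in the open future of the core, a barrier argument yields two leaves $\Sigma_-,\Sigma_+$ with $\Sigma_-$ in the past of $S$ and $\Sigma_+$ in its future, so that $S$ is squeezed between them. By Lemma~\ref{lm:dual}(3) the duals $\Sigma_-^\dual,\Sigma_+^\dual$ are again constant-curvature surfaces; I would argue, using the convexity reversal above together with the uniqueness of the constant-curvature foliation, that they are leaves of the foliation of the \emph{past} of the core. It would then remain to show that duality reverses the causal order of the leaves, so that $S^\dual$ is trapped between $\Sigma_+^\dual$ and $\Sigma_-^\dual$ and hence lies in the past of the core; equivariance transfers the conclusion to $\mgh$. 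I expect the genuinely delicate points to be exactly this monotonicity of the duality along the nesting of leaves and the identification of the dual of a future leaf as a past leaf, since both hinge on tracking the causal inequalities carefully and on matching the paper's sign conventions for past- and future-convexity.
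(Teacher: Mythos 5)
The paper never proves this lemma: it sits in the background Section~2, which the authors explicitly describe as recalling facts without proof, so your proposal can only be judged against the mathematics itself. On that score, the easy parts (equivariance, descent, and the local assertions borrowed from Lemma~\ref{lm:dual}(1)--(2)) are fine, but both of the steps you correctly identify as the substance of the statement have genuine gaps.

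The convexity-reversal step is not a valid argument as written. With the paper's conventions (the dual of a point is the plane at distance $\pi/2$ in its \emph{future}, the dual of a plane is the point at distance $\pi/2$ in its \emph{past}) one has $(T_x\St)^\dual=-n(x)$, where $n$ is the future unit normal, and the tangent plane of $\St^\dual$ there is the sheet of $x^\perp\cap\AdS^3$ through $-n(x)$, i.e.\ $(-x)^\dual$, \emph{not} $x^\dual$; iterating gives $\St^{\dual\dual}=-\St$, so the identity $\St^{\dual\dual}=\St$ you invoke fails in $\AdS^3$ --- this is exactly the antipodal twist the paper flags when it remarks that the planes dual to the points of a plane $P$ pass through the \emph{antipodal} of $P^\dual$. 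Passing to $\AdS^3/\{\pm 1\}$ restores the involution but collapses the distinction between ``distance $\pi/2$ in the future'' and ``in the past'' on which your reasoning relies. Worse, the inference you draw, ``each tangent plane of $\St^\dual$ lies to the future of the surface'', cannot be meaningful: a tangent plane contains its point of tangency; and local strict convexity only says the dual surface lies locally on \emph{one} side of its tangent planes, whereas deciding \emph{which} side is precisely what distinguishes future- from past-convexity. The clean repair is a direct computation in the quadric model: parameterize $\St^\dual$ by $\sigma^\dual=-n$; then strict past-convexity of $\St$ reads $\langle x',n(x)\rangle_{(2,2)}>0$ for all $x\neq x'$ in $\St$, while future-convexity of $\St^\dual$ reads $\langle -n(x'),x\rangle_{(2,2)}<0$, and these are literally the same family of inequalities after swapping $x$ and $x'$.

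The location step is left open by your own admission and, as designed, is circular. You propose to recognize the duals $\Sigma_{+}^{\dual},\Sigma_{-}^{\dual}$ of two leaves as leaves of the constant-curvature foliation of the \emph{past} of the core by invoking the uniqueness in Theorem~\ref{tm:foliation}; but that uniqueness statement only concerns surfaces already known to lie in the past of the core, so you would need the very localization you are trying to prove before you may apply it. (Note that the paper's Section~9 derives the ``dual of a future leaf is a past leaf'' correspondence \emph{from} the present lemma, so the logical dependency runs the other way.) A non-circular route avoids the foliation entirely: once future-convexity of $S^\dual$ is known \emph{globally} (local convexity implies global convexity for closed space-like surfaces here --- a fact from Mess's theory which is also what guarantees $\St^\dual$ lies in the domain of dependence and hence descends to a surface of $\mgh$ at all, a point both you and the paper's preamble gloss over), the closure of the future side of $S^\dual$ is a non-empty closed convex subset of $\mgh$; by minimality of the convex core it contains the core, which is exactly the assertion that $S^\dual$ lies in the past of the core.
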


\subsection{Constant curvature foliations in MGH $\AdS^3$ manifolds}

An important fact used below is the existence of a foliation of the complement of the convex
core of a MGH $\AdS^3$ manifold by surfaces of constant curvature. This is described in the following
result obtained by Barbot, B\'eguin and Zeghib.

\begin{theorem}[Barbot, B\'eguin, Zeghib \cite{BBZ2}] \label{tm:foliation}
Let $\mgh$ be a MGH $\AdS^3$ manifold, and let $K\in (-\infty, -1)$. There is a unique
past-convex (resp. future-convex) closed space-like surface in the future (resp. past)
of the convex core, with constant curvature $K$. Those surfaces form a foliation of
the complement in $\mgh$ of the convex core.
\end{theorem}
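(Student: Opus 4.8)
The plan is to treat the future component of $\mgh\setminus C(\mgh)$ and to recover the past component by the time-reversal symmetry of $\AdS^3$. Fix $K\in(-\infty,-1)$. By the Gauss equation $K=-1-\det(B)$, a spacelike surface whose induced metric has constant curvature $K$ has shape operator with $\det(B)=-1-K>0$, so it is automatically strictly convex; prescribing constant curvature $K$ is therefore equivalent to the Monge--Amp\`ere type equation $\det(B)=-1-K$ for the surface. I would first realize candidate surfaces as graphs $\{T=u\}$ over a fixed Cauchy surface, using the cosmological time function $T$ on the future component (level sets of $T$, or equivalently the surfaces equidistant from $\partial C(\mgh)$, providing a convenient reference foliation). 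The equation then becomes a scalar equation for $u$ of Monge--Amp\`ere type which is elliptic precisely because $B>0$, and which is invariant under the holonomy $\rho(\pi_1 S)$.

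For existence I would combine barriers with a continuity (or Leray--Schauder degree) argument. The convex core boundary $\partial C(\mgh)$ serves as a past barrier: as a pleated surface it has curvature $-1>K$ in the support sense, so any constant-$K$ surface must lie strictly in its future. Conversely, the level sets of the cosmological time close to the initial singularity have curvature tending to $-\infty$, providing future barriers between which the solution is confined. The three a priori estimates needed are a $C^0$ bound (confinement between these barriers), a $C^1$ bound guaranteeing that the competing surfaces stay uniformly spacelike, i.e.\ that their tangent planes remain uniformly away from the light cone, and a $C^2$ bound on $B$. With these in hand, standard elliptic theory together with continuity in $K$ (openness from the invertibility of the linearization, closedness from the estimates, and nonemptiness from the explicit Fuchsian case where the surfaces are umbilic) produces a solution for every $K$.

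For uniqueness I would invoke the geometric maximum principle. Lifting two constant-$K$ surfaces to $\AdS^3$ as $\rho$-invariant graphs, if they were distinct the difference of their graph functions, being $\rho(\pi_1 S)$-invariant, would descend to the closed surface $S$ and hence attain an extremum; at such a point the two surfaces are tangent with one locally in the future of the other. Comparing shape operators at the contact point, strict convexity together with $\det(B)=-1-K$ forces the two second fundamental forms to be strictly ordered, contradicting that both surfaces have the same $K$. The same tangency argument applied to surfaces of curvatures $K<K'$ shows that the constant-$K$ surface lies strictly in the future of the constant-$K'$ surface, which gives the monotone nesting.

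Finally, monotone nesting together with the barrier analysis shows that the surfaces $S_K$ are pairwise disjoint, accumulate on $\partial C(\mgh)$ as $K\to-1^+$ and on the initial singularity as $K\to-\infty$, and therefore sweep out the entire future component of $\mgh\setminus C(\mgh)$; smooth dependence on $K$ follows from the implicit function theorem, whose hypothesis (invertibility of the linearized operator) is exactly the infinitesimal form of the maximum principle used above. I expect the principal obstacle to be the gradient/spacelikeness estimate: because the prescribed-curvature equation degenerates as the surface approaches a lightlike position, one must use the global causal structure — the convex core as an obstacle on one side and the cosmological time foliation on the other — to keep the solution uniformly spacelike and uniformly away from both $\partial C(\mgh)$ and the singularity.
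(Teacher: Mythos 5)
You should know at the outset that the paper does not prove this statement: it is quoted verbatim, with attribution, as background from Barbot--B\'eguin--Zeghib \cite{BBZ2}, and is used purely as a black box in Section 9 (in Lemma \ref{lm:KK*}, in the definition of $\Phi_{K_-,K_+}$, and in the proof of Theorem \ref{tm:adsI}). So there is no proof in the paper to compare yours against; the only meaningful comparison is with \cite{BBZ2} itself, whose general strategy (prescribed Gauss curvature as an elliptic Monge--Amp\`ere problem, barriers on both sides, maximum-principle arguments for uniqueness and nesting) your sketch does resemble in outline.

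Judged on its own terms, your sketch has one structural error. The existence step ``continuity in $K$ \dots\ nonemptiness from the explicit Fuchsian case'' does not parse: the Fuchsian example is a \emph{different spacetime}, not a value of $K$ in the given manifold $\mgh$. For a fixed non-Fuchsian $\mgh$ there is no curvature $K_0$ at which a $K_0$-surface is already known to exist --- that is exactly what is being proved --- so a continuity path in $K$ alone has no anchor. The repair is to deform in the pair (spacetime, $K$): fix $K$, join the Fuchsian holonomy to that of $\mgh$ through MGH structures (the deformation space is connected, being $\cT\times\cT$ by Mess \cite{mess}), and propagate the umbilic Fuchsian solution along the path, with barriers and a priori estimates uniform along the path; alternatively one must use a genuinely different existence mechanism (Perron-type exhaustion, or Gerhardt-style results on prescribed curvature in Lorentzian manifolds). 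Note that this ``deform to a known model and track a degree'' scheme is precisely how the present paper proves Theorem \ref{tm:adsI}.

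There is also a recurring soft spot: your comparison-principle claims are stated with Riemannian intuition, whereas in Lorentzian signature several of these inequalities come out \emph{consistent} rather than contradictory. At an interior extremum of the difference of graph functions you get only a weak ordering of shape operators; since both are positive definite with the same determinant $-1-K$, weak ordering plus equal determinant forces \emph{equality} at that point, not the ``strict ordering'' you assert, so no contradiction results. Uniqueness therefore needs either Hopf's strong maximum principle for the linearized equation (whose zeroth-order coefficient does have the favorable sign, but this is exactly where the Lorentzian sign conventions must be checked), or the sliding argument against the equidistants of one surface, whose curvature is strictly monotone by the AdS Riccati equation $\dot B=B^2+\En$. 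Similarly, the claim that a $K$-surface cannot touch the pleated boundary does not follow from the support-sense inequality $-1>K$: a smooth past-convex surface of constant curvature $K<-1$ \emph{can} be tangent to a totally geodesic plane (past distance spheres are examples); what actually excludes contact is the convexity squeeze --- the surface lies in the past of its tangent plane and in the future of the flat piece of $\partial C(\mgh)$, hence would coincide locally with a plane --- or again a strong maximum principle. Finally, a small slip: for the future component the barrier near the singularity is the \emph{final} singularity, not the initial one.
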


Similar statements hold in the de Sitter and the Minkowski case, see \cite{BBZ2}.

\subsection{The landslide flow}

The landslide flow can be defined in at least three related ways, 
each of which
can be convenient in some cases:
\begin{itemize}
\item in terms of harmonic maps and holomorphic quadratic differentials,
\item using minimal Lagrangian maps,
\item in terms of 3-dimensional globally hyperbolic $\AdS$ manifolds.
\end{itemize}
We briefly recall here two definitions, one in terms of minimal Lagrangian maps,
the other in terms of 3-dimensional $\AdS$ manifolds.
More details can be found in \cite{cyclic}.

Let $h,h^\dual$ be two hyperbolic metrics on $S$. We have recalled above that there is a
unique minimal Lagrangian map $m:(S,h) \to (S,h^\dual)$ isotopic to the identity. 
This map can be decomposed as $m=f^\dual\circ f^{-1}$, where $f:(S,c)\to (S,h)$
and $f^\dual:(S,c)\to (S,h^\dual)$ are harmonic maps isotopic to the identity, for the conformal
structure $c$ on $S$, and $f$ and $f^\dual$ have opposite Hopf differentials $\qd_f$ and $\qd_{f^\dual}=-\qd_f$.
For each $e^i\theta\in S^1$, there is a unique hyperbolic metric $h_\theta$ on $S$ 
such that $e^{i\theta}\qd_f$ is the Hopf differential of the harmonic map isotopic to
the identity from $(S,c)$ to $(S, h_\theta)$, and a unique hyperbolic metric 
$h_\theta^\dual$ such that $-e^{i\theta}\qd_f$ is the Hopf differential of the harmonic map isotopic to
the identity from $(S,c)$ to $(S, h_\theta^\dual)$. Then $(h_\theta,h^\dual_\theta)=\Land_{e^{i\theta}}(h,h^\dual)$ 
is the image of $(h,h^\dual)$ by the landslide flow with parameter $e^{i\theta}$.

In terms of AdS 
geometry, the definition is the following. Let again $h,h^\dual$ be hyperbolic
metrics on $S$ and let $e^{i\theta}\in S^1$. There is a unique equivariant embedding of 
$\St$ in $\AdS^3$ with induced metric $\cos^2(\theta/2)h$ and third fundamental form 
$\sin^2(\theta/2)h^\dual$. The corresponding representation $\rho:\pi_1S\to \isom(\AdS^3)$ is
the holonomy representation of a MGH $\AdS^3$ manifold $\mgh$ (so that $\mgh$ contains a space-like
surface isometric to $\cos^2(\theta/2)h$ with third fundamental form equal to 
$\sin^2(\theta/2)h^\dual$). Then $(h_\theta, h^\dual_\theta)$ are the left and right hyperbolic
metrics of $\mgh$. 

\subsection{Hyperbolic ends}

An example of a hyperbolic end is an end of a quasifuchsian hyperbolic manifold, that is,
a connected component of the complement of the convex core in a quasifuchsian manifold.
More generally, a hyperbolic end $\hend$ is a 3-dimensional manifold homeomorphic
to $S\times \R_{>0}$, with a non-complete hyperbolic metric $g$ such that:
\begin{itemize}
\item $g$ is complete on the end of $S\times \R_{>0}$ corresponding to infinity,
\item $(\hend,g)$ has a metric completion for which the boundary corresponding
to $S\times \{ 0\}$ is a concave pleated surface.
\end{itemize}

Hyperbolic ends will appear in relation to the smooth grafting map, which is to landslides
as the grafting map is to earthquakes. 

A key result that we will use is that any hyperbolic end has a unique foliation by constant
curvature surfaces, with the curvature varying between $-1$ (close to the pleated surface
boundary) to $0$ (near the complete boundary), see \cite{L5}.

Given a hyperbolic end $\hend$, its boundary at infinity is the connected component of its 
boundary corresponding to $S\times \{ \infty\}$. This boundary at infinity $\partial_\infty \hend$
is the quotient of a domain in $\C\Pr^1=\partial_\infty \Hyp^3$ by an action of $\pi_1S$ by
complex projective transformations, so that $\partial_\infty \hend$ is endowed with a 
complex projective structure.

\subsection{The smooth grafting maps} \label{ssc:smooth-grafting}

We will consider two versions of smooth grafting:
\begin{itemize}
\item the map $SGr':\R_{>0}\times\cT\times \cT \to \cCP$ sending two hyperbolic metrics and
a real parameter to a complex projective structure on $S$,
\item the map  $sgr':\R_{>0}\times\cT\times \cT\to \cT$ which is the composition of 
$SGr'$ with the projection from $\cCP$ to $\cT$ sending a complex projective structure
to the underlying complex structure.
\end{itemize}

The map $SGr'$ can be defined using 3-dimensional geometry as follows. Let again $h,h^\dual\in \cT$
be two hyperbolic metrics on $S$, and let $s>0$. 
Up to global isometry there is a unique equivariant convex embedding
of $\St$ in $\Hyp^3$ such that the induced metric is $\cosh^2(s/2)\tilde{h}$ and the third fundamental form is 
$\sinh^2(s/2)\tilde{h}^\dual$. The corresponding representation is the holonomy representation of a 
hyperbolic end $\hend$ (so that $\hend$ contains a convex surface with induced metric $\cosh^2(s/2)h$ and third fundamental form
$\sinh^2(s/2)h^\dual$). We define $SGr'_s(h,h^\dual)$ as the complex projective structure on $\partial_\infty \hend$.


\section{First order computations}\label{sec:first}

Let us fix a normalized pair of hyperbolic metrics  $(h, h^\dual)$ and denote by $J$, $J^\dual$
the corresponding complex structures. Let us fix also a holomorphic quadratic  differential
$\qd\in\mathcal Q(J)$ and let us consider the family of hyperbolic metrics
\begin{equation}\label{eq:wolf}
   h_t= t\qd+e(t)h+t\bar\qd
\end{equation}
given by (\ref{eq:harmo}).

We will denote by $\alpha_t$ the positive self-adjoint operator such that
\[
  h_t=h(\alpha_t\bullet, \alpha_t\bullet)~.
\]
Notice that 
\begin{equation}\label{eq:alpha}
\alpha_t^2=e(t)\En+2t\nu_{\qd}~.
\end{equation}

Moreover we will denote by $b_t$ the Labourie operator of the pair $(h_t, h^\dual)$.
Let us stress that in general $(h_t, h^\dual)$ is not a normalized pair of hyperbolic metrics,
so  $h_t^\dual=h_t(b_t\bullet, b_t\bullet)$ does not coincide with $h^\dual$, but there
is a continuous family of diffeomorphisms $m_t:S\rightarrow S$ such that
\begin{equation}\label{eq:hypisotopic}
  h^\dual_t=m_t^*(h^\dual)~.
\end{equation}

In this section we will point out some relations between
the first variation $\alpha_t$ and the first variation of $b_t$.
This technical computation will be the key tool
to prove Theorems \ref{tm:hamiltonian} and \ref{tm:cvx}.

\subsection{First order variation of $\alpha_t$}

Denote by $\dot\alpha$ the derivative  of $\alpha_t$ at $t=0$.
By (\ref{eq:alpha}) we simply see that
\begin{equation}\label{eq:dota0}
   \dot\alpha=\nu_{\qd}~.
\end{equation}
In particular we deduce that $\dot\alpha$ is a self-adjoint operator such that
\begin{equation}\label{eq:dota}
\tr\dot\alpha=0~,\qquad  d^{\nabla}\dot\alpha=0~.
\end{equation}

\begin{remark}
The computation we  will make in this section only depend on the properties
above of $\dot\alpha$. That is, 
the 
results of this section
are valid for any family of hyperbolic metrics  $h_t=h(\alpha'_t, \alpha'_t)$ 
supposing that $\dot\alpha'$ 
verifies 
(\ref{eq:dota}).

On the other hand, in order to compute the Hessian of $F$ with respect to
the Weil-Petersson metric, it will be necessary to use the deformation given by (\ref{eq:wolf}).
\end{remark}

\subsection{First order variation of the area form of $h_t$}

Let $\da_t=\da_{h_t}$ denote the area form of $h_t$ and let $\frac{d}{dt}(\da_t)$ 
be 
its time-derivative.
Since we have
\[
   \da_t=\det(\alpha_t)\da_h~,
\]
we easily deduce that
\begin{lemma} \label{lm:omegadot}
$ \frac{d}{dt}(\da_t)=\tr(\alpha_t^{-1}\dot\alpha_t)\da_t$.
\end{lemma}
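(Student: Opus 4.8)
The plan is to reduce the statement to the classical Jacobi formula for the derivative of a determinant, applied fiberwise on $TS$. Since $\da_t=\det(\alpha_t)\da_h$ and $\da_h$ does not depend on $t$, the entire content is the computation of $\frac{d}{dt}\det(\alpha_t)$ at each point $p\in S$, where $\alpha_t$ is viewed as a smooth one-parameter family of invertible endomorphisms of the $2$-dimensional vector space $T_pS$.

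First I would recall Jacobi's formula: for any smooth family $A_t$ of invertible endomorphisms of a finite-dimensional vector space, $\frac{d}{dt}\det(A_t)=\det(A_t)\,\tr(A_t^{-1}\dot{A}_t)$. One clean way to obtain this is to write $\det(A_t)=\exp(\tr\log A_t)$ locally in $t$ --- legitimate here because $\alpha_t$ is positive, hence admits a well-defined logarithm --- and to differentiate using $\frac{d}{dt}\tr\log A_t=\tr(A_t^{-1}\dot{A}_t)$; alternatively one expands $\det(A_t+\epsilon\dot{A}_t)$ to first order in $\epsilon$ via cofactors. Either argument is purely algebraic and holds pointwise on $S$.

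Applying this identity with $A_t=\alpha_t$ gives $\frac{d}{dt}\det(\alpha_t)=\det(\alpha_t)\,\tr(\alpha_t^{-1}\dot\alpha_t)$ as an equality of functions on $S$. Multiplying by the fixed area form $\da_h$ and using the relation $\da_t=\det(\alpha_t)\da_h$ once more to reassemble the right-hand side yields $\frac{d}{dt}(\da_t)=\tr(\alpha_t^{-1}\dot\alpha_t)\,\det(\alpha_t)\,\da_h=\tr(\alpha_t^{-1}\dot\alpha_t)\,\da_t$, which is exactly the claim.

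There is essentially no obstacle here: the only point meriting a word of justification is that $\alpha_t$ stays invertible along the deformation, which is automatic since $\alpha_t$ is positive and self-adjoint by construction, so that $\alpha_t^{-1}$ is well-defined and depends smoothly on $t$. Everything else is the standard determinant-derivative identity applied fiberwise on $TS$.
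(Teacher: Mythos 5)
Your proof is correct and follows essentially the same route as the paper, which simply notes $\da_t=\det(\alpha_t)\,\da_h$ and deduces the claim from the standard derivative-of-determinant (Jacobi) formula. You merely make explicit the fiberwise Jacobi identity and the invertibility of $\alpha_t$ that the paper leaves implicit.
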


Notice in particular that $\frac{d}{dt}(\da_t)|_{t=0}=0$.

\subsection{First order variation of $b$}

In order to get information about $\dot b$ at $t=0$ we will differentiate the identities satisfied
by $b_t$. In particular we have
\begin{itemize}
\item $\det b_t=1$,
\item $b_t$ is $h_t$-self-adjoint,
\item $d^{\nabla^t}b_t=0$, where $\nabla^t$ is the Levi-Civita connection for $h_t$,
\item $h_t(b_t\bullet, b_t\bullet)=m_t^*(h^\dual)$, where $m_t$ is a smooth family
of diffeomorphisms of $S$ such that $m_0=\En$.
\end{itemize}

Differentiating the first identity, we get
\begin{equation}\label{eq:dotb1}
  \tr(b^{-1}\dot b)=0~.
\end{equation}

About the second property, notice that the fact that $b_t$ is $h_t$-self-adjoint is equivalent to
requiring 
that $\tr (J_tb_t)=0$, where $J_t$ is the complex structure compatible with $h_t$.
Differentiating this identity and using that $J_t=\alpha_t^{-1}J\alpha_t$, and that
$J\dot\alpha=-\dot\alpha J$ one finds:

\begin{lemma}\label{lm:sa}
$\displaystyle\tr (J\dot b)=-2\tr (J\dot\alpha b)$.
\end{lemma}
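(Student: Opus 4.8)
The plan is to differentiate at $t=0$ the identity $\tr(J_t b_t)=0$, which, as recalled just above, encodes exactly the $h_t$-self-adjointness of $b_t$. By the Leibniz rule, $\frac{d}{dt}\tr(J_t b_t)\big|_{t=0}=\tr(\dot J\, b)+\tr(J\,\dot b)=0$, where I abbreviate $J=J_0$ and $b=b_0$. Hence the whole statement reduces to computing the first variation $\dot J$ of the compatible complex structure and rewriting the term $\tr(\dot J\, b)$; the identity to be proved is then nothing but $\tr(J\,\dot b)=-\tr(\dot J\, b)$ with $\dot J$ evaluated explicitly.

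To compute $\dot J$ I would differentiate the relation $J_t=\alpha_t^{-1}J\alpha_t$. Since $h_0=h$ forces $e(0)=1$, equation \eqref{eq:alpha} gives $\alpha_0=\En$, and moreover $\frac{d}{dt}\alpha_t^{-1}\big|_{t=0}=-\dot\alpha$. The product rule therefore yields $\dot J=-\dot\alpha\,J+J\,\dot\alpha$. The key algebraic input is the anticommutation $J\dot\alpha=-\dot\alpha J$, which holds because $\dot\alpha=\nu_\qd$ is a Beltrami operator (i.e. $\C$-antilinear, equivalently $h$-self-adjoint and traceless) by \eqref{eq:dota0}. This turns $-\dot\alpha J$ into $J\dot\alpha$, so the two summands coincide and $\dot J=2J\dot\alpha$, in agreement with the general formula \eqref{eq:belt}.

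Substituting $\dot J=2J\dot\alpha$ into $\tr(J\,\dot b)=-\tr(\dot J\, b)$ gives at once $\tr(J\dot b)=-2\tr(J\dot\alpha\, b)$, which is the claim. The computation is essentially routine, and the only point demanding care is the evaluation of $\dot J$: one must simultaneously use that $\alpha_t$ is normalized to $\En$ at $t=0$ and exploit the antilinearity of $\dot\alpha$ to collapse $-\dot\alpha J+J\dot\alpha$ into the single term $2J\dot\alpha$. Without the anticommutation the expression would not simplify, and both the single-term form and the clean factor $2$ in the statement would be lost.
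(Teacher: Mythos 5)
Your proof is correct and follows exactly the paper's route: differentiating $\tr(J_t b_t)=0$ at $t=0$, computing $\dot J$ from $J_t=\alpha_t^{-1}J\alpha_t$ with $\alpha_0=\En$, and using the anticommutation $J\dot\alpha=-\dot\alpha J$ to obtain $\dot J=2J\dot\alpha$. You have merely written out in full the computation the paper leaves as a one-line sketch.
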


In order to get the infinitesimal information by the last properties
it is convenient to introduce the operator $\psi_t=\alpha_tb_t$. Notice that we have
\begin{equation}\label{eq:psi}
   h^\dual_t=h(\psi_t, \psi_t)~,
\end{equation}
and the infinitesimal deformation of $\psi$ at $t=0$ is simply $\dot\psi=\dot\alpha b+\dot b$.

By the fact that $h^\dual_t$ is a trivial family of hyperbolic metrics
 we deduce the following relation.

\begin{lemma}\label{lm:triv}
There exist a family of vector fields $Z_t$  on $S$ and a family of
functions $\skew_t$ on $S$ such that
\begin{equation}\label{eq:X}
\nabla^{\dual,t} Z_t +\skew_t J^\dual_t=\psi_t^{-1}\dot\psi_t
\end{equation}
where $J^\dual_t$ is the complex structure for $h^\dual_t$ and $\nabla^{\dual,t}$
is the Levi-Civita connection of $h_t^\dual$.
\end{lemma}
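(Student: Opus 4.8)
Fix $t$ and set $A_t:=\psi_t^{-1}\dot\psi_t$, an endomorphism of $TS$; the goal is to split $A_t$ as the covariant derivative of a vector field plus a multiple of $J_t^\dual$. First I would differentiate the defining relation \eqref{eq:psi}, $h_t^\dual(X,Y)=h(\psi_t X,\psi_t Y)$, in the parameter. Writing $\dot\psi_t=\psi_t A_t$ and using $h(\psi_t\,\bullet\,,\psi_t\,\bullet\,)=h_t^\dual$, one obtains
\[
\dot h_t^\dual(X,Y)=h_t^\dual(A_t X,Y)+h_t^\dual(X,A_t Y),
\]
so that $\dot h_t^\dual$ is precisely twice the $h_t^\dual$-symmetrization of the operator $A_t$.

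Next I would exploit that the family $h_t^\dual$ is trivial. By \eqref{eq:hypisotopic} we have $h_t^\dual=m_t^*h^\dual$ for a smooth family of diffeomorphisms $m_t$. Differentiating this at the fixed time $t$ and using the naturality of the Lie derivative for a time-dependent flow, one gets $\dot h_t^\dual=\mathcal{L}_{Z_t}h_t^\dual$, where $Z_t$ is the pullback by $m_t$ of the velocity field $\dot m_t\circ m_t^{-1}$. The Killing-operator formula for the Levi-Civita connection $\nabla^{\dual,t}$ of $h_t^\dual$ then reads
\[
(\mathcal{L}_{Z_t}h_t^\dual)(X,Y)=h_t^\dual(\nabla^{\dual,t}_X Z_t,Y)+h_t^\dual(X,\nabla^{\dual,t}_Y Z_t),
\]
exhibiting $\dot h_t^\dual$ as twice the $h_t^\dual$-symmetrization of the operator $X\mapsto\nabla^{\dual,t}_X Z_t$.

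Comparing the two expressions for $\dot h_t^\dual$ shows that $A_t$ and $\nabla^{\dual,t}Z_t$ have the same $h_t^\dual$-symmetric part, so their difference $A_t-\nabla^{\dual,t}Z_t$ is $h_t^\dual$-antisymmetric. In dimension $2$ the space of $g$-antisymmetric endomorphisms is one-dimensional, spanned pointwise by the compatible complex structure; hence $A_t-\nabla^{\dual,t}Z_t=\skew_t\,J_t^\dual$ for a unique function $\skew_t$ on $S$, which is exactly \eqref{eq:X}. The one point requiring care is the second step---that the variation of the isotopic family $h_t^\dual$ is a genuine Lie derivative and that the associated $Z_t$ is smooth in the surface variables---but this is immediate from the smoothness of $m_t$ in \eqref{eq:hypisotopic}; everything else is pointwise linear algebra on $S$. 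I note that the argument uses only the defining property \eqref{eq:psi} and the triviality of the family, and in particular it holds at every $t$, not only at $t=0$.
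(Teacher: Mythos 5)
Your proof is correct and follows essentially the same route as the paper: differentiate the relation $h_t^\dual=h(\psi_t\bullet,\psi_t\bullet)$ to express $\dot h_t^\dual$ via the symmetrization of $\psi_t^{-1}\dot\psi_t$, differentiate $h_t^\dual=m_t^*h^\dual$ to express it as a Lie derivative $\mathcal{L}_{Z_t}h_t^\dual$ with $Z_t$ the pullback of the velocity field, and conclude that the difference is $h_t^\dual$-skew-symmetric, hence a function multiple of $J_t^\dual$. The only difference is cosmetic: you spell out the final pointwise linear-algebra step (skew-symmetric in dimension $2$ implies a multiple of the complex structure), which the paper leaves implicit.
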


\begin{proof}
By (\ref{eq:psi}),
\begin{align*}
\dot{h}_t^\dual=h(\dot\psi_t\bullet, \psi_t\bullet)+h(\psi_t\bullet,\dot\psi_t)
=h^\dual(\psi^{-1}_t\dot\psi_t\bullet, \bullet)+h^\dual(\bullet, \psi^{-1}_t\dot\psi_t\bullet)~.
\end{align*}
On the other hand, let us consider the field 
$$ Z_t(p)=d_p(m_t^{-1})\left(\frac{\partial m_t(p)}{\partial t}\right)~. $$
By (\ref{eq:hypisotopic}), we have
\[
\dot{h}_t^\dual=h_t^\dual(\nabla^{\dual,t} Z_t,\bullet)+h^\dual_t(\bullet,\nabla^{\dual,t} Z_t)
\]
from which we obtain that the difference $\nabla^{\dual,t}Z_t-\psi_t^{-1}\dot\psi_t$
is $h_t^\dual$-skew-symmetric, and the conclusion follows.
\end{proof}

Notice that at $t=0$ we have $\psi_0=b$ so we deduce 
that 
$\nabla^\dual Z_0+\skew_0 J^\dual=b^{-1}\dot{\psi}$.
It can be shown that $\nabla^\dual=b^{-1}\nabla b$ (see \cite[Lemma 3.3]{cyclic} in the case $\theta=\pi$), 
whereas $J^\dual=b^{-1}Jb$. So we can rewrite the identity above in the form
\begin{equation}\label{eq:X2}
  \dot\psi=\nabla Y+\skew Jb~,
\end{equation}
where we have put $Y=bZ_0$ and $\skew=\skew_0$.

Finally differentiating the identity $d^{\nabla^t}b_t=0$ at $t=0$,
we get:

\begin{lemma}\label{lm:cdpsi}
$d^{\nabla}(\dot \psi)=0$.
\end{lemma}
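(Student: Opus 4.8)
The plan is to differentiate the Codazzi identity $d^{\nabla^t}b_t=0$ at $t=0$, exactly as the sentence preceding the statement suggests. The only genuinely new ingredient needed is the first-order variation of the Levi-Civita connection $\nabla^t$ of $h_t$. Since the difference of two connections is tensorial, the derivative $\dot\nabla:=\frac{d}{dt}\big|_{t=0}\nabla^t$ is a well-defined $TS$-valued bilinear form, and $\nabla^0=\nabla$ because $\alpha_0=\En$. Differentiating
$$ d^{\nabla^t}b_t(u,v)=\nabla^t_u(b_tv)-\nabla^t_v(b_tu)-b_t[u,v] $$
at $t=0$ and grouping the terms involving $\dot b$ into $d^\nabla\dot b$, one obtains
$$ 0=d^\nabla\dot b(u,v)+\dot\nabla_u(bv)-\dot\nabla_v(bu)~. $$

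To compute $\dot\nabla$, I would introduce the auxiliary connection $D^t_uv:=\alpha_t^{-1}\nabla_u(\alpha_tv)$. A direct check shows that $D^t$ is metric for $h_t=h(\alpha_t\bullet,\alpha_t\bullet)$, while its torsion equals $\alpha_t^{-1}d^\nabla\alpha_t$. At $t=0$ this torsion vanishes and, since $\alpha_0=\En$ and $d^\nabla\dot\alpha=0$ by (\ref{eq:dota}), its $t$-derivative vanishes as well. Hence $D^t$ agrees with the Levi-Civita connection $\nabla^t$ to first order, so $\dot\nabla=\dot D$, and a one-line computation gives
$$ \dot\nabla_uv=\dot D_uv=-\dot\alpha\nabla_uv+\nabla_u(\dot\alpha v)=(\nabla_u\dot\alpha)v~. $$

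With this formula the conclusion is a cancellation. Substituting $\dot\nabla_uW=(\nabla_u\dot\alpha)W$ gives $d^\nabla\dot b(u,v)=(\nabla_v\dot\alpha)(bu)-(\nabla_u\dot\alpha)(bv)$. On the other hand, expanding $d^\nabla(\dot\alpha b)$ by the Leibniz rule and using that $b$ satisfies Codazzi $d^\nabla b=0$ (the pair $(h,h^\dual)$ being normalized, so that $b$ is its Labourie operator) yields $d^\nabla(\dot\alpha b)(u,v)=(\nabla_u\dot\alpha)(bv)-(\nabla_v\dot\alpha)(bu)$. Adding the two identities and recalling $\dot\psi=\dot\alpha b+\dot b$, all terms cancel and $d^\nabla\dot\psi=d^\nabla(\dot\alpha b)+d^\nabla\dot b=0$.

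The main point to get right is the variation $\dot\nabla$: one must resist writing $\nabla^t_uv=\alpha_t^{-1}\nabla_u(\alpha_tv)$ for all $t$, since $\alpha_t$ is not Codazzi for $t\neq0$ (only $\dot\alpha$ is, at $t=0$). The identity holds only to first order, and it is precisely the Codazzi condition (\ref{eq:dota}) on $\dot\alpha$ that forces the torsion of $D^t$ to vanish to first order and thereby makes $\dot\nabla=\dot D$ legitimate. Everything after this is routine tensorial bookkeeping.
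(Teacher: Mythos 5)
Your proof is correct, and it runs on the same engine as the paper's: both arguments turn on the auxiliary connection $D^t_uv=\alpha_t^{-1}\nabla_u(\alpha_t v)$, which is compatible with $h_t$ and whose torsion $\alpha_t^{-1}d^\nabla\alpha_t$ vanishes to first order at $t=0$ precisely because of (\ref{eq:dota}). The difference lies in how the comparison with $\nabla^t$ is carried out. The paper (Lemmas \ref{lm:cod} and \ref{lm:lc}) computes the discrepancy $\nabla^t-D^t$ \emph{exactly}, as a correction term $h_t(W_t,\cdot)J_t(\cdot)$ --- a genuinely two-dimensional computation, resting on the fact that an $h_t$-skew-symmetric endomorphism of a surface is a multiple of $J_t$ --- then rewrites the Codazzi identity as $0=\alpha_t^{-1}d^\nabla\psi_t+(\text{corrections})$ and differentiates only at the very end, using $W_0=\dot W_0=0$ and $d^\nabla\psi_0=0$; the variation $\dot\nabla$ is never computed. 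You instead differentiate first and then justify $\dot\nabla=\dot D$ abstractly, via uniqueness of the Levi-Civita connection: the contorsion of a metric connection is a pointwise linear function of its torsion, so a torsion vanishing to first order forces first-order agreement with Levi-Civita. This yields the explicit formula $\dot\nabla_uv=(\nabla_u\dot\alpha)v$, after which the conclusion is the clean cancellation between $d^\nabla\dot b$ and $d^\nabla(\dot\alpha b)$ (the latter expanded by Leibniz and killed by $d^\nabla b=0$). Your route is dimension-free and a bit shorter; the paper's yields an exact identity valid for all $t$ rather than only an infinitesimal one. The one step you assert without proof --- that a metric connection whose torsion vanishes to first order agrees with Levi-Civita to first order --- is standard (the contorsion is algebraic and linear in the torsion), and you correctly flag the Codazzi condition (\ref{eq:dota}) as exactly what makes it applicable, so this is not a gap.
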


The proof of Lemma~\ref{lm:cdpsi} given below is based on the computation of $d^{\nabla^t}$, which relies on the following
two results.

\begin{lemma}\label{lm:cod}
There exists a family of vector fields $V_t$ on $S$ such that for $v,w\in TS$ we have
$(d^{\nabla}\alpha_t)(v,w)=\da_h(v,w) V_t$ with  $V_0=\dot V_0=0$
\end{lemma}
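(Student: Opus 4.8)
The plan is to separate the purely pointwise existence of $V_t$ from the two infinitesimal vanishing statements $V_0=0$ and $\dot V_0=0$, and to extract both of the latter from a single structural observation: the connection $\nabla$ and the area form $\da_h$ entering $d^\nabla$ are those of the \emph{fixed} metric $h$, hence do not depend on $t$.

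For existence I would argue by dimension count. Since $\alpha_t$ is a section of $\mathrm{End}(TS)\cong T^*S\otimes TS$, i.e. a $1$-form with values in $TS$, its exterior covariant derivative $d^\nabla\alpha_t$ is a $TS$-valued $2$-form, a section of $\Lambda^2 T^*S\otimes TS$. Because $\dim S=2$, the bundle $\Lambda^2 T^*S$ is a line bundle trivialized by $\da_h$, so there is a unique vector field $V_t$ with $(d^\nabla\alpha_t)(v,w)=\da_h(v,w)\,V_t$. This step carries no content beyond the surface hypothesis.

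Next I would compute $V_0$. At $t=0$ the Hopf differential $t\qd$ vanishes, so $h_0=h$ and therefore $\alpha_0=\En$ (consistently with $\alpha_0^2=e(0)\En=\En$ from (\ref{eq:alpha})). From the definition $d^\nabla\alpha(v,w)=\nabla_v(\alpha w)-\nabla_w(\alpha v)-\alpha[v,w]$ one then gets $d^\nabla\En(v,w)=\nabla_v w-\nabla_w v-[v,w]$, which is the torsion of $\nabla$ and so vanishes since $\nabla$ is the Levi-Civita connection. Hence $V_0=0$.

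For $\dot V_0$ the key observation is that $d^\nabla$ is linear in its argument with coefficients depending only on the fixed $\nabla$, so it commutes with $\frac{d}{dt}$; thus $\frac{d}{dt}\big|_{0}(d^\nabla\alpha_t)=d^\nabla\dot\alpha$. By (\ref{eq:dota0}) and (\ref{eq:dota}), $\dot\alpha=\nu_\qd$ satisfies the Codazzi equation $d^\nabla\dot\alpha=0$. Differentiating the identity $(d^\nabla\alpha_t)(v,w)=\da_h(v,w)\,V_t$ at $t=0$, with $\da_h$ independent of $t$, then yields $\da_h(v,w)\,\dot V_0=(d^\nabla\dot\alpha)(v,w)=0$ for all $v,w$, whence $\dot V_0=0$. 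The only delicate point, and the nearest thing to an obstacle, is this interchange of $d^\nabla$ with $\frac{d}{dt}$: it is legitimate precisely because $d^\nabla$ is built from the connection of the fixed metric $h$ rather than that of the varying $h_t$. Granting it, the entire statement reduces to the Codazzi equation for the harmonic Beltrami differential $\nu_\qd$ already recorded in (\ref{eq:dota}).
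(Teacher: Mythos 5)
Your proposal is correct and follows essentially the same route as the paper's own proof: pointwise existence of $V_t$ from the fact that a $TS$-valued $2$-form on a surface is a multiple of the area form, $V_0=0$ from $\alpha_0=\En$, and $\dot V_0=0$ by differentiating under $d^\nabla$ (legitimate since $\nabla$ and $\da_h$ are $t$-independent) and invoking the Codazzi equation $d^\nabla\dot\alpha=0$ from (\ref{eq:dota}). The only difference is that you spell out two steps the paper leaves implicit, namely that $d^\nabla\En$ is the torsion of $\nabla$ and that linearity of $d^\nabla$ with fixed coefficients justifies the interchange with $\frac{d}{dt}$.
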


\begin{proof}
On a point $p\in S$ take any $h$-orthonormal basis $e_1, e_2$ of $T_pS$.
Then putting $V_t(p)=(d^{\nabla}\alpha_t)(e_1, e_2)$, it follows that
\[
 (d^{\nabla}\alpha_t)(v,w)=\da_h(v,w) V_t(p)
\]
for every $v,w\in T_p S$. Clearly $V_t$ smoothly depends on $p$ and $t$.
Since $\alpha_0=\En$, $V_0$ vanishes everywhere.
On the other hand, by the linearity of $d^\nabla$ we have
\[
   d^{\nabla}\dot\alpha=\da_h\otimes\dot V_0~,
\] 
and  by (\ref{eq:dota}) we deduce that $\dot V_0$ vanishes everywhere.
\end{proof}

\begin{lemma}\label{lm:lc}
Let $\nabla^t$ the Levi-Civita connection of the metric  $h_t$.
If 
$v,w$ are vector fields on $S$ we have
\[
   \nabla^t_vw=\alpha_t^{-1}\nabla^t_v(\alpha w) + h_t(W_t, v) J_t(w)
\]
where $J_t$ is the complex structure compatible with $h_t$,
$W_t=\det(\alpha_t^{-1})\alpha_t^{-1}V_t$, and 
$V_t$ is the field defined in Lemma \ref{lm:cod}.
\end{lemma}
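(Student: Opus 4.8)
The plan is to realize $\nabla^t$ as a correction of the connection obtained by pulling back $\nabla$, the Levi-Civita connection of $h=h_0$, through the bundle automorphism $\alpha_t$. Since $h_t=h(\alpha_t\bullet,\alpha_t\bullet)$, the operator $\alpha_t$ is a pointwise linear isometry from $(TS,h_t)$ to $(TS,h)$, so it is natural to introduce the auxiliary connection
\[
   \tilde\nabla_v w:=\alpha_t^{-1}\nabla_v(\alpha_t w)~,
\]
which is again a connection on $TS$. I would first show that $\tilde\nabla$ is compatible with $h_t$ and then measure its torsion; the Levi-Civita connection $\nabla^t$ will then be $\tilde\nabla$ plus the unique correction tensor restoring zero torsion while preserving metric compatibility.

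Metric compatibility is a direct computation: differentiating $h_t(w_1,w_2)=h(\alpha_t w_1,\alpha_t w_2)$ along $v$ and using $\nabla h=0$, each term $h(\nabla_v(\alpha_t w_i),\alpha_t w_j)$ rewrites as $h_t(\tilde\nabla_v w_i,w_j)$, giving $v\cdot h_t(w_1,w_2)=h_t(\tilde\nabla_v w_1,w_2)+h_t(w_1,\tilde\nabla_v w_2)$. For the torsion, expanding $\nabla_v(\alpha_t w)=(\nabla_v\alpha_t)w+\alpha_t\nabla_v w$ and using that $\nabla$ is torsion-free, I obtain
\[
   \tilde\nabla_v w-\tilde\nabla_w v-[v,w]=\alpha_t^{-1}(d^\nabla\alpha_t)(v,w)~,
\]
which by Lemma~\ref{lm:cod} equals $\da_h(v,w)\,\alpha_t^{-1}V_t$.

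It remains to produce the correction. Writing $\nabla^t=\tilde\nabla+D$, metric compatibility of both connections forces $D(v,\bullet)$ to be $h_t$-skew-symmetric for each $v$, while the torsion computation forces $D(v,w)-D(w,v)=-\da_h(v,w)\,\alpha_t^{-1}V_t$. In dimension $2$ an $h_t$-skew-symmetric endomorphism is a scalar multiple of $J_t$, so $D(v,w)=\mu(v)J_t w$ for a unique $1$-form $\mu$. Plugging an $h_t$-orthonormal frame into the antisymmetrization equation, and using $\da_h=\det(\alpha_t)^{-1}\da_{h_t}$ to convert between the two area forms, pins down $\mu(v)=\det(\alpha_t)^{-1}h_t(\alpha_t^{-1}V_t,v)=h_t(W_t,v)$ with $W_t=\det(\alpha_t^{-1})\alpha_t^{-1}V_t$, yielding $\nabla^t_v w=\alpha_t^{-1}\nabla_v(\alpha_t w)+h_t(W_t,v)J_t w$.

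The only genuinely delicate point is this last pointwise linear-algebra step: I must track the discrepancy between $\da_h$ and $\da_{h_t}$ — which produces exactly the determinant factor $\det(\alpha_t^{-1})$ in $W_t$ — and check that the chosen $\mu$ does solve the skew-symmetric antisymmetrization equation, so that $\tilde\nabla+D$ is both $h_t$-metric and torsion-free. The identification with $\nabla^t$ then follows from the uniqueness of the Levi-Civita connection.
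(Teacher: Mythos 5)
Your proposal is correct and follows essentially the same route as the paper: both introduce the auxiliary connection $\alpha_t^{-1}\nabla(\alpha_t\bullet)$, observe it is $h_t$-compatible but has torsion $\da_h(v,w)\,\alpha_t^{-1}V_t$ by Lemma \ref{lm:cod}, write the difference from $\nabla^t$ as $\mu(v)J_t w$ using $h_t$-skew-symmetry in dimension $2$, and pin down $\mu = h_t(W_t,\bullet)$ by evaluating on a positive $h_t$-orthonormal frame, with the factor $\det(\alpha_t^{-1})$ coming from converting $\da_h$ to $\da_{h_t}$. Your closing appeal to uniqueness of the Levi-Civita connection is just a repackaging of the paper's direct computation of the difference tensor $T=\nabla^t-\alpha_t^{-1}\nabla(\alpha_t\bullet)$, and you also correctly read the $\nabla^t$ appearing on the right-hand side of the statement as the Levi-Civita connection $\nabla$ of $h$ (a typo in the lemma, as the paper's own proof and its later use confirm).
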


\begin{proof}
Notice that the connection 
$\alpha_t^{-1}\nabla (\alpha_t\bullet)$ 
is compatible with the metric $h_t$ but is not symmetric (since we are not assuming that
$\alpha_t$ is a solution of the Codazzi equation for $h$).

In particular, the difference
\[
   T(v,w)=\nabla^t_v w-\alpha_t^{-1}\nabla_v(\alpha_t w)
\]
is a vector-valued $2$-form such that $T(v,\bullet)$ is $h_t$-skew-symmetric.
That is, there exists a 1-form $\zeta$ such that  $T(v,w)=\zeta(v)J_tw$.
On the other hand an explicit computation shows that
\[
 T(v,w)-T(w,v)=-\alpha_t^{-1}(d^{\nabla}\alpha_t)(v,w)=-\da_h(v,w)\alpha_t^{-1}V_t~.
\]
If 
$(e_1,e_2)$ 
is a positive  $h_t$-orthonormal basis, we have
$\zeta(e_1)J_t(e_2)-\zeta(e_2)J_t(e_1)= -\da_h(e_1, e_2)\alpha_t^{-1}V_t$, that is
\[
    \zeta(e_1)e_1+\zeta(e_2)e_2=W_t~,
\]
so $\zeta(e_i)=h_t(e_i, W_t)$ and the result follows.
\end{proof}

\begin{proof}[Proof of Lemma \ref{lm:cdpsi}]
Take two vector fields $v,w$ on $S$, and consider 
the identity
\[
   \nabla^t_v b_t(w) -\nabla_w^t b_t(v)- b_t([v,w])=0.
\]
By Lemma \ref{lm:lc}, we can rewrite this identity as
\begin{eqnarray*} 
    0 & = & \alpha_t^{-1}(\nabla_v(\alpha_tb_t(w))-\nabla_w(\alpha_tb_t(v))-\alpha_tb_t([v,w]))
   +h_t(W_t, v)J_t(w)-h_t(W_t, w)J_t(v) \\
   & = & \alpha_t^{-1} d^{\nabla}(\alpha_t b_t)(v,w)
    +h_t(W_t, v)J_t(w)-h_t(W_t, w)J_t(v)
\end{eqnarray*}
Since $V_0=\dot V_0=d^\nabla\psi_0=0$ we have $W_0=\dot W_0=0$, so 
differentiating the last identity at $t=0$   we have
\[
     d^{\nabla}(\dot\psi)=0~.
\]
\end{proof}

 Lemma \ref{lm:cdpsi} implies the following interesting relation between
the field $Y$ and the function $\skew$ appearing in (\ref{eq:X2}).

\begin{lemma}\label{lm:grad}
We have
\[
  JY=b^{-1} \grad\skew~.
\]
\end{lemma}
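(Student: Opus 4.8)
The goal is to prove Lemma~\ref{lm:grad}, which asserts $JY = b^{-1}\grad\skew$. Here $\dot\psi = \nabla Y + \skew\, Jb$ from \eqref{eq:X2}, and Lemma~\ref{lm:cdpsi} gives the Codazzi constraint $d^\nabla(\dot\psi) = 0$. The plan is to expand $d^\nabla(\dot\psi) = d^\nabla(\nabla Y) + d^\nabla(\skew\, Jb)$ and isolate the content of the vanishing, treating $\dot\psi$ as a $TS$-valued $1$-form and using the formula $d^\nabla\nu(u,v) = \nabla_u(\nu v) - \nabla_v(\nu u) - \nu([u,v])$ recalled in the excerpt.

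First I would compute the two pieces separately on an $h$-orthonormal frame $(e_1,e_2)$ with $Je_1 = e_2$, evaluating the $2$-form on $(e_1,e_2)$. For the gradient part, $d^\nabla(\nabla Y)$ is essentially the curvature of $h$ applied to $Y$: since $(\nabla Y)(u) = \nabla_u Y$, we get $d^\nabla(\nabla Y)(u,v) = R(u,v)Y$, and because $h$ is hyperbolic with curvature $-1$ this equals $-\,\da_h(u,v)\,JY$ up to a sign I would fix by the orientation convention. For the second part I would use that $b$ satisfies Codazzi, $d^\nabla b = 0$, together with $\det b = 1$ and $Jb = bJ$-type relations; expanding $d^\nabla(\skew\,Jb)(u,v)$ produces a term $d\skew \wedge (Jb)$ plus a term $\skew\, d^\nabla(Jb)$, and the latter should vanish or simplify using $d^\nabla b = 0$ and the parallelism of $J$. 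The surviving term is of the form $(d\skew(u))\,Jb(v) - (d\skew(v))\,Jb(u)$, which on the frame collects into $\grad\skew$ contracted appropriately against $b$.

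Setting the total equal to zero and evaluating on $(e_1,e_2)$ should yield an identity equating $JY$ (from the curvature term) with $b^{-1}\grad\skew$ (from the $d\skew$ term), after using $\det b = 1$ to invert $b$ cleanly. The main obstacle I anticipate is bookkeeping the commutation relations between $J$, $b$, and $\nabla$, and correctly handling the fact that $Jb$ appears rather than $b$ alone: one must verify that $d^\nabla(Jb) = 0$ (which follows since $J$ is parallel for the hyperbolic metric and $d^\nabla b = 0$), and then carefully extract the gradient of the scalar $\skew$ from the exterior derivative of the $1$-form $\skew\,(Jb)^\flat$. Getting all the signs and the single factor of $b^{-1}$ right is the delicate step; the curvature $-1$ of the hyperbolic metric is exactly what makes the two surviving terms match up without extra constants.

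Concretely, I would write $0 = d^\nabla(\dot\psi)(e_1,e_2) = R(e_1,e_2)Y + \bigl(d\skew(e_1)Jb(e_2) - d\skew(e_2)Jb(e_1)\bigr)$, identify $R(e_1,e_2)Y = -JY$ via the hyperbolic curvature, and recognize the bracketed expression as $-Jb(\grad\skew)$ or $-b J(\grad\skew)$ depending on conventions; using $bJ = Jb^{-1}$ (consistent with $J^\dual = b^{-1}Jb$ and self-adjointness) I would rearrange to obtain $JY = b^{-1}\grad\skew$. The cleanest route is probably to pair the $TS$-valued equation against test vectors and reduce everything to scalar identities, thereby sidestepping frame-dependent sign ambiguities.
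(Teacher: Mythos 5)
Your proposal follows the paper's proof essentially step for step: both invoke Lemma \ref{lm:cdpsi} applied to $\dot\psi=\nabla Y+\skew Jb$, evaluate the vanishing $2$-form on a positive orthonormal frame $(e_1,e_2)$ so that the first term becomes the curvature term $R(e_1,e_2)Y=JY$, use $d^\nabla(Jb)=Jd^\nabla b=0$ to reduce the second term to $d\skew(e_1)Jb\,e_2-d\skew(e_2)Jb\,e_1$, and finish with the algebra coming from $\det b=1$. The one step you leave hedged resolves as follows: that bracketed term equals $JbJ\grad\skew$ (not $-Jb\,\grad\skew$ or $-bJ\,\grad\skew$), and since $b$ is self-adjoint with $\det b=1$ one has $JbJ=-b^{-1}$, so the identity reads $0=JY-b^{-1}\grad\skew$, exactly as in the paper.
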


\begin{proof}
Take a positive $h$-orthonormal basis 
$(e_1,e_2)$ 
of $T_pS$.
By Lemma \ref{lm:cdpsi} we have that $d^\nabla(\nabla Y+\skew Jb)=0$.
It follows that
\[
  d^\nabla(\nabla Y)(e_1, e_2)+ d^{\nabla}(\skew Jb)(e_1, e_2)=0~.
\]

We have $(d^\nabla\nabla Y)(e_1, e_2)=R(e_1, e_2)Y=JY$.
On the other hand, since $d^\nabla (Jb)=Jd^\nabla b=0$,
\[
  d^{\nabla}(\skew Jb)(e_1, e_2)=d\skew(e_1)Jb e_2-d\skew(e_2)Jb e_1=
  JbJ(d\skew(e_1)e_1+d\skew(e_2)e_2)=JbJ\grad\skew=-b^{-1}\grad\skew~,
\]
and the conclusion follows.
\end{proof}

\subsection{The function $F$ and its variation}

We consider on $\mathcal M_{-1}\times\mathcal M_{-1}$ the function
\[
   \tilde F(h, h^\dual)=\int\tr(b)\da_h~,
\]
where $b$ is the Labourie operator of the pair $(h, h^\dual)$.
Clearly $F$ is invariant by the action of $\Diffeo_0\times \Diffeo_0$,
so it induces a smooth function
\[
   F:\mathcal T\times\mathcal T\rightarrow\R~.
\]
In this section we will compute the derivative of the function $F_t:=F([h_t], [h^\dual])$ with respect to $t$.

\begin{prop}\label{prop:1der}
The first-order derivative of $F_t$ is
\begin{equation}\label{eq:1der}
\dot{F}_t=\int_S \left[\tr(b_t)\tr(\alpha_t^{-1}\dot{\alpha}_t)-\tr(\alpha_t^{-1}\dot{\alpha}_t b_t) \right] \da_t~.
\end{equation}
In particular, at $t=0$,
\[
\dot{F}=-\int_S \tr(\dot{\alpha}b)\da_h~.
\]
\end{prop}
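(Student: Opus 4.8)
The plan is to differentiate $F_t=\int_S\tr(b_t)\,\da_t$ directly, separating the contribution of the variation of $\tr(b_t)$ from that of the area form, and then to show that the only genuinely new piece---the one coming from the motion of $\psi_t$---integrates to zero over the closed surface $S$. First I would write
$$ \dot F_t = \int_S \tr(\dot b_t)\,\da_t + \int_S \tr(b_t)\,\frac{d}{dt}(\da_t)~, $$
and invoke Lemma~\ref{lm:omegadot} to replace $\frac{d}{dt}(\da_t)$ by $\tr(\alpha_t^{-1}\dot\alpha_t)\,\da_t$. This already produces the first summand $\tr(b_t)\tr(\alpha_t^{-1}\dot\alpha_t)$ of the claimed formula \eqref{eq:1der}, so everything is reduced to understanding $\int_S\tr(\dot b_t)\,\da_t$.

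Differentiating $\psi_t=\alpha_tb_t$ gives $\dot\psi_t=\dot\alpha_tb_t+\alpha_t\dot b_t$, whence $\tr(\dot b_t)=\tr(\alpha_t^{-1}\dot\psi_t)-\tr(\alpha_t^{-1}\dot\alpha_tb_t)$. The second term is exactly the remaining summand $-\tr(\alpha_t^{-1}\dot\alpha_tb_t)$ of the statement, so the whole proposition reduces to the vanishing
$$ \int_S\tr(\alpha_t^{-1}\dot\psi_t)\,\da_t=0~. $$
To prove this I would feed in Lemma~\ref{lm:triv}: since $\alpha_t^{-1}\dot\psi_t=b_t(\psi_t^{-1}\dot\psi_t)$, substituting \eqref{eq:X} yields $\tr(\alpha_t^{-1}\dot\psi_t)=\tr(b_t\nabla^{\dual,t}Z_t)+\skew_t\,\tr(b_tJ^\dual_t)$. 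The $\skew_t$-term vanishes pointwise: writing $J^\dual_t=b_t^{-1}J_tb_t$ one gets $\tr(b_tJ^\dual_t)=\tr(J_tb_t)=0$ by the $h_t$-self-adjointness of $b_t$. For the remaining term I would use the $t$-version of the conjugation relation $\nabla^{\dual,t}_X\bullet=b_t^{-1}\nabla^t_X(b_t\bullet)$ (the analog of $\nabla^\dual=b^{-1}\nabla b$ from \cite[Lemma 3.3]{cyclic}, valid since $b_t$ is Codazzi for $\nabla^t$), which turns $b_t\nabla^{\dual,t}Z_t$ into the endomorphism $X\mapsto\nabla^t_X(b_tZ_t)$. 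Its trace is precisely $\Div_{h_t}(b_tZ_t)$, so the integral is $\int_S\Div_{h_t}(b_tZ_t)\,\da_t=0$ by the divergence theorem on the closed surface $S$.

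This establishes \eqref{eq:1der}. The formula at $t=0$ then follows by specialization: since $\alpha_0=\En$ the first summand becomes $\tr(b)\tr(\dot\alpha)$, which vanishes because $\tr\dot\alpha=0$ by \eqref{eq:dota}, leaving $\dot F=-\int_S\tr(\dot\alpha b)\,\da_h$.

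I expect the main obstacle to be the vanishing $\int_S\tr(\alpha_t^{-1}\dot\psi_t)\,\da_t=0$: recognizing the divergence structure requires both the correct expression of $J^\dual_t$ and $\nabla^{\dual,t}$ through $b_t$, and the key observation that, although $\nabla^{\dual,t}$ and the area element $\da_t=\da_{h_t}$ refer to different metrics, the operator $b_t$ intertwines them so that a genuine $h_t$-divergence of the field $b_tZ_t$ appears. A secondary point to verify is the legitimacy of differentiating under the integral sign together with the smoothness of the auxiliary field $Z_t$ furnished by Lemma~\ref{lm:triv}.
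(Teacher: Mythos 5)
Your proposal is correct and follows essentially the same route as the paper: both differentiate $F_t$ into the $\tr(\dot b_t)$ and area-form contributions, feed in Lemma~\ref{lm:triv} together with the conjugation relations $J^\dual_t=b_t^{-1}J_tb_t$ and $\nabla^{\dual,t}=b_t^{-1}\nabla^t(b_t\bullet)$, kill the $\skew_t$-term pointwise via $\tr(J_tb_t)=0$, and kill the $Z_t$-term by recognizing $\tr(\nabla^t(b_tZ_t))=\Div_t(b_tZ_t)$ and integrating over the closed surface. The only difference is bookkeeping --- the paper solves for $\dot b_t$ explicitly from the rearranged form of (\ref{eq:X}) whereas you isolate $\tr(\alpha_t^{-1}\dot\psi_t)$, but these are the same computation.
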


\begin{proof}[Proof of Proposition~\ref{prop:1der}]
We want to compute
\begin{equation}\label{def:first}
\dot{F}_t=\int_S \tr(\dot{b}_t)\da_t+\tr(b_t)
{\textstyle{\frac{d}{dt}}}(\da_t)~.
\end{equation}

Since $\psi_t=\alpha_t b_t$, Equation (\ref{eq:X}) can be rearranged as
\[
\dot\psi_t=\alpha_t\nabla^t (b_t Z_t)+\skew_t \alpha_t J_t b_t~.
\]
In particular,
\[
\dot{b}_t=-\alpha_t^{-1}\dot{\alpha}_t b_t+\nabla^{t}(b_t Z_t) +\skew_t J_t b_t 
\]


Since $b_t$ is $h_t$-self-adjoint $\tr(J_tb_t)=0$, and so
$\tr(\skew_t J_t b_t)=0$.

Moreover, $\tr(\nabla^t (b_t Z_t))=\mathrm{div}_t(b_t Z_t)$ and so
\[
\int_S \tr(\nabla^t (b_t Z_t))\da_t=\int_S \mathrm{div}_t(b_t Z_t)\da_t=0~.
\]

But $\frac{d}{dt}(\da_t)=\tr(\alpha_t^{-1}\dot\alpha_t)\da_t$ by Lemma \ref{lm:omegadot}, so we obtain that
\begin{equation}\label{eq:pF-first}
\dot{F}_t=\int_S \left[\tr(b_t)\tr(\alpha_t^{-1}\dot{\alpha}_t)-\tr(\alpha_t^{-1}\dot{\alpha}_t b_t) \right] \da_t~.
\end{equation}
By (\ref{eq:dota}), at time $t=0$ we obtain
\begin{equation}\label{eq:F-first}
\dot{F}=-\int_S \tr(\dot{\alpha}b)\da_h~.
\end{equation}
\end{proof}

\begin{remark}\label{rk:gen}
Formula  (\ref{eq:1der}) holds for any family of deformations $h_t$ of the metric $h$,
even without assuming (\ref{eq:dota}). Notice indeed that both the proofs of  
Proposition \ref{prop:1der} and  Lemma \ref{lm:triv} do not make use of this hypothesis.
\end{remark}


\section{The landslide flow is Hamiltonian}

Let $F$ be the function on $\mathcal T\times\mathcal T$ defined in Section \ref{sec:first}.
The goal of this section is to show that $\frac{1}{4} F$ is the Hamiltonian
of the landslide flow with respect to the product symplectic form 
$\omega_{WP,1}+\omega_{WP,2}$.

If $(X,X^\dual)\in T_h\cT \oplus T_{h^\dual}\cT$ is the 
generator of the landslide flow at the point $(h, h^\dual)$, we need to prove that
it coincides with the symplectic gradient of $F$ at $([h],[h^\dual])$.

This is equivalent to showing that $X$ coincides with the symplectic gradient
of $F(\bullet, [h^\dual])$ at the point $h$ for $\omega_{WP}$, and analogously
that $X^\dual$ is the symplectic gradient of $F([h],\bullet)$ at $h^\dual$.

By a simple symmetry argument, it is sufficient to check the first point.
In particular, given any tangent vector $v\in T_{[h]}\mathcal T$ we need to show
that
\begin{equation}\label{eq:ham}
    \omega_{WP}(X, v)=\frac{1}{4}d(F(\bullet, [h^\star]))(v)~.
\end{equation}

Now, there exists a holomorphic quadratic differential $\qd$ such that
$v=[\nu_\qd]$.
Let $\dot J_\qd$ be  the first order variation associated with 
this 
Beltrami differential $\nu_\qd$, and
let $\dot J_X\in T_{J}\mathcal A$ be the first order variation of the complex structure corresponding to
the landslide deformation of the metric $h_t=h(\beta_t\bullet, \beta_t\bullet)$ with 
$\beta_t=\cos (t/2) \En+\sin (t/2) Jb$. Notice that $d\pi(\dot J_\qd)=v$ and $d\pi(\dot J_X)=X$ so
by (\ref{eq:ft2}) we have that
\[
\omega_{WP}(X, v)=\frac{1}{8}\int_S\tr(J\dot J^H_X\dot J^H_\qd)\da_h~.
\]
Now by (\ref{eq:belt}),  $\dot J_\qd=2J\nu_\qd=-2\nu_\qd J$,
whereas $\dot J_X=\frac{1}{2}(JJb-JbJ)$. In particular, since $\dot J_\qd$ is horizontal,
we get
\begin{equation}\label{eq:ham1}
\omega_{WP}(X,v)=\frac{1}{8}\int_S\tr(J\dot J_X\dot J_\qd)\da_h~=
-\frac{1}{4}\int_S\tr(b\nu_\qd)\da_h~.
\end{equation}

To compute the 
right-hand side 
of (\ref{eq:ham}), we can consider the path of metrics
$h_t=t\qd+e(t)h+t\bar\qd$ as in (\ref{eq:wolf}). Then we have
\[
d(F(\bullet, [h^\dual]))(v)=\frac{d F([h_t], [h^\dual])}{dt}(0)~.
\]
With the 
notations 
of Section \ref{sec:first}, Proposition \ref{prop:1der} implies that
\[
d(F(\bullet, [h^\dual]))(v)=-\int_S \tr(\dot{\alpha}b)\da_h~.
\]
By (\ref{eq:dota0}), comparing this identity with (\ref{eq:ham1}), we get
(\ref{eq:ham}).

\section{Convexity of $F$}

The aim of this section is to show that the function 
$F:\mathcal T\times\mathcal T\rightarrow\R$ is convex on each factor
with respect to the Weil-Petersson metric.

In \cite{wolf:teichmuller}, it has been shown that the family of metrics $h_t$ introduced in
(\ref{eq:wolf}) determines a path in $\mathcal T$ which is $WP$-geodesic at $t=0$.
So fixing $h^\dual$, the Hessian of the function $F(\bullet, h^\dual)$ at $[h]$
is determined by
\[
\Hess (F(\bullet,[h^\dual]))([\nu_\qd], [\nu_\qd])=\frac{d^2 F_t}{dt^2}(0)~,
\]
where $F_t=F([h_t], [h^\dual])$.

Using the 
notations 
of Section \ref{sec:first},
by differentiating Equation (\ref{eq:pF-first}), we have
\begin{align*}
\ddot{F}_t & =\int_S \Big[
\tr(\dot{b}_t)\tr(\alpha_t^{-1}\dot{\alpha}_t)+
\tr(b_t)\tr[\alpha_t^{-1}\ddot{\alpha}_t-(\alpha_t^{-1}\dot{\alpha}_t)^2]
+\tr[(\alpha_t^{-1}\dot{\alpha}_t)^2-\alpha_t^{-1}\ddot{\alpha}_t) b_t]
-\tr(\alpha_t^{-1}\dot{\alpha}_t\dot{b}_t)\Big]\da_t + \\
& \qquad
+\int_S (\tr(b_t)\tr(\alpha_t^{-1}\dot\alpha_t)-\tr(\alpha_t^{-1}\dot\alpha_tb_t))
{\textstyle{\frac{d}{dt}}}(\da_t)~.
\end{align*}
We have already seen that $\frac{d}{dt}(\da_t)=0$ at time $t=0$. So, at time $t=0$, we obtain
\begin{equation}\label{eq:F-second}
\ddot{F}=\int_S \left[
\tr(b)\tr(\xi)-\tr(\xi b)-\tr(\dot\alpha\dot b)
\right]\da_h~,
\end{equation}
where $\xi:=\ddot{\alpha}-\dot{\alpha}^2$.

Since $\dot\alpha$ is self-adjoint and traceless we deduce that
$\dot\alpha^2$ is a non-negative multiple of the identity.
On the other hand, by comparing the relation $h_t=h(\alpha_t, \alpha_t)$ with (\ref{eq:wolf}) we get
\[
\ddot e(0) h=\ddot h= 2h((\dot\alpha^2+\ddot\alpha)\bullet, \bullet)~.
\]
so we deduce that
\begin{equation}\label{eq:c1}
   \dot\alpha^2+\ddot\alpha=\frac{1}{2}\ddot e(0)\En~. 
\end{equation}

In \cite{wolf:teichmuller}, the function $e(t)$ has been computed up to
the second order. More precisely, if in local conformal coordinates 
$\qd=\ph(z)dz^2$ and $h=h(z)|dz|^2$, then we have
\begin{equation}  \label{eq:ddot1}
      e(t)=1+t^2\left(
    \frac{|\ph(z)|^2}{h(z)^2} +2(2-\Delta)^{-1}\frac{|\ph(z)|^2}{h(z)^2}
    \right )+O(t^3)~.
\end{equation}

With the real notation, $\displaystyle
\frac{|\ph(z)|^2}{h(z)^2}=|\nu_\qd|^2
=\tr(\dot\alpha^2)/2 
$. So we can rewrite (\ref{eq:ddot1})
as 
\begin{equation}\label{eq:wolf2}
\frac{1}{2}\ddot e(0)=|\nu_\qd|^2+2(2-\Delta)^{-1}(|\nu_\qd|^2)~.
\end{equation}

Using 
(\ref{eq:wolf2}) in (\ref{eq:c1}) we have
\[
   \ddot\alpha=2(2-\Delta)^{-1}(|\nu_\qd|^2)\En~.
\]
In particular the operator $\xi$ in (\ref{eq:F-second}) is
equal to $\xi=[2(2-\Delta)^{-1}(|\nu_\qd|^2)-|\nu_\qd|^2]\En$.
Since $\xi$ is a multiple of the identity, we have that 
\begin{equation}
  \label{eq:mb}
 \tr (b)\tr (\xi)-\tr(b\xi)=\tr (b \xi)=\tr (b(\ddot\alpha-\dot\alpha^2))~. 
\end{equation}

In order to conclude the computation we need to estimate the integral of the term
$\tr(\dot\alpha\dot b)$ appearing in (\ref{eq:F-second}).

\begin{prop}\label{prop:psi}
We have
\begin{equation}\label{eq:psi1}
\int_S\tr(\dot\alpha\dot b)\da_h=-\int_S\tr(\dot\alpha^2b)\da_h- 
\int_S h(\grad\skew, b^{-1}\grad\skew)\da_h\int_S\skew^2\tr (b)\da_h~.
\end{equation}
In particular
\begin{equation}\label{eq:psi2}
\int_S\tr(\dot\alpha\dot b) \da_h\leq-\int_S\tr(\dot\alpha^2 b)\da_h~.
\end{equation}
\end{prop}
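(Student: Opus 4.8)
The plan is to start from the decomposition $\dot\psi=\dot\alpha b+\dot b$ (valid at $t=0$ since $\alpha_0=\En$), which gives $\dot b=\dot\psi-\dot\alpha b$ and hence the pointwise identity
\[
\tr(\dot\alpha\dot b)=\tr(\dot\alpha\dot\psi)-\tr(\dot\alpha^2 b)~.
\]
The second term already contributes $-\int_S\tr(\dot\alpha^2 b)\,\da_h$, so everything reduces to evaluating $\int_S\tr(\dot\alpha\dot\psi)\,\da_h$. For this I would substitute $\dot\psi=\nabla Y+\skew Jb$ from (\ref{eq:X2}) and treat the two resulting terms separately.

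For the first term I claim $\int_S\tr(\dot\alpha\nabla Y)\,\da_h=0$. The key point is that $\dot\alpha=\nu_\qd$ is self-adjoint, traceless and Codazzi by (\ref{eq:dota}), and for such a tensor the operator divergence $\sum_i(\nabla_{e_i}\dot\alpha)e_i$ vanishes: pairing it against an arbitrary $Z$ and using self-adjointness and Codazzi rewrites it as $\tr(\nabla_Z\dot\alpha)=Z(\tr\dot\alpha)=0$. Writing $\tr(\dot\alpha\nabla Y)=\sum_i h(\dot\alpha\nabla_{e_i}Y,e_i)$ in an $h$-orthonormal frame and using self-adjointness of $\dot\alpha$ and of each $\nabla_{e_i}\dot\alpha$, I would then recognize $\tr(\dot\alpha\nabla Y)=\Div(\dot\alpha Y)$, which integrates to zero on the closed surface $S$. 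Thus $\int_S\tr(\dot\alpha\dot\psi)\,\da_h=\int_S\skew\,\tr(\dot\alpha Jb)\,\da_h$.

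For the remaining term I would establish the pointwise identity $\tr(\dot\alpha Jb)=\Div(JY)-\skew\tr(b)$ by computing $\tr(J\dot\psi)$ in two ways. On one side, $\dot\psi=\dot\alpha b+\dot b$ together with Lemma~\ref{lm:sa} and the anti-commutation $J\dot\alpha=-\dot\alpha J$ gives $\tr(J\dot\psi)=-\tr(J\dot\alpha b)=\tr(\dot\alpha Jb)$. On the other side, $\dot\psi=\nabla Y+\skew Jb$ with $\nabla J=0$ gives $\tr(J\dot\psi)=\Div(JY)-\skew\tr(b)$, using $\tr(J\,Jb)=-\tr(b)$. Multiplying by $\skew$, integrating, and integrating by parts on the closed $S$ turns $\int_S\skew\,\Div(JY)\,\da_h$ into $-\int_S h(\grad\skew,JY)\,\da_h$, after which Lemma~\ref{lm:grad} ($JY=b^{-1}\grad\skew$) converts this into $-\int_S h(\grad\skew,b^{-1}\grad\skew)\,\da_h$. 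Collecting the three contributions yields (\ref{eq:psi1}).

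Finally, (\ref{eq:psi2}) follows since $b$ is positive definite with $\det b=1$: then $b^{-1}$ is positive, so $h(\grad\skew,b^{-1}\grad\skew)\ge0$, while $\tr(b)\ge2>0$, so both correction terms are nonpositive. The main obstacle I expect is the careful bookkeeping in the two trace computations — in particular keeping the anti-commutation signs straight and correctly identifying $\tr(\dot\alpha\nabla Y)$ as an exact divergence via the vanishing divergence of the traceless Codazzi tensor $\dot\alpha$. Once these pointwise identities are in place, the integrations by parts on the closed surface are routine.
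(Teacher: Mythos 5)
Your proposal is correct and follows essentially the same route as the paper: both multiply the identity $\dot\psi=\dot\alpha b+\dot b=\nabla Y+\skew Jb$ by $\dot\alpha$ (resp.\ by $J$, invoking Lemma~\ref{lm:sa} and the anti-commutation $J\dot\alpha=-\dot\alpha J$), recognize $\tr(\dot\alpha\nabla Y)=\Div(\dot\alpha Y)$ via the Codazzi equation and tracelessness, integrate by parts, and finish with Lemma~\ref{lm:grad} and positivity of $b$. Note only that your computation yields the sum $-\int_S h(\grad\skew,b^{-1}\grad\skew)\da_h-\int_S\skew^2\tr(b)\da_h$, which is indeed what the paper's proof obtains and uses later; the displayed formula (\ref{eq:psi1}) contains a typographical slip (a missing minus sign turning that sum into a product of integrals).
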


\begin{proof}
Multiplying the identity (\ref{eq:X2})
by $\dot\alpha$ and taking the trace, we get
\begin{equation}\label{eq:id2}
\tr(\dot\alpha\dot b)=-\tr(\dot \alpha^2b)+\tr(\dot\alpha\nabla Y)+\tr(\skew\dot\alpha Jb)~.
\end{equation}
Since $\dot\alpha$ solves the Codazzi equation $d^\nabla\dot\alpha=0$ we have
 $\dot\alpha\nabla Y=\nabla(\dot\alpha Y)-\nabla_Y\dot\alpha$. Taking the trace and considering
that $\tr\dot\alpha=0$, it results that
\[
\tr(\dot\alpha\nabla Y)=\Div(\dot\alpha Y)~.
\]

On the other hand, multiplying the identity (\ref{eq:X2}) by $J$ we get
\[
  J\dot b+J\dot\alpha b=\nabla (JY)-\skew b~.
\]
Taking the trace and using Lemma \ref{lm:sa} we have $\tr(J\dot\alpha b)=-\Div(JY)+\skew\tr(b)$.
Using again that $\dot\alpha J=-J\dot\alpha$ we get
\[
\tr(\dot\alpha J b)=\Div(JY)-\skew\tr(b)~.
\]
In particular replacing this identity in (\ref{eq:id2}) we obtain that
\[
\tr(\dot\alpha\dot b)=-\tr(\dot\alpha^2b)+\Div(\dot\alpha Y)+\skew \Div(JY)-\skew^2\tr b~.
\]
Integrating, it results that
\[
\int_S\tr(\dot\alpha\dot b)\da_h=-\int_S\tr(\dot\alpha^2b)\da_h-\int_S h(\grad\skew, JY)\da_h
-\int_S\skew^2\tr (b)\da_h~.
\]
By Lemma \ref{lm:grad}, $h(\grad\skew, JY)=h(\grad\skew, b^{-1}\grad\skew)>0$, so the result easily follows.
\end{proof}

\begin{proof}[Proof of Theorem \ref{tm:cvx}]
Using (\ref{eq:psi1}) in (\ref{eq:F-second}) and taking into account (\ref{eq:mb}) we get
\begin{align*}
\ddot F=\int_S[\tr(b(\ddot\alpha-\dot\alpha^2))+\tr (b\dot\alpha^2)] \da_h
+\int_S h(\grad\skew, b^{-1}\grad\skew)\da_h+\int_S\skew^2\tr (b)\da_h=\\ 
\int_S\tr (b\ddot\alpha)\da_h+\int_S h(\grad\skew, b^{-1}\grad\skew)\da_h+\int_S \skew^2\tr (b)\da_h~.
 \end{align*}

In particular, we deduce that
\[
\ddot F\geq\int_S\tr(b\ddot\alpha)\da_h=\int_S2(2-\Delta)^{-1}(|\nu_\qd|^2)\tr (b)\da_h~.
\]

Now, let $\qd\neq 0$ and put $u:=(2-\Delta)^{-1}(|\nu_\qd|^2)$. We have that
$2u-\Delta u=|\nu_\qd|^2$ and, by 
the 
maximum principle,
$u>0$ (since $|\nu_\qd|^2$ is positive).
Hence, $\ddot F$ is positive.

A simple case is where $h=h^\dual$. In this a case
it is not difficult to check that $\skew=0$ and $b=\En$.
Then we simply get
\[
\ddot F=\int_S 4u\,\da_h=\int_S2(|\nu_\qd|^2+\Delta u)\da_h=\int_S|\nu_\qd|^2\da_h=2g_{WP}(\qd, \qd)~.
\]
\end{proof}


\section{Smooth grafting} \label{sc:sgr}

We now turn to the smooth grafting map, and to the proof of Theorem
\ref{tm:sgr}.

\begin{subsection}{Notation and hypotheses}

Fix a point $[h]$ in Teichm\"uller space, where $h$ is a hyperbolic
metric on $S$.  Fix also $s>0$ and consider a one-parameter family $t\mapsto b_t$ of
$h$-Labourie operators, that gives a family of hyperbolic metrics
$h_t^\dual=h(b_t\bullet,b_t\bullet)$.

The smooth grafting $sgr'_{s}(h,h^\dual_t)$ is represented by the
metric $h^\#_t=h(\beta_t,\beta_t)$, where
\[
\beta_t=\cosh(s/2)\En+\sinh(s/2)b_t\, .
\]

We will show that the map
\[
sgr'_{s}(h,\bullet):\Teich\longrightarrow\Teich
\]
is surjective.

More precisely we will show the following results:
\begin{itemize}
\item[(a)]
$sgr'_{s}(h,\bullet)$ is proper;
\item[(b)]
the degree of the map
$sgr'_{s}(h,\bullet):\Teich\rightarrow\Teich$ is $1$.
\end{itemize}

Surjectivity is an immediate consequence of $(b)$.
Notice that $(a)$ is needed to define the degree of the map
$sgr'_{s}(h,\bullet)$.
\end{subsection}

\begin{subsection}{The map $sgr'_{s}(h,\bullet)$ is proper} 
Let $[h_n^\dual]$ be a divergent sequence in $\Teich$ and
let $J^\#_n$ be the complex structure associated to
$h^\#_n$.
We want to show that $J^\#_n$ is a diverging sequence in $\Teich$.
The proof is based on the following lemma.

\begin{notation}
Let $f:S\rightarrow S$ be any smooth map.
For any two metrics $g,g'$ on $S$, denote by
$E(f;g,g')$ the energy of $f$ regarded
as a map $f:(S,g)\rightarrow (S,g')$.
Since $E(f;g,g')$ is invariant by conformal
deformations of $g$, we sometimes
replace $g$ by its underlying conformal structure. 
\end{notation}

\begin{lemma}\label{lm:energyest}
Let $h$ be a hyperbolic metric and let $s>0$. Given a $h$-Labourie operator $b$, call $\beta_c=\En+b$ and 
$\beta=\cosh(s/2)\En+\sinh(s/2) b$, 
and consider the metrics $h_c=h(\beta_c\bullet, \beta_c\bullet)$ and
$h^\#=h(\beta\bullet,\beta\bullet)$. 
Then
\[
    E(f; h_{c}, h)< \tau^{-1}E(f; h^\#, h) 
\]
where $\tau=\tanh(s/2)$, for any smooth map $f:S\rightarrow S$.
\end{lemma}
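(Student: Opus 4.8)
The plan is to reduce the asserted inequality of integrals to a pointwise comparison of the two energy densities, performed in a frame diagonalizing $b$. Recall that for a smooth map $f$, a source metric $g=h(G\,\bullet,\bullet)$ and target metric $h$ (with $G$ an $h$-self-adjoint positive operator), the energy can be written as
\[
E(f;g,h)=\frac12\int_S\tr\!\big(G^{-1}p\big)\sqrt{\det G}\,\da_h,\qquad f^*h=h(p\,\bullet,\bullet),
\]
where $p$ is $h$-self-adjoint and positive semidefinite. I would apply this with $G=\beta_c^2$ to express $E(f;h_c,h)$ and with $G=\beta^2$ to express $E(f;h^\#,h)$, observing that the operator $p$ is the same in both cases since it depends only on $f$ and on the common target $(S,h)$.

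Next, fix $p\in S$ and pick an $h$-orthonormal basis $(e_1,e_2)$ diagonalizing $b$, with eigenvalues $\mu\ge 1$ and $\mu^{-1}$ (recall $\det b=1$). Then $\beta_c=\mathrm{diag}(1+\mu,\,1+\mu^{-1})$ and $\beta=\mathrm{diag}(\beta_1,\beta_2)$ with $\beta_1=\cosh(s/2)+\sinh(s/2)\mu$ and $\beta_2=\cosh(s/2)+\sinh(s/2)\mu^{-1}$, so both source metrics are diagonal in this frame. The key observation is that, $G^{-1}$ being diagonal, $\tr(G^{-1}p)$ only involves the diagonal entries $p_{11},p_{22}$ of $p$, which are $\ge 0$ because $p$ is positive semidefinite. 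Using $(1+\mu)/(1+\mu^{-1})=\mu$ and combining $\tr(G^{-1}p)$ with $\sqrt{\det G}$, the two densities become
\[
e_c\,\da_{h_c}=\tfrac12\big(\mu^{-1}p_{11}+\mu\,p_{22}\big)\da_h,\qquad
e_\#\,\da_{h^\#}=\tfrac12\Big(\tfrac{\beta_2}{\beta_1}\,p_{11}+\tfrac{\beta_1}{\beta_2}\,p_{22}\Big)\da_h.
\]

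Since $p_{11},p_{22}\ge 0$, it now suffices to compare coefficients, i.e.\ to establish the two scalar inequalities $\mu^{-1}<\tau^{-1}\beta_2/\beta_1$ and $\mu<\tau^{-1}\beta_1/\beta_2$. Clearing denominators and substituting $\tau=\sinh(s/2)/\cosh(s/2)$, both collapse to the single inequality $\sinh^2(s/2)<\cosh^2(s/2)$, which holds for every $s>0$. Integrating the resulting pointwise bound $e_c\,\da_{h_c}\le\tau^{-1}e_\#\,\da_{h^\#}$ over $S$ then yields the claim.

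The computation itself is elementary, so the points requiring care are bookkeeping rather than a genuine obstacle. First, the eigenbasis of $b$ varies over $S$; this causes no trouble because $\tr(G^{-1}p)\sqrt{\det G}$ is an invariantly defined density that I am free to evaluate, at each point, in whatever orthonormal frame diagonalizes $b$ there. Second, equality in the pointwise bound occurs exactly where $df=0$, so the strict inequality in the statement should be read for $f$ nonconstant --- which is the only case relevant to the properness argument that follows.
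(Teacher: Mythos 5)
Your proof is correct, and it is worth comparing carefully with the paper's own argument, because the two are organized differently in a way that matters. The paper factors the estimate into two separate pointwise comparisons: an area-form inequality $\da_{h_c}\leq \tau^{-1}\cosh^{-2}(s/2)\,\da_{h^\#}$ (coming from $2\tau\leq 1+\tau^2$ applied to the determinants) and an energy-density inequality, which it derives by writing $f^*h=h_c(\gamma\alpha\bullet,\gamma\alpha\bullet)$ with $\gamma=\beta_c^{-1}\beta$ and using that the eigenvalues of $\gamma$ are $<\cosh(s/2)$; multiplying the two factors gives $\tau^{-1}$. You instead compare the \emph{combined} integrands $e\,\da$ directly, in a frame diagonalizing $b$, where positivity of the diagonal entries $p_{11},p_{22}$ reduces everything to the two scalar inequalities that both collapse to $\sinh^2(s/2)<\cosh^2(s/2)$. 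Your merged comparison is not just a cosmetic variant: the paper's intermediate claim, as literally written (``$e(f;h_c,h)<e(f;h^\#,h)$''), is false in general --- take $b=\En$, so $h_c=4h$ and $h^\#=e^s h$, and note that $\frac{1}{4}\tr_h(f^*h)>e^{-s}\tr_h(f^*h)$ once $s>2\ln 2$; the correct statement needs the factor $\cosh^2(s/2)$, i.e.\ $e(f;h_c,h)<\cosh^2(s/2)\,e(f;h^\#,h)$, which is what recombines with the area estimate to give exactly $\tau^{-1}$. Moreover the paper's route to that density bound via the $h_c$-adjoint of $\gamma\alpha$ quietly uses a commutation of $\alpha$ with $\gamma$ that need not hold, whereas your coefficientwise argument in the $b$-eigenframe (using only that $p$ is positive semidefinite, so $p_{11},p_{22}\geq 0$) is airtight; the varying eigenframe is harmless since the integrand is an invariant density evaluated pointwise. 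Finally, you are right to flag the strictness caveat: the pointwise bound degenerates to equality exactly where $df=0$, so the strict inequality of the lemma holds for nonconstant $f$ (both sides vanish for constant $f$); the paper's proof has the same implicit restriction, and it is immaterial for the application, where $f$ is a harmonic diffeomorphism.
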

\begin{proof}
Notice that $h=h_{c}(\gamma, \gamma)$ with $\gamma=\beta_{c}^{-1}\beta$ and that
\begin{equation}\label{eq:est}
\da_{h_{c}}\leq
\tau^{-1}\cosh^{-2}(s/2)\da_{h^\#}
\end{equation}
as $\tau\det(\beta_c)=\tau(2+\tr(b))\leq
1+\tau^2+\tr(b)=\cosh^{-2}(s/2)\det(\beta)$.
%

Now if $\alpha$ is the $h^\#$-self-adjoint operator such that
\[
     f^*(h)=h^\#(\alpha\bullet, \alpha\bullet)
\]
we have that
$f^*(h)=h_{c}(\gamma\alpha\bullet,\gamma\alpha\bullet)$,
and so
\[
   e(f; h^\#, h)=\frac{1}{2}\tr(\alpha^2)\,,\qquad\qquad e(f;
   h_{c},h)=\frac{1}{2}\tr((\gamma\alpha)^\dagger\gamma\alpha)~.
\]
where $^\dagger$ denotes the adjoint with respect to $h_c$.
Notice that the eigenvalues of $\gamma$ are less than
$\cosh(s/2)$
everywhere and so $h_c(\gamma v,\gamma v)<\cosh^2(s/2)h_c(v,v)$ for every nonzero tangent vector $v$.
Indeed if $k$ is the biggest eigenvalue of $b$, then the
eigenvalues of $\gamma$ are $\cosh(s/2)\frac{1+\tau k}{1+k}$ and
$\cosh(s/2)\frac{\tau+k}{1+k}$. It easily follows that
\[
  e(f; h_{c},h)<e(f;h^\#, h)\,.
\]
This inequality with (\ref{eq:est}) implies the statement.
\end{proof}

Now, let $\beta_{c_n}=\En+b_n$ and $h_{c_n}=h(\beta_{c_n}\bullet,\beta_{c_n}\bullet)$, and consider
the smoothly grafted metric
$h(\beta^\#_n\bullet,\beta^\#_n\bullet)$, where 
$\beta^\#_n=\cosh(s/2)\En+\sinh(s/2) b_n$,
defining the conformal class $J^\#_n$.

By Lemma \ref{lm:energyest} applied to the unique
harmonic map
$f_n:(S,J^\#_n)\rightarrow (S,h)$ isotopic to the identity,
we have
\[
E(\Id;c_n,h)\leq
E(f_n;c_n,h)
\leq
\tau^{-1} E(f_n;h^\#_n,h)~.
\]
So, as $h^\dual_n$ is diverging, $c_n$ is diverging too
and $E(\Id;c_n,h)\rightarrow\infty$. As a consequence,
$E(f_n;h_n^\#,h)\rightarrow\infty$, which implies that
the isotopy class of the underlying complex structure 
$[J^\#_n]$ is diverging in $\mathcal T$ (see \cite{wolf:teichmuller}).
\end{subsection}

\begin{subsection}{The degree of $sgr'_{s}(h,\bullet)$}
In this section we will compute the topological degree of
the map
\[
\cG:= sgr'_{s}(h,\bullet):\Teich\rightarrow\Teich
\]
and we will prove that it is equal to $1$.

In fact we will prove that $\cG^{-1}(h)=\{h\}$ and that the map
$\cG$ is locally invertible around $h$.

\begin{lemma}\label{lm:unique}
If $h^\#=sgr'_{s}(h,h^\dual)$ represents the same point in $\Teich$ as $h$,
then $h^\dual=h$.
\end{lemma}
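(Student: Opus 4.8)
The plan is to reduce the statement to the assertion $b=\En$ (equivalently $h^\dual=h$), and then to detect the conformal class of $h^\#$ in an isotopy-invariant way by means of a harmonic map. First I record the algebraic meaning of the conclusion. Since $b$ is positive, $h$-self-adjoint and $\det b=1$, its eigenvalues are $k,k^{-1}$, and those of $\beta=\cosh(s/2)\En+\sinh(s/2)b$ are $\cosh(s/2)+\sinh(s/2)k$ and $\cosh(s/2)+\sinh(s/2)k^{-1}$; these coincide exactly when $k=1$, i.e. when $b=\En$. Thus $b=\En$ is equivalent to $\beta$ being a pointwise multiple of $\En$, which is precisely the condition that $h^\#=h(\beta\bullet,\beta\bullet)$ be \emph{pointwise} conformal to $h$. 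The content of the lemma is therefore a rigidity statement: the hypothesis only asserts that $h^\#$ is conformal to $h$ \emph{up to isotopy}, and one must upgrade this to pointwise conformality.

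As a clean preliminary I would first establish the analogous statement for the \emph{center}: the center $c$ of $(h,h^\dual)$ equals $[h]$ if and only if $b=\En$. This has a short proof via Hopf differentials. By the defining property of the center (Section~2.2), the identity maps $(S,c)\to(S,h)$ and $(S,c)\to(S,h^\dual)$ are harmonic with opposite Hopf differentials $\pm\qd_0$. If $c=[h]$, then the first map is the identity isometry, so $\qd_0=0$ by uniqueness of harmonic maps; hence the second map has vanishing Hopf differential, is therefore conformal, and being harmonic and conformal between hyperbolic metrics it is an isometry, giving $h^\dual=h$ and $b=\En$. This is corroborated by Proposition~\ref{prop:energy}, since the harmonic energy from the center is a fixed multiple of $\int_S\tr(b)\,\da_h$, which attains its minimum exactly when $\tr b\equiv 2$, that is when $b=\En$.

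To bring the grafted structure into the same framework, I would introduce the harmonic map $f\colon(S,c^\#)\to(S,h)$ isotopic to the identity, where $c^\#:=[h^\#]$, with Hopf differential $\Phi$. By the standard inequality $E(f;h^\#,h)\ge -2\pi\chi(S)$, with equality precisely when $f$ is conformal, the hypothesis $c^\#=[h]$ is equivalent to $\Phi=0$, i.e. to the energy attaining its absolute minimum. It then remains to show $c^\#=[h]\Rightarrow b=\En$, equivalently that $c^\#\neq[h]$ whenever $b\neq\En$.

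This last implication is where I expect the main obstacle to lie. The energy comparison already available, Lemma~\ref{lm:energyest}, only yields $E(\Id;c,h)<\tau^{-1}E(f;h^\#,h)$ with $\tau^{-1}>1$, and this slack is too large to separate $E(f;h^\#,h)$ from its minimum: for small $s$ the structure $c^\#$ lies very close to $[h]$ even when $b\neq\En$, so no soft inequality of this type can force $b=\En$. The delicate point is to prove that $\Phi$ is nonzero as soon as the traceless part of $b$ is nonzero. I would attack this either (a) by deriving an explicit relation expressing $\Phi$ in terms of the holomorphic quadratic differential attached to the traceless part of $b$ through the Codazzi equation $d^\nabla b=0$, thereby reducing $\Phi=0$ to $b=\En$ (and, combined with the center lemma above, to $c=[h]$); or (b) by passing to the associated hyperbolic end and arguing by a maximum principle that the only end in this one-parameter family with conformal infinity $[h]$ is the Fuchsian one. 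Either route replaces the non-sharp energy estimate by a genuinely rigid input, which is the heart of the matter.
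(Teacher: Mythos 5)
Your preparatory reductions are correct: $b=\En$ is indeed equivalent to $h^\dual=h$ and to pointwise conformality of $h^\#$ with $h$; the hypothesis is indeed equivalent to the vanishing of the Hopf differential $\Phi$ of the harmonic map $(S,c^\#)\to(S,h)$; and your diagnosis that Lemma \ref{lm:energyest} is too lossy to conclude is accurate. But the proposal stops exactly at the point where the lemma actually lives: the implication $c^\#=[h]\Rightarrow b=\En$ is never proved, only two possible attacks are sketched. Route (a) rests on a questionable premise: the traceless part of $b$ is \emph{not} a Codazzi tensor unless $\tr(b)$ is constant (the term $d^\nabla(\tfrac{\tr b}{2}\En)$ does not vanish), so it does not directly produce a holomorphic quadratic differential on $(S,h)$; the holomorphicity naturally associated to $b$ lives on the center $c$ of $(h,h^\dual)$, not on $[h]$, and relating that object to $\Phi$ --- the Hopf differential of a harmonic map whose source is the \emph{grafted} conformal structure $c^\#$ --- is precisely the hard analytic content, for which no formula or mechanism is offered. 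Route (b) is a one-sentence hope. Since everything before this step is a reformulation of the statement, the proposal as written does not constitute a proof.

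For comparison, the paper closes this gap by a sharp quantitative energy estimate (consistent with your remark that a soft bound cannot suffice, but of a different nature than either of your routes). It proves Proposition \ref{lm:energyest2}: if $\hat h=e^{2u}h^\#$ is the hyperbolic metric in the conformal class of $h^\#$, then $F([\hat h],[h^\dual])\leq F([h],[h^\dual])$ with equality iff $h=h^\dual$, which immediately gives Lemma \ref{lm:unique} since $[h^\#]=[h]$ forces equality. The mechanism is: because $\beta_\tau=\En+\tau b$ is self-adjoint and Codazzi, the curvature of $h^\#$ satisfies $K^\#=-\det(\beta_\tau)^{-1}\geq-(1+\tau)^{-2}$, with equality only where $b=\En$; the Liouville equation $\Delta_{h^\#}u=e^{2u}+K^\#$ and the maximum principle then give the pointwise bound $e^{2u}\leq(1+\tau)^{-2}$. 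Substituting this into an explicit trace computation of $E(\Id;c,\hat h)$, where $c$ is the center of $(h,h^\dual)$, yields the deficit inequality
\[
E([c],[\hat h])\ \leq\ E([c],[h])\ -\ \tau(1+\tau)^{-2}\int_S[\tr(b^2)-2]\det(\En+b)^{-1}\da_h\,,
\]
and the variational characterization $F([\hat h],[h^\dual])=\inf_{[h']}E([h'],[\hat h])+E([h'],[h^\dual])$ from \cite{cyclic} converts this into the monotonicity of $F$. The deficit term vanishes iff $\tr(b^2)\equiv 2$, i.e. $b\equiv\En$, which is the rigidity you were missing: it is obtained not from a formula for $\Phi$, but from the maximum principle applied to the conformal factor of the grafted metric.
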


This lemma is a simple consequence of  the following
statement. 

\begin{prop}\label{lm:energyest2}
Let $\hat h$ be the unique hyperbolic metric in the conformal class of $h^\#$.
Then $F([\hat h], [h^\dual])\leq F([h],[h^\dual])$ and the equality holds iff $h=h^\dual$.
\end{prop}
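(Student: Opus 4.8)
The statement to prove is Proposition~\ref{lm:energyest2}: writing $\hat h$ for the unique hyperbolic metric conformal to the smoothly-grafted metric $h^\#=h(\beta\bullet,\beta\bullet)$, one has $F([\hat h],[h^\dual])\le F([h],[h^\dual])$ with equality if and only if $h=h^\dual$. My plan is to exploit the variational characterization of $F$ via energy of harmonic maps supplied by Proposition~\ref{prop:energy}, together with the comparison of energies already proved in Lemma~\ref{lm:energyest}. Recall from Proposition~\ref{prop:energy} that $2F(h,h^\dual)=\Ene(c,h)$, where $c$ is the center of the pair and $\Ene(c,h)$ is the energy of the harmonic map isotopic to the identity from $(S,c)$ to $(S,h)$. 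The key point is that $F$ can be expressed as an energy, and that energy of the identity map bounds the energy of \emph{any} map in its isotopy class from below — in particular it is bounded by the harmonic energy, which is the infimum over the isotopy class.

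\textbf{First step.} I would first reinterpret both sides of the inequality as harmonic-map energies. For the right-hand side, $2F([h],[h^\dual])=\Ene(c,h)=\Ene(c,h^\dual)$ where $c$ is the center of $(h,h^\dual)$; since $(h,h^\dual)$ is the normalized pair, $c$ is the conformal class of $h(\beta_c\bullet,\beta_c\bullet)=h_c$ with $\beta_c=\En+b$, exactly the metric appearing in Lemma~\ref{lm:energyest}. For the left-hand side, let $\hat h$ be the hyperbolic representative of $[h^\#]=[J^\#]$ and let $\hat c$ be the center of the pair $(\hat h,h^\dual)$; then $2F([\hat h],[h^\dual])=\Ene(\hat c,h^\dual)$. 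Because the energy of the harmonic map is the \emph{minimum} of the energy functional over the isotopy class of the identity, and $\hat h$ is conformal to $h^\#$ so that $\Ene$ from $[\hat h]$ equals $\Ene$ from $[h^\#]$ (energy depends only on the source conformal structure), I can bound $2F([\hat h],[h^\dual])=\Ene(\hat c,h^\dual)\le E(\Id; h^\#, h^\dual)$ by the energy of a conveniently chosen comparison map rather than the harmonic one.

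\textbf{Second step.} The heart of the argument is then to compare $E(\Id;h^\#,h^\dual)$ against $\Ene(c,h)=2F([h],[h^\dual])$ using Lemma~\ref{lm:energyest}. Lemma~\ref{lm:energyest} gives $E(f;h_c,h)<\tau^{-1}E(f;h^\#,h)$ for all $f$; I would apply it (and its analog with the roles of $h$ and $h^\dual$ interchanged, using that the Labourie operator of $(h^\dual,h)$ is $b^{-1}$ and $h^\#$ is built symmetrically) to reduce the grafted energy to the center energy. The natural route is to note that $2F([h],[h^\dual])=\Ene(c,h)=E(f_c;h_c,h)$ for the harmonic map $f_c$, and to show that the grafted pair $(\hat h,h^\dual)$ has strictly smaller $F$ by tracking the strict inequality in Lemma~\ref{lm:energyest}, which becomes an equality precisely when the operator $\gamma=\beta_c^{-1}\beta$ is a homothety, i.e.\ when $b=\En$, i.e.\ when $h=h^\dual$.

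\textbf{The main obstacle.} The delicate point is bookkeeping the inequality in the correct direction and controlling the conformal factors and area-element estimate \eqref{eq:est} so that the $\tau^{-1}$ factor and the $\cosh^{-2}(s/2)$ factor combine to yield $F([\hat h],[h^\dual])\le F([h],[h^\dual])$ rather than a bound going the wrong way. Lemma~\ref{lm:energyest} produces $E(f;h_c,h)<\tau^{-1}E(f;h^\#,h)$, whereas I need a comparison whose constant is exactly $1$; this forces me to use the minimality of harmonic energy to absorb the stray constant, using that $\hat h$ is the \emph{energy-minimizing} hyperbolic metric in its conformal class. Concretely, I expect to chain the estimate as $2F([\hat h],[h^\dual])=\Ene(\hat c,h^\dual)\le E(\Id;h^\#_{\text{from }\hat h},h^\dual)$ and then feed the symmetric version of Lemma~\ref{lm:energyest} to return to $\Ene(c,h^\dual)=2F([h],[h^\dual])$. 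Verifying that the strict inequality in Lemma~\ref{lm:energyest} survives the passage to harmonic energies — so that equality in the final estimate genuinely forces $\gamma$ to be a homothety and hence $b=\En$ — is the step I expect to require the most care, since one must rule out the possibility that the non-harmonicity of the comparison map masks the strictness.
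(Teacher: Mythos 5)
Your reduction of the proposition to an energy comparison is the right general idea, but the two inequalities you propose to chain together are respectively unjustified and false, and the wrong tool is being invoked. First, the bound $\Ene(\hat c,h^\dual)\le E(\Id;h^\#,h^\dual)$ does not follow from the minimizing property of harmonic maps: that property compares maps with the \emph{same} source conformal structure, and $\hat c$ (the center of the pair $(\hat h,h^\dual)$) is \emph{not} the conformal class of $h^\#$. The legitimate statement is the variational characterization $F([\hat h],[h^\dual])=\inf_{[h']}E([h'],[\hat h])+E([h'],[h^\dual])$ from \cite{cyclic}, which is what the paper uses; but then everything hinges on the choice of competitor $[h']$. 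Your implicit choice $[h']=[\hat h]=[h^\#]$ (which is what replacing the source by $h^\#$ amounts to) is too lossy: it reduces the proposition to an inequality of the shape $2\pi|\chi(S)|+E(\Id;h^\#,h^\dual)\le E([c],[h])+E([c],[h^\dual])$, and this is false for small $s$. Indeed, in terms of the eigenvalues $k,k^{-1}$ of $b$ one computes $E(\Id;h^\#,h^\dual)=\frac{1}{2}\int_S\bigl(A+A^{-1}\bigr)\da_h$ with $A=k(k+\tau)/(1+\tau k)$, while $E([c],[h])+E([c],[h^\dual])=\int_S\tr(b)\da_h$; as $s\to 0$ (so $A\to k^2$) the required pointwise inequality becomes $1+\frac{1}{2}(k^2+k^{-2})\le k+k^{-1}$, i.e.\ $t^2/2\le t$ for $t=k+k^{-1}\ge 2$, which fails whenever $k\ne 1$. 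Second, Lemma \ref{lm:energyest} cannot rescue this: it reads $E(f;h_c,h)<\tau^{-1}E(f;h^\#,h)$, so it bounds grafted-source energies from \emph{below} (it was built for the properness argument, where one wants grafted energies to be large), and moreover in it the grafted metric is a \emph{source} of maps, whereas in the proposition $\hat h$ enters as a \emph{target} of harmonic maps. No amount of harmonic minimality can reverse the direction of that inequality, and the stray factor $\tau^{-1}>1$ makes it weaker still.

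The idea your outline is missing is control of the conformal factor relating $\hat h$ to $h^\#$. The paper takes the competitor $[h']=[c]$, the center of the \emph{original} pair, so that $F([h],[h^\dual])=E([c],[h])+E([c],[h^\dual])$ and it suffices to prove $E([c],[\hat h])\le E([c],[h])$ with equality iff $h=h^\dual$. Writing $\hat h=e^{2u}h^\#$, one notes that $K^\#=-\det(\En+\tau b)^{-1}\ge -(1+\tau)^{-2}$ (because $\det(\En+\tau b)=1+\tau^2+\tau\tr(b)\ge (1+\tau)^2$ when $\det b=1$), and the maximum principle applied to the Liouville equation $\Delta_{h^\#}u=e^{2u}+K^\#$ yields $e^{2u}\le(1+\tau)^{-2}$, with equality somewhere only if $b\equiv\En$. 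Only with this estimate in hand does the comparison map argument close: $E([c],[\hat h])\le E(\Id;c,\hat h)$ is computed explicitly and bounded, giving $E([c],[\hat h])\le E([c],[h])-\tau(1+\tau)^{-2}\int_S[\tr(b^2)-2]\det(\En+b)^{-1}\da_h$, and since $\tr(b^2)\ge 2$ with equality iff $b=\En$, both the inequality and its equality case follow. Nothing in your proposal produces this conformal-factor bound, and without it no comparison with constant exactly $1$ can be achieved.
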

\begin{proof}
By \cite{cyclic} we have  that
\[
  F([\hat h], [h^\dual])=\inf_{[h']\in\mathcal T}E([h'],[\hat{h}])+E([h'], [h^\dual])\leq
E([c],[\hat h])+E([c], [h^\dual])
\]
so we only need to show that $E([c],[\hat h])\leq E([c], [h])=E([c],[h^\dual])$ and 
that 
the equality 
holds only if $h=h^\dual$.

Let us set $\hat h=e^{2u}h^\#$ for some function $u$ on $S$.
Since the operator $\beta_\tau=\En+\tau b$ is a self-adjoint solution of 
the 
Codazzi equation,
the curvature of $h^\#$ is $K^\#=-\det(\beta_\tau)^{-1}$. 
Notice that $\det\beta_\tau=1+\tau^2+\tau\tr b\geq(1+\tau)^2$ so $K^\#\geq -(1+\tau)^{-2}$
and the equality holds only at points where $b=\En$.

The Liouville equation reads
\[
\Delta_{h^\#}u= e^{2u}+K^\#\geq e^{2u}-(1+\tau)^{-2}~.
\]
By the maximum principle we deduce that 
\begin{equation}\label{eq:curvest}
e^{2u}\leq (1+\tau)^{-2}~,
\end{equation}
 and if the equality holds at some points, then
$b=\En$ everywhere.

Now
we have $\hat h=e^{2u}h_c((\En+b)^{-2}(\En+\tau b)^2\bullet,\bullet )$,
so
\[
E(\Id;c,\hat{h})=\frac{1}{2}\int_S e^{2u}\tr[(\En+\tau b)^2(\En+b)^{-2}]\det(\En+b)\da_h~.
\]
On the other hand
\[
\det(\En+b)\tr[(\En+\tau b)^2(\En+b)^{-2}]=\det(\En+b)^{-1}\tr[(\En+\tau b)^2(\En+b^{-1})^2]
\]
where the last equality holds since $\det b=1$.
But 
we have
\begin{align*}
(\En+\tau b)^2(\En+b^{-1})^2 &=[(1+\tau)\En+\tau b+b^{-1}]^2=\\
&=(1+\tau)^2 \En+(\tau b)^2+b^{-2}+2(1+\tau)\tau b+2(1+\tau)b^{-1}+2\tau \En~.
\end{align*}
Taking the trace we deduce
that 
\[
\tr[(\En+\tau b)^2(\En+b^{-1})^2]=(1+\tau)^2[2+2\tr(b)+\tr(b^2)]-\tau[\tr(b^2)-2]~.
\]
Using (\ref{eq:curvest}) in this identity, we
obtain 
\begin{align}\label{eq:enestgr}
E([c],[\hat h]) & \leq E(\Id, c,\hat{h})=\frac{1}{2} \int_Se^{2u}\tr[(\En+\tau b)^2(\En+b)^{-2}]\det(\En+b)\da_h\leq \\
& \leq \frac{1}{2}\int_S
\left\{[2+2\tr(b)+\tr(b^2)]-2\tau(1+\tau)^{-2}[\tr(b^2)-2]
\right\} \det(\En+b)^{-1}\da_h~. 
\notag
\end{align}

On the other hand,
\begin{eqnarray*} 
E([c],[h]) & = & E(\Id;c,h) =\\
& = & \frac{1}{2}\int_S \tr[(\En+b)^{-2}]\det(\En+b)\da_h =\\
& = & \frac{1}{2}\int_S\tr[(\En+b^{-1})^2]\det(\En+b)^{-1}\da_h =\\
& = & \frac{1}{2}\int_S\tr[\En+2b^{-1}+b^{-2}]\det(\En+b)^{-1}\da_h =\\
& = & \frac{1}{2}\int_S[2+2\tr(b)+\tr(b^2)]\det(\En+b)^{-1}\da_h~.
\end{eqnarray*}
Comparing this identity with (\ref{eq:enestgr}), we get that
\[
E([c],[\hat h])\leq E([c],[h])-\tau(1+\tau)^{-2}\int[\tr(b^2)-2]\det(\En+b)^{-1}\da_h~,
\]
and this completes the proof.
\end{proof}

In order to conclude that the topological degree of $\cG=sgr'_{s}(h,\bullet)$ is $1$,
it is sufficient to prove that $d \cG$ at $[h]$ is non-degenerate. 

Consider a one-parameter family of Labourie operators $t\mapsto b_t$ 
such that $b_0=\En$
and $\dot b$ is non-zero. Notice that $\dot b$ is a traceless self-adjoint solution of
the 
Codazzi equation. 
Now consider the path of complex structure $\wti{J}_t$ compatible with
$\wti{h}^\#_t=h(\beta_t,\beta_t)$ and $\beta_t=\cosh(s/2)\En+\sinh(s/2) b_t$.

Since $\wti{J}_t=\beta_t^{-1}J\beta_t$, 
the derivative of $\wti{J}_t$ at $t=0$ is the traceless $h$-self-adjoint operator
$\dot{\wti{J}}=[\wti{J},\dot\beta]=2J\dot b$. 

The derivative of the path $[\wti{J}_t]\in\Teich$ at $t=0$  is the projection of
$\dot{\wti{J}}$ to $T_{[J]}\Teich$, through the natural map $\mathcal A\rightarrow\mathcal T$.
By  \cite{fischer-tromba:cell}, 
Codazzi solutions in $T_J\mathcal A$ form a complement
of the kernel of the projection $T_J\mathcal A\rightarrow T_{[J]}\Teich$. Since
$\dot{\wti{J}}$ lies in this subspace, then it projects to a non-zero vector.

\end{subsection}

%
%
%



\subsection{Hyperbolic ends}
\label{ssc:ends}

In this section we use the parameterization of landslide and smooth grafting
by the upper half-plane, so that we use the notations $SGr'$ and $sgr'$ as
in \cite{cyclic}.
Recall from \cite[Section 5]{cyclic} that two smooth grafting maps can be 
considered. One, $SGr'$, takes its values in $\cCP$, the space of complex
projective structures on $S$, while the other, $sgr'$, goes to the Teichm\"uller
space of $S$.

Given two hyperbolic metrics $h,h^\dual\in \cT$ and $s>0$, there is a unique 
equivariant embedding $\sigma$ of the universal cover
$\tilde{S}$ of $S$ inside $\Hyp^3$ with induced metric $\cosh^2(s/2)\tilde{h}$
and third fundamental form $\sinh^2(s/2)\tilde{h}^\dual$.
Then $SGr'_s(h,h^\dual)$ is the 
complex projective structure induced on $S$ from the complex projective
structure on $\partial_\infty \Hyp^3$ by the hyperbolic Gauss map. The 
complex structure $sgr'_s(h,h^\dual)$ is the complex structure underlying $SGr'_s(h,h^\dual)$.

The equivariant embedding $\sigma$ is locally convex by the Gauss formula
(the Gaussian curvature of the induced metric is $-1/\cosh^2(s/2)>-1$) so that
the quotient of the image of $\sigma$ by the image of its associated representation
of $\pi_1S$ is a convex surface 
in a hyperbolic end $\hend$. This hyperbolic 
end is uniquely determined by $h, h^\dual$ and $s$, and its conformal structure
at infinity is equal to $sgr'_s(h,h^\dual)$. 

\begin{proof}[Proof of Theorem \ref{tm:hyperbolic}]
According to Theorem \ref{tm:sgr},
the map 
$sgr'_s(h,\bullet): \cT\rightarrow \cT$ is 
surjective. 
This means precisely
that, given $h$ and $c\in \cT$, there is a 
$h^\dual\in \cT$ such that 
$sgr'_s(h,h^\dual)=c$, so that there is a 
hyperbolic end with complex
structure at infinity $c$ containing a surface of constant curvature 
$-1/\cosh^2(s/2)$ with induced metric homothetic to $h$.
\end{proof}


We now recall briefly some key points concerning de Sitter domains of dependence, so
as to be able to prove Theorem \ref{tm:deSitter}. A de Sitter domain of dependence is
a (non-complete) 3-dimensional manifold locally modelled on the de Sitter space, which
is future-complete and globally hyperbolic. 

De Sitter domains of dependence are in one-to-one correspondence with hyperbolic
ends. One way to see this correspondence is that, given a hyperbolic end, there is
a unique de Sitter domain of dependence with the same fundamental group and the
same representation of the fundamental group into $\PSL(2, \C)$. 

However it is perhaps simpler here to characterize this correspondence in terms
of convex embedded surfaces. Let $\hend$ be a hyperbolic end, and let $S$ be a
locally strictly convex surface in $\hend$ which bounds a convex domain. The
universal cover $\St$ of $S$ is then a complete, locally convex surface in $\Hyp^3$
invariant under the action of the fundamental group of $\hend$. The dual surface
$\St^\dual$ is then a strictly future-convex, space-like surface in the de Sitter space $\dS^3$, 
also invariant under the action of the fundamental group of $\hend$ but now considered 
as acting on the de Sitter space. The action of $\pi_1 S$ is free and properly 
discontinuous on a convex domain $\tDom$ in $\dS^3$ containing $\St^\dual$, and the quotient
is the de Sitter domain of dependence $\Dom$ corresponding to $\hend$.

The conformal structure at infinity of $\Dom$ is the same as the conformal structure
at infinity of $\hend$. It can be defined in terms of the conformal structure at 
future infinity of $\Dom$, or in terms of the quotient by $\pi_1 S$ of the boundary
at infinity of $\tDom\subset \dS^3$.

\begin{proof}[Proof of Theorem \ref{tm:deSitter}]
Let $h^\dual,h'\in \cT$, and let $K^\dual\in (-\infty,0)$. Let $K:=K^\dual/(1-K^\dual)$, so that $K\in (-1,0)$ 
--- thus, $K$ is the curvature of a surface in $\Hyp^3$ dual to a surface of curvature $K^\dual$
in $\dS^3$.
The second part of Theorem \ref{tm:sgr} implies that there exists $h\in \cT$ such that 
$sgr'_s(h,h^\dual)=h'$, where $s$ is chosen so that $-1/\cosh^2(s/2)=K$. 

This means precisely
that there exists a hyperbolic end $\hend$ containing a surface $S$ with constant curvature $K$, 
with induced metric homothetic to $h$, third fundamental form homothetic to $h^\dual$, and
conformal structure at infinity equal to $h'$.

But then the de Sitter domain of dependence corresponding to $\hend$ contains a surface $S^\dual$ --- dual
to $S$ --- with constant curvature $K^\dual$, induced metric proportional to $h^\dual$ and third fudamental
form proportional to $h$. This proves the theorem.
\end{proof}

\section{The smooth grafting map is symplectic}

In this section we consider symplectic properties of the smooth
grafting map, and prove Proposition \ref{pr:parameterization} and
Theorem \ref{tm:sgr-symplectic}.  A key point in the proof of
Proposition \ref{pr:parameterization} will actually be a consequence
of the symplectic arguments occuring in the proof of Theorem
\ref{tm:sgr-symplectic}.

\subsection{The renormalized volume beyond a $K$-surface}

A Poincar\'e-Einstein manifold is a manifold $\hend$ diffeomorphic to the
interior of a compact manifold with boundary $\overline{\hend}$, with a Riemannian
metric $g$ which is Einstein and can be written near the boundary as
$$ g = \frac{\gb}{\rho^2}~, $$ where $\gb$ is a smooth metric on $\overline{\hend}$
and $\rho$ is a smooth function on $\overline{\hend}$ vanishing on the boundary and
with $\|d\rho\|_{\gb}=1$ on $\partial \hend$. 
In dimension 3,
Poincar\'e-Einstein manifolds are the same as convex co-compact
hyperbolic manifolds.

The volume of a Poincar\'e-Einstein manifold is always
infinite. However it is possible to define a ``renormalized volume''
which is finite and has interesting properties, see
\cite{graham-witten}. In even total dimension, this renormalized
volume is well-defined, while in odd total dimension it depends on the
choice of a metric in the conformal class at infinity.

For quasifuchsian manifolds, in total dimension 3, it makes sense to
choose as the metric at infinity the (unique) hyperbolic metric in the
conformal class at infinity. The renormalized volume which is then
obtained is intimately related to the Liouville functional introduced
by Takhtajan and Zograf \cite{TZ-schottky,takhtajan-zograf:spheres}
for the Schottky uniformization and for the punctured sphere, later
extended to higher genus surfaces \cite{takhtajan-teo}.

Here we follow the analysis of the renormalized volume of hyperbolic
3-manifolds developed in \cite{volume,review}.  The argument we use 
is strongly related to that used in \cite{cp}, so we only sketch the
main points.  We consider a hyperbolic end $\hend$ containing a convex
surface $S$ of constant curvature, isotopic to the boundary at
infinity.

Consider a foliation of a neighborhood of infinity in $\hend$ by
equidistant surfaces $(\Sigma_t)_{t\geq t_0}$, with all leaves between
$S$ and the boundary at infinity of $\hend$. Let $I_t, \II_t, \III_t$ and
$\da_t$, respectively, be the induced metric, second fundamental form,
third fundamental form, and area form of $\Sigma_t$, and by $I,\II,
\III$ and $\da$ the corresponding quantities on $S$. For both $S$ and
$\Sigma_t$ 
we 
use the unit normal pointing towards infinity in $\hend$ when
defining $\II$. We also call $H$ (resp. $H_t$) the mean curvature of
$S$ (resp. $\Sigma_t$), that is, $H=\tr_I\II$.

\begin{defi}
For all $t\geq t_0$ we denote by $V_t$ the volume of the domain of $\hend$ bounded
by $S$ and $\Sigma_t$, and set
$$ W_t = V_t -\frac{1}{4} \int_{\Sigma_t} H_t \da_t + \frac{1}{2}\int_S H\da~. $$
\end{defi}

The following proposition is a direct consequence of the main result
of \cite{sem,sem-era}.

\begin{prop} \label{pr:schlafli}
In a first-order deformation of $\hend$, the first-order variation of
$W_t$ is given by:
\begin{equation}
  \label{eq:schlafli}
  \frac{dW_t}{dt} = \frac 14 \int_{\Sigma_t}
\left(
\frac{dH_t}{dt} + \langle \frac{dI_t}{dt}, \II_t - \frac{H_t}{2} I_t\rangle 
\right)
\da_t
- \frac 12\int_S \langle \frac{dI}{dt},\II - H I\rangle \da~. 
\end{equation}
\end{prop}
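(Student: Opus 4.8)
The plan is to apply the smooth Schläfli formula of \cite{sem,sem-era} to the compact hyperbolic region $\Omega_t\subset\hend$ bounded by the constant curvature surface $S$ and the equidistant leaf $\Sigma_t$, and then to differentiate the two mean-curvature boundary terms appearing in the definition of $W_t$. Throughout I write $\dot{\ }$ for the first-order deformation derivative of the statement (the ambient $\frac{d}{dt}$), keeping in mind that the subscript $t$ on $\Sigma_t,V_t,I_t,\dots$ is the \emph{equidistant} parameter, fixed during the deformation. The Schläfli formula expresses $\dot V_t$ as an integral over $\partial\Omega_t=\Sigma_t\cup S$ of a universal combination of $\dot H$, $\langle\dot I,\II\rangle$ and $H\langle\dot I,I\rangle$; the two correction terms contribute their own derivatives, and the whole point is that the coefficients $-\tfrac14$ and $+\tfrac12$ are tuned so that the sum collapses to the asymmetric expression \eqref{eq:schlafli}.

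First I would record the elementary variational identity that is used on both boundary pieces. Since the area form of a surface varies by $\dot{(\da)}=\tfrac12\langle\dot I,I\rangle\,\da$, one gets, for $\Sigma\in\{S,\Sigma_t\}$,
\[
\frac{d}{dt}\int_\Sigma H\,\da=\int_\Sigma\Big(\dot H+\tfrac12 H\langle\dot I,I\rangle\Big)\,\da .
\]
Then comes the orientation bookkeeping, which is where care is genuinely needed. By convention $\II,H$ on \emph{both} $S$ and $\Sigma_t$ are taken with the unit normal pointing towards infinity. For the region $\Omega_t$ this normal is outward along $\Sigma_t$ but inward along $S$, so the Schläfli formula — stated with outward conormals — sees the shape data $(\II_t,H_t)$ on $\Sigma_t$ and the sign-reversed data $(-\II,-H)$ on $S$. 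Substituting these, adding $\frac{d}{dt}\big(-\tfrac14\int_{\Sigma_t}H_t\,\da_t\big)$ and $\frac{d}{dt}\big(\tfrac12\int_S H\,\da\big)$, and regrouping, the $\dot H$ contribution and part of the $\langle\dot I,I\rangle$ contribution on $S$ are arranged to cancel, leaving only $-\tfrac12\langle\dot I,\II-HI\rangle$ there, while on $\Sigma_t$ the surviving combination is exactly $\tfrac14\big(\dot H_t+\langle\dot I_t,\II_t-\tfrac{H_t}{2}I_t\rangle\big)$.

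The main obstacle is therefore not conceptual but a scrupulous tracking of signs and normalizations: getting the inward orientation of the normal along $S$ right (this is what turns the $+\tfrac12$ correction into a clean $\II-HI$ term), and checking that the numerical constant of the Schläfli formula combines with the chosen coefficients to produce the deliberately asymmetric final form — a full $\II-HI$ term with \emph{no} $\dot H$ surviving on the inner surface $S$, versus a $\II_t-\tfrac{H_t}{2}I_t$ term \emph{together with} $\dot H_t$ on the outer surface $\Sigma_t$. In carrying out the cancellation on $S$ one expects to use that $S$ is a constant curvature surface, which is precisely the feature that lets the inner boundary term collapse to the clean renormalized-volume expression; the remainder is the routine algebra that \cite{sem,sem-era} already reduces to a one-line consequence.
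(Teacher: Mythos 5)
Your outline follows the paper's proof exactly: apply \cite[Theorem 1]{sem} to the region of $\hend$ bounded by $S$ and $\Sigma_t$, with the contribution of $S$ entering with a minus sign because the normal used to define $\II$ and $H$ points \emph{into} that region along $S$, and combine this with the identity $\frac{d}{dt}\int_\Sigma H\,\da = \int_\Sigma\left(\dot H + \frac{H}{2}\langle\dot I, I\rangle\right)\da$ applied on both boundary components. The two ingredients, the decomposition, and the orientation bookkeeping all coincide with the paper's argument.

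Two points, however, need correcting. First, your closing claim about the mechanism of the cancellation on $S$ is wrong: the constant curvature of $S$ is never used. The Schl\"afli formula contributes $-\frac12\int_S\dot H\,\da$, while the correction $+\frac12\int_S H\,\da$ in the definition of $W_t$ contributes $+\frac12\int_S\left(\dot H+\frac{H}{2}\langle\dot I,I\rangle\right)\da$; the $\dot H$-terms cancel identically, for an \emph{arbitrary} smooth surface $S$. It is the coefficient $\frac12$, chosen to match the Schl\"afli normalization, that does the work --- no geometric property of $S$ enters this variational identity (constant curvature matters later, e.g.\ in the corollary computing $dW_t/dt=-\pi\chi(S)$, where one also uses that the $\Sigma_t$ are equidistant). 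If you go looking for a place to use constant curvature in this proof, you will not find one. Second, precisely because you promise scrupulous tracking of constants: carrying out the regrouping honestly gives the inner term as $-\frac14\int_S\langle\dot I,\II-HI\rangle\,\da$, not $-\frac12$. Indeed, once the $\dot H$-terms cancel, what survives on $S$ is $-\frac14\langle\dot I,\II\rangle+\frac14 H\langle\dot I,I\rangle$, and requiring the $\dot H$-cancellation forces the correction coefficient $\frac12$, so the factor $-\frac14$ is unavoidable. The same factor comes out of the paper's own ``simple computation'', so the $-\frac12$ in the printed statement appears to be a typo rather than something your regrouping can reproduce; asserting that the algebra ``collapses'' to the printed formula, without exhibiting this step, conceals exactly the discrepancy your proof was supposed to control.
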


\begin{proof} 
According to \cite[Theorem 1]{sem}, in any first-order deformation of $\hend$,
$$ \frac{dV_t}{dt} = \frac 12\int_{\Sigma_t}
\left(
\frac{dH_t}{dt} + \frac 12\langle \frac{dI_t}{dt}, \II_t\rangle
\right)
\da_t
- \frac 12\int_S
\left(
\frac{dH}{dt}+\frac 12\langle \frac{dI}{dt}, \II\rangle
\right)
\da~. $$
However an elementary computation shows that
$$ \frac{d}{dt}\int_S H\da = \int_S
\left(
\frac{dH}{dt}+\frac{H}{2}\langle \frac{dI}{dt},I\rangle
\right)
\da~, $$
and similarly for $\Sigma_t$. The result follows by a simple computation.
\end{proof}

\begin{cor}
The derivative of $W_t$ with respect to $t$ is given by
$$ \frac{dW_t}{dt} = -\pi \chi(S)~. $$  
\end{cor}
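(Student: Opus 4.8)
The plan is to evaluate the right-hand side of Proposition~\ref{pr:schlafli} for the specific variation furnished by the equidistant foliation itself: I keep the hyperbolic end $\hend$ and the inner $K$-surface $S$ fixed, and let the outer leaf $\Sigma_t$ move at unit speed along the unit normal pointing toward infinity. The crucial simplification is that $S$ does not move under this variation, so $\frac{dI}{dt}=0$ on $S$ and the second integral in (\ref{eq:schlafli}) drops out entirely; only the integral over $\Sigma_t$ survives, and the whole computation is local on the leaf.

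For that leaf integral I would insert the elementary evolution equations of an equidistant foliation in a space of constant curvature $-1$. Writing $B_t$ for the shape operator of $\Sigma_t$, so that $\II_t=I_t(B_t\bullet,\bullet)$ and $H_t=\tr(B_t)$, these are $\frac{dI_t}{dt}=2\II_t$ together with the Riccati equation $\frac{dB_t}{dt}=\En-B_t^2$; taking the trace of the latter gives $\frac{dH_t}{dt}=2-\tr(B_t^2)$. Substituting $\frac{dI_t}{dt}=2\II_t$ into the pairing and using $\langle\II_t,\II_t\rangle=\tr(B_t^2)$ and $\langle\II_t,I_t\rangle=H_t$, the term $\langle\frac{dI_t}{dt},\II_t-\frac{H_t}{2}I_t\rangle$ collapses to $(k_1-k_2)^2$, where $k_1,k_2$ are the principal curvatures. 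Adding $\frac{dH_t}{dt}$, the integrand becomes $2-2k_1k_2=2-2\det(B_t)$.

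To finish I would invoke the Gauss equation for a surface in $\Hyp^3$, namely $K_{\Sigma_t}=-1+\det(B_t)$, so that $2-2\det(B_t)=-2K_{\Sigma_t}$, and then Gauss--Bonnet, $\int_{\Sigma_t}K_{\Sigma_t}\,\da_t=2\pi\chi(\Sigma_t)=2\pi\chi(S)$, the leaves being isotopic to $S$. This yields
\[
\frac{dW_t}{dt}=\frac14\int_{\Sigma_t}(-2K_{\Sigma_t})\,\da_t=-\frac12\int_{\Sigma_t}K_{\Sigma_t}\,\da_t=-\pi\chi(S),
\]
as claimed.

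The computation is routine, and the real care goes into the sign conventions. I must ensure that the unit normal is the one pointing toward infinity (the choice fixed when defining $\II$), that with this choice the Riccati equation carries the sign $\frac{dB_t}{dt}=\En-B_t^2$ and the metric evolves by $\frac{dI_t}{dt}=+2\II_t$, and that the Gauss equation reads $K_{\Sigma_t}=-1+\det(B_t)$ rather than with the opposite sign; getting any one of these wrong flips the final sign. A secondary point worth verifying is that Proposition~\ref{pr:schlafli}, although phrased for a deformation of $\hend$, is in fact a Schl\"afli-type identity valid for any smooth variation of the two bounding surfaces, and so legitimately applies to the equidistant flow. As a consistency check, the uncorrected volume formula used in the proof of Proposition~\ref{pr:schlafli} reproduces $\frac{dV_t}{dt}=\area(\Sigma_t)$ under the same variation, which is exactly the first variation of volume under a unit-speed normal motion of $\Sigma_t$.
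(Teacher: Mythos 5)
Your proposal is correct and follows essentially the same route as the paper: both apply the Schl\"afli-type formula of Proposition~\ref{pr:schlafli} to the equidistant variation (with the term over $S$ vanishing since $S$ is fixed), substitute the evolution equations $\frac{dI_t}{dt}=2\II_t$, $\frac{dH_t}{dt}=2-\tr(B_t^2)$ to reduce the integrand to $2-2\det(B_t)=-2K_t$, and conclude by the Gauss equation and Gauss--Bonnet. Your explicit attention to the sign conventions and to the legitimacy of applying the deformation formula to the foliation parameter $t$ is a sound elaboration of what the paper does implicitly.
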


\begin{proof}
Since the surfaces $\Sigma_t$ are equidistant, we have
$$ \frac{dI_t}{dt} = 2\II_t~,~~ \frac{dB_t}{dt} = \En-B_t^2~, ~~ \frac{dH_t}{dt} = 2-\tr(B_t^2)~. $$
Replacing this in Equation (\ref{eq:schlafli}) leads to
\begin{eqnarray*}
\frac{dW_t}{dt} & = & \frac 14
\left(
\int_{\Sigma_t} 2-\tr(B_t^2) + 2\tr(B_t^2) - H_t^2
\right)
\da_t\\
& = & \frac 12\int_{\Sigma_t} \left( 1-\det(B_t) \right) \da_t \\
& = & \frac 12\int_{\Sigma_t} (-K_t) \da_t~, 
\end{eqnarray*}
where $K_t$ is the curvature of $I_t$. The result follows by the Gauss-Bonnet formula.
\end{proof}

\begin{defi}
We define the renormalized volume above $S$ by 
$$ W := W_t + \pi \chi(S)t~, $$
which is clearly independent of the choice of $t\geq t_0$.
\end{defi}

Note that $W$ can be defined simply as $W_0$ if $t_0\leq 0$, 
however this is not always the case. 

This quantity $W$ depends only on the hyperbolic end $\hend$, on $S$, and on the equidistant
foliation of $\hend$ near infinity. Below we defined another quantity $\cW$, depending only on
$\hend$ and on $S$, obtained by taking a special, canonically defined foliation near infinity.

\subsection{The data at infinity of a hyperbolic end} 

Recall that if $\Sigma$ is a surface in hyperbolic 3-space, and if $\Sigma_t$ is
a surface at constant distance $t$ from $\Sigma$, the induced metric on $\Sigma_t$ can
be expressed in terms of the induced metric $I$ and the shape operator $B$ of $\Sigma$ as:
$$ I_t(x, y) = I
\big(
(\cosh(t)\En + \sinh(t)B)x, (\cosh(t)\En + \sinh(t)B)y
\big)~. $$

It follows directly that the induced metrics $I_t$ have a simple asymptotic development 
as $t\rightarrow \infty$, which can be written as:
$$ I_t = e^{2t}I_\infty + 2\II_\infty+e^{-2t}\III_\infty~, $$
where $I_\infty, \II_\infty$ and $\III_\infty$ are bilinear symmetric forms on $S$ which can be expressed
quite simply in terms of $I_t$ and $B_t$ for any given value of $t$. We call $\nabla^\infty$ the Levi-Civita connection of $I_\infty$,
$K_\infty$ its curvature, $B_\infty:TS\rightarrow TS$ the linear map
which is self-adjoint for $I_\infty$ 
and such that 
$$\qquad\II_\infty(x,y)=I_\infty(B_\infty x,y)
\quad \forall p\in S,\ \forall x,y\in T_p S
$$
and $H_\infty=\tr(B_\infty)$.

The following lemma recalls some properties of this asymptotic expansion, details can
be found in \cite{volume,review}.

\begin{lemma} \label{lm:items}
  \begin{enumerate}
  \item $I_\infty$ is in the conformal class at infinity of $\hend$,
  \item $I_\infty$ and $B_\infty$ satisfy the Codazzi equation, $d^{\nabla^\infty}B_\infty=0$, and a modified
version of the Gauss equation, $K_\infty=-H_\infty$,
  \item $I_\infty$ and $\II_\infty$ together determine uniquely $\hend$,
  \item any metric $I_\infty$ in the conformal class at infinity of $\hend$ is obtained from
a unique foliation of a neighborhood of infinity in $\hend$ by equidistant surfaces.
  \end{enumerate}
\end{lemma}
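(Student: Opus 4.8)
The plan is to extract $I_\infty,\II_\infty,\III_\infty$ explicitly from the development of $I_t$, to reduce items (2) and (3) to elementary computations with Codazzi tensors, and to invoke the asymptotic theory of \cite{volume,review} (Epstein's construction) for the analytic statements (1) and (4).

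First I would expand the defining identity $I_t=I((\cosh(t)\En+\sinh(t)B)\bullet,(\cosh(t)\En+\sinh(t)B)\bullet)$ in powers of $e^{t}$. Setting $E=\tfrac12(\En+B)$ (a positive operator, since the leaves are convex so $B>-\En$), a direct computation gives
\[
I_\infty=I(E\bullet,E\bullet),\qquad \II_\infty=\tfrac14\big(I(\bullet,\bullet)-I(B\bullet,B\bullet)\big),\qquad \III_\infty=I((\En-E)\bullet,(\En-E)\bullet),
\]
and hence $B_\infty=I_\infty^{-1}\II_\infty=E^{-1}-\En$. Substituting these back shows that the whole development collapses to
\[
I_t=I_\infty\big((e^t\En+e^{-t}B_\infty)\bullet,(e^t\En+e^{-t}B_\infty)\bullet\big),
\]
so that $\II_\infty=I_\infty(B_\infty\bullet,\bullet)$ and $\III_\infty=I_\infty(B_\infty\bullet,B_\infty\bullet)$, as required for the definitions to be internally consistent.

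For item (2) I would use that $E$ is an $I$-self-adjoint solution of the Codazzi equation $d^\nabla E=0$ (because $B$ is, being the shape operator of a surface in $\Hyp^3$, and $\En$ trivially is). As in the minimal Lagrangian computation of \cite{cyclic}, a positive Codazzi operator makes $\nabla^\infty:=E^{-1}\nabla(E\bullet)$ the Levi-Civita connection of $I_\infty=I(E\bullet,E\bullet)$; a one-line check then shows that $E^{-1}$, and hence $B_\infty=E^{-1}-\En$, is Codazzi for $\nabla^\infty$, which is exactly $d^{\nabla^\infty}B_\infty=0$. For the modified Gauss equation I would note that, $E$ being Codazzi, the curvatures are related by $K_\infty=K_I/\det E$ (the derivative terms cancel precisely because $d^\nabla E=0$); inserting the Gauss equation $K_I=-1+\det B$ of $\Sigma\subset\Hyp^3$ together with $\det E=\tfrac14(1+\tr B+\det B)$ and the expression for $H_\infty=\tr B_\infty$ then yields the modified Gauss equation $K_\infty=-H_\infty$.

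Item (3) is then formal: from $(I_\infty,\II_\infty)$ one recovers $E=(\En+B_\infty)^{-1}$, the shape operator $B=2E-\En$ and the metric $I=I_\infty(E^{-1}\bullet,E^{-1}\bullet)$ of any leaf; these satisfy the Gauss and Codazzi equations of $\Hyp^3$ (this is item (2) read backwards), so by the fundamental theorem of surface theory they are realized by an equivariant immersion into $\Hyp^3$, unique up to global isometry, whose associated hyperbolic end is $\hend$. Finally, items (1) and (4) are the analytic input recalled from \cite{volume,review}: the rescaled metrics $e^{-2t}I_t\to I_\infty$ and the hyperbolic Gauss map identifies the leaves with $\partial_\infty\hend$, so $I_\infty$ represents the conformal class at infinity; conversely Epstein's horospherical construction attaches to each conformal representative a unique equidistant foliation near infinity with that leading coefficient. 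The main obstacle is really this last point: establishing the existence and uniqueness of the foliation adapted to a prescribed conformal representative requires the asymptotic analysis of the metric near $\partial_\infty\hend$, rather than the formal surface theory used for (2) and (3).
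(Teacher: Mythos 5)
Your overall strategy is reasonable, and since the paper itself offers no proof of Lemma \ref{lm:items} (it defers entirely to \cite{volume,review}), an explicit derivation is welcome. Several of your steps are correct as written: the expansion of $\cosh(t)\En+\sinh(t)B=e^tE+e^{-t}(\En-E)$ with $E=\frac12(\En+B)$, the identity $B_\infty=E^{-1}-\En$, the fact that $\nabla^\infty=E^{-1}\nabla(E\,\bullet)$ is the Levi-Civita connection of $I(E\bullet,E\bullet)$ when $E$ is a positive self-adjoint Codazzi operator, the one-line verification that $E^{-1}$ (hence $B_\infty$) is Codazzi for $\nabla^\infty$, the curvature formula $K'=K_I/\det E$, and the inversion $E=(\En+B_\infty)^{-1}$, $B=2E-\En$, $I=I_\infty(E^{-1}\bullet,E^{-1}\bullet)$ underlying item (3).

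However, the Gauss-equation step fails as written, by a factor of $2$, and this is a genuine gap: with your own formulas (which match the paper's displayed expansion $I_t=e^{2t}I_\infty+2\II_\infty+e^{-2t}\III_\infty$), one gets
\[
K_\infty=\frac{K_I}{\det E}=\frac{4(\det B-1)}{\det(\En+B)}~,
\qquad
H_\infty=\tr(E^{-1})-2=\frac{\tr E}{\det E}-2=\frac{2(1-\det B)}{\det(\En+B)}~,
\]
using $K_I=-1+\det B$ and $\det E=\tfrac14\det(\En+B)$. Hence your normalization yields $K_\infty=-2H_\infty$, not $K_\infty=-H_\infty$; the computation you assert ``then yields the modified Gauss equation'' does not close. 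The discrepancy is one of normalization: in \cite{volume,review} the expansion carries an overall factor $\tfrac12$, that is $I_t=\tfrac12\bigl(e^{2t}I_\infty+2\II_\infty+e^{-2t}\III_\infty\bigr)$, so that $I_\infty=\tfrac12 I\bigl((\En+B)\bullet,(\En+B)\bullet\bigr)$ and $\II_\infty=\tfrac12\bigl(I-\III\bigr)$. This rescaling leaves $B_\infty=(\En+B)^{-1}(\En-B)$ and $H_\infty$ unchanged but halves $K_\infty$, and only then does $K_\infty=-H_\infty$ hold --- consistently with the paper's later use of $H_\infty=-K_\infty=1$ for the hyperbolic representative in the proof of Proposition \ref{pr:cW}. (The paper's displayed expansion has the same inconsistency with the lemma; your attempt makes it visible rather than resolving it.) To repair the proof you should either adopt the normalization of \cite{volume} throughout and re-derive your explicit formulas with the extra $\tfrac12$, or else prove the statement in the form $K_\infty=-2H_\infty$; note that the Codazzi part of item (2), and items (1), (3), (4), are insensitive to this rescaling, so the rest of your argument survives unchanged.
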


A key point is that there are simple formulas relating the data $I_t, B_t, \II_t, \III_t$
on a surface $\Sigma_t$ to the corresponding data at infinity, see \cite[Section 5]{volume} or \cite{review}.
This leads in particular to the following analog of Proposition \ref{pr:schlafli}, see \cite[Lemma 6.1]{volume}.

\begin{prop} \label{pr:schlafli*}
In a first-order deformation of $\hend$ and of the foliation $(\Sigma_t)_{t\geq t_0}$, 
the first-order variation of $W$ is given by:
\begin{equation}
  \label{eq:schlafli*}
  \frac{dW}{dt} = - \frac 14 \int_{\partial_\infty S} 
\left(
\frac{dH_\infty}{dt} + \langle \frac{dI_\infty}{dt}, \II_\infty - \frac{H_\infty}2 I_\infty\rangle
\right)
\da_\infty
- \frac 12\int_S \langle \frac{dI}{dt},\II-H I\rangle \da~. 
\end{equation}
\end{prop}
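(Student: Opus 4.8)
The plan is to obtain this renormalised Schläfli formula from Proposition~\ref{pr:schlafli} by pushing the finite leaf $\Sigma_t$ to infinity. The starting point is that, by the very definition $W=W_t+\pi\chi(S)t$, the quantity $W$ is independent of the level $t$; so if we fix a level and differentiate along the given first-order deformation, the term $\pi\chi(S)t$ contributes nothing and $\frac{dW}{dt}=\frac{dW_t}{dt}$ for every $t$ (here, as in Proposition~\ref{pr:schlafli}, $\frac{d}{dt}$ denotes the derivative along the deformation while the subscript records the foliation level). Proposition~\ref{pr:schlafli} writes $\frac{dW_t}{dt}$ as a boundary integral over $\Sigma_t$ plus the term $-\frac12\int_S\langle \frac{dI}{dt},\II-HI\rangle\,\da$, which is already exactly the second term of (\ref{eq:schlafli*}) and is independent of $t$. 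Since the left-hand side is independent of $t$ as well, the boundary integral over $\Sigma_t$ is itself level-independent, and it suffices to compute its limit as $t\to\infty$ and identify it with $-\frac14\int_{\partial_\infty S}\big(\frac{dH_\infty}{dt}+\langle \frac{dI_\infty}{dt},\II_\infty-\frac{H_\infty}2 I_\infty\rangle\big)\,\da_\infty$.

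For the limit I would substitute the asymptotic data recalled above and in \cite[Section 5]{volume}. From $I_t=e^{2t}I_\infty+2\II_\infty+e^{-2t}\III_\infty$ together with the equidistance relation $\partial_t I_t=2\II_t$ one gets $\II_t=e^{2t}I_\infty-e^{-2t}\III_\infty$, hence the shape operator $B_t=(\En+e^{-2t}B_\infty)^{-1}(\En-e^{-2t}B_\infty)$, with $B_t\to\En$ and $H_t\to 2$, while $\da_t=(e^{2t}+H_\infty+e^{-2t}\det B_\infty)\,\da_\infty$. The crucial feature is a cancellation of scales: viewed as an $I_t$-self-adjoint endomorphism, $\II_t-\frac{H_t}2 I_t$ equals $B_t-\frac{H_t}2\En=e^{-2t}(H_\infty\En-2B_\infty)+O(e^{-4t})$, which decays like $e^{-2t}$ and exactly compensates the $e^{2t}$ growth of $\da_t$; likewise the deformation derivative satisfies $\frac{dH_t}{dt}=O(e^{-2t})$. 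Thus the integrand $\big(\frac{dH_t}{dt}+\langle \frac{dI_t}{dt},\II_t-\frac{H_t}2 I_t\rangle\big)\,\da_t$ has a finite pointwise limit with no divergent terms left to renormalise, consistently with the level-independence noted above.

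Carrying out the contraction, the leading behaviour $I_t^{-1}\frac{dI_t}{dt}\to I_\infty^{-1}\frac{dI_\infty}{dt}$ paired with $B_t-\frac{H_t}2\En\sim e^{-2t}(H_\infty\En-2B_\infty)$ yields, after multiplying by $\da_t\sim e^{2t}\da_\infty$, a limit that (up to the normalisations of $I_\infty$, $H_\infty$ and $\da_\infty$ fixed in \cite{volume}) equals the $-\langle \frac{dI_\infty}{dt},\II_\infty-\frac{H_\infty}2 I_\infty\rangle\,\da_\infty$ term, and the parallel computation for $\frac{dH_t}{dt}$ produces the $-\frac{dH_\infty}{dt}\,\da_\infty$ term. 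The essential qualitative point is the change of sign: the factor $H_\infty\En-2B_\infty$ carries the minus sign that turns the coefficient $+\frac14$ on $\Sigma_t$ into $-\frac14$ at infinity, which is the whole content of the renormalised formula. The precise numerical constant then follows from the exact relations in \cite{volume} between the data $(I_t,B_t,\II_t,\III_t)$ on a leaf and the data at infinity.

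The main obstacle is exactly this last bookkeeping: organising the expansions so that all the $e^{\pm 2t}$ scales cancel and the surviving finite term is matched with the renormalised boundary integral with the correct coefficient and sign. A clean consistency check I would run to validate the constant is the reparametrisation deformation of the foliation, for which $\frac{dI_\infty}{dt}=2I_\infty$, $\frac{d\II_\infty}{dt}=0$, $\frac{dH_\infty}{dt}=-2H_\infty$ and $\frac{dI}{dt}=0$: here $\langle 2I_\infty,\II_\infty-\frac{H_\infty}2 I_\infty\rangle=0$, so (\ref{eq:schlafli*}) collapses to $\frac{dW}{dt}=\frac12\int_S H_\infty\,\da_\infty=-\pi\chi(S)$ by $K_\infty=-H_\infty$ and Gauss--Bonnet, exactly matching the elementary fact that shifting the origin of the equidistant parameter by $s$ sends $W$ to $W-\pi\chi(S)s$.
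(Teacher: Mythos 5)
Your strategy --- reduce to Proposition \ref{pr:schlafli} by observing that $\frac{dW}{dt}=\frac{dW_t}{dt}$ for every level $t$ (since $W=W_t+\pi\chi(S)t$ and the correction is deformation-independent), then compute the $t\to\infty$ limit of the $\Sigma_t$-integral from the leaf-to-infinity formulas --- is structurally sound, and it is essentially the computation that the paper itself never writes out but delegates to \cite[Lemma 6.1]{volume}. Your expansions are also correct: with $I_t=e^{2t}I_\infty+2\II_\infty+e^{-2t}\III_\infty$ one indeed gets $B_t=(\En+e^{-2t}B_\infty)^{-1}(\En-e^{-2t}B_\infty)$, $\da_t=(e^{2t}+H_\infty+e^{-2t}\det B_\infty)\,\da_\infty$, and $B_t-\frac{H_t}{2}\En=e^{-2t}(H_\infty\En-2B_\infty)+O(e^{-4t})$.

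The gap is exactly the step you postpone as ``bookkeeping'', and it is not innocuous: carried out with the formulas you wrote, the limit does \emph{not} give the constant in (\ref{eq:schlafli*}). Indeed one has the exact identity $\big(B_t-\frac{H_t}{2}\En\big)\,\da_t=-2\big(B_\infty-\frac{H_\infty}{2}\En\big)\,\da_\infty$, and likewise $\frac{dH_t}{dt}\,\da_t\to -2\,\frac{dH_\infty}{dt}\,\da_\infty$, so the $\Sigma_t$-integral of Proposition \ref{pr:schlafli} converges to $-\frac12\int_{\partial_\infty S}\big(\frac{dH_\infty}{dt}+\langle\frac{dI_\infty}{dt},\II_\infty-\frac{H_\infty}{2}I_\infty\rangle\big)\,\da_\infty$, which is \emph{twice} the first term of (\ref{eq:schlafli*}). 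The factor $2$ is a normalization mismatch: in \cite{volume} the data at infinity is defined through $I_t=\frac12\big(e^{2t}I_\infty+2\II_\infty+e^{-2t}\III_\infty\big)$, i.e.\ their metric at infinity is twice yours, and it is in that convention that both the relation $K_\infty=-H_\infty$ of Lemma \ref{lm:items} and the coefficient $-\frac14$ of (\ref{eq:schlafli*}) are correct; with the un-halved expansion you use, the correct relations are $K_\infty=-2H_\infty$ and coefficient $-\frac12$. Your final consistency check fails to detect this precisely because it mixes the two conventions: you invoke $K_\infty=-H_\infty$, but with your expansion the exact identity $(1-\det B_t)\,\da_t=2H_\infty\,\da_\infty$, the Gauss equation $K_t=-1+\det B_t$ and Gauss--Bonnet on $\Sigma_t$ force $\int H_\infty\,\da_\infty=-\pi\chi(S)$, i.e.\ $K_\infty=-2H_\infty$; rerunning your check with that relation selects the coefficient $-\frac12$, not $-\frac14$. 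So as written the check validates the wrong constant for your own formulas. To complete the proof you must fix a single normalization (that of \cite{volume}, if the statement is to be recovered verbatim) and carry the expansion, the limit, and the check through consistently in it.
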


\subsection{Smooth grafting is symplectic}

Point (4) of Lemma \ref{lm:items} in particular is used in the next definition.

\begin{defi}
We let $\cW$ be 
the value of $W$ when the foliation $(\Sigma_t)_{t\geq t_0}$ is the unique
foliation such that $I_\infty$ is the hyperbolic metric at infinity.
\end{defi}

With this definition, Proposition \ref{pr:schlafli*} has a direct consequence. Let $(\II_\infty)_0$ be
the traceless part of $\II_\infty$ (with respect to $I_\infty$). 

\begin{prop} \label{pr:cW}
In a first-order deformation of $\hend$, the first-order variation of $\cW$ is given by
\begin{equation}
  \label{eq:cW}
\frac{d\cW}{dt} = -\frac 14\int_{\partial_\infty S} \langle \frac{dI_\infty}{dt}, (\II_\infty)_0\rangle_{I_\infty} \da_\infty
- \frac 12\int_S \langle \frac{dI}{dt},\II-HI\rangle_I \da_I~.
\end{equation}
\end{prop}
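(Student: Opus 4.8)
The plan is to obtain (\ref{eq:cW}) by specializing the general variational formula (\ref{eq:schlafli*}) to the canonical foliation that defines $\cW$. Recall that $\cW$ is the value of $W$ for the unique foliation of a neighborhood of infinity in $\hend$ whose associated metric at infinity $I_\infty$ is the hyperbolic representative of the conformal class at infinity; by point (4) of Lemma \ref{lm:items} such a foliation exists, is unique, and depends on $\hend$ only through its conformal structure at infinity. A first-order deformation of $\hend$ deforms that conformal class and hence, by uniformization together with Lemma \ref{lm:items}(4), induces a well-defined first-order deformation of the canonical foliation. Thus $\frac{d\cW}{dt}$ is computed by applying Proposition \ref{pr:schlafli*} to this particular simultaneous deformation of $\hend$ and of its foliation.

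The decisive step is to exploit the defining constraint of the canonical foliation: along the entire family the metric $I_\infty$ stays hyperbolic, so $K_\infty \equiv -1$ at every point and for every member of the family. By the modified Gauss equation of Lemma \ref{lm:items}(2), $K_\infty = -H_\infty$, and therefore $H_\infty \equiv 1$ pointwise throughout the deformation. Consequently $\frac{dH_\infty}{dt}=0$ pointwise, and the term $\int_{\partial_\infty S}\frac{dH_\infty}{dt}\,\da_\infty$ in (\ref{eq:schlafli*}) drops out entirely. This is where the special choice of foliation enters, and it is in fact the only place in which (\ref{eq:cW}) differs from (\ref{eq:schlafli*}).

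It then remains only to identify the surviving boundary integrand. Since $H_\infty=\tr_{I_\infty}\II_\infty$ and the surface is two-dimensional, the traceless part of $\II_\infty$ with respect to $I_\infty$ is exactly $(\II_\infty)_0=\II_\infty-\frac{H_\infty}{2}I_\infty$, so the bilinear form paired against $\frac{dI_\infty}{dt}$ in (\ref{eq:schlafli*}) is literally the one in (\ref{eq:cW}); the interior term over $S$ is untouched. Substituting $\frac{dH_\infty}{dt}=0$ and this algebraic identity into (\ref{eq:schlafli*}) produces (\ref{eq:cW}). The one genuine technical point requiring care is the differentiable dependence of the canonical foliation on the deformation of $\hend$, so that Proposition \ref{pr:schlafli*} applies; this follows from the smooth dependence of the hyperbolic uniformizing metric on the conformal class at infinity, combined with the bijection of Lemma \ref{lm:items}(4), and once it is in hand the remaining computation is purely formal.
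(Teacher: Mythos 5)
Your proof is correct and takes essentially the same approach as the paper: specialize Proposition \ref{pr:schlafli*} to the canonical foliation, observe that $H_\infty=-K_\infty=1$ throughout the deformation (so the $\frac{dH_\infty}{dt}$ term vanishes), and identify $\II_\infty-\frac{H_\infty}{2}I_\infty$ with $(\II_\infty)_0$ since $\tr_{I_\infty}\II_\infty=H_\infty$. The paper compresses this into one line; your write-up merely makes the same ingredients (plus the smooth dependence of the canonical foliation on the deformation) explicit.
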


\begin{proof}
This follows directly from Proposition \ref{pr:schlafli*} using the fact that $H_\infty=-K_\infty=1$,
that $\tr_{I_\infty}\II_\infty=H_\infty$, and that $\tr_I\II=H$. 
\end{proof}

Both terms occuring in (\ref{eq:cW}) can be interpreted in an interesting way. 

%

We need to identify the image by $d_1F$ of a point $(h,h^\dual)\in \cT\times \cT$.

\begin{lemma} \label{lm:beta}
Let $(h,h^\dual)\in \cT\times \cT$, and let $b$ be the Labourie operator of $(h,h^\dual)$. Then
$d_1F(h,h^\dual)=(h,\beta)\in T^*\cT$, where $\beta\in T^*_h\cT$ is defined, for any first-order
variation $\hd$ of $h$, by
\begin{equation} \label{eq:beta}
\beta(\hd)=-\frac{1}{2}\int_S \langle \hd, h(b\bullet,\bullet)-\tr(b) h\rangle_h \da_h~.
\end{equation}
\end{lemma}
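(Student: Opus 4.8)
The plan is to reduce the statement to the first-order variation formula of Proposition~\ref{prop:1der} in its most general form. By Remark~\ref{rk:gen}, formula~(\ref{eq:1der}) holds for \emph{any} family of deformations $h_t$ of $h$, not only for the Wolf path~(\ref{eq:wolf}). So first I would fix a normalized representative of the pair, so that $h^\dual=h(b\bullet,b\bullet)$, and realize an arbitrary first-order variation $\hd$ of $h$ by a path $h_t=h(\alpha_t\bullet,\alpha_t\bullet)$ with $\alpha_t$ positive $h$-self-adjoint and $\alpha_0=\En$. Differentiating $h_t=h(\alpha_t\bullet,\alpha_t\bullet)$ at $t=0$ and using that $\dot\alpha$ is $h$-self-adjoint gives $\hd=2h(\dot\alpha\bullet,\bullet)$. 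Equivalently, writing $\hd=h(A\bullet,\bullet)$ for the $h$-self-adjoint operator $A$ representing $\hd$, this says $A=2\dot\alpha$.

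Next I would evaluate~(\ref{eq:1der}) at $t=0$, where $\alpha_0=\En$, $b_0=b$ and $\da_0=\da_h$, obtaining
$$ d_hF_{h^\dual}(\hd)=\dot F=\int_S\bigl[\tr(b)\tr(\dot\alpha)-\tr(\dot\alpha b)\bigr]\da_h=\frac{1}{2}\int_S\bigl[\tr(b)\tr(A)-\tr(A b)\bigr]\da_h~. $$
The point I would stress here is that one must keep the term $\tr(b)\tr(\dot\alpha)$: for a general variation of $h$ the operator $\dot\alpha$ need not be traceless, so one cannot pass to the simplified expression $\dot F=-\int_S\tr(\dot\alpha b)\da_h$ used at the end of Proposition~\ref{prop:1der}, which relied on~(\ref{eq:dota}).

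Then I would identify the right-hand side of~(\ref{eq:beta}). Using that for $h$-self-adjoint operators $P,Q$ the tensor inner product satisfies $\langle h(P\bullet,\bullet),h(Q\bullet,\bullet)\rangle_h=\tr(PQ)$, and writing $h(b\bullet,\bullet)-\tr(b)h=h\bigl((b-\tr(b)\En)\bullet,\bullet\bigr)$, I compute
$$ \langle\hd,\,h(b\bullet,\bullet)-\tr(b)h\rangle_h=\tr\bigl(A(b-\tr(b)\En)\bigr)=\tr(Ab)-\tr(b)\tr(A)~. $$
Hence $\beta(\hd)=-\tfrac{1}{2}\int_S[\tr(Ab)-\tr(b)\tr(A)]\da_h$, which is exactly the value of $\dot F$ found above. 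This gives $d_1F(h,h^\dual)=(h,\beta)$.

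The only genuinely delicate point I expect is checking that $\beta$ is a well-defined element of $T^*_h\cT$, i.e.\ that the right-hand side of~(\ref{eq:beta}) is unchanged when $\hd$ is modified by a Lie derivative $L_X h$. This follows from the equations defining the Labourie operator: the Codazzi equation $d^\nabla b=0$, together with self-adjointness and $\det b=1$, yields $\Div_h\bigl(h(b\bullet,\bullet)\bigr)=d\tr(b)=\Div_h\bigl(\tr(b)h\bigr)$, so the symmetric $2$-tensor $h(b\bullet,\bullet)-\tr(b)h$ is divergence-free and therefore pairs to zero against every $L_X h$ after integration by parts. This is precisely the consistency needed to make $\dot F$ independent of the chosen representative of the tangent vector, and it is reassuring that it is forced by the Codazzi equation rather than requiring a separate argument.
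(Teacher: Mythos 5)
Your proof is correct and follows essentially the same route as the paper's: realize $\hd$ as $h_t=h(\alpha_t\bullet,\alpha_t\bullet)$ so that $\hd=2h(\dot\alpha\bullet,\bullet)$, apply the general first-order variation formula (\ref{eq:1der}) --- valid for arbitrary, not necessarily traceless, $\dot\alpha$ by Remark \ref{rk:gen} --- and match the result against the pairing $\langle \hd, h(b\bullet,\bullet)-\tr(b)h\rangle_h = 2[\tr(\dot\alpha b)-\tr(b)\tr(\dot\alpha)]$. Your closing verification that $h(b\bullet,\bullet)-\tr(b)h$ is divergence-free via the Codazzi equation is correct but not strictly necessary, since $F$ is already a well-defined function on $\cT\times\cT$ and Proposition \ref{prop:1der} computes the derivative of $F([h_t],[h^\dual])$ along paths, so the resulting linear functional automatically depends only on the class of $\hd$ in $T_{[h]}\cT$.
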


\begin{proof}
If we put $h_t=h(\alpha_t, \alpha_t)$ with $\alpha_t$ positive self-adjoint, then
$\hd=2 h(\dot\alpha\bullet, \bullet)$.

In particular we have 
\[
\frac{1}{2} \langle \hd, h(b\bullet,\bullet)-\tr(b) h\rangle_h =\tr(\dot\alpha b)-\tr(\dot\alpha)\tr(b)~.
\]
The result then follows from (\ref{eq:1der}) and Remark \ref{rk:gen}.
%
%
\end{proof}

To give a geometric interpretation of this fact, we note that the smooth grafting map
$SGr'_s:\cT_h\times \cT_{h^\dual}\rightarrow \cCP$ can be decomposed as follows. 
Let $\Ends$ be the
space of hyperbolic ends. There is a natural homeomorphism $\partial_\infty:\Ends\to \cCP$
sending a hyperbolic end to its complex projective structure at infinity. Moreover, 
each hyperbolic end $\hend\in \Ends$ contains a unique convex surface $S_{\hend}$ with constant
curvature $-1/\cosh^2(s/2)$, and we can consider the map $\kappa_s:\Ends\to \cT_h\times \cT_{h^\dual}$
sending $\hend$ to $(h,h^\dual)$, where $h$ and $h^\dual$ are the hyperbolic metrics homothetic
respectively to the induced metric and to the third fundamental form of $S_{\hend}$.
By construction,
the following diagram commutes
\[
\xymatrix{
\Ends \ar[drr]_{\partial_\infty}
\ar[rr]^{\kappa_s\quad} && \cT_h \times \cT_{h^\dual} \ar[d]^{SGr'_s}
\ar[rr]^{\quad d_1 F} && T^*\cT_h \\
&& \ar[rr]^{\Sch}\cCP && T^*\cT_\infty
}
\]
where $\Sch$ is the Schwarzian derivative with respect to the Fuchsian section.

\begin{defi}
We denote by $\lambda$ the Liouville form on $T^*\cT$.  
\end{defi}

We can now identify the second integral in (\ref{eq:cW}).

\begin{cor} \label{cr:liouville1}
The pull-back 
of the Liouville form on $T^* \cT_h$
through $d_1F\circ \kappa_s:\Ends\rightarrow T^* \cT_h$ 
is given by
$$ (d_1F\circ \kappa_s)^*\lambda = \frac 1{\sinh(s)}\int_S \langle \d I, \II-HI\rangle_I \da_I~. $$
\end{cor}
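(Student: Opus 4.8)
The plan is to unwind the tautological definition of the Liouville form and reduce the identity to the explicit formula for $d_1F$ given by Lemma~\ref{lm:beta}, after translating the extrinsic data of the constant curvature surface $S_{\hend}$ into the pair $(h,b)$ describing its induced metric.

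Recall that $\lambda$ is characterized by $\lambda_{(q,p)}(\xi)=p(d\pi(\xi))$, where $\pi\colon T^*\cT_h\to\cT_h$ is the bundle projection. Fix a first-order deformation of the hyperbolic end $\hend$ and let $\hd$ denote the induced variation of the hyperbolic metric $h$ homothetic to the induced metric of $S_{\hend}$; this is precisely the variation of the base point of the map $d_1F\circ\kappa_s$. Since $d_1F\circ\kappa_s(\hend)=(h,\beta)$ by Lemma~\ref{lm:beta}, the tautological property gives
\[
(d_1F\circ\kappa_s)^*\lambda=\beta(\hd)=-\frac12\int_S\langle\hd,\,h(b\bullet,\bullet)-\tr(b)h\rangle_h\,\da_h~,
\]
where $b$ is the Labourie operator of $(h,h^\dual)$. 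Writing $\hd=2h(\dot\alpha\bullet,\bullet)$ with $\dot\alpha$ the associated $h$-self-adjoint operator, this equals $-\int_S[\tr(\dot\alpha b)-\tr(b)\tr(\dot\alpha)]\,\da_h$.

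The heart of the matter is the dictionary between $S_{\hend}$ and $(h,b)$. By construction the induced metric and third fundamental form of $S_{\hend}$ are $I=\cosh^2(s/2)\,h$ and $\III=\sinh^2(s/2)\,h^\dual=\sinh^2(s/2)\,h(b\bullet,b\bullet)$. Using $\III(x,y)=I(B^2x,y)$ together with $h=\cosh^{-2}(s/2)I$ forces $B^2=\tanh^2(s/2)\,b^2$, hence $B=\tanh(s/2)\,b$ after taking positive square roots (both operators being positive). Therefore the $I$-self-adjoint operator representing $\II=I(B\bullet,\bullet)$ is $\tanh(s/2)\,b$ and $H=\tr(B)=\tanh(s/2)\tr(b)$, so $\II-HI$ corresponds to $\tanh(s/2)(b-\tr(b)\En)$; meanwhile $\d I=\cosh^2(s/2)\,\hd$ corresponds to $2\dot\alpha$ and $\da_I=\cosh^2(s/2)\,\da_h$. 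Plugging these in and using $\tanh(s/2)\cosh^2(s/2)=\tfrac12\sinh(s)$ yields
\[
\frac1{\sinh(s)}\int_S\langle\d I,\,\II-HI\rangle_I\,\da_I=\int_S[\tr(\dot\alpha b)-\tr(b)\tr(\dot\alpha)]\,\da_h~,
\]
which agrees with $\beta(\hd)$ up to the overall sign fixed by the conventions discussed below.

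I expect the only genuine subtlety to be the bookkeeping of signs and normalizations, since no further analytic input is needed once the dictionary above is in place. The relation $B=\tanh(s/2)\,b$ must be read with the unit normal pointing towards infinity, which is the choice that makes the conformal structure at infinity of $\hend$ be represented by $h(\beta\bullet,\beta\bullet)$ with $\beta=\cosh(s/2)\En+\sinh(s/2)b$, consistently with Section~\ref{sc:sgr} and with the asymptotic expansion $I_\infty=\tfrac14\,I((\En+B)\bullet,(\En+B)\bullet)$. With this choice fixed, the last step is simply to align the sign convention for the Liouville form $\lambda$ with that of Lemma~\ref{lm:beta}, after which the two displayed integrals coincide and the corollary follows.
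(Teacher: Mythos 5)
Your proof is correct and takes essentially the same route as the paper: the paper's own proof consists precisely of invoking Lemma~\ref{lm:beta} together with the homothety dictionary $I=\cosh^2(s/2)\,h$, $\II=\cosh(s/2)\sinh(s/2)\,h(b\bullet,\bullet)$, $\III=\sinh^2(s/2)\,h(b\bullet,b\bullet)$ (your $B=\tanh(s/2)\,b$), with the factor $\sinh(s)$ arising exactly as in your computation $2\tanh(s/2)\cosh^2(s/2)=\sinh(s)$. The overall sign you flag at the end is a genuine ambiguity, but it sits in the paper's conventions rather than in your argument: the text never fixes the sign of the Liouville form $\lambda$, and the paper's one-line proof is silent on the point, so fixing the convention so that the tautological formula $\lambda_{(q,p)}(\xi)=p(d\pi(\xi))$ aligns with Lemma~\ref{lm:beta} is the intended reading.
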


\begin{proof}
This follows directly from Lemma \ref{lm:beta}, taking into account the homothetic
factors: $I=\cosh^2(s/2)h$, $\II=\cosh(s/2)\sinh(s/2)h(b\bullet, \bullet)$ and $\III = \sinh^2(s/2)h(b\bullet,b\bullet)$.
\end{proof}

Finally we can identify the first integral in  (\ref{eq:cW}). The following lemma is another
way to state Lemma 8.3 in \cite{volume}.

\begin{lemma} \label{lm:eta*}
The pull-back of the Liouville form of $T^*\cT_\infty$ through the map $\Sch\circ \partial_\infty:\Ends \to 
T^*\cT_\infty$ is the 1-form given by
$$ (\Sch\circ \partial_\infty)^*\lambda = - \int_{\partial_\infty S} \langle {\d I_\infty}, (\II_\infty)_0\rangle_{I_\infty} \da_\infty~. $$
\end{lemma}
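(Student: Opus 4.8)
The plan is to unwind the definition of the Liouville form and then reduce the statement to the analysis of the data at infinity already carried out in \cite{volume}. Recall that for $(c,\phi)\in T^*\cT$ and a tangent vector $W$ to $T^*\cT$ at $(c,\phi)$, the Liouville form is $\lambda(W)=\phi(d\pi(W))$, where $\pi:T^*\cT\to\cT$ is the bundle projection. Since $\pi\circ\Sch\circ\partial_\infty$ is the map sending a hyperbolic end $\hend$ to the conformal class $[I_\infty]$ of its metric at infinity (using Lemma \ref{lm:items}(1)), a first-order deformation $V$ of $\hend$ induces a variation $\dot c=[\d I_\infty]\in T_{[I_\infty]}\cT$ of the underlying complex structure, and
$$ (\Sch\circ\partial_\infty)^*\lambda(V)=\Sch(\partial_\infty\hend)\big(\dot c\big)~. $$
Thus the whole problem reduces to identifying the cotangent vector $\Sch(\partial_\infty\hend)\in T^*_{[I_\infty]}\cT$ and pairing it against $\dot c$.

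First I would identify $\Sch(\partial_\infty\hend)$ with a holomorphic quadratic differential. By Lemma \ref{lm:items}, the data at infinity $(I_\infty,B_\infty)$ satisfy the Codazzi equation $d^{\nabla^\infty}B_\infty=0$ together with the modified Gauss equation $K_\infty=-H_\infty$; in the foliation defining $\cW$ the metric $I_\infty$ is hyperbolic, so $K_\infty=-1$ and $H_\infty=1$, and the traceless part $(\II_\infty)_0$ of $\II_\infty$ is a traceless, $I_\infty$-self-adjoint, Codazzi tensor. Exactly as in the discussion of harmonic Beltrami differentials in Section 2, such a tensor is the real part of a holomorphic quadratic differential $\qd$ on $(S,[I_\infty])$. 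The genuinely analytic content, supplied by \cite[Lemma 8.3]{volume}, is precisely that this $\qd$ coincides, up to the normalization built into the definition of $\Sch$ (the Schwarzian taken with respect to the Fuchsian section), with $\Sch(\partial_\infty\hend)$; in other words the cotangent vector $\Sch(\partial_\infty\hend)$ is represented by $\Re\qd=(\II_\infty)_0$.

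It then remains to evaluate $\Sch(\partial_\infty\hend)(\dot c)$. Using the real pairing between $T^*_{[I_\infty]}\cT=\mathcal Q([I_\infty])$ and $T_{[I_\infty]}\cT$ recalled in Section 2, the value of $\Re\qd$ on the variation $\dot c=[\d I_\infty]$ is the integral over $\partial_\infty S$ of the $I_\infty$-inner product of $\d I_\infty$ with $(\II_\infty)_0$ against $\da_\infty$; since $(\II_\infty)_0$ is traceless, only the traceless (i.e.\ conformal) part of $\d I_\infty$ contributes, consistently with $\dot c$ being a Teichm\"uller variation. Collecting factors yields
$$ (\Sch\circ\partial_\infty)^*\lambda = -\int_{\partial_\infty S}\langle \d I_\infty,(\II_\infty)_0\rangle_{I_\infty}\,\da_\infty~, $$
as asserted.

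The hard part will be purely bookkeeping: tracking the several sign and normalization conventions — the constant relating the Schwarzian derivative to $(\II_\infty)_0$, the sign convention for the Liouville form, and the normalization of the pairing between quadratic differentials and conformal variations — so that they combine to the exact coefficient $-1$ in the statement. Since the essential input, namely the identification of the Schwarzian derivative with the traceless second fundamental form at infinity, is already established in \cite{volume}, no new estimate is required and the argument is a faithful translation between the two formulations.
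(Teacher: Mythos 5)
Your proof is correct and follows essentially the same route as the paper: the paper offers no independent argument but simply declares the lemma to be ``another way to state Lemma 8.3 in \cite{volume}'', which is exactly the analytic input you invoke to identify $\Sch(\partial_\infty \hend)$ with the traceless part $(\II_\infty)_0$. Your unwinding of the Liouville form and of the pairing between quadratic differentials and conformal variations (including the sign/normalization bookkeeping you flag) is precisely the routine translation that the paper leaves implicit.
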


\begin{proof}[Proof of Theorem \ref{tm:sgr-symplectic}]
Consider the two 1-forms defined on $\Ends$ by
$$ \eta(\hend) = \int_S \langle \d I,\II-HI\rangle_I\da_I~,
\qquad
\eta_\infty(\hend) = \int_{\partial_\infty \hend} \langle {\d I_\infty},(\II_\infty)_0\rangle_{I_\infty}\da_{I_\infty}~. $$
According to Proposition \ref{pr:cW}, we have on $\Ends$
$$ d\cW = -\frac 14 \eta_\infty - \frac 12 \eta~, $$
so that 
$$ d\eta_\infty + 2d\eta=0~. $$

However Corollary \ref{cr:liouville1} shows that
$$ 2\eta = 2\sinh(s) (d_1F\circ \kappa_s)^* \lambda~, $$
while Lemma \ref{lm:eta*} indicates that 
$$ \eta_\infty = -  (\Sch\circ \partial_\infty)^*\lambda~. $$
So
$$ 2\sinh(s) (d_1F\circ \kappa_s)^* \lambda = (\Sch\circ \partial_\infty)^*\lambda~, $$
and, calling $\omega_{can}=d\lambda$ the cotangent symplectic form on $T^*\cT$, we have
$$ 2\sinh(s) (d_1F\circ \kappa_s)^* \omega_{can} = (\Sch\circ \partial_\infty)^*\omega_{can}~. $$
This proves the result.
\end{proof}

\subsection{Proof of Proposition \ref{pr:parameterization}}

The proof of Proposition \ref{pr:parameterization} is based on the following two lemmas.

\begin{lemma} \label{lm:dFinjective}
The differential of the map $d_1F:\cT\times \cT\to T^*\cT$ is an isomorphism at each point.
\end{lemma}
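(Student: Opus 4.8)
The plan is to reduce the statement to the nondegeneracy of a pulled-back symplectic form, exploiting the identity established in the course of proving Theorem \ref{tm:sgr-symplectic}. Since $\cT\times\cT$ and $T^*\cT$ have the same real dimension $2(6g(S)-6)$, it suffices to prove that the differential of $d_1F$ is injective at each point, as an injective linear map between spaces of equal dimension is automatically an isomorphism.

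First I would recall the key equality of $2$-forms on the space $\Ends$ of hyperbolic ends obtained while proving Theorem \ref{tm:sgr-symplectic}, namely
$$ 2\sinh(s)\,(d_1F\circ\kappa_s)^*\omega_{can} = (\Sch\circ\partial_\infty)^*\omega_{can}~, $$
valid for every fixed $s>0$. The crucial point is that this identity is derived purely from the variational formula of Proposition \ref{pr:cW} together with Corollary \ref{cr:liouville1} and Lemma \ref{lm:eta*}; in particular it makes no use of the invertibility of $d_1F$, which is exactly what we are trying to establish, so it is legitimately available at this stage.

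Next I would argue that the right-hand side is nondegenerate. By Kawai's theorem the pull-back $\Sch^*\omega_{can}$ is a nonzero multiple of the real part of the Goldman symplectic form on $\cCP$, hence nondegenerate; since $\partial_\infty:\Ends\to\cCP$ is a diffeomorphism, $(\Sch\circ\partial_\infty)^*\omega_{can}=\partial_\infty^*(\Sch^*\omega_{can})$ is nondegenerate. As $\sinh(s)\neq 0$, the displayed identity forces $(d_1F\circ\kappa_s)^*\omega_{can}=\kappa_s^*\big((d_1F)^*\omega_{can}\big)$ to be nondegenerate as well; and because $\kappa_s:\Ends\to\cT\times\cT$ is a diffeomorphism, pulling back preserves nondegeneracy, so $(d_1F)^*\omega_{can}$ is a nondegenerate $2$-form on $\cT\times\cT$. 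Finally, at any point, if $d(d_1F)(v)=0$ for some tangent vector $v$, then $\big((d_1F)^*\omega_{can}\big)(v,w)=\omega_{can}\big(d(d_1F)(v),d(d_1F)(w)\big)=0$ for all $w$, so nondegeneracy gives $v=0$; thus $d(d_1F)$ is injective, hence an isomorphism.

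The main obstacle is not the concluding linear-algebra step but the smoothness bookkeeping underlying it: one must make sure that $\Ends$ carries a smooth structure for which both $\partial_\infty$ and $\kappa_s$ (and hence $SGr'_s$) are genuine diffeomorphisms rather than mere homeomorphisms, so that the manipulation of the differential forms $(\Sch\circ\partial_\infty)^*\omega_{can}$ and $(d_1F\circ\kappa_s)^*\omega_{can}$ is meaningful and the symplectic identity transfers cleanly to $\cT\times\cT$.
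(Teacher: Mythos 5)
Your overall strategy is the paper's own: both arguments rest on the identity $2\sinh(s)\,(d_1F\circ\kappa_s)^*\omega_{can}=(\Sch\circ\partial_\infty)^*\omega_{can}$, on the nondegeneracy of its right-hand side, and on the concluding linear-algebra step (nondegenerate pullback forces injective differential, and equal dimensions force an isomorphism). You are also right that this identity comes from $d(d\cW)=0$ together with Corollary \ref{cr:liouville1} and Lemma \ref{lm:eta*}, so invoking it here is not circular.

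There is, however, a genuine gap at the step where you transfer nondegeneracy from $\Ends$ to $\cT\times\cT$: you assert that $\kappa_s$ is a diffeomorphism, and your own closing paragraph concedes that this is exactly what you cannot guarantee. It is not merely "smoothness bookkeeping". What is actually available (from \cite{cyclic} together with the uniqueness of the constant curvature foliation of a hyperbolic end) is that $\kappa_s$ is a bijection, indeed a homeomorphism, and that its inverse $(h,h^\dual)\mapsto\hend$ is smooth; but smoothness of $\kappa_s$ itself --- equivalently, invertibility of the differential of $SGr'_s$, or of $\kappa_s^{-1}$ --- is a statement of the same nature and difficulty as the lemma you are proving, so establishing it first threatens circularity. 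The paper sidesteps this entirely by using only that $\kappa_s$ is \emph{onto}, which is immediate: given $(h,h^\dual)$, the equivariant convex immersion with induced metric $\cosh^2(s/2)h$ and third fundamental form $\sinh^2(s/2)h^\dual$ sits inside a hyperbolic end $\hend$, and by uniqueness of the constant curvature surface in $\hend$ one has $\kappa_s(\hend)=(h,h^\dual)$. The rank argument is then run in the opposite order from yours: nondegeneracy of $(d_1F\circ\kappa_s)^*\omega_{can}$ at $\hend$ forces $d(d_1F)\circ d\kappa_s$ to be injective there, hence $d\kappa_s$ is injective (so an isomorphism, by equality of dimensions) and $d(d_1F)$ is injective --- hence an isomorphism --- at $\kappa_s(\hend)$; surjectivity of $\kappa_s$ then covers every point of $\cT\times\cT$. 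In other words, the maximal rank of $d\kappa_s$ is a \emph{byproduct} of the symplectic identity, not a hypothesis. Your proof is repaired by replacing "diffeomorphism" with "surjection" and reorganizing the final step accordingly; as written, it leans on precisely the unestablished claim that is hardest to justify independently.
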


\begin{lemma} \label{lm:dFproper}
The map $d_1F:\cT\times \cT\to T^*\cT$ is proper.
\end{lemma}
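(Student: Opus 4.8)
The plan is to deduce properness of $d_1F$ from the properness of $F$ established in Proposition~\ref{prop:energy}, using the convexity of Theorem~\ref{tm:cvx} to bound $F(h,h^\dual)$ in terms of the image point. Since $d_1F(h,h^\dual)=(h,\beta)$ with $\beta=d_hF(\bullet,h^\dual)\in T^*_h\cT$, the base point $h$ is automatically confined to a compact subset of $\cT$ as soon as $d_1F(h,h^\dual)$ lies in a compact set, so the whole difficulty is to control $h^\dual$. The strategy is thus: first bound $F(h,h^\dual)$ on the preimage of a compact set, then invoke the properness of $F$ to bound $h^\dual$.

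For the key estimate I would exploit the symmetry $F(h,h^\dual)=F(h^\dual,h)$: it shows that $\beta=d_1F(h,h^\dual)$ is the differential at $h$ of the function $F(h^\dual,\bullet):\cT\to\R$, which by Theorem~\ref{tm:cvx} is strictly convex with $\Hess(F(h^\dual,\bullet))\ge 2g_{WP}$ and, by Lemma~\ref{lm:beta} (since $\beta=0$ forces $b=\En$, i.e. $h=h^\dual$), attains its minimum exactly at $h=h^\dual$, with value the constant $F_0:=F(h^\dual,h^\dual)=4\pi|\chi(S)|$. Let $\gamma:[0,L]\to\cT$ be the unit-speed Weil--Petersson geodesic from $h^\dual$ to $h$, which remains in $\cT$ by Wolpert's geodesic convexity of the Weil--Petersson metric, and set $\phi(t)=F(h^\dual,\gamma(t))$. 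Then $\phi'(0)=0$ and $\phi''(t)=\Hess F(\gamma',\gamma')\ge 2$, so that $\phi'(L)\ge 2L$; since $\phi'(L)=\beta(\gamma'(L))\le\|\beta\|_{WP}$, this yields
\[
\|\beta\|_{WP}\ \ge\ 2L\ =\ 2\,d_{WP}(h^\dual,h)~,
\]
while integrating $\phi'$ and using $\phi'(t)\le\phi'(L)\le\|\beta\|_{WP}$ gives
\[
F(h,h^\dual)-F_0\ =\ \int_0^L\phi'(t)\,dt\ \le\ d_{WP}(h^\dual,h)\,\|\beta\|_{WP}\ \le\ \tfrac12\|\beta\|_{WP}^2~.
\]

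With this in hand the conclusion is quick. Given a compact $K\subset T^*\cT$, set $C=\sup\{\|\beta\|_{WP}:(h,\beta)\in K\}<\infty$; then every $(h,h^\dual)\in(d_1F)^{-1}(K)$ has $h$ in the compact projection of $K$ to $\cT$ and satisfies $F(h,h^\dual)\le F_0+C^2/2$. If the preimage were not relatively compact, there would be a sequence $(h_n,h_n^\dual)$ in it with $h_n\to h_\infty$ and $h_n^\dual$ leaving every compact set; but then $F(h_n,h_n^\dual)\to+\infty$, contradicting the uniform bound. Hence $(d_1F)^{-1}(K)$ is bounded, and being closed by continuity of $d_1F$, it is compact.

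The main obstacle I foresee is precisely this last step: upgrading the properness of $F$ to the statement that $F(h_n,h_n^\dual)\to+\infty$ when $h_n^\dual$ diverges while $h_n$ only \emph{converges}, whereas Proposition~\ref{prop:energy} is phrased with the first argument held fixed. I would handle it exactly as in the proof of Proposition~\ref{prop:energy}: pass to a subsequence with $\theta_n h_n^\dual\to\lambda\ne 0$ in $\cML$ and $\theta_n\to 0^+$, and use the limiting estimate of \cite{cyclic} together with the continuity of $h\mapsto\ell_\lambda(h)$ to obtain $\theta_n F(h_n,h_n^\dual)\to\ell_\lambda(h_\infty)>0$, hence $F(h_n,h_n^\dual)\to+\infty$. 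A secondary point to verify is that the Weil--Petersson geodesic joining the interior points $h^\dual$ and $h$ stays inside $\cT$, so that $F(h^\dual,\bullet)$ is smooth and convex along it; this is guaranteed by Wolpert's convexity theorem.
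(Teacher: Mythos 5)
Your proof is correct, but it takes a genuinely different route from the paper's. The paper argues directly at the level of derivatives: assuming $h_n\to h$ and $h_n^\dual$ divergent, it rescales so that $\th_n\ell_{h_n^\dual}\to\iota(\lambda,\bullet)$, uses that $\th_n F_{h_n^\dual}\to\ell_\lambda$ uniformly on compact subsets of $\cT$, upgrades this to $C^\infty$ convergence on compact sets via real-analyticity, and concludes that $\th_n d_{h_n}F_{h_n^\dual}\to d_h\ell_\lambda\neq 0$, forcing the covectors $d_{h_n}F_{h_n^\dual}$ to blow up. You instead prove a quantitative a priori inequality, $F(h,h^\dual)-4\pi|\chi(S)|\le\tfrac{1}{2}\|d_1F(h,h^\dual)\|_{WP}^2$, by integrating the Hessian bound of Theorem~\ref{tm:cvx} along the Weil--Petersson geodesic from $h^\dual$ to $h$, and then reduce properness of $d_1F$ to a (uniform) properness of $F$. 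What your approach buys: you only ever need $C^0$ locally uniform control of the rescaled functions $\th_n F_n$, bypassing the paper's derivative-convergence step, whose justification through real-analyticity is itself somewhat delicate. What it costs: an appeal to Wolpert's geodesic convexity of the Weil--Petersson metric, an external input the paper does not use.

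Two points in your write-up need tightening. First, in the last step, the convergence $\th_n F(h_n,h_n^\dual)\to\ell_\lambda(h_\infty)$ does \emph{not} follow from the fixed-first-argument limit of \cite{cyclic} together with mere continuity of $h\mapsto\ell_\lambda(h)$: pointwise convergence to a continuous limit does not allow evaluation along the moving points $h_n$. You need locally uniform convergence of $\th_n F_n$ to $\ell_\lambda$; this is exactly what the paper asserts in its own proof, and in your setting it can also be extracted from the convexity you have already invoked (a pointwise-convergent sequence of geodesically convex functions converges locally uniformly), so the gap is fillable but should be acknowledged as such. Second, to place the minimum of $F(h^\dual,\bullet)$ at $h^\dual$ you only need the easy direction, namely that $h^\dual$ is a critical point: by Proposition~\ref{prop:1der}, at $h=h^\dual$ one has $b=\En$ and $\dot F=-\int_S\tr(\dot\alpha)\da_h=0$ since $\dot\alpha=\nu_\qd$ is traceless; combined with strict convexity and properness this identifies the unique minimum. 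The converse implication ``$\beta=0$ forces $b=\En$'' that you attribute to Lemma~\ref{lm:beta} is true but not immediate as stated, and is not actually needed.
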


It follows from Lemma \ref{lm:dFinjective} that $d_1F$ is a local homeomorphism. Since it is
proper, it is a covering. But $T^*\cT$ is simply connected and $\cT\times \cT$ is connected,
so $d_1F$ is a homeomorphism. This concludes the proof of the proposition.

We now turn to the proofs of those lemmas.

\begin{proof}[Proof of Lemma \ref{lm:dFinjective}]
We have seen above that 
$$ d\eta_\infty = -  (\Sch\circ \partial_\infty)^*\omega_{can}~. $$
Since both $\Sch$ and $\partial_\infty$ are diffeomorphisms, it follows that 
$d\eta_\infty$ is non-degenerate. 

However we have also seen that $d\eta_\infty + 2d\eta=0$, so that $d\eta$ is
also non-degenerate. Since 
$$ 2d\eta = 2\sinh(s) (d_1F\circ \kappa_s)^* \omega_{can}~, $$
and $\kappa_s$ is onto,
both $d_1F$ and $\kappa_s$ have differentials of maximal rank at each point. 
\end{proof}

\begin{proof}[Proof of Lemma \ref{lm:dFproper}]
Let $(h_n,h^\dual_n)_{n\in\N}$ be a diverging sequence in $\cT\times\cT$. If $h_n$ diverges, so does $(d_1 F)_{(h_n,h^\dual_n)}$.
Hence, up to extracting a subsequence, we can assume that $h_n\rar
h\in\cT$ and that 
$\th_n \ell_{h^\dual_n}\to\iota(\lambda,\bullet)$, 
where $\th_n\to 0$ and $\lambda$ is a nonzero measured lamination.

Now, the functions $F_n=F_{h^\dual_n}:\cT\to \R$ determine a
sequence $(\th_n F_n)_{n\in\N}$ that converges to $\ell_\lambda$, uniformly on the compact
subsets of $\cT$.
As the functions $F_n$ and $\ell_\lambda$ are real-analytic, the convergence is also 
$C^\infty$ on the compact subsets of $\cT$.
Hence, $\th_n d_{h_n} F_n\to d_h\ell_\lambda$ and so
$(d_1 F)_{(h_n,h_n^\dual)}=d_{h_n} F_n$ diverges.
\end{proof}

\section{Extension to the boundary}

Fix $c\in\cT$ and let $\cQ_c$ be the space of holomorphic quadratic
differentials on $(S,c)$. Then consider the Sampson-Wolf map
\[
\SW_c:
\cQ_c \lra \cT
\]
that assigns to $q$ the class of the unique hyperbolic metric $h=\SW_c(\qd)$ such that the identity $\Id:(S,c)\rar (S,h)$ is harmonic with Hopf differential equal to $\frac{1}{4}\qd$.

\begin{theorem}[Sampson \cite{sampson:78}, Wolf \cite{wolf:teichmuller}]
The map $\SW_c$ is a real-analytic diffeomorphism.
\end{theorem}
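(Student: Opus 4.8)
The plan is to reduce the statement to the Wolf parameterization $\wolf_c$ already recalled above. Since the Hopf differential prescribed in the definition of $\SW_c$ is $\frac14\qd$ rather than $\qd$, we have $\SW_c(\qd)=\wolf_c\big(\tfrac14\qd\big)$, so that $\SW_c$ is the composition of $\wolf_c$ with the linear rescaling $\qd\mapsto\frac14\qd$, which is a real-analytic diffeomorphism of the vector space $\cQ_c$. It therefore suffices to prove that $\wolf_c$ is a real-analytic diffeomorphism. Bijectivity, and in fact the fact that $\wolf_c$ is a homeomorphism, is exactly Wolf's theorem \cite{wolf:teichmuller}; what remains to add is real-analyticity together with nondegeneracy of the differential at every point.

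For real-analyticity, I would fix a hyperbolic representative $h_0$ of $c$ and use (\ref{eq:harmo}): the metric $h_\qd=\wolf_c(\qd)$ has the form $h_\qd=\qd+e\,h_0+\bar\qd$, and the requirement that $h_\qd$ be hyperbolic of curvature $-1$ becomes, after writing the energy density as $e=e^{2w}$, a scalar semilinear elliptic equation of the form
\[
\Delta_{h_0}w = e^{2w}-|\qd|^2e^{-2w}-1~,
\]
where $|\qd|^2$ is the pointwise squared norm of $\qd$ with respect to $h_0$. The nonlinearity is real-analytic in $(w,\qd)$, so, following the arguments of Eells--Lemaire \cite{eells-lemaire:deformation} already invoked in the proof of Proposition~\ref{prop:energy}, the analytic implicit function theorem in suitable Hölder spaces shows that the unique solution $w$, hence the metric $h_\qd$, depends real-analytically on $\qd$; thus $\wolf_c$ is real-analytic.

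To control the differential, I would linearize the displayed equation at a solution $w$: the linearized operator is
\[
L=\Delta_{h_0}-\big(2e^{2w}+2|\qd|^2e^{-2w}\big)~,
\]
whose zeroth-order coefficient is everywhere strictly negative. By the maximum principle $L$ has trivial kernel and is therefore an isomorphism on the relevant function spaces, so the implicit function theorem shows that $d\wolf_c$ has maximal rank at every point. At $\qd=0$ this is consistent with the computation $d_0\wolf_c(\qd)=[\nu_\qd]$ recalled above, which is an isomorphism because $wp$ is. Consequently $\wolf_c$ is a real-analytic bijection between two manifolds of the same real dimension $6g(S)-6$ whose differential is invertible everywhere, and by the inverse function theorem its inverse is real-analytic as well; hence $\wolf_c$, and therefore $\SW_c$, is a real-analytic diffeomorphism.

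The main obstacle is precisely the global nondegeneracy of the differential, i.e.\ the invertibility of the linearized operator at an \emph{arbitrary} solution rather than only at $\qd=0$; this is where the favorable sign of the zeroth-order term, and hence the maximum principle, is essential. As an alternative to invoking Wolf's homeomorphism result for bijectivity, one could argue as in the proof of Proposition~\ref{pr:parameterization}: once $\wolf_c$ is known to be a proper local diffeomorphism, it is a covering of the connected and simply connected space $\cT$, and is therefore a diffeomorphism.
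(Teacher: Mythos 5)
First, a remark on the comparison itself: the paper offers no proof of this statement --- it is imported, with attribution, from Sampson \cite{sampson:78} and Wolf \cite{wolf:teichmuller} --- so your proposal has to be judged on its own terms rather than against an argument in the text.

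Your reduction $\SW_c(\qdd)=\wolf_c(\qdd/4)$ is correct, and the analyticity step is sound: the scalar Bochner equation $\Delta_{h_0}w=e^{2w}-|\qd|^2e^{-2w}-1$ is the right reduction, its linearization in $w$ is $L=\Delta_{h_0}-(2e^{2w}+2|\qd|^2e^{-2w})$, which is invertible because the zeroth-order coefficient is strictly negative, and the analytic implicit function theorem then gives real-analytic dependence of $w$, hence of the metric tensor $h_\qd=\qd+e\,h_0+\bar\qd$, on $\qd$. (One small slip: $e^{2w}$ is the \emph{holomorphic} energy density, not the energy density; the latter is $e=e^{2w}+|\qd|^2e^{-2w}$. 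This does not affect the structure of the argument.)

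The genuine gap is the sentence ``so the implicit function theorem shows that $d\wolf_c$ has maximal rank at every point.'' Invertibility of $L$ is exactly what you already used to get analyticity; it says nothing about the rank of $d\wolf_c$. The target of $\wolf_c$ is the quotient $\cT=\mathcal{M}_{-1}/\Diffeo_0(S)$, and $d\wolf_c(\dot\qd)=[\dot h]$ with $\dot h=\dot\qd+\dot e\,h_0+\overline{\dot\qd}$, where $\dot e$ is obtained by solving the linearized scalar equation; nondegeneracy means that, for $\dot\qd\neq 0$, this $\dot h$ is \emph{not} of the form $\mathcal{L}_X h_\qd$ for a vector field $X$. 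Solvability of $L\dot w=\cdots$ is entirely compatible with $\dot h$ being such a Lie derivative: an injective analytic map can have degenerate differential (think of $t\mapsto t^3$ on $\R$), and global injectivity of $\wolf_c$ --- which does follow cheaply from uniqueness of harmonic maps into negatively curved targets --- does not exclude this. Your consistency check at $\qd=0$ works only because there $\dot w=0$ and $\dot h=2\Re(\dot\qd)$ is traceless and Codazzi \emph{for $h_0$}, hence transverse to the trivial deformations by the Fischer--Tromba lemma recalled in the paper. At a general point, $\dot\qd$ is holomorphic for $c$ but not for $h_\qd$, and deciding whether $\dot h$ has a nonzero component transverse to the $\Diffeo_0$-orbit of $h_\qd$ is precisely the content of Sampson's immersion theorem (and of the corresponding lemma in Wolf's paper): it requires pairing $\dot h$ against holomorphic quadratic differentials on the \emph{target}, or Sampson's maximum-principle argument applied to the variations of the holomorphic and anti-holomorphic energies --- not the maximum principle applied to $L$ alone. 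Note also that your fallback via properness and a covering-space argument does not repair this, since it too presupposes that $\wolf_c$ is a local diffeomorphism.
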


It is well-known since Thurston \cite{FLP} that it is possible to
produce a compactification $\ol{\cT}$ of $\cT$ by adding the space of projectively measured laminations $\PP\cML$ at infinity.

Here we recall that to every nonzero holomorphic quadratic differential $\qd$
on a Riemann surface $S$ we can attach
a horizontal foliation $\cF_+(\qd)$ (resp. vertical foliation $\cF_-(\qd)$)
along which $\qd$ restricts as a positive-definite (resp. negative-definite) real quadratic form, which is
singular at the points where $\qd$ vanishes.
Moreover, $\cF_+(\qd)$ (resp. $\cF_-(\qd)$) comes endowed with a measure $|\mathrm{Im}\sqrt{\qd}|$ (resp. $|\mathrm{Re}\sqrt{\qd}|$)
transverse to its leaves. (A more extensive discussion can
be found in \cite{strebel:quadratic}.)

To every measured foliation $\cF$ one can associate a
measured lamination, intuitively by ``straightening'' the leaves
of $\cF$ to geodesics with respect to some hyperbolic metric.
We notice that the measured laminations $\l_\pm(\qd)$ associated
to $\cF_\pm(\qd)$ fill the surface in the following sense.

\begin{defi}
A couple $(\l_+,\l_-)$ of measured laminations on $S$ is
filling if $i(\l_+,\mu)+i(\l_-,\mu)>0$ for every lamination $\mu\neq 0$. We denote by $\cFML\subset\cML\times\cML$ the open locus of filling laminations.
\end{defi}

The exact correspondence between holomorphic quadratic differentials and filling measured laminations relies on the following result.

\begin{theorem}[Hubbard-Masur \cite{hubbard-masur:acta79}]
\label{thm:hm}
The map $\cQ\rar\cFML\cup\{0\}$ defined as $\qd\mapsto (\l_+(\qd),\l_-(\qd))$
is a homeomorphism.
\end{theorem}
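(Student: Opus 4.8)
The plan is to prove this bi-foliation statement in the classical way: first reduce to measured foliations and check continuity together with the fact that the image is filling, then establish surjectivity by a variational argument and injectivity by reconstructing the flat structure, and finally upgrade the resulting continuous bijection to a homeomorphism by invariance of domain.

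First I would reduce everything to measured foliations. Since the passage from a measured foliation to its associated measured lamination is a homeomorphism, it suffices to show that $\qd\mapsto(\cF_+(\qd),\cF_-(\qd))$ identifies the complement of the zero section in $\cQ$ with the filling pairs of measured foliations, the whole zero section collapsing to the adjoined point $0$. Continuity is routine, as the transverse measures $|\mathrm{Im}\sqrt{\qd}|$ and $|\mathrm{Re}\sqrt{\qd}|$, and hence their classes in $\cML$, depend continuously on $\qd$. That the image is filling is also immediate, because the horizontal and vertical foliations of a nonzero $\qd$ jointly fill $S$: for any $\mu\neq 0$ one has $i(\l_+(\qd),\mu)+i(\l_-(\qd),\mu)>0$. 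In particular the target is genuinely $\cFML$, and (taking $\mu=\l_\pm$ and using $i(\l,\l)=0$) both members of a filling pair are nonzero.

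The heart of the argument is surjectivity, which I would obtain variationally. Fix $(\l_+,\l_-)\in\cFML$ and set
\[ \mathcal E(c)=\Extr_c(\l_+)+\Extr_c(\l_-) \]
on $\cT$. By the fixed-surface Hubbard--Masur theorem, for every $c$ there is a unique $q_\pm(c)\in\cQ(c)$ with horizontal foliation $\l_\pm$ and with $\|q_\pm(c)\|=\Extr_c(\l_\pm)$. The filling hypothesis makes $\mathcal E$ proper: along a sequence leaving every compact set, i.e.\ converging to some $[\mu]\in\PP\cML$, at least one of $i(\mu,\l_+),i(\mu,\l_-)$ is positive, and extremal length blows up in that direction. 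Hence $\mathcal E$ attains an interior minimum at some $c_0$. Invoking Gardiner's differentiability of extremal length, the differential of $\Extr_\bullet(\l_\pm)$ at $c_0$ is, in the canonical normalization, the cotangent vector $q_\pm(c_0)\in\cQ(c_0)$, so $d\mathcal E=0$ reads $q_+(c_0)=-q_-(c_0)$. Writing $\qd:=q_+(c_0)\neq 0$, its horizontal foliation is $\l_+$, while $-\qd=q_-(c_0)$ has horizontal foliation $\l_-$, i.e.\ $\qd$ has vertical foliation $\l_-$. Thus $(c_0,\qd)$ realizes the prescribed pair, and the canonical normalization built into Gardiner's formula is exactly what prevents a spurious rescaling, so the pair is hit on the nose.

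For injectivity I would recover a nonzero $\qd$ from its two foliations: the transverse measures $|\mathrm{Im}\sqrt{\qd}|$ and $|\mathrm{Re}\sqrt{\qd}|$ together reconstruct the singular flat metric $|\qd|$, hence the underlying conformal structure and $\qd$ itself --- this is the uniqueness half of Hubbard--Masur. The map is therefore a continuous bijection from $\cQ$ minus its zero section onto $\cFML$; since both are manifolds of the same dimension $12g(S)-12$, invariance of domain forces it to be open, hence a homeomorphism, with the zero section mapping to the point $0$. The main obstacle is the surjectivity step, and inside it the two analytic inputs that are not formal: Gardiner's theorem that $\Extr_\bullet(\l)$ is $C^1$ with gradient the Hubbard--Masur differential, and the properness of $\mathcal E$, which is precisely where the filling hypothesis is genuinely used. (Since the harmonic-map machinery of Wolf and the Sampson--Wolf diffeomorphism are available, one could alternatively run the variational argument with the energy $\Ene$ in place of extremal length, using Wolf's identification of the Thurston boundary with the foliations of Hopf differentials; but the extremal-length version is the most economical.)
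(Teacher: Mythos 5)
The paper never proves this statement: it is quoted as background, attributed to \cite{hubbard-masur:acta79} (the version over varying complex structure that you are asked to prove is usually credited to Gardiner and Masur; the 1979 Acta paper proves the fixed-fiber statement). So there is no internal proof to compare against, and your proposal has to be judged on its own and against the closest material in the paper, which is Lemma~\ref{lemma:Ext}. Your surjectivity argument is exactly the classical variational one and is correct: properness of $\Extr_{\lambda_+}+\Extr_{\lambda_-}$ (note this does require a comparison such as $\Extr_\lambda\geq \ell_\lambda^2/(2\pi|\chi(S)|)$, since convergence to Thurston's boundary controls hyperbolic, not extremal, lengths), Gardiner's formula at an interior minimum giving $q_+(c_0)=-q_-(c_0)$, and the observation that multiplication by $-1$ exchanges horizontal and vertical foliations. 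It is worth noting that your logic runs opposite to the paper's: in Lemma~\ref{lemma:Ext} the very same ingredients (Kerckhoff's properness plus Gardiner's formula) are combined \emph{with} Theorem~\ref{thm:hm} to deduce uniqueness of the minimizer of $\Extr_\lambda+\Extr_\mu$, whereas you run the minimization to produce the existence half of Theorem~\ref{thm:hm}. That direction is the standard, non-circular one, since your input is only the fixed-fiber Hubbard--Masur theorem (unique $q\in\cQ_c$ with prescribed horizontal foliation on a fixed $c$), which is a genuinely different statement from the one being proved.

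Two points need repair. First, injectivity is a genuine soft spot. What must be excluded is two points $(c,q)\neq(c',q')$ of $\cQ$, with \emph{different} underlying complex structures, inducing the same pair in $\cML\times\cML$; the ``uniqueness half of Hubbard--Masur'' is weaker, saying only that on a fixed $c$ the differential is determined by its horizontal foliation, so it does not see the variation of $c$ and your appeal to it is a misattribution. Your reconstruction idea is the correct mechanism, but as written it begs the question: the data is only the pair of measure classes (laminations up to isotopy and Whitehead equivalence), not two transverse measures sitting on a common surface, so ``reconstructing $|q|$'' requires a canonical construction from the classes --- for instance realizing the flat surface equivariantly inside the product of the two dual $\R$-trees, or invoking Gardiner's minimal norm property \cite{gardiner:acta84} --- together with a proof that every realization is isomorphic to the canonical one by an isomorphism isotopic to the identity. (One cannot instead argue that the critical point of $\Extr_{\lambda_+}+\Extr_{\lambda_-}$ is unique: the paper derives precisely that uniqueness \emph{from} Theorem~\ref{thm:hm}, so that route is circular.) Second, a small but real point about the statement itself: the map collapses the entire zero section of $\cQ$ (a copy of $\cT$) to the single point $0$, so it cannot literally be a homeomorphism on all of $\cQ$; what your argument proves --- and what is actually used later, e.g.\ in Proposition~\ref{prop:What} --- is that it restricts to a homeomorphism from the complement of the zero section onto $\cFML$. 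Your invariance-of-domain finish for that restricted map, both sides being manifolds of dimension $12g(S)-12$, is fine.
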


In order to extend our construction to some boundary at infinity,
the following result will play a key role.

\begin{theorem}[Wolf \cite{wolf:teichmuller} \cite{wolf:R-trees}]
Let $\ol{\cQ}_c$ be the compactification of $\cQ_c$ obtained by adding the sphere at infinity $\partial \cQ_c=(\cQ_c\setminus\{0\})/\R_+$.
Then $\SW_c$ extends as a homeomorphism
\[
\ol{\SW}_c:
\ol{\cQ}_c \lra \ol{\cT}
\]
by defining $\ol{\SW}_c([q])=[\cF_-(q)]$
for every $[q]\in\partial\cQ_c$.
\end{theorem}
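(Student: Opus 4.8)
The plan is to deduce the theorem from three inputs that are already available: $\SW_c$ is a real-analytic diffeomorphism (Sampson--Wolf), both $\ol{\cQ}_c$ and $\ol{\cT}$ are closed balls of real dimension $6g-6$ whose boundary spheres have dimension $6g-7$, and the assignment $q\mapsto(\l_+(q),\l_-(q))$ is the Hubbard--Masur homeomorphism. Since a continuous bijection between compact Hausdorff spaces is automatically a homeomorphism, it suffices to check that the map $\ol{\SW}_c$ defined by $\ol{\SW}_c([q])=[\cF_-(q)]$ on $\partial\cQ_c$ is a continuous bijection. Bijectivity on the boundary is immediate from Hubbard--Masur (distinct projective classes of $q$ give distinct $[\cF_-(q)]$, and every projective measured foliation is attained), so the only substantive point is continuity at the boundary: if $q_n\in\cQ_c$ has $\|q_n\|\to\infty$ and $[q_n]\to[q]\in\partial\cQ_c$, one must prove $[\SW_c(q_n)]\to[\cF_-(q)]$ in Thurston's compactification.

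By the definition of that compactification it is enough to control the length functional. I would show that there are scalars $\epsilon_n\to 0$, comparable to $\|q_n\|^{-1/2}$, such that for every measured lamination $\mu$,
$$\epsilon_n\,\ell_{\SW_c(q_n)}(\mu)\;\longrightarrow\; i(\cF_-(q),\mu),$$
uniformly on compact families of $\mu$; passing to projective classes then yields exactly $[\cF_-(q)]$. The key is that $\SW_c(q)$ is realized by the harmonic map $w=\Id:(S,c)\to(S,\SW_c(q))$ with Hopf differential $\frac14 q$, so hyperbolic lengths are computed as infima, over free homotopy classes, of lengths in the pulled-back metric $w^*(\SW_c(q))=\Phi+(\mathcal{H}+\mathcal{L})\rho+\bar\Phi$, where $\Phi=\frac14 q$, $\rho$ is the hyperbolic metric in the class $c$, and $\mathcal{H},\mathcal{L}$ are the holomorphic and antiholomorphic energy densities.

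The analytic heart is the degeneration of this harmonic map as $\Phi$ blows up. The densities satisfy $\mathcal{H}\mathcal{L}=|\Phi|^2/\rho^2$ together with a Bochner equation of the form $\Delta_\rho\log\mathcal{H}=2\mathcal{H}-2\mathcal{L}-2$. A maximum-principle comparison between $\mathcal{H}$ and the singular model $|\Phi|/\rho$ shows that, away from the finitely many zeros of $\Phi$, one has $\mathcal{H}\sim|\Phi|/\rho$ and $\mathcal{H}-\mathcal{L}\to 0$ with uniform control, the neighbourhoods of the zeros contributing negligibly. In a natural coordinate $\zeta=u+iv$ with $\Phi=d\zeta^2$, the metric $w^*(\SW_c(q_n))$ is diagonal with eigenvalues $(\sqrt{\mathcal{H}\rho}\pm\sqrt{\mathcal{L}\rho})^2$ along the horizontal and vertical trajectories, relative to $|d\zeta|^2$; hence the horizontal eigenvalue tends to $4$ while the vertical one collapses to $0$. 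A length-minimizing representative of $\mu$ therefore runs as much as possible along vertical leaves, and to leading order
$$\ell_{\SW_c(q_n)}(\mu)\;\sim\;2\inf_{\mu'\sim\mu}\int_{\mu'}|\mathrm{Re}\sqrt{\Phi}|\;=\;2\,i(\cF_-(\Phi),\mu),$$
the vertical-direction contribution being of lower order. Since $\cF_-(\Phi)$ and $\cF_-(q_n)$ are proportional and $[\cF_-(q_n)]\to[\cF_-(q)]$ by Hubbard--Masur, this gives the required convergence of projective length functionals.

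I expect the main obstacle to be exactly this degeneration estimate. One must make the control of $\mathcal{H}-\mathcal{L}$ and the approximation ``length equals the vertical intersection number'' uniform in $\mu$ and uniform up to the zeros of $\Phi$, in order to upgrade pointwise convergence of the metric to convergence of the length functional; this, rather than the soft topology or the bijectivity, is the genuine content of Wolf's argument. Once continuity at the boundary is secured, the interior bijectivity of $\SW_c$, the boundary bijectivity from Hubbard--Masur, and the compact-to-Hausdorff principle combine to show that $\ol{\SW}_c:\ol{\cQ}_c\to\ol{\cT}$ is a homeomorphism.
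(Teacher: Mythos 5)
The paper does not prove this statement at all: it is imported as Wolf's theorem, with the proof delegated entirely to \cite{wolf:teichmuller} and \cite{wolf:R-trees}, so there is no internal argument to compare yours against — the relevant comparison is with Wolf's original proof, and your proposal reconstructs it correctly. The topological reduction is sound and complete (compactness of $\ol{\cQ}_c$ and Hausdorffness of $\ol{\cT}$, interior bijectivity from Sampson--Wolf, boundary bijectivity from Hubbard--Masur, so that only continuity at boundary points is at stake), and the analytic core you describe — the Bochner equation, the estimate $\mathcal{H}\sim|\Phi|/\rho$ away from the zeros of $\Phi$, the collapse of the vertical eigenvalue of the pulled-back metric, and the resulting asymptotic $\ell_{\SW_c(q_n)}(\mu)\sim 2\,i(\cF_-(\Phi_n),\mu)$ after rescaling by $\|q_n\|^{-1/2}$ — is exactly the content of Wolf's degeneration analysis, which you assert rather than carry out, correctly flagging it as where all the work lies (uniformity in $\mu$ and control near the zeros). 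Two minor points worth making explicit: continuity of $\ol{\SW}_c$ along sequences lying entirely in $\partial\cQ_c$ also uses the continuity half of Hubbard--Masur, not only its bijectivity; and the Hubbard--Masur input needed here is the fiberwise statement for the fixed Riemann surface $(S,c)$, not the bundle-level Theorem stated later in the paper's Section 8.
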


It will be more practical to work with a de-homogeneized
version of the above result.
Indeed, in Thurston's picture the space $\cY=\cT\times\R_{<0}$
of metrics of constant negative curvature on $S$ (up to isotopy)
can be completed as $\ol{\cY}=(\cML\times\{-\infty\})\cup\cY$ by adding a copy of $\cML$.

\begin{cor}
The following map
\[
\qquad\what{\SW}_c:
\xymatrix@R=0in{
\cQ_c\times[-\infty,0) \ar[rr] && \ol{\cY} \\
(\qd,K) \ar@{|->}[rr] && (\SW_c(|K|\qd),K) & \text{if $K\in(-\infty,0)$}\\
(\qd,-\infty)\ar@{|->}[rr] && (\cF_-(\qd),-\infty) & \text{if $K=-\infty$}
}
\]
is a homeomorphism.
\end{cor}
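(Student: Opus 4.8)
The plan is to derive the corollary from three results already available: the Sampson--Wolf diffeomorphism $\SW_c$, the Hubbard--Masur theorem (Theorem~\ref{thm:hm}), and Wolf's boundary extension $\ol{\SW}_c$; the only genuinely new ingredient is a \emph{de-homogenized} (non-projective) refinement of the last one. First I would check that $\what{\SW}_c$ is a continuous bijection onto $\ol{\cY}$, piece by piece. On the interior $\cQ_c\times(-\infty,0)$ this is immediate: for each fixed $K$ the scaling $\qd\mapsto|K|\qd$ is a homeomorphism of $\cQ_c$, so $\qd\mapsto\SW_c(|K|\qd)$ is a homeomorphism $\cQ_c\to\cT$ by the Sampson--Wolf theorem, and retaining the $K$-coordinate gives a homeomorphism $\cQ_c\times(-\infty,0)\to\cY$ (joint continuity following from joint continuity of $\SW_c$ and of the scaling). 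On the boundary slice $\cQ_c\times\{-\infty\}$ the map is $\qd\mapsto\l_-(\qd)$, the measured lamination attached to the vertical foliation $\cF_-(\qd)$; fixing the conformal structure $c$, the assignment $\qd\mapsto\l_-(\qd)$ is a homeomorphism $\cQ_c\to\cML$, which follows from Theorem~\ref{thm:hm} together with the relation $\cF_-(\qd)=\cF_+(-\qd)$. Thus $\what{\SW}_c$ is a bijection restricting to a homeomorphism on each of the two pieces, and everything reduces to understanding its behaviour across $K=-\infty$.

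The crux, and the step I expect to be the main obstacle, is the compatibility of the two pieces: continuity of $\what{\SW}_c$ along sequences $(\qd_n,K_n)\to(\qd_\infty,-\infty)$ with $K_n\in(-\infty,0)$. Here I would use the description of the topology of $\ol{\cY}$: writing $g_n=\frac{1}{|K_n|}\,\SW_c(|K_n|\qd_n)$ for the constant-curvature-$K_n$ metric represented by $\what{\SW}_c(\qd_n,K_n)$, convergence to $\l_-(\qd_\infty)$ in $\ol{\cY}$ means $\ell_{g_n}(\gamma)\to i(\l_-(\qd_\infty),\gamma)$ for every simple closed curve $\gamma$, the shrinking produced by $K_n\to-\infty$ playing the role of Thurston's external rescaling. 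Since $\ell_{g_n}(\gamma)=|K_n|^{-1/2}\ell_{\SW_c(|K_n|\qd_n)}(\gamma)$, what is needed is Wolf's asymptotic estimate for the length spectrum of the harmonic-map image metric along a ray, in the locally uniform form
\[
\lim_{K\to-\infty}\frac{1}{\sqrt{|K|}}\,\ell_{\SW_c(|K|\qd)}(\gamma)=i(\l_-(\qd),\gamma),
\]
which refines the projective convergence $\SW_c(|K|\qd)\to[\l_-(\qd)]$ contained in the extension theorem \cite{wolf:teichmuller}. The factor $\tfrac14$ in the definition of $\SW_c$ is precisely the normalization that makes the multiplicative constant equal to $1$, and the $\sqrt{|K|}$ growth of flat lengths under $\qd\mapsto|K|\qd$ cancels the $|K|^{-1/2}$ coming from the curvature normalization, which is why the correct scaling is the linear one $|K|\qd$. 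Local uniformity in $\qd$ together with continuity of $\qd\mapsto\l_-(\qd)$ then handles the case $\qd_n\to\qd_\infty$, while sequences with $K_n=-\infty$ reduce to the boundary slice already treated. Tracking these constants correctly is the delicate point.

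Finally I would upgrade the continuous bijection $\what{\SW}_c$ to a homeomorphism. Since $\ol{\cY}$ is locally compact and Hausdorff, it suffices to verify properness, for a proper continuous bijection onto such a space is closed, hence a homeomorphism. A sequence leaves every compact subset of $\cQ_c\times[-\infty,0)$ exactly when $K_n\to0^-$ or $\|\qd_n\|\to\infty$. If $K_n\to0^-$, the image has second coordinate tending to $0$ and so escapes every compact of $\ol{\cY}$, whose finite-curvature part has $K$ bounded away from $0$. If $\|\qd_n\|\to\infty$ with $K_n$ bounded away from $0$ and $-\infty$, then $\|\,|K_n|\qd_n\,\|\to\infty$ and properness of $\SW_c$ forces the image to diverge in $\cT$. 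If $\|\qd_n\|\to\infty$ with $K_n\to-\infty$, the same length estimate gives $\ell_{g_n}(\gamma)\simeq i(\l_-(\qd_n),\gamma)$, and since $\qd\mapsto\l_-(\qd)$ is proper the laminations $\l_-(\qd_n)$ leave every compact of $\cML$, so the images leave every compact of $\ol{\cY}$. This establishes properness and completes the proof.
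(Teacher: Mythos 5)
Your proposal is correct in substance, but it is worth saying how it relates to the paper: the paper offers \emph{no} proof of this corollary at all --- it is stated immediately after Wolf's theorem as an evidently ``de-homogenized'' version of the homeomorphism $\ol{\SW}_c:\ol{\cQ}_c\to\ol{\cT}$. Your proof supplies the content that this de-homogenization actually requires, and you identify the right reason why it is not purely formal: the projective statement only says that $\SW_c(|K_n|\qd_n)$ converges to $[\cF_-(\qd)]$ after rescaling by \emph{some} unspecified constants, whereas convergence in $\ol{\cY}$ pins the scale, so one genuinely needs Wolf's quantitative length asymptotics $|K|^{-1/2}\ell_{\SW_c(|K|\qd)}(\gamma)\to i(\l_-(\qd),\gamma)$, with the factor $\frac14$ in the definition of $\SW_c$ making the multiplicative constant equal to $1$. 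That computation checks out (Hopf differential $\frac14|K|\qd$ gives pullback metric $\approx 4\bigl(\mathrm{Re}\sqrt{\tfrac14 |K|\qd}\bigr)^2=|K|\bigl(\mathrm{Re}\sqrt{\qd}\bigr)^2$, hence lengths $\approx\sqrt{|K|}\,i(\l_-(\qd),\cdot)$), and the locally uniform form you invoke is indeed available from \cite{wolf:teichmuller} and \cite{wolf:R-trees}; note the paper itself uses the analogous $C^\infty$-convergence of energies on compacta from \cite{wolf:R-trees} in the proof of Proposition~\ref{prop:What}. So your route buys an actual proof, at the cost of importing the quantitative estimates; the paper's ``route'' buys brevity by leaving them implicit.

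Three small points should be patched, none fatal. First, the boundary-slice homeomorphism $\cQ_c\to\cML$, $\qd\mapsto\l_-(\qd)$, does not formally follow from Theorem~\ref{thm:hm} as stated in the paper (that is the \emph{pairs} version, which does not fix $c$); it is the original fixed-$c$ Hubbard--Masur theorem \cite{hubbard-masur:acta79}, or alternatively it follows from the boundary restriction of $\ol{\SW}_c$ together with the homogeneity $\l_-(t\qd)=\sqrt{t}\,\l_-(\qd)$. Second, in the final step you should justify ``proper continuous bijection $\Rightarrow$ homeomorphism'' via metrizability (or first countability) of $\ol{\cY}$ --- which holds because there are only countably many isotopy classes of simple closed curves, so $\ol{\cY}$ embeds in $\R^{\mathcal S}\times[-\infty,0)$ --- rather than via local compactness, which is not obvious for $\ol{\cY}$ before the corollary is proved. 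Third, in the properness case $\|\qd_n\|\to\infty$ with $K_n\to-\infty$, the ``same length estimate'' is only locally uniform, so you must first write $\qd_n=t_n\hat\qd_n$ with $\|\hat\qd_n\|=1$, apply the estimate uniformly on the unit sphere, and use $\l_-(t_n\hat\qd_n)=\sqrt{t_n}\,\l_-(\hat\qd_n)$ to conclude that the image length spectra blow up; with that insertion the argument is complete.
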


In order to study the behavior of the landslide flow as the metrics degenerate, we consider the space $\cDY=\cT\times\cT\times\R_{<0}$ of couple of metrics with the same constant negative curvature on $S$ (up to isotopy) and the partial completion $\ol{\cDY}=\cDY\cup
(\cFML\times\{-\infty\})$.

Now consider
\[
\qquad\what{\SW}:
\xymatrix@R=0in{
\cQ\times[-\infty,0) \ar[rr] && \ol{\cDY} \\
(c,\qd,K) \ar@{|->}[rr] && (\SW_c(|K|\qd),\SW_c(-|K|\qd),K) & \text{if $K\in(-\infty,0)$}\\
(c,\qd,-\infty)\ar@{|->}[rr] && (\cF_-(\qd),\cF_+(\qd),-\infty) & \text{if $K=-\infty$~.}
}
\]

\begin{prop}\label{prop:What}
The map $\what{\SW}$ is a homeomorphism.
\end{prop}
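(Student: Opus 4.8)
The plan is to treat the interior $K\in(-\infty,0)$ and the boundary $K=-\infty$ separately, to show that on each piece $\what{\SW}$ is a homeomorphism onto the corresponding part of $\ol{\cDY}$, and then to check that the two pieces glue into a continuous bijection with continuous inverse; the behaviour as $K\to-\infty$ is the real issue. A convenient preliminary remark is that $\what{\SW}$ factors through the diagonal embedding $\ol{\cDY}\hookrightarrow\ol{\cY}\times\ol{\cY}$ onto the locus where the two curvatures coincide: under this embedding $\what{\SW}(c,\qd,K)$ corresponds to the pair $(\what{\SW}_c(\qd,K),\what{\SW}_c(-\qd,K))$, where $\what{\SW}_c$ is the one--variable homeomorphism of the Corollary above. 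This uses $\SW_c(-|K|\qd)=\SW_c(|K|(-\qd))$ for finite $K$ and $\cF_-(-\qd)=\cF_+(\qd)$ at $K=-\infty$. Hence it suffices to control the joint behaviour of the single map $(c,\qd,K)\mapsto\what{\SW}_c(\qd,K)$.

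On the interior, fix $K\in(-\infty,0)$. The map $(c,\qd)\mapsto(\SW_c(|K|\qd),\SW_c(-|K|\qd))$ is a real--analytic diffeomorphism onto $\cT\times\cT$: its inverse sends a pair $(h,h^\dual)$ to its center $c$ together with $\qd=\frac{4}{|K|}\qd_f$, where $\qd_f$ is the Hopf differential of the harmonic map $(S,c)\to(S,h)$. Indeed the center and the opposite Hopf differentials exist, are unique, and depend real--analytically on $(h,h^\dual)$ by the minimal Lagrangian theory of Labourie and Schoen recalled in Section~2, and by the Sampson--Wolf theorem one has $\SW_c(4\qd_f)=h$ and $\SW_c(-4\qd_f)=h^\dual$. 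Letting $K$ vary, this gives a real--analytic diffeomorphism $\cQ\times(-\infty,0)\to\cDY$.

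On the boundary $K=-\infty$ the map reads $(c,\qd)\mapsto(\cF_-(\qd),\cF_+(\qd))=(\lambda_-(\qd),\lambda_+(\qd))$, which for $\qd\neq0$ is the Hubbard--Masur homeomorphism of Theorem~\ref{thm:hm} followed by the interchange of the two factors, hence a homeomorphism of $\cQ\setminus\{0\}$ onto $\cFML$ (the zero section over $K=-\infty$ must be discarded, as it would give the non--filling pair $(0,0)\notin\cFML$). It remains to glue the interior diffeomorphism to this boundary homeomorphism. Given a sequence $(c_n,\qd_n,K_n)\to(c,\qd,-\infty)$ with $\qd\neq0$, I would show $\SW_{c_n}(|K_n|\qd_n)\to\cF_-(\qd)$ in $\ol{\cY}$; by the factoring above the $-\qd$ component behaves likewise, whence $\what{\SW}(c_n,\qd_n,K_n)\to\what{\SW}(c,\qd,-\infty)$ in $\ol{\cDY}$. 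For fixed $c$ this is exactly Wolf's extension $\what{\SW}_c$, which rests on \cite{wolf:R-trees}; the point is therefore that the degeneration of $\SW_c(t\qd)$ towards the vertical measured foliation as $t\to+\infty$ is \emph{locally uniform} in $(c,\qd)$ on compact subsets of $\cQ$ avoiding the zero section. Since $c_n\to c$ and $\qd_n\to\qd\neq0$ stay in such a compact set, the desired convergence follows. This uniform control of the degeneration is the main obstacle.

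Granting it, $\what{\SW}$ is a continuous bijection between the two bordified spaces, and it is proper. Indeed a sequence leaving every compact subset of $\cQ\times[-\infty,0)$ either has $K_n\to 0^-$, in which case $\SW_{c_n}(|K_n|\qd_n)$ stays in a compact region while the curvature escapes $\R_{<0}$, so the image leaves $\ol{\cDY}$; or it has $(c_n,\qd_n)$ diverging in $\cQ$ while $K_n$ stays in a compact subinterval, in which case the interior diffeomorphism forces the image to diverge in $\cDY$. (Sequences with $K_n\to-\infty$ do not leave every compact set, $-\infty$ being an interior endpoint of the closed interval, and are covered by the continuity just established.) A proper continuous bijection between locally compact Hausdorff spaces is a homeomorphism, which completes the proof.
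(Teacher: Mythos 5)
Your interior and boundary identifications (Labourie--Schoen/Sampson--Wolf for $K\in(-\infty,0)$, Hubbard--Masur for $K=-\infty$) match the paper's explicit description of $\what{\SW}^{-1}$, but the proof has a genuine gap at exactly the point you yourself call ``the main obstacle'': the passage across $K=-\infty$. You reduce everything to the claim that Wolf's degeneration $\SW_c(t\qd)\rar\cF_-(\qd)$ is \emph{locally uniform} in $(c,\qd)$ away from the zero section, and then write ``Granting it''. No argument is given for this uniformity, and it is not a formality: Wolf's theorem is a fixed-$c$ statement, and upgrading it to joint continuity in $(c,\qd,K)$ is precisely the analytic content of the proposition. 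The paper never proves such a uniform statement; instead it proves that $\what{\SW}$ is \emph{closed} (equivalently, that its inverse is continuous), and it can do so by a variational argument: if $(h_n,h_n^\dual,K_n)=\what{\SW}(c_n,\qd_n,K_n)$ converges to $(\lambda,\mu,-\infty)$, then each $c_n$ is the unique minimizer of $E(\bullet,|K_n|^{-1}h_n)+E(\bullet,|K_n|^{-1}h_n^\dual)$ (Lemma~\ref{lemma:Ext}, resting on \cite{cyclic}); these functions converge in $C^\infty$ on compact subsets of $\cT$ to $\frac{1}{2}(\Extr_\lambda+\Extr_\mu)$ by \cite{wolf:R-trees}; and $\Extr_\lambda+\Extr_\mu$ is proper with a unique local (hence global) minimum at the Hubbard--Masur center $c$ of $(\lambda,\mu)$ (Lemma~\ref{lemma:Ext} via Gardiner's formula, together with the Remark that local minima are unique). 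Hence $c_n\rar c$, and then $\qd_n\rar\qd$, again by \cite{wolf:R-trees}. This is the step your proposal is missing; you would need either to reproduce it or to actually prove your uniform-degeneration claim.

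A second, smaller defect is the properness argument. Your dichotomy for a sequence leaving every compact subset of the domain omits the case where $K_n\rar-\infty$ \emph{and} the $\cQ$-component diverges (or tends to the deleted zero section over $K=-\infty$). Compact subsets of the domain are contained in sets of the form $C\times[-\infty,a]$ with $C\subset\cQ$ compact and avoiding a neighborhood of the zero section at the boundary, so such sequences do leave every compact set; your parenthetical claim that sequences with $K_n\rar-\infty$ never do is false. For these sequences neither of your two cases applies, and ``covered by the continuity just established'' does not help, since continuity says nothing about images of divergent sequences. Showing that their images leave every compact subset of $\ol{\cDY}$ is essentially equivalent to the closedness statement that the paper proves, so properness cannot be obtained independently of the missing boundary analysis.
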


We recall that the extremal length of $c$ with respect to $\lambda$ depends real-analytically on $c\in\cT$ and it satisfies
$\Extr_\lambda(c)=\|\qd\|=2E(c,\lambda)$, where $E(c,\lambda)$
is the energy of the harmonic map $f$ from $c$ to the $\R$-tree dual to $\lambda$, 
and $\frac{1}{4}\qd$ is the Hopf differential of $f$ and also
the unique holomorphic quadratic
differential on $c$ with $\cF_-(\qd)=\lambda$
(see for instance \cite{wolf:R-trees}).

\begin{lemma}\label{lemma:Ext}
Given $h',h\in\cT$ and let $E(h',h)$ be the energy of the unique hamornic map $(S,h')\rar(S,h)$ isotopic to the identity.
For every $h,h^\dual\in\cT$, the function $E(\bullet,h)+E(\bullet,h^\dual):\cT\rar\R_+$ is proper 
and achieves a unique minimum at the center $c$ of the couple $(h,h^\dual)$.
Similarly, if $(\lambda,\mu)\in\cFML$, then the function $\Extr_\lambda+
\Extr_\mu:\cT\rar\R_+$ is proper and achieves a unique minimum at
$c$, where $c$ is the conformal structure underlying the Hubbard-Masur quadratic
differential $\qd$ associated to $(\lambda,\mu)$.
\end{lemma}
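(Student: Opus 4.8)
The plan is to treat both halves of the statement in parallel, since each asserts properness together with existence and uniqueness of a minimum located at a distinguished conformal structure, and in each case the minimizer can be pinned down by a first--variation computation. For a proper function on $\cT$ that is bounded below (all the functions here are nonnegative), a global minimum is attained and every minimizer is a critical point; so once properness is known, it suffices to show that the function has a single critical point and that this critical point is the prescribed center. I would thus organize the argument in three steps: (1) properness, (2) identification of the critical locus via a first--variation formula, (3) matching the critical locus with the designated center and deducing uniqueness.

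For properness, the energy case is immediate: $E(\bullet,h)$ is proper by Tromba (as recalled in the proof of Proposition~\ref{prop:energy}) and $E(\bullet,h^\dual)\geq 0$, so each sublevel set of $E(\bullet,h)+E(\bullet,h^\dual)$ is a closed subset of a compact set. For the extremal length case I would use the fundamental inequality $i(\lambda,\gamma)^2\leq \Extr_\lambda(c)\,\Extr_\gamma(c)$, valid for every measured lamination $\gamma$. If $c_n$ leaves every compact set of $\cT$, its extremal--length systole tends to $0$ (the extremal--length analog of Mumford compactness): there are essential simple closed curves $\gamma_n$ with $\Extr_{\gamma_n}(c_n)\to 0$, whose lengths $\ell_{c_*}(\gamma_n)$ in a fixed metric $c_*$ stay bounded below by the systole of $c_*$. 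Since $(\lambda,\mu)$ fills, the positive, degree--one homogeneous function $\gamma\mapsto i(\lambda,\gamma)+i(\mu,\gamma)$ is bounded below by $\epsilon\,\ell_{c_*}(\gamma)$ for some $\epsilon>0$ (by compactness of the projectivized sphere of laminations). Combining these gives
\[
\Extr_\lambda(c_n)+\Extr_\mu(c_n)\ \geq\ \frac{i(\lambda,\gamma_n)^2+i(\mu,\gamma_n)^2}{\Extr_{\gamma_n}(c_n)}\ \geq\ \frac{\tfrac12\,\epsilon^2\,\ell_{c_*}(\gamma_n)^2}{\Extr_{\gamma_n}(c_n)}\ \longrightarrow\ +\infty,
\]
which proves properness.

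The core of both halves is the \emph{first--variation formula}. Fixing the target, the differential of $c\mapsto E(c,h)$ at $c$ is, up to a universal constant, the real part of the Hopf differential $\qd_h$ of the harmonic map $(S,c)\to(S,h)$, viewed in $T_c^*\cT\cong\cQ_c$; likewise, by Gardiner's formula (consistent with the identity $\Extr_\lambda=2E(c,\lambda)$ recalled before the lemma), the differential of $\Extr_\lambda$ at $c$ is a constant multiple of the real part of the quadratic differential $\qd_\lambda$ on $(S,c)$ with $\cF_-(\qd_\lambda)=\lambda$. Hence $c$ is critical for $E(\bullet,h)+E(\bullet,h^\dual)$ exactly when $\qd_h+\qd_{h^\dual}=0$, i.e. the two harmonic maps from $(S,c)$ have opposite Hopf differentials; and critical for $\Extr_\lambda+\Extr_\mu$ exactly when $\qd_\lambda+\qd_\mu=0$. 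In the first situation the opposite--Hopf--differentials condition is precisely the one making $f^\dual\circ f^{-1}:(S,h)\to(S,h^\dual)$ minimal Lagrangian, so by uniqueness of the minimal Lagrangian map (Labourie \cite{L5}, Schoen \cite{schoen:role}) there is exactly one such $c$, namely the center of $(h,h^\dual)$. In the second situation, $q:=\qd_\mu$ satisfies $\cF_-(q)=\mu$ and $\cF_+(q)=\cF_-(-q)=\lambda$, so $q$ is a holomorphic quadratic differential on $(S,c)$ with horizontal lamination $\lambda$ and vertical lamination $\mu$; by the Hubbard--Masur theorem (Theorem~\ref{thm:hm}) the pair $(c,q)$ is unique, so $c$ is the conformal structure underlying the Hubbard--Masur differential of $(\lambda,\mu)$.

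Finally, since each function is proper and bounded below it attains a global minimum, every minimizer is a critical point, and we have just shown the critical point is unique and equal to the prescribed center; hence the minimum is attained there and only there. The step I expect to be the main obstacle is the bookkeeping in identifying the critical locus with the center: matching the sign and normalization conventions in the two first--variation formulas so that ``$\qd_h+\qd_{h^\dual}=0$'' and ``$\qd_\lambda+\qd_\mu=0$'' translate correctly into the minimal Lagrangian and Hubbard--Masur characterizations, and checking that \emph{every} solution of the critical equation (not only the center already known to solve it) is captured by the relevant uniqueness theorem.
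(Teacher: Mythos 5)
Your steps (2)--(3) --- identifying the critical locus by a first-variation formula and matching it with the designated center --- coincide with the paper's argument: Gardiner's formula together with Hubbard--Masur (Theorem~\ref{thm:hm}) for the extremal-length half, and opposite Hopf differentials together with Labourie--Schoen uniqueness for the energy half (which the paper simply outsources to Theorem 1.10(iv) of \cite{cyclic}). Your properness argument for $E(\bullet,h)+E(\bullet,h^\dual)$ is also the paper's (Proposition~\ref{prop:energy} plus nonnegativity of the second term). The genuine gap is in your properness argument for $\Extr_\lambda+\Extr_\mu$.

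The false step there is the claim that if $(c_n)$ leaves every compact subset of $\cT$, then there exist essential simple closed curves $\gamma_n$ with $\Extr_{\gamma_n}(c_n)\rar 0$. That ``Mumford compactness'' statement holds for divergence in \emph{moduli} space, not in Teichm\"uller space: the $\epsilon$-thick part of $\cT$ is invariant under the (infinite, properly discontinuously acting) mapping class group and is therefore noncompact, so a sequence can diverge in $\cT$ while its systole stays bounded below. Concretely, let $\tau$ be a Dehn twist and $c_n=\tau^n(c_0)$; then $\Extr_\gamma(c_n)=\Extr_{\tau^{-n}\gamma}(c_0)$ for every curve $\gamma$, so the extremal-length systole of $c_n$ equals that of $c_0>0$ for all $n$, yet $c_n$ diverges in $\cT$. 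For such twisting sequences your chain of inequalities produces no lower bound at all --- and these are exactly the degenerations for which the filling hypothesis is needed. The paper avoids the issue by writing $\Extr_\lambda\geq\ell_\lambda^2/(2\pi|\chi(S)|)$ (take the hyperbolic metric as test metric in the definition of extremal length) and then invoking Kerckhoff's theorem \cite{kerckhoff:minima} that $\ell_\lambda+\ell_\mu$ is proper on $\cT$ for a filling pair, which is where the hard work (including the twisting case) lives. Your argument can be repaired by the same reduction, or by replacing the false systole claim with a Thurston-boundary argument (if $c_n\rar[\nu]\in\PP\cML$ then $\ell_\lambda(c_n)\rar\infty$ whenever $i(\lambda,\nu)>0$, and filling guarantees $i(\lambda,\nu)+i(\mu,\nu)>0$); as written, however, the properness step fails.
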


\begin{proof}
Properness of the energy function is proven in Proposition~\ref{prop:energy}
and the remaining part of the first claim can be found in Theorem 1.10(iv) of \cite{cyclic}.

For the second statement, we have $\Extr_\lambda\geq \ell_\lambda^2
/(2\pi|\chi(S)|)$ by the definition of extremal length. As $\ell_\lambda+\ell_\mu$ is proper
\cite{kerckhoff:minima}, the same holds for $\Extr_\lambda+\Extr_\mu$.
Moreover, Gardiner's formula \cite{gardiner:acta84} gives
\[
d\Extr_\lambda|_{\bullet=c}=-\frac{\qd}{2}
\]
where $\frac{1}{4}\qd$ is the Hopf differential of the harmonic map from $c$ to the $\R$-tree dual to $\lambda$; in other words, $\qd$ is also the unique
holomorphic quadratic differential on $c$ whose vertical foliation
corresponds to $\lambda$.
Thus, if $d\Extr_\mu|_{\bullet=c}=-\frac{1}{2}\psi$, then
$c$ is a minimum if and only if $\qd=-\psi$ and so
$\mu$ corresponds to the horizontal foliation of $\qd$.
We conclude by Theorem~\ref{thm:hm}.
\end{proof}

\begin{remark}
The previous argument also shows that
the functions $E(\bullet,h)+E(\bullet,h^\dual)$ and
$\Extr_\lambda+\Extr_\mu$ have a unique
{\it local} minimum.
\end{remark}

\begin{proof}[Proof of Proposition~\ref{prop:What}]
By the above corollary, $\what{\SW}$ is continuous. Moreover,
$\what{\SW}$ is bijective and its inverse can be described as follows.

Given $(h,h^\dual,K)$ with $K\in(-\infty,0)$,
we can assume that $(h,h^\dual)$ are normalized representatives.
Then we let $c$ be the conformal structure underlying the 
metric $h+h^\dual$, so that $\Id:(S,c)\rar (S,h)$ and $\Id:(S,c)\rar(S,h^\dual)$ have Hopf differentials $\frac{1}{4}\qd$ and $-\frac{1}{4}\qd$.
Finally, $\what{\SW}^{-1}(h,h^\dual,K)=(c,|K|^{-1}\qd,K)$.

On the other hand, $\what{\SW}^{-1}(\lambda,\mu,-\infty)=(c,\qd,-\infty)$, where $\qd$ is the Hubbard-Masur $c$-holomorphic quadratic differential with $\cF_-(\qd)=\lambda$ and $\cF_+(\qd)=\mu$.

In order to show that $\what{\SW}$ is closed, we consider a sequence $\{(c_n,\qd_n,K_n)\}$ in $\cQ\times[-\infty,0)$
such that $(h_n,h_n^\dual,K_n)=\what{\SW}(c_n,\qd_n,K_n)$ 
converges and we want to show that $\{(c_n,\qd_n,K_n)\}$
has an accumulation point.
Let $K=\lim_{n\rar \infty} K_n\in [-\infty,0)$. 

Suppose that $K\in(-\infty,0)$ and $(h_n,h_n^\dual)\rar(h,h^\dual)\in\cT\times\cT$.
Because harmonic maps and minimal Lagrangian maps depend regularly on the metrics,
$c_n=[h_n+h_n^\dual]\rar c=[h+h^\dual]$ and $\qd_n\rar \qd$, where $\frac{\qd}{4|K|}$ is
the Hopf differential of the harmonic map $(S,c)\rar(S,h)$.

Suppose now that $K=-\infty$ and that
$|K_n|^{-1}(h_n,h^\dual_n)\rar(\lambda,\mu)\in\cFML$.
It follows from \cite{wolf:R-trees} that
the function $E(\bullet,|K_n|^{-1}h_n)$ converges
$C^\infty$ on the compact subsets of $\cT$ to
$\frac{1}{2}\Extr_\lambda(\bullet)$.
%
By Lemma~\ref{lemma:Ext}, the function
$\Extr_\lambda(\bullet)+\Extr_\mu(\bullet)$
achieves a unique minimum at the conformal structure $c$
underlying the quadratic differential $\qd$ with foliations $(\lambda,\mu)$.
By the above remark, the minima $c_n$ of $E(\bullet,|K_n|^{-1}h_n)+E(\bullet,|K_n|^{-1}h^\dual_n)$
converge to $c$ and
by \cite{wolf:R-trees} we conclude that $\qd_n\rar\qd$.
%
%
%
\end{proof}

\begin{proof}[Proof of Proposition~\ref{prop:action}]
The landslide flow on $\cT\times\cT$ can be extended to
$\cDY$ as $\Land_{e^{i\theta}}
(h,h^\dual,K)=(h_\theta,h^\dual_\theta,K)$. It is immediate to see that
$\what{\SW}$ conjugates this landslide flow on $\cDY$ with the flow
$e^{i\theta}\cdot(c_n,\qd_n,K)=(c_n,e^{i\theta}\qd_n,K)$
on $\cQ\times(-\infty,0)$ and
so it extends to $\cQ\times\{-\infty\}\cong\cFML\times\{-\infty\}$.
\end{proof}

\begin{proof}[Proof of Proposition~\ref{prop:ham-ext}]
The function $F$ on $\partial\cDY$ is given by
$F(\lambda,\mu)=2E(c,\lambda)=2E(c,\mu)=\|\qd\|$, where
$\frac{1}{4}\qd$ is the Hopf differential of the harmonic map from $c$ to the $\R$-tree dual to $\lambda$
and $\qd$ is the quadratic differential corresponding to $(\lambda,\mu)$
and $c$ is conformal structure underlying $\qd$, and so
$F(\lambda,\mu)=i(\lambda,\mu)$.

Using charts of $\cFML$ given by couples of maximal recurrent (and transversely recurrent) train tracks transverse to each other,
the symplectic form $\omega_{Th,1}+\omega_{Th,2}$ and the $1$-form $dF$ have constant coefficients and so define a local Hamiltonian flow (in charts). We want to show that this local flow
is exactly the limit of the landslide flow.

Notice that $F$ is real-analytic on $\cT\times\cT$ and so
it extends as a $C^1$ function to those points $\cFML_{max}$
of $\cFML$
that have a tangent space, namely to
couples $(\lambda,\mu)$ of maximal
measured laminations, which represent a dense
subset of full measure.

From \cite{papadopoulos-penner:TAMS} and \cite{bonahon-sozen:duke} it follows that 
$K^{-2}\omega_{WP}$ on $\cY=\cT\times(-\infty,0)$
continuously extends
as Thurston's symplectic form $\omega_{Th}$ at those points
of $\cML\times\{-\infty\}$ that represent maximal measured laminations.

Thus, the vector field $\omega_{WP}^{-1}(\frac{1}{4}dF,-)$ that generates
the landslide flow converges almost everywhere
to $\omega_{Th}^{-1}(\frac{1}{4}dF,-)$. This implies that the landslide flow
converges locally uniformly to the flow locally
determined by $(\omega_{Th,1}+\omega_{Th,2})^{-1}(\frac{1}{4}dF,-)$
on $\cFML$.
%
\end{proof} 

\section{AdS geometry and composition of earthquakes}

\subsection{Dual constant curvature surfaces in AdS manifolds}

\begin{defi}
For any $K<-1$, set $K^\dual=-K/(K+1)$.  
\end{defi}

Our first goal is to prove the special case of Theorem \ref{tm:adsI} when
the curvatures of the future and past surfaces satisfy the relation
$K_+=K_-^\dual$. 

\begin{lemma}
Let $\mgh$ be a MGH AdS manifold, and let $S_+$ and $S_-$ be the surfaces in $\mgh$ with
curvature $K_+$ and $K_-$, respectively, in the future and in the past of the convex
core of $\mgh$. If $K_+=-K_-^\dual$ then $S_-$ is dual to $S_+$ (and conversely). 
If we identify $S_+$ to $S_-$ by the natural duality map, then the third 
fundamental form of $S_+$ is equal to the induced metric on $S_-$, and 
conversely.
\end{lemma}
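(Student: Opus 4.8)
The plan is to derive the lemma from the two duality statements already established (Lemma~\ref{lm:dual} and Lemma~\ref{lm:GHdual}) together with the uniqueness built into the constant curvature foliation (Theorem~\ref{tm:foliation}). The one algebraic fact I would isolate first is that the curvature duality $K\mapsto K^\dual=-K/(K+1)$ is an involution of $(-\infty,-1)$: since $K^\dual+1=1/(K+1)$, a direct substitution gives $(K^\dual)^\dual=K$. In particular the hypothesis relating the two curvatures, $K_+=K_-^\dual$, is equivalent to $(K_+)^\dual=K_-$, and this is what makes the two surfaces dual.

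First I would apply Lemma~\ref{lm:GHdual} to $S_+$. By the description of the foliation in Theorem~\ref{tm:foliation}, $S_+$ is a closed, space-like, past-convex surface lying in the future of the convex core, so its dual $S_+^\dual$ is a space-like, future-convex surface in the past of the convex core; by Lemma~\ref{lm:dual}(1) it is smooth and locally strictly convex, and it is closed, being the quotient of the $\rho(\pi_1 S)$-invariant dual surface $\St^\dual$. By Lemma~\ref{lm:dual}(3), since $S_+$ has constant curvature $K_+$, the surface $S_+^\dual$ has constant curvature $(K_+)^\dual=K_-$.

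Next I would invoke uniqueness in Theorem~\ref{tm:foliation}: in the past of the convex core there is exactly one closed space-like surface of constant curvature $K_-$, namely $S_-$. As $S_+^\dual$ is such a surface, $S_+^\dual=S_-$, which is precisely the statement that $S_-$ is dual to $S_+$. The converse $S_-^\dual=S_+$ follows either by running the same argument on $S_-$ (its dual is past-convex of curvature $(K_-)^\dual=K_+$, hence equals $S_+$ by uniqueness) or directly from the involutivity of the surface duality. Finally, the assertion about the fundamental forms is immediate from Lemma~\ref{lm:dual}(2): identifying $S_+$ with $S_-=S_+^\dual$ through the duality map, the induced metric on $S_-$ pulls back to the third fundamental form of $S_+$, and symmetrically the induced metric on $S_+$ is the third fundamental form of $S_-$.

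I expect the only delicate point to be verifying that the hypotheses of Lemmas~\ref{lm:dual} and~\ref{lm:GHdual} are genuinely met, namely that $S_+$ is space-like, past-convex and lies strictly in the future of the convex core, so that its support planes are space-like and its dual is both well-defined and again convex. These facts are supplied by Theorem~\ref{tm:foliation}, but the orientation and convexity conventions (past- versus future-convex, and on which side of the convex core the dual surface falls) must be tracked carefully, so that $S_+^\dual$ is certified to land in the \emph{past} of the convex core and can be matched with $S_-$ rather than with $S_+$ itself.
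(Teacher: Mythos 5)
Your proof is correct and follows essentially the same route as the paper's: apply Lemma~\ref{lm:GHdual} to place $S_+^\dual$ as a future-convex surface in the past of the convex core, use point (3) of Lemma~\ref{lm:dual} to get its curvature $K_+^\dual=K_-$, conclude $S_+^\dual=S_-$ by the uniqueness in Theorem~\ref{tm:foliation} (the main result of \cite{BBZ2}), and read off the fundamental-form statement from point (2) of Lemma~\ref{lm:dual}. Your explicit check that $K\mapsto K^\dual$ is an involution (and hence that the minus sign in the lemma's hypothesis must be read as $K_+=K_-^\dual$) is a small clarification the paper leaves implicit, but the argument is the same.
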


\begin{proof}
It follows from Lemma \ref{lm:GHdual} that the surface dual to $S_+$ is a future
convex surface $S_+^\dual$ in the past of the convex core of $\mgh$. Point (3) of Lemma \ref{lm:dual} then
shows that $S_+^\dual$ has constant curvature $K_-=K_+^\dual$. But according to the main 
result of \cite{BBZ2}, there is a unique such space-like surface of constant curvature 
$K_-$ in the past of the convex core of $\mgh$, so $S_-=S_+^\dual$. Lemma \ref{lm:dual} then
shows that, under the identification of $S_+$ with $S_-$ by the duality map, the
induced metric on $S_+$ corresponds to the third fundamental form of $S_-$, and
conversely.
\end{proof}

The special case of Theorem \ref{tm:adsI} directly follows. 

\begin{lemma}  \label{lm:KK*}
Let $h_+, h_-\in \cT$, and let $K_+, K_-<-1$ with $K_+=K_-^\dual$. There exists 
a unique MGH AdS manifold $\mgh$ such that the past-convex surface of constant 
curvature $K_+$ in $\mgh$ is homothetic to $h_+$ while the future-convex surface of
constant curvature $K_-$ in $\mgh$ is homothetic to $h_-$.
\end{lemma}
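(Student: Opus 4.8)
The plan is to use the duality between convex space-like surfaces in $\AdS^3$ to collapse the two-sided boundary condition into a condition on a single surface, and then to invoke the $\AdS$ description of the landslide flow recalled in Section 2. Since $K_+=K_-^\dual$, the preceding lemma tells us that in any MGH AdS manifold the constant curvature $K_-$ surface $S_-$ in the past of the convex core is dual to the constant curvature $K_+$ surface $S_+$ in the future, and that the induced metric on $S_-$ coincides with the third fundamental form of $S_+$. Hence the requirement that $S_-$ have induced metric homothetic to $h_-$ is equivalent to the requirement that the third fundamental form of $S_+$ be homothetic to $h_-$. The task thus becomes to exhibit a \emph{unique} MGH AdS manifold containing a convex surface $S_+$ of constant curvature $K_+$ whose induced metric is homothetic to $h_+$ and whose third fundamental form is homothetic to $h_-$.

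Next I would pin down the homothety factors. A surface of constant curvature $K_+$ has induced metric of curvature $K_+$, so if this metric is a multiple of a hyperbolic metric $h_+$ the factor must be $-1/K_+\in(0,1)$; likewise the third fundamental form of $S_+$, being the induced metric of $S_-$, has curvature $K_-$, so it must be $-1/K_-$ times $h_-$. Choosing $\theta\in(0,\pi)$ with $\cos^2(\theta/2)=-1/K_+$, the essential consistency check is that $\sin^2(\theta/2)=-1/K_-$; this follows from the relation $K_+=-K_-/(K_-+1)$, which yields $1+1/K_+=-1/K_-$. So the two factors automatically sum to $1$ precisely because $K_+=K_-^\dual$ --- this is the arithmetic heart of the special case.

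Then I would apply the $\AdS$ definition of the landslide flow: there is a unique equivariant embedding of $\St$ in $\AdS^3$ with induced metric $\cos^2(\theta/2)h_+=-\frac{1}{K_+}h_+$ and third fundamental form $\sin^2(\theta/2)h_-=-\frac{1}{K_-}h_-$, and its holonomy is that of a MGH AdS manifold $\mgh$. This surface has constant curvature $K_+$, hence by Theorem \ref{tm:foliation} it is the surface $S_+$ in the future of the convex core, whose induced metric is homothetic to $h_+$; the preceding lemma then identifies the induced metric of the dual $S_-$ with $-\frac{1}{K_-}h_-$, homothetic to $h_-$, which gives existence. For uniqueness, any MGH AdS manifold meeting the conclusion forces the induced metric and third fundamental form of its $S_+$ to be exactly $-\frac{1}{K_+}h_+$ and $-\frac{1}{K_-}h_-$ (the first from the curvature-$K_+$ normalization, the second again via duality), so the uniqueness clause in the landslide construction pins down $\mgh$.

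I expect the main obstacle to be not any single computation but the careful bookkeeping of conventions: confirming that the convex surface produced by the landslide construction is genuinely the future-of-the-convex-core surface $S_+$ rather than its past-side counterpart, and that a convex surface is indeed determined by its induced metric together with its third fundamental form (which holds here because $\III=I(B\bullet,B\bullet)$ recovers $B^2$, and $B$ is positive definite for a convex surface of curvature $K_+<-1$). Once these normalizations are fixed, the duality relation $K_+=K_-^\dual$ does all the substantive work.
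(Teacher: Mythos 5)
Your proof is correct and follows essentially the same route as the paper: both arguments reduce, via the $\AdS^3$ duality, to prescribing the induced metric $(-1/K_+)h_+$ and third fundamental form $(-1/K_-)h_-$ on a single convex surface, both hinge on the arithmetic identity $1+1/K_+=-1/K_-$ forced by $K_+=K_-^\dual$, and both obtain uniqueness from the uniqueness of the equivariant embedding together with the fact (from \cite{BBZ2}) that a MGH AdS manifold contains a unique past-convex and a unique future-convex surface of each constant curvature. The only difference is one of packaging: where you cite the existence and uniqueness of the equivariant embedding with prescribed $I$ and $\III$ (the statement recalled in Section 2 from the landslide definition), the paper re-derives it by setting $B=\sqrt{-1-K_+}\,b$ for the Labourie operator $b$ of the pair $(h_+,h_-)$ and verifying the Gauss and Codazzi equations directly.
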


\begin{proof}
Given two hyperbolic metrics $h_+, h_-\in \cT$ and two constants $K_-, K_+<-1$ such
that $K_-=K_+^\dual$, let $I=(-1/K_+)h_+, \III=(-1/K_-)h_-$. Consider the 
identification between $(S,h_-)$ and $(S, h_+)$ by the unique minimal Lagrangian
map isotopic to the identity, and let $b$ be the Labourie operator such that 
$h_-=h_+(b\bullet,b\bullet)$.

Let $k=\sqrt{-1-K_+}$, and set $B=kb$. Then $B$ is self-adjoint for $I$, solution
of the Codazzi equation for $I$, and of the AdS Gauss equation $\det(B)=-1-K_+$.
So there exists an equivariant embedding of the universal cover of $(S,I)$ as
a space-like, locally strictly convex surface in $\AdS^3$ with shape operator equal
to the lift of $B$ to $\St$. This implies that there is an isometric embedding of $(S,I)$ in a MGH 
AdS manifold $\mgh$, with shape operator equal also to $B$.

The properties of the duality map in $\AdS^3$ then imply that the surface $S^\dual$ dual to $S$
in $\mgh$ has induced metric equal to $\III$, in particular it has constant curvature $K_-$ and
is homothetic to $h_-$. 
This already shows the existence of $\mgh$ containing the required surfaces. 

The uniqueness of $\mgh$ follows from the same arguments, and from the fact that any MGH 
AdS manifolds contains a unique past-convex and a unique future-convex surface of any
given curvature in $(-\infty,-1)$, see \cite{BBZ2}, so that for any $K\in (-\infty,-1)$,
the past-convex surface of constant curvature $K^\dual$ is always dual to the future-convex
surface of constant curvature $K$.
\end{proof}

\subsection{AdS manifolds with constant curvature boundary}

The more general part of Theorem \ref{tm:adsI} will follow from a compactness
argument. We will need the following elementary statement on the Teichm\"uller distance.
Given a hyperbolic metric $h$ and a closed curve $\gamma$ on $S$, we denote by 
$\ell_\gamma(h)$ the length of the geodesic for $h$ homotopic to $\gamma$.

\begin{lemma} \label{lm:compact}
Let $\const>1$ and $h\in \cT$. 
\begin{enumerate}
\item The set of hyperbolic metrics $h'$ on $S$ such that, for all closed curve
$\gamma$ on $S$, $\ell_{\gamma}(h') \leq \const\,\ell_{\gamma}(h)$, is compact. 
\item Similarly, the set of metrics $h'$ on $S$ such that, for all closed curves $\gamma$, 
$\ell_{\gamma}(h)\leq \const\,\ell_{\gamma}(h')$, is compact.
\end{enumerate}
\end{lemma}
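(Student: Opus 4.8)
The plan is to realise the two sets as closed balls for Thurston's asymmetric (Lipschitz) metric and to exploit the properness of hyperbolic length functions. First I record that both sets are closed: each length function $\ell_\gamma\colon\cT\to\R_{>0}$ is continuous, so each set is an intersection, over all simple closed curves $\gamma$, of the closed sets $\{h'\mid\ell_\gamma(h')\le\const\,\ell_\gamma(h)\}$, respectively $\{h'\mid\ell_\gamma(h)\le\const\,\ell_\gamma(h')\}$. Writing $L(X,Y)=\sup_\gamma \ell_\gamma(Y)/\ell_\gamma(X)$, set (1) is $\{h'\mid L(h,h')\le\const\}$ and set (2) is $\{h'\mid L(h',h)\le\const\}$, the forward and backward closed balls of radius $\log\const$ about $h$ for Thurston's metric. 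The one tool I need is the standard fact that if $\gamma_1,\dots,\gamma_k$ is a filling system of simple closed curves, then $\sum_j\ell_{\gamma_j}$ is a proper function on $\cT$ (an exhaustion).

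Part (1) is then immediate. Applying the upper bound to a fixed filling family gives $\ell_{\gamma_j}(h'_n)\le\const\,\ell_{\gamma_j}(h)$, so $\sum_j\ell_{\gamma_j}$ stays bounded along any sequence $h'_n$ in the set; by properness such a sequence cannot leave every compact subset of $\cT$, so the set is precompact, hence (being closed) compact.

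For part (2) only a lower bound $\ell_\gamma(h'_n)\ge\const^{-1}\ell_\gamma(h)$ is available. This already bounds the systole from below, $\mathrm{sys}(h'_n)\ge\const^{-1}\mathrm{sys}(h)>0$, so no curve can be pinched; but it gives no upper bound on any length, so the filling-family argument does not apply. The genuine difficulty --- and the main obstacle of the whole lemma --- is to rule out the remaining mode of escape to infinity, in which $h'_n$ stays in the thick part while the marking degenerates towards a filling lamination and every length tends to $+\infty$. Against this the inequality is powerless: if $h'_n\to[\mu]\in\PP\cML$ with Thurston scalars $t_n\to0$, then $t_n\ell_\gamma(h'_n)\ge\const^{-1}t_n\ell_\gamma(h)\to0$ yields only $i(\mu,\gamma)\ge0$.

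To overcome this I would invoke the properness of Thurston's Lipschitz metric, i.e. the compactness of its backward balls $\{h'\mid L(h',h)\le C\}$ (Thurston; see also Papadopoulos--Th\'eret). Concretely this says that if $h'_n$ leaves every compact set then $\sup_\gamma \ell_\gamma(h)/\ell_\gamma(h'_n)\to\infty$, so some curve is eventually made shorter by $h'_n$ than $\const^{-1}\ell_\gamma(h)$, contradicting membership in set (2). If one prefers a self-contained argument, this last point can be extracted from extremal length together with Kerckhoff's formula $d_T(X,Y)=\tfrac12\log\sup_\gamma \Extr_\gamma(Y)/\Extr_\gamma(X)$ and the comparison $\ell_\gamma^2/(2\pi|\chi(S)|)\le\Extr_\gamma$, exhibiting the contracting foliation as a curve family whose $h'_n$-length collapses; but producing such a curve for thick-part divergence is exactly the delicate step, and quoting properness of the Thurston ball is the cleanest route.
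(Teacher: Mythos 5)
Your proof is correct, and it differs from the paper's in an instructive way on part (1) while coinciding with it on part (2). The paper disposes of both parts with a single citation: it recalls Thurston's asymmetric distance $d_{Th}$ (log of the optimal Lipschitz constant, equal to the log of the supremum of length ratios), observes that the two sets in the lemma are exactly the forward and backward balls $\{h'\mid d_{Th}(h,h')\le\log\const\}$ and $\{h'\mid d_{Th}(h',h)\le\log\const\}$, and quotes Papadopoulos--Th\'eret for the fact that $d_{Th}(h,h'_n)\to\infty$ and $d_{Th}(h'_n,h)\to\infty$ whenever $h'_n$ leaves every compact set. You frame the sets the same way, but for part (1) you replace the citation by a self-contained argument: the upper bound traps the set inside a sublevel set of $\sum_j\ell_{\gamma_j}$ for a filling family of curves, and properness of that total length function (a fact the paper itself invokes elsewhere, via Kerckhoff, in its Lemma on extremal lengths) gives precompactness, with closedness clear from continuity of length functions. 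This makes the easy direction independent of the Papadopoulos--Th\'eret result, and your explanation of why the same trick must fail for part (2) --- the lower bound only controls the systole, and under Thurston-boundary rescaling $t_n\ell_\gamma(h'_n)\ge\const^{-1}t_n\ell_\gamma(h)\to 0$ yields nothing --- is a genuine point the paper glosses over. For part (2) you then fall back on exactly the paper's argument, compactness of backward Thurston balls from Thurston/Papadopoulos--Th\'eret, so there the two proofs are the same.
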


\begin{proof}
Recall that Thurston's asymetric distance $d_{Th}(h,h')$ between $h$ and $h'$ is defined as the log of the
infimum of the Lipschitz constants over all smooth maps from $(S,h)$ to $(S,h')$ isotopic
to the identity (see \cite{thurston:minimal}). It can also be defined as the supremum of the
ratio of length for $h$ and for $h'$ of closed curves on $S$, see \cite{thurston:minimal}.
It is known (see \cite{papadopoulos-theret}) that, if $h$ is fixed,
then $d_{Th}(h,h'_n)\rightarrow \infty$
as $h'_n\rightarrow \infty$. This proves the first point. Similarly, if $h'$ is fixed
and $h_n\rightarrow \infty$, then $d_{Th}(h_n,h')\rightarrow \infty$, and this proves the second
point.
\end{proof}

\begin{cor} \label{cr:compact}
Let $\const>1$ and $\cC\subset \cT$ be compact.
Let $\cC'$ be the set of all metrics $h'\in \cT$ such
that $\ell_{\gamma}(h')
\leq \const\,\ell_{\gamma}(h)$ (resp. $\ell_{\gamma}(h) \leq \const \,\ell_{\gamma}(h')$) for some $h\in \cC$ and for all closed curve $\gamma$ on $S$. Then $\cC'$ is compact.
\end{cor}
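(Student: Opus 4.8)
The plan is to deduce the corollary from Lemma~\ref{lm:compact} by absorbing the compact set $\cC$ into a single dominating metric, and then checking that $\cC'$ is closed; a closed subset of a compact set is compact. I treat the first alternative (where $\ell_\gamma(h')\le \const\,\ell_\gamma(h)$) in detail, the second being entirely symmetric.

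First I would establish precompactness. Fix any $h_0\in\cT$. Since Thurston's asymmetric distance $d_{Th}(h_0,\bullet):\cT\to\R$ is finite and continuous (see \cite{thurston:minimal}) and $\cC$ is compact, the quantity $\sup_{h\in\cC}d_{Th}(h_0,h)$ is finite, say equal to $\log\const'$ for some $\const'\ge 1$. Unwinding the definition of $d_{Th}$ as in the proof of Lemma~\ref{lm:compact}, this says precisely that $\ell_\gamma(h)\le \const'\,\ell_\gamma(h_0)$ for every $h\in\cC$ and every closed curve $\gamma$. Consequently, if $h'\in\cC'$ and $h\in\cC$ realises the defining inequality, then $\ell_\gamma(h')\le\const\,\ell_\gamma(h)\le\const\const'\,\ell_\gamma(h_0)$ for all $\gamma$. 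Thus $\cC'$ is contained in the set of metrics whose $\gamma$-lengths are dominated by $\const\const'\,\ell_\gamma(h_0)$, which is compact by Lemma~\ref{lm:compact}(1) applied to $h_0$ and the constant $\const\const'$. Hence $\cC'$ is precompact.

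Next I would show that $\cC'$ is closed. Let $(h'_n)$ be a sequence in $\cC'$ converging to some $h'_\infty\in\cT$, and choose $h_n\in\cC$ with $\ell_\gamma(h'_n)\le\const\,\ell_\gamma(h_n)$ for all $\gamma$. By compactness of $\cC$ we pass to a subsequence with $h_n\to h_\infty\in\cC$. For each fixed curve $\gamma$ the length function $h\mapsto\ell_\gamma(h)$ is continuous on $\cT$, so letting $n\to\infty$ in the inequality yields $\ell_\gamma(h'_\infty)\le\const\,\ell_\gamma(h_\infty)$ for every $\gamma$; since $h_\infty\in\cC$, this means $h'_\infty\in\cC'$. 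Therefore $\cC'$ is closed, and being a closed subset of the compact set produced above, it is compact.

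The only delicate point is the passage to the limit with a \emph{varying} dominating metric $h_n\in\cC$, which is what forces compactness of $\cC$ to be used twice: once (through continuity of $d_{Th}$) to manufacture a single uniform dominating metric $h_0$ for precompactness, and once (through a subsequence) to control the limit in the closedness step. The second alternative of the statement is obtained verbatim by exchanging the two arguments of $d_{Th}$ --- so that $h_0$ appears as the second argument and one bounds $\sup_{h\in\cC}d_{Th}(h,h_0)$ --- and invoking Lemma~\ref{lm:compact}(2) in place of part~(1).
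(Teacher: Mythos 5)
Your proof is correct, and it takes the route the paper intends: the corollary is stated there without proof as an immediate consequence of Lemma~\ref{lm:compact}, and your argument supplies exactly the missing routine details (a single uniform dominating metric $h_0$ via finiteness and continuity of Thurston's asymmetric distance to get containment in a compact set from the lemma, plus a subsequence argument for closedness). Both the handling of the quantifier ``for some $h\in\cC$'' and the symmetric treatment of the second alternative via Lemma~\ref{lm:compact}(2) are handled correctly.
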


This corollary will be useful in conjunction which the following basic estimate
from AdS geometry.

\begin{lemma} 
Let $\mgh$ be a MGH AdS manifold, let $K<K'<-1$, and let $S,S'$ be the future-convex
surfaces of constant curvature $K$ and $K'$, respectively, in $\mgh$. Let $\gamma$
be a closed geodesic in $S$. Then the length of $\gamma$ is smaller than the length
of the closed geodesic $\gamma'$ in $S'$ homotopic to $\gamma$.
\end{lemma}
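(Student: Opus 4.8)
The plan is to realize the two surfaces as leaves of the constant-curvature foliation and to prove that the length of every closed curve decreases monotonically as one moves towards the past along this foliation. By Theorem~\ref{tm:foliation}, the part of $\mgh$ in the past of the convex core is foliated by the future-convex constant-curvature surfaces, the curvature decreasing from $-1$ (at the convex core) to $-\infty$ (at the initial singularity). Since $K<K'$, the surface $S$ lies strictly to the past of $S'$, and the region $R$ between them is foliated by the leaves $\Sigma_\sigma$ of constant curvature $\sigma\in[K,K']$. Let $\kappa:R\to[K,K']$ be the smooth function assigning to each point the curvature of the leaf through it; its level sets are exactly the leaves, so $\nabla\kappa$ is everywhere orthogonal to them and hence time-like (in particular $\langle\nabla\kappa,\nabla\kappa\rangle<0$). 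I would then flow along
$$ \phi=-\frac{\nabla\kappa}{\langle\nabla\kappa,\nabla\kappa\rangle}~, $$
a past-directed vector field, normal to the foliation, along which $\kappa$ decreases at unit speed; its flow $\Phi_s$ therefore carries $S'=\{\kappa=K'\}$ onto $S=\{\kappa=K\}$ at time $s=K'-K$. Writing $\phi=-Nn$ with $n$ the future unit normal and $N=\sqrt{-\langle\phi,\phi\rangle}>0$, a short computation using the convention $Bv=-D_vn$ gives, for vector fields $X,Y$ tangent to a leaf,
$$ (\mathcal{L}_\phi g)(X,Y)=2N\,\II(X,Y)~. $$

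The main point, and the step I expect to be the real obstacle, is to control the sign of the second fundamental form: I claim that along the future-convex surfaces in the past of the convex core the form $\II$ (computed with the future unit normal) is negative definite. Its determinant relative to the induced metric is $\det(B)=-1-K>0$ by the Gauss equation, so $\II$ is already definite, and it suffices to fix the sign of the mean curvature $H=\tr(B)$. Here I would use a continuity argument: the constant-curvature surface of a given curvature, together with its shape operator, depends continuously on the MGH structure, i.e. on $(h_+,h_-)\in\cT\times\cT$, which is connected; on the connected set $S\times(\cT\times\cT)$ the function $H$ is continuous and never vanishes (since $\det(B)>0$ forces the two eigenvalues of $B$ to have the same nonzero sign, whence $H\neq0$), so it has constant sign. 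Evaluating at a Fuchsian manifold, where the leaf at time-like distance $t\in(-\pi/2,0)$ from the totally geodesic convex core satisfies $B=\tan(t)\En$ and thus $H=2\tan(t)<0$, pins this sign to be negative everywhere. Therefore $\II<0$, and $(\mathcal{L}_\phi g)$ restricted to each leaf $\Sigma_\sigma$ is negative definite.

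Finally I would draw the conclusion. Because $(\mathcal{L}_\phi g)$ is negative definite on each leaf, for any curve $c$ on $S'$ the length of $\Phi_s\circ c$ is strictly decreasing in $s$; applying this to the geodesic $\gamma'$ of its free homotopy class on $S'$, the curve $\Phi_{K'-K}(\gamma')$ lies on $S$ and satisfies $\mathrm{length}_S(\Phi_{K'-K}(\gamma'))<\mathrm{length}_{S'}(\gamma')$. Since $\Phi$ is the restriction of an ambient isotopy and the inclusions of the leaves into $\mgh$ are homotopy equivalences, $\Phi_{K'-K}(\gamma')$ is freely homotopic on $S$ to $\gamma$; as the geodesic $\gamma$ minimizes length in its class, $\mathrm{length}_S(\gamma)\le\mathrm{length}_S(\Phi_{K'-K}(\gamma'))$. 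Combining the two inequalities yields $\mathrm{length}_S(\gamma)<\mathrm{length}_{S'}(\gamma')$, which is exactly the claim.
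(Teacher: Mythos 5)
Your proposal takes the same route as the paper: the paper's entire proof is a one-sentence citation of exactly the fact you set out to establish, namely that in a foliation of an AdS manifold by future-convex surfaces, identified by the normal flow, the metric decreases when moving towards the past (it refers to \cite{colI} for this). Your infinitesimal computation is correct with the paper's conventions ($Bv=-D_v\nor$, $\II=I(B\cdot,\cdot)$, future unit normal): indeed $(\mathcal{L}_\phi g)(X,Y)=2N\,\II(X,Y)$ for leaf-tangent $X,Y$, and granting that $\II$ is negative definite on the leaves, the monotonicity of lengths and the final homotopy-class bookkeeping go through as you wrote them.

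However, two steps rest on assertions that you do not prove and that are not contained in the results you invoke. First, you use the leafwise-curvature function $\kappa$ as if it were (at least) $C^1$ with nowhere-vanishing timelike gradient; Theorem \ref{tm:foliation} produces the foliation but says nothing about its transverse regularity, so the vector field $\phi=-\nabla\kappa/\langle\nabla\kappa,\nabla\kappa\rangle$ is not yet licensed. Second --- and this is the step you yourself call the crux --- your determination of the sign of $H$ is a connectedness argument that needs the constant-curvature surface \emph{and its shape operator} to depend continuously on the point of $\cT\times\cT$; this is true, but it is nowhere proved in \cite{BBZ2} as quoted, and establishing it (uniqueness plus a compactness argument for families of convex surfaces) is of roughly the same weight as the lemma itself, so as written this is a genuine gap. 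It is also an unnecessary detour: the sign of $\II$ is forced directly by future-convexity, which is part of the statement of Theorem \ref{tm:foliation}. If the future of a leaf $\Sigma$ is convex, then $\overline{I^+(\Sigma)}$ is a convex set with smooth boundary near any $p\in\Sigma$, so its supporting plane at $p$ is the (spacelike, totally geodesic) tangent plane and $\Sigma$ lies locally in the causal future of that plane; writing $\Sigma$ as a normal graph $t=f(x)$ over the tangent plane, this one-sidedness gives $\mathrm{Hess}\,f(p)\geq 0$, hence $\II\leq 0$ at $p$ with the convention $B=-D\nor$, and the Gauss equation $\det(B)=-1-K>0$ upgrades this to $\II<0$. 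This removes the need for both the continuity argument and the Fuchsian computation, and the remaining regularity issue can be avoided by comparing two leaves directly (for instance via the normal projection onto the future-convex side, as in \cite{colI}) rather than by flowing along $\nabla\kappa$.
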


\begin{proof}
This follows from the elementary fact that, in a foliation of an AdS manifold
by future-convex surfaces (identified by the normal flow), the metric is
decreasing when moving towards the past, see e.g. \cite{colI}.
\end{proof}

\begin{cor} \label{cr:comparison}
Let $K_-, K_+<-1$ with $K_-<K_+^\dual$. Let $\mgh$ be a MGH AdS manifold containing a past-convex 
surface $S_+$ with induced metric $(-1/K_+)h_+$ and third fundamental form $(-1/K_+^\dual)h_+^\dual$,
so that $h_+$ and $h_+^\dual$ are hyperbolic metrics. Let $h_-$ (resp. $h_-^\dual$) be the hyperbolic metric 
homothetic to the induced metric (resp. third fundamental form) of the future-convex surface $S_-$ 
of constant curvature $K_-$. Then 
$$ h_- \leq \left( \frac {K_-}{K_+^\dual}\right) h_+^\dual~, 
\qquad
h_+ \leq \left( \frac {K_+}{K_-^\dual}\right) h_-^\dual~, $$
where the inequalities are understood in the sense of the length spectrum. 
Similarly if $K_+<K_-^\dual$ then 
$$ h_+ \leq \left( \frac {K_+}{K_-^\dual}\right) h_-^\dual~,
\qquad
h_- \leq \left( \frac {K_-}{K_+^\dual}\right) h_+^\dual~. $$
\end{cor}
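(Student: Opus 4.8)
The plan is to derive both inequalities from the duality between constant curvature surfaces in $\mgh$ together with the monotonicity of the induced metric along the constant curvature foliation established in the preceding lemma. First I would record the purely algebraic fact that $K\mapsto K^\dual=-K/(K+1)$ is a strictly decreasing involution of $(-\infty,-1)$; consequently the hypothesis $K_-<K_+^\dual$ is \emph{equivalent} to $K_+<K_-^\dual$. This is exactly why the two displayed sets of inequalities coincide, so the ``similarly'' case needs no separate argument and it suffices to establish the two inequalities once.

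For the first inequality I would pass to the dual of $S_+$. Since $S_+$ is past-convex in the future of the convex core, Lemma \ref{lm:GHdual} gives that its dual $S_+^\dual$ is a future-convex surface in the past of the convex core, Lemma \ref{lm:dual}(3) gives that it has constant curvature $K_+^\dual$, and Lemma \ref{lm:dual}(2) identifies its induced metric with the third fundamental form $(-1/K_+^\dual)h_+^\dual$ of $S_+$. By the uniqueness in Theorem \ref{tm:foliation}, $S_+^\dual$ is precisely the future-convex leaf of curvature $K_+^\dual$, so $S_-$ and $S_+^\dual$ are two leaves of the same future-convex foliation with $K_-<K_+^\dual$. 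The preceding lemma then yields $\ell_\gamma\big((-1/K_-)h_-\big)\le \ell_\gamma\big((-1/K_+^\dual)h_+^\dual\big)$ for every closed curve $\gamma$. Using the homogeneity $\ell_\gamma(\mu\, g)=\sqrt{\mu}\,\ell_\gamma(g)$ and simplifying $\sqrt{(-1/K_+^\dual)/(-1/K_-)}=\sqrt{K_-/K_+^\dual}$ gives $h_-\le (K_-/K_+^\dual)\,h_+^\dual$ in the sense of the length spectrum.

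The second inequality is obtained symmetrically, now dualizing $S_-$. As $S_-$ is future-convex in the past of the convex core, Lemma \ref{lm:GHdual} together with the involutivity of duality and Lemma \ref{lm:dual} show that $S_-^\dual$ is a past-convex surface of curvature $K_-^\dual$ whose induced metric is the third fundamental form $(-1/K_-^\dual)h_-^\dual$ of $S_-$, and by Theorem \ref{tm:foliation} it is the past-convex leaf of curvature $K_-^\dual$. Since $K_-<K_+^\dual$ is equivalent to $K_+<K_-^\dual$, the surfaces $S_+$ and $S_-^\dual$ are two leaves of the \emph{past}-convex foliation with $K_+<K_-^\dual$; applying the time-reversed form of the preceding lemma (in the future of the convex core the induced metric decreases as one moves towards the future, i.e. towards more negative curvature) gives $\ell_\gamma\big((-1/K_+)h_+\big)\le \ell_\gamma\big((-1/K_-^\dual)h_-^\dual\big)$, and the same homogeneity computation produces $h_+\le (K_+/K_-^\dual)\,h_-^\dual$.

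The argument is essentially bookkeeping once the duality is set up, so I do not expect a genuine obstacle; the two points that require care are getting the \emph{direction} of the monotonicity right (checking that the leaf of more negative curvature has the smaller induced metric, on both sides of the convex core) and tracking the homothety factors $-1/K$ when converting the length-spectrum comparison of induced metrics into the stated comparison of their hyperbolic representatives. Invoking the uniqueness part of Theorem \ref{tm:foliation} to identify each dual surface with the corresponding leaf of the constant curvature foliation is the only non-formal ingredient.
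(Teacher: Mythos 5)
Your proof is correct and follows essentially the paper's own route: the paper states this corollary without a separate proof, as an immediate consequence of the preceding monotonicity lemma combined with Lemmas \ref{lm:dual} and \ref{lm:GHdual} and the uniqueness in Theorem \ref{tm:foliation}, which is precisely your argument (including the time-reversed form of the lemma needed to compare the two past-convex leaves, and the bookkeeping of the homothety factors). Your side remark that $K_-<K_+^\dual$ is equivalent to $K_+<K_-^\dual$ because $K\mapsto K^\dual$ is a decreasing involution of $(-\infty,-1)$ --- so that the ``similarly'' clause is redundant as printed --- is also accurate.
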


\begin{defi}
Let $K_-, K_+<-1$. We denote by $\Phi_{K_-,K_+}:\cT\times \cT\rightarrow \cT\times \cT$
the map sending $(h_l, h_r)$ to the hyperbolic metrics $h_-, h_+$ such that the 
MGH AdS manifold $\mgh$ with left and right metrics $h_l$ and $h_r$ contains a 
past-convex surface of constant curvature $K_+$ with induced metric $(-1/K_+)h_+$, 
and a future-convex surface of constant curvature $K_-$ with induced metric
$(-1/K_-)h_-$. 
\end{defi}

It follows from Lemma \ref{lm:KK*} that $\Phi_{K_-,K_-^\dual}$ is a homeomorphism
for all $K_-<-1$. To prove Theorem \ref{tm:adsI}, we will show that $\Phi_{K_-,K_+}$
is ``bounded'' in a suitable sense by $\Phi_{K_-,K_-^\dual}$ or $\Phi_{K_+^\dual,K_+}$.


\begin{cor}
For all $K_-, K_+<-1$, $\Phi_{K_-, K_+}$ is proper.
\end{cor}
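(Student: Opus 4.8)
The plan is to prove properness in its sequential form: if $(h_l^n,h_r^n)$ leaves every compact subset of $\cT\times\cT$, then $\Phi_{K_-,K_+}(h_l^n,h_r^n)$ leaves every compact set; equivalently, if $(h_-^n,h_+^n)=\Phi_{K_-,K_+}(h_l^n,h_r^n)$ stays in a fixed compact $\cC\subset\cT\times\cT$, I must confine $(h_l^n,h_r^n)$ to a compact set. The guiding idea is to compare $\Phi_{K_-,K_+}$ with one of the two \emph{homeomorphisms} $\Phi_{K_-,K_-^\dual}$ and $\Phi_{K_+^\dual,K_+}$ furnished by Lemma \ref{lm:KK*} (note $K_+=(K_+^\dual)^\dual$, so the second is indeed an instance of that lemma). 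Writing $\mgh$ for the MGH manifold with left/right metrics $(h_l^n,h_r^n)$, Theorem \ref{tm:foliation} provides surfaces of every curvature in $(-\infty,-1)$ on both sides of the convex core, and by Lemmas \ref{lm:dual} and \ref{lm:GHdual} the dual of the future‑convex curvature‑$K_-$ surface $S_-$ is the past‑convex curvature‑$K_-^\dual$ surface whose induced metric is homothetic to the third fundamental form $h_-^\dual$ of $S_-$; symmetrically the future‑convex curvature‑$K_+^\dual$ surface $S_+^\dual$ is dual to $S_+$ with induced metric homothetic to $h_+^\dual$. Hence $\Phi_{K_-,K_-^\dual}(h_l^n,h_r^n)=(h_-^n,h_-^{\dual,n})$ and $\Phi_{K_+^\dual,K_+}(h_l^n,h_r^n)=(h_+^{\dual,n},h_+^n)$.

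Because these two maps are homeomorphisms, $(h_l^n,h_r^n)$ is confined as soon as either $(h_-^n,h_-^{\dual,n})$ or $(h_+^{\dual,n},h_+^n)$ is. Since $(h_-^n,h_+^n)\in\cC$, the metrics $h_-^n$ and $h_+^n$ already lie in the compact projections $\pi_1(\cC),\pi_2(\cC)$, so it remains only to control \emph{one} of the dual metrics $h_-^{\dual,n}$, $h_+^{\dual,n}$, and this is exactly what the constant‑curvature comparison delivers. When $K_+<K_-^\dual$, Corollary \ref{cr:comparison} gives $h_+^n\le (K_+/K_-^\dual)\,h_-^{\dual,n}$ in the sense of the length spectrum, i.e. $\ell_\gamma(h_+^n)\le\sqrt{K_+/K_-^\dual}\,\ell_\gamma(h_-^{\dual,n})$ for all closed $\gamma$, with $\sqrt{K_+/K_-^\dual}>1$. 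When $K_+>K_-^\dual$, the comparison lemma preceding that corollary, applied to the two future‑convex surfaces $S_-$ and $S_+^\dual$ (of curvatures $K_-<K_+^\dual$), together with the homothety factors $I=(-1/K)h$, yields $\ell_\gamma(h_+^{\dual,n})\le\sqrt{K_+^\dual/K_-}\,\ell_\gamma(h_-^n)$ with $\sqrt{K_+^\dual/K_-}>1$.

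I would then close the argument with Corollary \ref{cr:compact}, which turns a one‑sided length‑spectrum inequality relative to a compact set into confinement to a compact subset of $\cT$. In the case $K_+<K_-^\dual$ the bound above is a \emph{lower} bound on $h_-^{\dual,n}$ in terms of the compactly varying $h_+^n$, so Corollary \ref{cr:compact} places $h_-^{\dual,n}$ in a fixed compact set; then $(h_-^n,h_-^{\dual,n})$, and hence $(h_l^n,h_r^n)=\Phi_{K_-,K_-^\dual}^{-1}(h_-^n,h_-^{\dual,n})$, stays compact. In the case $K_+>K_-^\dual$ the bound is an \emph{upper} bound on $h_+^{\dual,n}$ in terms of $h_-^n$, and the same conclusion follows through $\Phi_{K_+^\dual,K_+}^{-1}$; the case $K_+=K_-^\dual$ is immediate since $\Phi_{K_-,K_+}$ is then itself a homeomorphism. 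Together with continuity of $\Phi_{K_-,K_+}$ (so that $\Phi_{K_-,K_+}^{-1}(\cC)$ is closed) this gives compactness of $\Phi_{K_-,K_+}^{-1}(\cC)$. The one genuine subtlety is the bookkeeping: correctly tracking past/future convexity under duality and the homothety constants $(-1/K)$, so that the comparison inequality always points in the direction for which the relevant version of Corollary \ref{cr:compact} applies and the right auxiliary homeomorphism is invoked. Once this is arranged, no compactness of an entire fibre of $\Phi_{K_-,K_+}$ needs to be argued directly.
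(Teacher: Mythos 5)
Your proposal is correct and takes essentially the same route as the paper: both arguments run the two-case comparison of constant-curvature surfaces (Corollary \ref{cr:comparison}, or the monotonicity lemma behind it), bound an auxiliary dual metric via Corollary \ref{cr:compact}, and then pull back through a homeomorphism from Lemma \ref{lm:KK*} --- the paper routes both cases through $\Phi_{K_-,K_-^\dual}$ by bounding $h_-^\dual$ from below or above, while you switch to $\Phi_{K_+^\dual,K_+}$ and bound $h_+^\dual$ in the second case, a purely symmetric variant that the paper itself anticipates. One bookkeeping slip: in the case $K_+>K_-^\dual$ the two future-convex surfaces have curvatures $K_+^\dual<K_-$ (not $K_-<K_+^\dual$ as your parenthetical states), which is exactly the ordering your displayed inequality $\ell_\gamma(h_+^{\dual,n})\le\sqrt{K_+^\dual/K_-}\,\ell_\gamma(h_-^n)$ corresponds to, so the inequality itself and the rest of the argument are right.
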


\begin{proof}
We consider two cases, depending on whether $K_-$ is smaller or larger than $K_+^\dual$.
Assume first that $K_-<K_+^\dual$. Let $\cDC\subset \cT\times \cT$ be compact, and 
let $\cC_-, \cC_+$ be two compact subsets of $\cT$ such that $\cDC\subset \cC_-\times \cC_+$.
Suppose that $\Phi_{K_-,K_+}(h_l,h_r)=(h_-,h_+)\in\cDC$.
Then $\Phi_{K_-,K_-^\dual}(h_l,h_r)=(h_-, h_-^\dual)$ with
$h_+ \leq \left( K_+/K_-^\dual\right) h_-^\dual$ by Corollary \ref{cr:comparison}. It follows that $h_-^\dual$ is in a compact set $\cC^\dual_-$ which depends
only on $\cC_+$ and on $K_+/K_-^\dual$ by Corollary \ref{cr:compact}.

Since $\Phi_{K_-,K_-^\dual}$ is a homeomorphism, $\Phi_{K_-,K_-^\dual}^{-1}(\cC_-\times \cC_-^\dual)$ is 
a compact subset $\cDC'$ of $\cT\times \cT$. By construction, $(h_l, h_r)\in \cDC'$
whenever $(h_-,h_+)\in \cDC$. This shows that $\Phi_{K_-,K_+}$ is proper. 

The same argument proves the same result when $K_->K_+^\dual$, except that now
$h_+ \leq \left( {K_+}/{K_-^\dual}\right) h_-^\dual$ and the other inequality has to be
used in Corollary \ref{cr:compact}.
\end{proof}

\begin{proof}[Proof of Theorem \ref{tm:adsI}]
As $\Phi_{K_-,K_+}$ is proper, its degree is well-defined for all $K_-,K_+<-1$. Moreover, it easily follows from the above corollary that, for every $K_{min}<K_{max}<-1$, the map
$\Phi:[K_{min},K_{max}]^2\times\cT\times\cT\rar\cT\times\cT$
defined as $\Phi(K_-,K_+,h_l,h_r):=\Phi_{K_-,K_+}(h_l,h_r)$
is proper. Hence, the degree of $\Phi_{K_-,K_+}$ does not depend on the chosen $(K_-,K_+)$, and in particular it coincides with
the degree of $\Phi_{K_-,K_-^\dual}$.
But we already know that $\Phi_{K_-,K_-^\dual}$ is a homeomorphism. Hence, for all $K_-,K_+<-1$ the map $\Phi_{K_-,K_+}$ has degree $1$ and
so it is onto. This proves the theorem.
\end{proof}

\subsection{Prescribed third fundamental forms}

Theorem \ref{tm:adsIII} follows from Theorem \ref{tm:adsI} through the duality
between constant curvature surfaces in MGH AdS manifolds (see Lemma \ref{lm:GHdual}).
In particular,
if $\mgh$ is a MGH AdS manifold containing a past-convex surface $S_+$ of constant
curvature $K_+$ with induced metric homothetic to $h_+$ and a future-convex
surface $S_-$ of constant curvature $K_-$ and induced metric homothetic to $h_-$,
then the surface $S_+^\dual$ dual to $S_+$ is future-convex, has constant curvature
$K_+^\dual$ and third fundamental form homothetic to $h_+$, while the surface
$S_-^\dual$ dual to $S_-$ has constant curvature $K_-^\dual$ and third fundamental form
homothetic to $h_-$.

\subsection{Fixed points of compositions of landslides}

We now turn to the proof of Theorem \ref{tm:fixed}. 

The relationship between constant curvature surfaces in MGH AdS manifolds and
landslides is captured in the following statement, strongly analoguous to 
a well-known statement for earthquakes, see \cite{mess,mess-notes,earthquakes}.

\begin{lemma} \label{lm:diagram}
Let $\mgh$ be a MGH AdS manifold, with left and right hyperbolic metrics $h_l, h_r$. 
Let $K_-, K_+<-1$, and let $S_-, S_+$ be the future-convex and past-convex 
surfaces with constant curvature $K_-$ and $K_+$, respectively. 
Let $h_-, h_+$ (resp. $h_-^\dual, h_+^\dual$) be the hyperbolic metrics homothetic to 
the induced metrics (resp. third fundamental forms) on $S_-$ and $S_+$, respectively.
Then
\begin{alignat}{2}
  \label{eq:diagram+}
   h_l  & = \Land^1_{e^{it_+}}(h_+, h_+^\dual)~, &
\qquad h_r  & = \Land^1_{e^{-it_+}}(h_+, h_+^\dual)~, \\
  \label{eq:diagram-}
h_l & = \Land^1_{e^{-it_-}}(h_-, h_-^\dual)~, &
\qquad h_r & = \Land^1_{e^{it_-}}(h_-, h_-^\dual)~, 
\end{alignat}
where $K_+=-1/\cos^2(t_+/2), K_-=-1/\cos^2(t_-/2)$. 

Conversely, if (\ref{eq:diagram+}) and (\ref{eq:diagram-}) are satisfied then
there exists a MGH AdS manifold $\mgh$ with left and right hyperbolic metrics 
$h_l, h_r$, containing a past-convex surface $S_+$ with constant curvature $K_+$
and induced metric and third fundamental form homothetic to $h_+$ and $h_+^\dual$,
and a future convex surface $S_-$ with constant curvature $K_-$
and induced metric and third fundamental form homothetic to $h_-$ and $h_-^\dual$.
\end{lemma}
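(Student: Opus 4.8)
The plan is to read off the left and right metrics of $\mgh$ directly from the embedding data of the constant curvature surfaces $S_\pm$ and to recognize that data as exactly the data defining a landslide, via Mess's parameterization together with the explicit operator $\beta_t=\cos(t/2)\En+\sin(t/2)Jb$ used in Section~\ref{sec:first}.

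I would first treat $S_+$. By Theorem~\ref{tm:foliation} the past-convex surface $S_+$ of curvature $K_+$ exists; let $I_+$ be its induced metric and $B_+$ its shape operator, which is $I_+$-self-adjoint and satisfies Codazzi $d^{\nabla_+}B_+=0$ and Gauss $K_+=-1-\det(B_+)$. Setting $\cos^2(t_+/2)=-1/K_+$ (so $K_+=-1/\cos^2(t_+/2)$, $t_+\in(0,\pi)$) gives $\det(B_+)=-(1+K_+)=\tan^2(t_+/2)$. Writing $I_+=\cos^2(t_+/2)h_+$ with $h_+$ hyperbolic and $B_+=\tan(t_+/2)b_+$, the operator $b_+$ is $h_+$-self-adjoint, Codazzi for $h_+$, and has $\det b_+=1$; by the minimal Lagrangian correspondence recalled in Section~2 the metric $h_+^\dual:=h_+(b_+\bullet,b_+\bullet)$ is again hyperbolic and $b_+$ is precisely the Labourie operator of $(h_+,h_+^\dual)$. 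A one-line computation then gives $\III_+=I_+(B_+\bullet,B_+\bullet)=\sin^2(t_+/2)h_+^\dual$, so that $h_+$ and $h_+^\dual$ are the hyperbolic metrics homothetic to $I_+$ and $\III_+$. Mess's formula expresses the left and right metrics of $\mgh$ in terms of such a convex surface as $I_+((\En\pm J_{I_+}B_+)\bullet,(\En\pm J_{I_+}B_+)\bullet)$; since $J_{I_+}=J$ is the complex structure of $h_+$, the homothety factors combine to give $I_+((\En\pm\tan(t_+/2)Jb_+)\bullet,(\En\pm\tan(t_+/2)Jb_+)\bullet)=h_+(\beta_{\pm t_+}\bullet,\beta_{\pm t_+}\bullet)$ with $\beta_{\pm t_+}=\cos(t_+/2)\En\pm\sin(t_+/2)Jb_+$. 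By the operator description of the landslide in Section~\ref{sec:first}, these are $\Land^1_{e^{it_+}}(h_+,h_+^\dual)$ and $\Land^1_{e^{-it_+}}(h_+,h_+^\dual)$, which yields $(\ref{eq:diagram+})$.

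The surface $S_-$ is handled identically, the only change being that it lies on the opposite side of the convex core: relative to the fixed time orientation of $\mgh$ its shape operator enters with the opposite sign, i.e. $B_-$ plays the role of $-\tan(t_-/2)b_-$. Under $B\mapsto-B$ the two signs in Mess's formula are exchanged, so the left and right metrics swap their expressions, producing $h_l=\Land^1_{e^{-it_-}}(h_-,h_-^\dual)$ and $h_r=\Land^1_{e^{it_-}}(h_-,h_-^\dual)$, which is $(\ref{eq:diagram-})$. For the converse, given $(\ref{eq:diagram+})$ and $(\ref{eq:diagram-})$ I would let $\mgh$ be the unique MGH AdS manifold with left and right metrics $(h_l,h_r)$, which exists by Mess's parameterization recalled in Section~2. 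Applying the forward direction to the curvature-$K_+$ and curvature-$K_-$ surfaces of this $\mgh$ expresses their (a priori unknown) metrics $(\widehat h_\pm,\widehat h_\pm^\dual)$ in exactly the landslide form of $(\ref{eq:diagram+})$ and $(\ref{eq:diagram-})$. The map $(h,h^\dual)\mapsto\big(\Land^1_{e^{it}}(h,h^\dual),\Land^1_{e^{-it}}(h,h^\dual)\big)$ is injective --- this pair is the pair of left/right metrics of the manifold obtained by embedding $(h,h^\dual)$ at angle $t$, which by Mess determines the manifold, hence (by uniqueness of the curvature-$K$ surface) the data $(I,B)=(\cos^2(t/2)h,\tan(t/2)b)$, hence $(h,h^\dual)$. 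Comparing with the given identities forces $\widehat h_\pm=h_\pm$ and $\widehat h_\pm^\dual=h_\pm^\dual$, completing the converse.

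The main obstacle is the orientation bookkeeping: fixing which of the two $\PSL(2,\R)$ metrics is ``left'' and which is ``right'', and tracking how the sign of the shape operator flips between the two sides of the convex core. This sign flip is the sole source of the asymmetry in the exponents between the future surface $S_+$ and the past surface $S_-$, and pinning it down correctly (rather than absorbing it into an overall convention mismatch) is the delicate point. Everything else reduces to the Gauss-equation computation of the homothety factors $\cos^2(t_\pm/2)$ and $\sin^2(t_\pm/2)$ and to the already-established equivalence between the AdS and harmonic-map/operator descriptions of the landslide.
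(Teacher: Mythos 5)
Your proposal is correct, but it takes a genuinely different route from the paper's own proof, which is essentially a citation: the two displayed identities are exactly \cite[Lemma 1.9]{cyclic}, and the converse follows from that same lemma together with Mess's theorem that a MGH AdS manifold is determined by its left and right hyperbolic metrics. What you do instead is re-derive the content of the cited lemma. You normalize the embedding data of the constant-curvature surfaces via the Gauss and Codazzi equations and the Labourie correspondence ($I_\pm=\cos^2(t_\pm/2)h_\pm$, $B_\pm=\pm\tan(t_\pm/2)b_\pm$, $\III_\pm=\sin^2(t_\pm/2)h_\pm^\dual$), then feed this into Mess's formula $\mu_{l,r}=I\big((\En\pm JB)\bullet,(\En\pm JB)\bullet\big)$ for the left/right metrics determined by a space-like surface, and match the result with the operator form $\beta_t=\cos(t/2)\En+\sin(t/2)Jb$ of the landslide (which is used in Section 4 of the paper, not in Section \ref{sec:first}). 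Be aware that the formula for $\mu_{l,r}$ is stated nowhere in this paper --- it is the substance of \cite[Lemma 1.9]{cyclic} --- so you are importing it from \cite{mess} and Krasnov--Schlenker. Your converse is also organized differently: the paper builds, from the equivariant embeddings of \cite{cyclic}, two MGH manifolds containing the prescribed surfaces and identifies them by Mess's uniqueness, while you build one manifold from $(h_l,h_r)$ and conclude by an injectivity argument resting on the uniqueness of constant-curvature surfaces (Theorem \ref{tm:foliation}); both are valid. What your route buys is an explicit explanation of the sign asymmetry: the definite sign of the shape operator flips across the convex core, which exchanges $\En+JB$ and $\En-JB$ and hence produces the opposite exponents in (\ref{eq:diagram+}) and (\ref{eq:diagram-}). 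What it costs is the orientation bookkeeping that you correctly flag but do not carry out: as written, your argument determines the identities only up to the overall convention of which factor of $\PSL(2,\R)\times\PSL(2,\R)$ is ``left'', and fixing that convention consistently with the definition of $\Land$ is precisely what the citation to \cite[Lemma 1.9]{cyclic} accomplishes.
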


\begin{proof} 
The first point follows directly from \cite[Lemma 1.9]{cyclic}. 
The converse also follows from the same lemma, because a MGH AdS manifold is uniquely
determined by its left and right hyperbolic metrics (see \cite{mess}) so that the 
MGH AdS manifold containing a past-convex space-like surface of curvature $K_+$ with
$I$ and $\III$ respectively homothetic to $h_+$ and $h_+^\dual$ is the same as the 
MGH AdS manifold containing a future-convex space-like surface of curvature $K_-$
with $I$ and $\III$ respectively homothetic to $h_-$ and $h_-^\dual$.
\end{proof}

\begin{proof}[Proof of Theorem \ref{tm:fixed}] 
Let $\theta_-,\theta_+\in (0,\pi)$, and let $h_-,h_+\in \cT$. 
Set $K_+=-1/\cos^2(\theta_+/2)$ and $K_-=-1/\cos^2(\theta_-/2)$. 
Theorem \ref{tm:adsI} indicates that there exists a MGH AdS manifold $\mgh$ containing a 
past-convex space-like surface of constant curvature $K_+$ proportional to $h_+$, 
and a future-convex space-like surface of constant curvature $K_-$ proportional to 
$h_-$. Moreover, if $\theta_-+\theta_+=\pi$, then $\mgh$ is unique.

Let $h_l, h_r$ be the left and right hyperbolic metrics of $\mgh$. 
Lemma \ref{lm:diagram} then shows that $h_r=\Sy_{e^{i\theta_+},h_+}(h_l)$, while 
$h_l=\Sy_{e^{i\theta_-},h_-}(h_r)$. Thus $h_r$ is a fixed point of 
$\Sy_{e^{i\theta_+},h_+}\circ \Sy_{e^{i\theta_-},h_-}$. This proves the existence part of the statement. 

The uniqueness part when $\theta_-+\theta_+=\pi$ follows from the uniqueness of $\mgh$ in 
this case, together with the converse part of Lemma \ref{lm:diagram}.
\end{proof} 

\bibliographystyle{amsplain}

\def\cprime{$'$} \def\cprime{$'$}
\providecommand{\bysame}{\leavevmode\hbox to3em{\hrulefill}\thinspace}
\providecommand{\MR}{\relax\ifhmode\unskip\space\fi MR }
\providecommand{\MRhref}[2]{%
  \href{http://www.ams.org/mathscinet-getitem?mr=#1}{#2}
}
\providecommand{\href}[2]{#2}

\end{document}